\let\into=\hookrightarrow
\DeclareSymbolFont{bbold}{U}{bbold}{m}{n}
\DeclareSymbolFontAlphabet{\mathbb}{bbold}
\newcommand{\LAMBDA}{\mathbb{\Lambda}}
\newcommand{\DELTA}{\mathbb{\Delta}}
\newcommand{\cell}{\mathbb}
\DeclareMathSymbol{\bbmu}{\mathord}{bbold}{"16}
\DeclareSymbolFont{cyrletters}{OT2}{wncyr}{m}{n}
\DeclareMathSymbol{\Sha}{\mathalpha}{cyrletters}{"58}
\renewcommand{\*}{}
\let\To=\to
\let\Box=\square
\let\too=\to
\let\SQUARE=\square
\let\square=\Box
\renewcommand{\to}{\To}
\numberwithin{equation}{section}
\theoremstyle{definition}
\newtheorem{theorem}{Theorem}[section]
\newtheorem*{theorem*}{Theorem}
\newtheorem{proposition}[theorem]{Proposition}
\newtheorem{corollary}[theorem]{Corollary}
\newtheorem{lemma}[theorem]{Lemma}
\newtheorem*{lemma*}{Lemma}
\newtheorem{definition}[theorem]{Definition}
\newtheorem*{definition*}{Definition}
\newtheorem{example}[theorem]{Example}
\DeclareMathOperator{\HOM}{\underline{Hom}}
\DeclareMathOperator{\End}{End}
\DeclareMathOperator{\Map}{Hom}
\DeclareMathOperator{\cosk}{cosk}
\DeclareMathOperator{\sk}{sk}
\newcommand{\p}{\partial}
\newcommand{\bull}{\bullet}
\newcommand{\R}{\mathbb{R}}
\newcommand{\N}{\mathbb{N}}
\newcommand{\K}{\mathbb{K}}
\newcommand{\CF}{\mathcal{F}}
\newcommand{\CV}{\mathcal{V}}
\newcommand{\CW}{\mathcal{W}}
\renewcommand{\o}{\otimes}
\DeclareMathOperator{\MC}{\mathsf{MC}}
\DeclareMathOperator{\ad}{ad}
\newcommand{\om}{\omega}
\newcommand{\V}{\mathcal{V}}
\newcommand{\W}{\mathcal{W}}
\def\hookrightarrow{\mathchoice
  {\DOTSB\lhook\joinrel\relbar\joinrel\rightarrow}
  {\DOTSB\lhook\joinrel\rightarrow}
  {\DOTSB\lhook\joinrel\rightarrow}
  {\DOTSB\lhook\joinrel\rightarrow}}
\newcommand{\Cinf}{$C^\infty$}
\newcommand{\TT}{\mathbb{T}}
\newcommand{\T}{\mathsf{T}}
\newcommand{\ts}{\tilde{s}}
\renewcommand{\[}{[\![}
\renewcommand{\]}{]\!]}
\newcommand{\X}{\mathcal{X}}
\DeclareMathOperator{\GL}{GL}
\newcommand{\VA}{\mathit{VA}}
\newcommand{\UA}{\mathit{UA}}
\newcommand{\D}{\Delta}
\renewcommand{\L}{\Lambda}
\newcommand{\GG}{\mathbb{G}}
\newcommand{\G}{\mathsf{G}}
\renewcommand{\P}{\mathbb{P}}
\title{Geometric higher groupoids and categories}
\author{Kai Behrend}
\address{Department of Mathematics, University of British Columbia}
\email{behrend@math.ubc.ca}
\author{Ezra Getzler}
\address{Department of Mathematics, Northwestern University}
\email{getzler@northwestern.edu}
\thanks{We are grateful to Nick Roszenblyum for remarking that the
  path space of a quasi-category $X$ is $\Map(\DELTA^1,X)$, and to
  Jesse Wolfson for many helpful discussions. The first author thanks
  Imperial College for its hospitality during the period when this
  paper was begun. The second author thanks the University of Geneva
  for its hospitality during the period when it was completed, and the
  Simons Foundation for support under a Collaboration Grant for
  Mathematicians.}
\begin{document}

\begin{abstract}
  In an enriched setting, we show that higher groupoids and higher
  categories form categories of fibrant objects, and that the nerve of
  a differential graded algebra is a higher category in the category
  of algebraic varieties.
\end{abstract}

\maketitle  

This paper develops a general theory of higher groupoids in a category
$\CV$. We consider a small category $\CV$ of \textbf{spaces}, together
with a subcategory of \textbf{covers}, satisfying the following
axioms:
\begin{enumerate}[label=(D\arabic*),labelwidth=\widthof{(D3)},leftmargin=!]
\item \label{finite} $\CV$ has finite limits;
\item the pullback of a cover is a cover;
\item \label{cancellation} if $f$ is a cover and $g\*f$ is a cover,
  then $g$ is a cover.
\end{enumerate}

These axioms are reminiscent of those for a category of smooth
morphisms $\mathbf{P}$ of Toen and Vezzosi (\cite{tv},
Assumption~1.3.2.11). A topos satisfies these axioms, with
epimorphisms as covers; so do the category of schemes, with surjective
\'etale morphisms, smooth epimorphisms, or faithfully flat morphisms
as covers, and the category of Banach analytic spaces, with surjective
submersions as covers. We call a category satisfying these axioms a
\textbf{descent category}. We call a simplicial object in a descent
category a \textbf{simplicial space}.

A finite simplicial set is a simplicial set with a finite number of
\emph{degenerate} simplices. Given a simplicial space $X$ and a
finite simplicial set $T$, let
\begin{equation*}
  \Map(T,X)
\end{equation*}
be the space of simplicial morphisms from $T$ to $X$; it is a finite
limit in $\CV$, and its existence is guaranteed by \ref{finite}.

Let $\L^n_i\subset\D^n$ be the \textbf{horn}, consisting of the union
of all but the $i$th face of the $n$-simplex:
\begin{equation*}
  \L^n_i = \bigcup_{j\ne i} \p_j\D^n .
\end{equation*}
A simplicial set $X$ is the nerve of a groupoid precisely when the
induced morphism
\begin{equation*}
  X_n\to\Map(\L^n_i,X)  
\end{equation*}
is an isomorphism for $n>1$.

On the other hand, given a simplicial abelian group $A$, the
associated complex of normalized chains vanishes above degree $k$ if
and only if the morphism $A_n\to\Map(\L^n_i,A)$ is an isomorphism for
$n>k$. Motivated by these examples, Duskin defined a $k$-groupoid to
be a simplicial set $X$ such that the morphism $X_n\to\Map(\L^n_i,X)$
is surjective for $n>0$ and bijective for $n>k$. (See
Duskin~\cite{Duskin} and Glenn~\cite{Glenn}. In their work,
$k$-groupoids are called ``$k$-dimensional hypergroupoids.'')

In this paper, we generalize Duskin's theory of $k$-groupoids to
descent categories: Pridham takes a similar approach in
\cite{Pridham}.
\begin{definition*}
  Let $k$ be a natural number. A simplicial space $X$ in a descent
  category $\CV$ is a \textbf{\boldmath{$k$}-groupoid} if, for each
  $0\le i\le n$, the morphism
  \begin{equation*}
    X_n \too \Map(\L^n_i,X)
  \end{equation*}
  is a cover for $n>0$, and an isomorphism for $n>k$.
\end{definition*}

Denote by $s_k\CV$ the category of $k$-groupoids, with morphisms the
simplicial morphisms of the underlying simplicial spaces. Thus, the
category $s_0\CV$ of $0$-groupoids is equivalent to $\CV$, while the
category $s_1\CV$ of $1$-groupoids is equivalent to the category of
Lie groupoids in $\CV$, that is, groupoids such that the source and
target maps are covers. (The equivalence is induced by mapping a Lie
groupoid to its nerve.)

\begin{definition*}
  A morphism $f:X\to Y$ between $k$-groupoids is a \textbf{fibration}
  if, for each $n>0$ and $0\le i\le n$, the morphism
  \begin{equation*}
    X_n \too \Map(\L^n_i,X)\times_{\Map(\L^n_i,Y)}Y_n
  \end{equation*}
  is a cover. It is a \textbf{hypercover} if, for each $n\ge0$, the
  morphism
  \begin{equation*}
    X_n \too \Map(\p\D^n,X)\times_{\Map(\p\D^n,Y)}Y_n
  \end{equation*}
  is a cover. It is a \textbf{weak equivalence} if there is a
  $k$-groupoid $P$ and hypercovers $p:P\to X$ and $q:P\to Y$ such that
  $f=qs$, where $s$ is a section of $p$.
\end{definition*}

Every $k$-groupoid is \textbf{fibrant}: that is, the unique morphism
with target the terminal object $e$ is a fibration. Every hypercover
is a fibration.

The following is the first main result of this paper: for the
definition of a category of fibrant objects, see Definition~\ref{CFO}.
\begin{theorem*}
  The category of $k$-groupoids $s_k\CV$ is a category of fibrant
  objects.
\end{theorem*}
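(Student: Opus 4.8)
The plan is to verify the axioms for a category of fibrant objects (Definition~\ref{CFO}), with the hypercovers serving as the acyclic fibrations. Three ingredients are already available: $s_k\CV$ has a terminal object $e$, every $k$-groupoid is fibrant, and every hypercover is a fibration. It remains to show that fibrations and weak equivalences contain the isomorphisms and are closed under composition; that the pullback of a fibration exists and is a fibration, and the pullback of a hypercover is a hypercover; that weak equivalences satisfy two-out-of-three; and that every $k$-groupoid has a path object.

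The composition and base-change axioms for fibrations and hypercovers I would verify by a direct manipulation of matching maps. Writing these as fibre products of the objects $\Map(\L^n_i,-)$ or $\Map(\p\D^n,-)$, which exist by \ref{finite} and commute with pullback, the matching map of a composite $g\circ f$ factors as the matching map of $f$ followed by a base change of the matching map of $g$, while the matching map of a base change of $f$ is itself a base change of the matching map of $f$. Since covers form a subcategory and the pullback of a cover is again a cover, both factors are covers and the conclusions follow; the cancellation axiom \ref{cancellation} enters when deducing that a map covered by a cover is itself suitably controlled. The pulled-back simplicial space $X\times_Z Y$ is again a $k$-groupoid, since its structure map factors as a composite of fibrations $X\times_Z Y\to Y\to e$. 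Finally, isomorphisms are fibrations because their matching maps are isomorphisms, hence covers; and every isomorphism, being a hypercover, is a weak equivalence, as is any section of a hypercover.

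For the path object I would set $PX=\Map(\D^1,X)$ and bring in the second, combinatorial half of the matching calculus, organized through the pushout-product construction on monomorphisms of finite simplicial sets. The input is that the pushout-product of a cofibration with a horn inclusion is anodyne, so that $\Map(-,X)$ carries anodyne inclusions to hypercovers and arbitrary cofibrations to fibrations; combined with the dimension bookkeeping in an anodyne decomposition of $\L^n_i\times\D^1\into\D^n\times\D^1$, where every simplex added has dimension exceeding $k$ once $n>k$, this shows $PX$ is a $k$-groupoid. The endpoint inclusions $\{0\},\{1\}\into\D^1$ are horns, so the two evaluations $PX\to X$ are hypercovers; the degeneracy exhibits the diagonal $X\to PX$ as a common section, hence a weak equivalence; and the boundary inclusion $\p\D^1\into\D^1$, a cofibration, presents $PX\to X\times X$ as a fibration. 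This is the required factorization, and arranging the combinatorics so that both the cover condition and the isomorphism condition for $n>k$ are respected is the principal technical step here.

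The main obstacle is two-out-of-three for weak equivalences, which are defined through spans of hypercovers rather than by a lifting property. I would first record, as consequences of the calculus above, that hypercovers are closed under composition and base change and that pulling back along a hypercover neither creates nor destroys weak equivalences. The decisive step is then to characterize the acyclic fibrations intrinsically: a fibration between $k$-groupoids is a weak equivalence if and only if it is a hypercover. Granting this, two-out-of-three follows by the standard argument for categories of fibrant objects, replacing arbitrary maps by fibrations via the mapping path space built from $PX$ and reducing the three cases to the corresponding statements for hypercovers. The delicate point throughout is the construction of the common refinement $P$ in a span: it is produced as an iterated fibre product of the given hypercovers over their shared vertices, and one must check both that this fibre product remains a $k$-groupoid and that the resulting projections remain hypercovers, which again rests on the composition and base-change stability established at the outset, but requires care to assemble the sections coherently.
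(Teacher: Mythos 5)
Your overall architecture matches the paper's quite closely: the matching-object calculus for composition and base change, the path space $\Map(\D^\bull\times\D^1,X)$ with dimension-controlled horn decompositions (the paper's Lemma~\ref{Moore}, a refinement of Moore's theorem via Cartan's shuffle argument, is exactly your ``dimension bookkeeping,'' and it is indeed a substantial combinatorial step, not a citation), and the intrinsic characterization of trivial fibrations as hypercovers. But there is a genuine gap at the step you dismiss as ``the standard argument for categories of fibrant objects'': no such standard argument exists, since in Brown's framework two-out-of-three is an \emph{axiom}, not a consequence. After replacing maps by fibrations via mapping path spaces, the hardest case --- $g$ and $g\*f$ weak equivalences imply $f$ is --- reduces not to composition or base-change closure of hypercovers, nor to any cancellation you have recorded, but to the separate statement: if $f$ is a fibration and $g$ and $g\*f$ are hypercovers, then $f$ is a hypercover (the paper's Lemma~\ref{Hard}). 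This follows from nothing in your calculus. In simplicial sets one could argue with relative homotopy groups or pointwise lifts, but a general descent category has no points and its covers need not admit sections; the paper's proof is a delicate construction of fibre products $V(f,g)\cong W(f,g)$, $T(f,g)$, $U(f,g)$, built from the degeneracy $s_0$ and an exchange of the horns $\L^{n+1}_0$ and $\L^{n+1}_1$, exhibiting the matching map of $f$ as a pullback of a cover. Your proposal contains no substitute for this, and it is the actual crux of the theorem.

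Two smaller points. First, the opposite cancellation --- $f$ and $g\*f$ hypercovers imply $g$ is (Lemma~\ref{groupoid:complete}) --- underlies every diagram chase in the two-out-of-three proofs, including your ``decisive'' characterization of trivial fibrations; it too is not formal, since axiom~\ref{cancellation} applies only after one shows that the comparison map from $\Map(\p\D^n\into\D^n,g\*f)$ to $\Map(\p\D^n\into\D^n,g)$ is a cover, which the paper proves by an induction over the skeleta of $\D^n$. Your remark that cancellation enters ``when deducing that a map covered by a cover is itself suitably controlled'' gestures at this but supplies no argument. Second, the point you single out as delicate --- assembling a common refinement of a span by iterated fibre products over shared vertices --- is misplaced: working with the intrinsic definition (declare $f$ a weak equivalence when $q(f):P(f)\to Y$ is a hypercover), the paper composes weak equivalences functorially via the iterated path spaces $P_1$ and $P_2$ (the object $P(g,f)$), and the span definition is then recovered from Brown's Lemma~\ref{P(f)}; no ad hoc refinement construction is needed.
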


We will prove the following more direct characterization of weak
equivalences in Section~5.
\begin{theorem*}
  A morphism $f:X\to Y$ between $k$-groupoids is a \textbf{weak
    equivalence} if and only if, for each $n\ge0$, the morphisms
  \begin{equation*}
    X_n\times_{Y_n}Y_{n+1} \too
    \Map(\p\D^n,X)\times_{\Map(\p\D^n,Y)}\Map(\L^{n+1}_{n+1},Y)
  \end{equation*}
  are covers.
\end{theorem*}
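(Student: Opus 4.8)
The plan is to recognise the displayed morphism as a single boundary matching map. Let $\mathrm{Dec}\,Y$ be the décalage of $Y$, with $(\mathrm{Dec}\,Y)_n=Y_{n+1}$ and structure map $\pi=\p_{n+1}:\mathrm{Dec}\,Y\to Y$; it satisfies $\Map(T,\mathrm{Dec}\,Y)=\Map(T\star\D^0,Y)$ for finite $T$. Form the mapping path space $M_f:=X\times_Y\mathrm{Dec}\,Y$, so that $(M_f)_n=X_n\times_{Y_n}Y_{n+1}$ is exactly the source of the displayed morphism. Since $\Map$ carries colimits in its first variable to limits, and since $\p\D^n\star\D^0=\L^{n+1}_{n+1}$ while $\D^n\star\D^0=\D^{n+1}$, one computes
\begin{equation*}
\Map(\p\D^n,M_f)=\Map(\p\D^n,X)\times_{\Map(\p\D^n,Y)}\Map(\L^{n+1}_{n+1},Y),
\end{equation*}
which is exactly the target, the restriction $\Map(\L^{n+1}_{n+1},Y)\to\Map(\p\D^n,Y)$ being induced by $\pi$. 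Hence the theorem is equivalent to the statement that $f$ is a weak equivalence if and only if the unique morphism $M_f\to e$ is a hypercover.

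Next I would record two structural facts. First, $\pi:\mathrm{Dec}\,Y\to Y$ is a fibration: in degree $m$ its relative horn matching map has the form $Y_{m+1}\to\Map(\L^m_i\star\D^0,Y)\times_{\Map(\L^m_i,Y)}Y_m$, and forming the fibre product glues back the facet $\p_{m+1}\D^{m+1}$, identifying the target with $\Map(\L^{m+1}_i,Y)$; the matching map is then the horn cover $Y_{m+1}\to\Map(\L^{m+1}_i,Y)$ of the $k$-groupoid $Y$, a cover by \ref{finite}--\ref{cancellation}. Consequently $M_f$, the pullback of a fibration, is again a $k$-groupoid and $p:M_f\to X$ is a fibration; the last degeneracies assemble into $\sigma:Y\to\mathrm{Dec}\,Y$ with $\pi\sigma=\mathrm{id}$, so that $s:=(\mathrm{id}_X,\sigma f)$ is a section of $p$. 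Thus $M_f$ is a genuine object of $s_k\CV$, and the reformulated condition asserts that the homotopy fibre of $f$, modelled by $M_f$, is contractible.

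The heart is the equivalence between this contractibility and $f$ being a weak equivalence, which I would prove by passing through the path object $\Map(\D^1,Y)$ supplied by the category of fibrant objects structure of $s_k\CV$ (the first main theorem). Form the free mapping path space $M'_f:=X\times_Y\Map(\D^1,Y)$ along $\mathrm{ev}_0$; standard path-object arguments give that $p':M'_f\to X$ is a hypercover with section $s'$, that $q':=\mathrm{ev}_1:M'_f\to Y$ is a fibration, and that $f=q's'$. Since $p'$ is a hypercover, hence a weak equivalence, and $p's'=\mathrm{id}$, the section $s'$ is a weak equivalence; as $f=q's'$, the two-out-of-three property shows that $f$ is a weak equivalence if and only if $q'$ is, and since $q'$ is a fibration this holds if and only if $q'$ is a hypercover (a fibration is a weak equivalence precisely when it is a hypercover). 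It then remains to match the two acyclicity conditions, $M_f\to e$ a hypercover and $q'$ a hypercover: for one implication one descends the cover condition for $M_f$ along the decomposition $\D^1=\D^0\star\D^0$, and for the other one pulls the décalage fibration $\pi$ back along $q'$ and invokes stability and cancellation of covers. For the converse half of the theorem I would instead start from an arbitrary weak-equivalence witness $(P,p_0,q_0,s_0)$ and, pulling $\pi$ back along $q_0$ and descending along the hypercover $p_0$, deduce directly that $M_f\to e$ is a hypercover.

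The main obstacle is precisely this comparison of the based décalage path space $M_f$ with the two-sided interval path space $\Map(\D^1,Y)$: translating between the boundary fill encoded by $\p\D^n\star\D^0=\L^{n+1}_{n+1}$ and the product $\D^1\times\D^n$, and descending the resulting cover conditions along hypercovers using only \ref{finite}--\ref{cancellation}. The two ingredients I expect to need most carefully are that a fibration which is a weak equivalence is a hypercover, extracted from the proof that $s_k\CV$ is a category of fibrant objects, and the homotopy invariance of the assignment $f\mapsto M_f$; once these are in hand, both implications follow by chaining the three equivalences above.
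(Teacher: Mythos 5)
Your opening reduction contains a genuine error. The identity $\Map(T,\mathrm{Dec}\,Y)\cong\Map(T\star\D^0,Y)$ holds only when the category of simplices of $T$ is connected: $\mathrm{Dec}$ is restriction along $[n]\mapsto[n]\star[0]$, and its left adjoint sends $T$ to the colimit of $\D^{n+1}$ over the simplices of $T$, which agrees with $T\star\D^0$ only for connected $T$. It fails precisely at the two low cases you need. For $n=0$ one has $\Map(\p\D^0,M_f)=e$, whereas the theorem's target is $\Map(\L^1_1,Y)\cong Y_0$; for $n=1$ one has $\Map(\p\D^1,M_f)=(M_f)_0\times(M_f)_0$, whereas the theorem's target is $(X_0\times_{Y_0}Y_1)\times_{Y_0}(X_0\times_{Y_0}Y_1)$, the two edges of $\L^2_2$ being forced to share their final vertex. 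Consequently your reformulation ``$f$ is a weak equivalence iff $M_f\to e$ is a hypercover'' is false, not merely imprecise: take $f=\mathrm{id}_Y$ with $Y_0\to e$ not a cover. Then the theorem's conditions hold (the $n=0$ map is the face map $Y_1\to Y_0$, a cover by Corollary~\ref{face-cover}, and for $n\ge1$ the maps are the horn covers $Y_{n+1}\to\Map(\L^{n+1}_{n+1},Y)$), but $(M_f)_0=Y_1\to e$ need not be a cover. The correct global repackaging is that the displayed maps are the relative matching maps of the last-vertex augmentation $M_f\to\mathrm{const}(Y_0)$, not of $M_f\to e$; your computations are valid verbatim only for $n\ge2$.

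Even granting that fix, the heart of the theorem is left unproved. Your structural observations (the levelwise identification $(M_f)_n=X_n\times_{Y_n}Y_{n+1}$, the fact that $\pi:\mathrm{Dec}\,Y\to Y$ is a fibration via the gluing $\L^m_i\star\D^0\cup\D^m=\L^{m+1}_i$, and the reduction via Brown's lemma to ``$q(f)$ is a hypercover'') are all correct, but the last of these is essentially the paper's definition of weak equivalence, so the entire content of Theorem~\ref{We} is the comparison you defer to a sentence (``descend along $\D^1=\D^0\star\D^0$'', ``pull $\pi$ back along $q'$ and invoke stability and cancellation''). The paper's proof consists exactly of this comparison, carried out combinatorially: it identifies $P(f)_n\cong\Map(\D^n\into\D^{1,n},f)$ and $\Map(\p\D^n\into\D^n,q(f))\cong\Map(\p\D^n\into\L^{1,n}_1,f)$, then in one direction filters the prism by $F_i\D^{1,n}=\L^{1,n}_1\cup\D^{n+1}_0\cup\dots\cup\D^{n+1}_i$, where each step is a pullback of a cover supplied by Lemma~\ref{expansion} and the final step is a pullback of the theorem's map; and in the other direction uses the collapse $\D^{1,n}\to\D^{n+1}$ sending $(0,i)\mapsto i$ and $(1,i)\mapsto n+1$, which carries $\L^{1,n}_1$ onto $\L^{n+1}_{n+1}$ and exhibits the theorem's map as a pullback of the prism map. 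Axioms \ref{finite}--\ref{cancellation} alone do not produce these pullback squares; your plan would have to reconstruct this simplex-by-simplex bookkeeping, and as written it does not.
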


Parallel to the theory of $k$-groupoids, there is a theory of
$k$-categories, modeled on the theory of complete Segal spaces (Rezk
\cite{Rezk}). In the case where $\CV$ is the category of sets, these
are truncated weak Kan complexes in the sense of Boardman and Vogt
\cite{BV}. Weak Kan complexes were studied further by Joyal
\cite{Joyal}, who calls them quasi-categories, and by Lurie
\cite{Lurie}, who calls them $\infty$-categories.

The \textbf{thick \boldmath{$n$}-simplex} is the simplicial set
$\DELTA^n = \cosk_0\D^n$. Just as $\D^n$ is the nerve of the category
with objects $\{0,\dots,n\}$ and a single morphism from $i$ to $j$ if
$i\le j$, $\DELTA^n$ is the nerve of the groupoid $\[n\]$
with objects $\{0,\dots,n\}$ and a single morphism from $i$ to $j$ for
all $i$ and $j$. In other words, just as the $k$-simplices of the
$n$-simplex are monotone functions from $\{0,\dots,k\}$ to
$\{0,\dots,n\}$, the $k$-simplices of the thick simplex are \emph{all}
functions from $\{0,\dots,k\}$ to $\{0,\dots,n\}$. What we call the
thick simplex goes under several names in the literature:
Rezk~\cite{Rezk} denotes it $E(n)$, while Joyal and Tierney~\cite{JT}
use the notation $\D'[n]$.

\begin{definition*}
  Let $k$ be a positive integer. A simplicial space $X$ in a descent
  category $\CV$ is a \textbf{\boldmath{$k$}-category} if for each
  $0<i<n$, the morphism
  \begin{equation*}
    X_n \to \Map(\L^n_i,X) ,
  \end{equation*}
  is a cover for $n>1$ and an isomorphism for $n>k$, and the morphism
  \begin{equation*}
    \Map(\DELTA^1,X) \to X_0
  \end{equation*}
  induced by the inclusion of a vertex $\D^0\hookrightarrow\DELTA^1$
  is a cover.
\end{definition*}
  
In a topos, where all epimorphisms are covers, the last condition is
automatic, since these morphisms have the section
$X_0\to\Map(\DELTA^1,X)$ induced by the projection from $\DELTA^1$ to
$\D^0$.

Associated to a $k$-category $X$ is the simplicial space $\GG(X)$,
defined by
\begin{equation*}
  \GG(X)_n = \Map(\DELTA^n,X) .
\end{equation*}
The formation of $\GG(X)_n$, while appearing to involve an infinite
limit, is actually isomorphic to a finite limit, since (see
Lemma~\ref{coskeletal})
\begin{equation*}
  \Map(\DELTA^n,X) \cong \cosk_{k+1}X_n = \Map(\sk_{k+1}\DELTA^n,X) ,
\end{equation*}
and $\sk_{k+1}\DELTA^n$, the $(k+1)$-skeleton of $\DELTA^n$, is a
finite simplicial complex.

The following theorem is useful in constructing examples of
$k$-groupoids.
\begin{theorem*}
  If $X$ is a $k$-category, $\GG(X)$ is a $k$-groupoid.
\end{theorem*}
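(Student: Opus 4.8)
The plan is to reduce the horn maps of $\GG(X)$ to restriction maps along inclusions of \emph{thick horns} into thick simplices, and then to prove these are covers (resp.\ isomorphisms) by decomposing the inclusions into elementary fillings of \emph{inner} horns of $X$. Write $\Phi$ for the colimit-preserving extension to simplicial sets of the cosimplicial object $n\mapsto\DELTA^n$, so that $\Phi(\D^n)=\DELTA^n$; since $\Map(-,X)$ carries colimits of simplicial sets to limits in $\CV$, we get $\Map(T,\GG(X))\cong\Map(\Phi(T),X)$ for every finite simplicial set $T$. Applying $\Phi$ to the union $\L^n_i=\bigcup_{j\ne i}\p_j\D^n$, and using that thick faces are the full sub-simplicial-sets on their vertex sets, so that $\DELTA^S\cap\DELTA^{S'}=\DELTA^{S\cap S'}$ inside $\DELTA^n$, one identifies $\Phi(\L^n_i)$ with the thick horn
\begin{equation*}
  \LAMBDA^n_i=\bigcup_{j\ne i}\DELTA^{\{0,\dots,n\}\setminus\{j\}}\subset\DELTA^n .
\end{equation*}
Thus $\GG(X)_n\to\Map(\L^n_i,\GG(X))$ is the restriction $\Map(\DELTA^n,X)\to\Map(\LAMBDA^n_i,X)$, and it remains to show this is a cover for $n>0$ and an isomorphism for $n>k$.

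First I would dispose of $n=1$ directly: here $\LAMBDA^1_i$ is a single vertex $\D^0$, and the map is exactly $\Map(\DELTA^1,X)\to X_0$, which is a cover by the defining axiom of a $k$-category (no isomorphism is required, since $k\ge1$ forces $1\le k$). For $n\ge2$ the strategy is to filter the inclusion $\LAMBDA^n_i\into\DELTA^n$, in order of increasing dimension, as a composite of pushouts of \emph{inner} horn inclusions $\L^m_l\into\D^m$ with $0<l<m$. The simplices of $\DELTA^n$ missing from $\LAMBDA^n_i$ are the functions $f\colon\{0,\dots,p\}\to\{0,\dots,n\}$ whose image contains $\{0,\dots,n\}\setminus\{i\}$; the idea is to match each such $f$ with the simplex obtained by inserting the value $i$ at a canonical \emph{interior} position. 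Because $\DELTA^n$ has non-monotone simplices, the value $i$ can always be placed in an interior slot, even when $i=0$ or $i=n$, and this is precisely what turns the outer horns of $\GG(X)$ into inner horns of $X$; it is the mechanism by which thickness produces a groupoid from a category.

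Granting such a matching, each elementary step attaches a matched pair along an inner horn $\L^m_l\into\D^m$, and since the top cell of every pair contains $i$ and already has image all of $\{0,\dots,n\}$, it has dimension $m\ge n$. Applying $\Map(-,X)$ turns each pushout into a pullback of the map $X_m\to\Map(\L^m_l,X)$, which (as $0<l<m$) is a cover for $m>1$ and an isomorphism for $m>k$. As every $m\ge n\ge2$, stability of covers under pullback and composition, axioms \ref{cancellation} and its predecessor, shows that $\Map(\DELTA^n,X)\to\Map(\LAMBDA^n_i,X)$ is a cover; and when $n>k$ every step is an isomorphism, so the restriction is itself an isomorphism. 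The a priori infinitude of the filtration is harmless: Lemma~\ref{coskeletal} guarantees that $\Map(\DELTA^n,X)$ and $\Map(\LAMBDA^n_i,X)$ are finite limits, computed on $(k+1)$-skeleta, so that only finitely many fillings affect the map while the remaining, higher-dimensional ones contribute isomorphisms.

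The main obstacle is the combinatorial lemma that $\LAMBDA^n_i\into\DELTA^n$ is inner anodyne: I must produce the pairing of the missing simplices (by inserting or deleting $i$ at a fixed interior position) and check that it is acyclic, leaves no cell unmatched, and that at each stage all faces of the top cell other than the matched one are already present, so that the attaching map genuinely is an inner horn. The delicate point, visible already in the fact that a single missing cell would naively be attached along its entire boundary, is that such a cell must instead be absorbed into a higher-dimensional inner filling of its matched partner; arranging the canonical insertion rule so that this holds uniformly in $i$ and $n$, and reconciling it with the skeletal truncation supplied by Lemma~\ref{coskeletal}, is where the real work lies.
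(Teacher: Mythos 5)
Your architecture is exactly the paper's: reduce the horn map of $\GG(X)$ to the restriction $\Map(\DELTA^n,X)\to\Map(\LAMBDA^n_i,X)$ via the colimit-preserving extension $T\mapsto T\times_\D\DELTA$, dispose of $n=1$ by the second defining axiom of a $k$-category, decompose $\LAMBDA^n_i\into\DELTA^n$ into inner-horn attachments of dimension $\ge n$ (so covers for $n\ge2$, isomorphisms for $n>k$), and use the $(k{+}1)$-coskeletality of Lemma~\ref{coskeletal} to make sense of the infinite filtration — this is precisely the paper's Corollary~\ref{lambda2} fed into Lemma~\ref{inner-expansion}. But your proof is explicitly conditional: the phrase ``granting such a matching'' defers the one statement that carries all the mathematical content, namely that $\LAMBDA^n_i\into\DELTA^n$ is an inner $n$-expansion for all $0\le i\le n$, $n\ge2$. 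That is the paper's Lemma~\ref{lambda}, whose proof occupies a careful page of combinatorics (the sets $Q_{k,m}$, the insertion of $i$ at the calibrated position $i+m_s$, and the verification that attaching in order of increasing $k$ and decreasing $m$ leaves exactly the matched face free). You have correctly located the hard point, but locating it is not proving it.

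Two concrete features of the paper's argument show that your ``canonical insertion rule, uniform in $i$ and $n$'' is not merely unwritten but would need genuinely different devices. First, the monotone cell $(0,1,\dots,n)$ cannot be handled by insertion at all under the natural rule: inserting $i$ at the position the paper's formula dictates produces a degenerate simplex. The paper therefore attaches $\D^n$ \emph{first}, along its own horn $\L^n_i\into\D^n$, and Lemma~\ref{lambda} only treats the relative inclusion $\LAMBDA^n_i\cup\D^n\into\DELTA^n$ — and only for $0<i<n$, since $\L^n_i\into\D^n$ is an inner horn only then. Second, for the outer cases $i\in\{0,n\}$ the paper constructs no matching whatsoever: it invokes the action of the symmetric group $S_{n+1}$ on $\DELTA^n$, which permutes the thick horns $\LAMBDA^n_i$ transitively, to reduce to $i=1$ (Corollary~\ref{lambda2}). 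Your plan to handle $i=0,n$ directly — absorbing $(0,\dots,n)$ as an inner face of a non-monotone $(n{+}1)$-cell such as $(0,1,0,2,\dots,n)$ — is plausible in spirit but requires exhibiting an ordering in which all the other faces of that cell are already present, and checking acyclicity of the resulting matching; none of this is done, and the paper's own bookkeeping does not supply it in the outer case. To complete your proof you should either reproduce the $Q_{k,m}$ matching of Lemma~\ref{lambda} together with the separate first attachment of $\D^n$, or import the symmetry reduction; as written, the proposal establishes the theorem only modulo its central lemma.
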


In fact, $k$-categories also form a category of fibrant objects.

\begin{definition*}
  A morphism $f:X\to Y$ of $k$-categories is a
  \textbf{quasi-fibration} if for $0<i<n$, the morphism
  \begin{equation*}
    X_n \to \Map(\L^n_i,X)\times_{\Map(\L^n_i,Y)}Y_n
  \end{equation*}
  is a cover, and the morphism
  \begin{equation*}
    \Map(\DELTA^1,X) \to X_0\times_{Y_0}\Map(\DELTA^1,Y)
  \end{equation*}
  induced by the inclusion of a vertex $\D^0\hookrightarrow\DELTA^1$
  is a cover. It is a \textbf{hypercover} if, for each $n\ge0$, the
  morphism
  \begin{equation*}
    X_n \too \Map(\p\D^n,X)\times_{\Map(\p\D^n,Y)}Y_n
  \end{equation*}
  is a cover. (This is the same definition as for $k$-groupoids,
  except that now $X$ and $Y$ are $k$-categories.) It is a
  \textbf{weak equivalence} if there is a $k$-category $P$ and
  hypercovers $p:P\to X$ and $q:P\to Y$ such that $f=qs$, where $s$ is
  a section of $p$.
\end{definition*}

\begin{theorem*}
  \mbox{}
  \begin{enumerate}[label=\roman*)]
  \item The category of $k$-categories is a category of fibrant
    objects.
  \item The functor $\GG$ is an exact functor: it takes
    quasi-fibrations to fibrations, pullbacks of quasi-fibrations to
    pullbacks, and hypercovers to hypercovers.
  \end{enumerate}
\end{theorem*}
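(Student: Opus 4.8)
The plan is to base both parts on a single combinatorial analysis of the \emph{thickening} functor $T\mapsto\DELTA^T$, the cocontinuous extension of $[n]\mapsto\DELTA^n$ to finite simplicial sets, so that $\DELTA^{\D^n}=\DELTA^n$ while $\DELTA^{\L^n_i}$ and $\DELTA^{\p\D^n}$ are the ``thick'' horns and boundaries obtained by gluing copies of $\DELTA^m$ along the face maps. Writing $T=\colim_{\D^n\to T}\D^n$ and using that $\Map(-,X)$ carries this colimit to a limit, one obtains a natural isomorphism
\begin{equation*}
  \Map(T,\GG(X)) \cong \Map(\DELTA^T,X),
\end{equation*}
and in particular $\Map(\L^n_i,\GG(X))\cong\Map(\DELTA^{\L^n_i},X)$ and $\Map(\p\D^n,\GG(X))\cong\Map(\DELTA^{\p\D^n},X)$. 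Although $\DELTA^T$ is usually infinite, when $X$ is a $k$-category each of these spaces is a finite limit, since Lemma~\ref{coskeletal} allows $\DELTA^T$ to be replaced by $\sk_{k+1}\DELTA^T$; thus every space below exists in $\CV$.

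For a morphism $f\colon X\to Y$ of $k$-categories and an inclusion $S\hookrightarrow T$ of finite simplicial sets, call $S\hookrightarrow T$ \emph{$f$-admissible} if the morphism
\begin{equation*}
  \Map(T,X)\too\Map(S,X)\times_{\Map(S,Y)}\Map(T,Y)
\end{equation*}
is a cover. Since $\Map(-,X)$ sends a pushout of simplicial sets to a pullback of spaces and covers form a pullback-stable subcategory by \ref{finite}--\ref{cancellation}, the $f$-admissible inclusions are closed under composition, cobase change and retracts. When $f$ is a quasi-fibration the inner horns $\L^m_j\hookrightarrow\D^m$ ($0<j<m$) and the vertex $\D^0\hookrightarrow\DELTA^1$ are $f$-admissible by definition, and when $f$ is a hypercover the boundaries $\p\D^m\hookrightarrow\D^m$ are $f$-admissible as well. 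The combinatorial core, which I expect to be the main obstacle, is a filtration by skeleta exhibiting
\begin{equation*}
  \DELTA^{\L^n_i}\hookrightarrow\DELTA^n \quad(0\le i\le n) \qquad\text{and}\qquad \DELTA^{\p\D^n}\hookrightarrow\DELTA^n
\end{equation*}
as iterated cobase changes of, respectively, inner horns together with copies of $\D^0\hookrightarrow\DELTA^1$, and of these together with the boundary inclusions. (Taking $Y$ terminal recovers the decomposition already used to prove that $\GG(X)$ is a $k$-groupoid, which I may assume.) The key phenomenon is that thickening converts the non-fillable \emph{outer} horns of $\D^n$ into composites of fillable inner horns and intervals $\D^0\hookrightarrow\DELTA^1$, and this is exactly why the completeness condition on $\Map(\DELTA^1,X)\to X_0$ is built into the definition of a $k$-category; the thick boundary genuinely requires the boundary inclusions, so it is admissible only for hypercovers.

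Granting this, part~ii) is immediate. If $f$ is a quasi-fibration, the $f$-admissibility of $\DELTA^{\L^n_i}\hookrightarrow\DELTA^n$ translates, through the transfer isomorphism, into the assertion that
\begin{equation*}
  \GG(X)_n\too\Map(\L^n_i,\GG(X))\times_{\Map(\L^n_i,\GG(Y))}\GG(Y)_n
\end{equation*}
is a cover for every $0\le i\le n$, that is, $\GG(f)$ is a fibration; if $f$ is a hypercover the $f$-admissibility of $\DELTA^{\p\D^n}\hookrightarrow\DELTA^n$ shows in the same way that $\GG(f)$ is a hypercover. Preservation of pullbacks of quasi-fibrations holds because such pullbacks are computed levelwise in $\CV$ and $\Map(\DELTA^n,-)$, being a finite limit by Lemma~\ref{coskeletal}, preserves them, so that $\GG(X\times_Y Z)\cong\GG(X)\times_{\GG(Y)}\GG(Z)$. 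Preservation of weak equivalences is then purely formal: if $f=qs$ with $p,q$ hypercovers and $s$ a section of $p$, then $\GG(f)=\GG(q)\GG(s)$ with $\GG(p),\GG(q)$ hypercovers and $\GG(s)$ a section of $\GG(p)$.

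For part~i) I would verify the axioms of Definition~\ref{CFO}. The terminal object, finite products, the two-out-of-three property, and closure of quasi-fibrations and hypercovers under composition and base change all follow from \ref{finite}--\ref{cancellation} as in the $k$-groupoid case, once one knows that a pullback of a quasi-fibration is again a $k$-category and a quasi-fibration. The only genuinely new point there is the completeness clause: for a quasi-fibration $f\colon X\to Y$ and any $g\colon Z\to Y$, the morphism $\Map(\DELTA^1,X\times_Y Z)\to(X\times_Y Z)_0$ is a composite of base changes of the relative completeness cover $\Map(\DELTA^1,X)\to X_0\times_{Y_0}\Map(\DELTA^1,Y)$ and of the absolute one for $Z$, hence a cover; the inner-horn conditions are inherited the same way. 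Finally, for path objects I would take the cotensor with the \emph{thick} interval, $(X^{\DELTA^1})_n=\Map(\DELTA^1\times\D^n,X)$, and factor the diagonal as
\begin{equation*}
  X \too X^{\DELTA^1} \too X\times X,
\end{equation*}
the first morphism induced by the collapse $\DELTA^1\to\D^0$ and the second by the two vertices $\D^0\sqcup\D^0\hookrightarrow\DELTA^1$. That the second morphism is a quasi-fibration is one more instance of the admissibility calculus, applied to the pushout-products of $\D^0\sqcup\D^0\hookrightarrow\DELTA^1$ with the horns and boundaries of $\D^n$; that the first is a weak equivalence follows from the completeness condition, which forces the evaluation $X^{\DELTA^1}\to X$ to be a weak equivalence so that two-out-of-three applies. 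The use of the invertible interval $\DELTA^1$ rather than $\D^1$ is essential, since a $k$-category has no inverse arrows and only $\DELTA^1$ produces a weak equivalence.
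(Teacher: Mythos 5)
Your route is, in all essentials, the paper's own: the transfer isomorphism $\Map(T,\GG(X))\cong\Map(T\times_\D\DELTA,X)$, the cell-by-cell decomposition of the thick horns (Lemma~\ref{lambda} and Corollary~\ref{lambda2} --- note that for $n\ge2$ the thick horns $\LAMBDA^n_i\into\DELTA^n$, outer ones included, are already \emph{inner} $n$-expansions, since the symmetric group acting on $\DELTA^n$ permutes them; the interval $\D^0\into\DELTA^1$ is needed only at $n=1$, via the completeness axiom), Lemma~\ref{cofibration} for hypercovers against arbitrary inclusions, Lemma~\ref{coskeletal} to keep the relevant limits finite, and the thick-interval cotensor $(\P_1X)_m=\Map(\D^m\times\DELTA^1,X)$ as path object, whose required properties are exactly the paper's Lemma~\ref{Moore3}, Corollary~\ref{TD1} and Lemma~\ref{DD1}. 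On this basis your part~ii) is sound. One side remark is wrong but unused: closure of your $f$-admissible inclusions under retracts does \emph{not} follow from \ref{finite}--\ref{cancellation}, and the paper explicitly declines to assume covers are closed under retracts; since your decompositions are cellwise filtrations, simply delete that clause.

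The genuine gap is in part~i), at the words ``the two-out-of-three property \dots follow[s] from \ref{finite}--\ref{cancellation} as in the $k$-groupoid case.'' In the $k$-groupoid case this axiom is not formal: the implication that $g$ and $gf$ weak equivalences force $f$ to be one rests on Lemma~\ref{Hard} (a fibration $f$ such that $g$ and $gf$ are hypercovers is itself a hypercover), the most delicate argument of Section~3. Its crucial step produces the cover $V(f,g)\to U(f,g)$ as a base change of the horn filler $X_{n+1}\to\Map(\L^{n+1}_1\into\D^{n+1},f)$. For $n\ge1$ the horn $\L^{n+1}_1$ is inner, so this transfers to quasi-fibrations of $k$-categories and yields the matching condition in degrees $n>0$; but at $n=0$ it is the outer horn $\L^1_1$ --- precisely the filler your own narrative emphasizes a $k$-category lacks --- and nothing in your proposal then shows that $f_0:X_0\to Y_0$ is a cover. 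The paper's repair is the one idea missing from your outline, and it reverses your implicit logical order: apply the $k$-groupoid Lemma~\ref{Hard} to $\GG(f)$ and $\GG(g)$, which by your part~ii) are respectively a fibration and (together with $\GG(g)\GG(f)=\GG(gf)$) hypercovers of $k$-groupoids; since $\DELTA^0=\D^0$ gives $\GG(f)_0=f_0$, the degree-zero component of the resulting hypercover is exactly the missing cover. Thus part~ii) is an input to part~i), not merely a companion statement, and without this step (or some substitute for the $n=0$ case) your verification of the saturation axiom --- and hence of the category-of-fibrant-objects structure --- is incomplete.
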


We also have the following more direct characterization of weak
equivalences between $k$-categories, proved in Section~6. Recall that
if $S$ and $T$ are simplicial sets, then their \textbf{join}
$K\star L$ is the simplicial set
\begin{equation*}
  (S\star T)_k = S_k \sqcup T_k \sqcup \bigsqcup_{j=0}^{k-1} S_j
  \times T_{k-j-1}
\end{equation*}
\begin{theorem*}
  A morphism $f:X\to Y$ of $k$-categories is a weak equivalence if and
  only if the morphism
  \begin{equation*}
    X_0 \times_{Y_0} \Map(\DELTA^1,Y) \too Y_0
  \end{equation*}
  is a cover, and the morphisms
  \begin{multline*}
    X_n \times_{Y_n} \Map(\DELTA^1\star\D^{n-1},Y) \\
    \too \Map(\p\D^n,X) \times_{\Map(\p\D^n,Y)}
    \Map(\DELTA^1\star\p\D^{n-1}\cup\LAMBDA^1_0\star\D^{n-1},Y)
  \end{multline*}
  are covers for $n>0$.
\end{theorem*}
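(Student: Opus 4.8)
The plan is to adapt, almost verbatim, the argument of Section~5 for the analogous characterization of weak equivalences between $k$-groupoids, replacing the thin décalage (the cone $\D^0\star\D^{\bull}$) by its thick counterpart built from $\DELTA^1$. The starting point is the category-of-fibrant-objects structure already established for $k$-categories, together with the standard factorization lemma valid in any such category: every morphism $f$ factors as $f=q\circ s$ with $s$ a section of a hypercover and $q$ a fibration. Since a section of a hypercover is automatically a weak equivalence, the two-out-of-three property shows that $f$ is a weak equivalence if and only if $q$ is, and a fibration is a weak equivalence precisely when it is a hypercover. Thus the theorem reduces to exhibiting such a factorization concretely and then expanding the statement ``$q$ is a hypercover'' into the displayed family of covering conditions; both implications then fall out of the single equivalence chain $\text{conditions}\iff q\text{ hypercover}\iff f\text{ weak equivalence}$.

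To produce the factorization I would introduce the thick path space $\T Y$, the simplicial space with $(\T Y)_n=\Map(\DELTA^1\star\D^{n-1},Y)$ (with $\D^{-1}=\emptyset$, so $(\T Y)_0=\Map(\DELTA^1,Y)$), whose simplicial structure comes from the cosimplicial object $[n]\mapsto\DELTA^1\star\D^{n-1}$. The two vertices of $\DELTA^1$ give inclusions $\{a\}\star\D^{n-1}=\D^n\into\DELTA^1\star\D^{n-1}$ and $\{b\}\star\D^{n-1}=\D^n\into\DELTA^1\star\D^{n-1}$, and restriction along them yields two simplicial maps $\alpha,\beta:\T Y\to Y$. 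The first task is to check, using the coskeletal Lemma~\ref{coskeletal} and the finiteness of $\DELTA^1$, that $\T Y$ is again a $k$-category and that $\alpha$ and $\beta$ are quasi-fibrations; here the completeness clause in the definition of a $k$-category (that $\Map(\DELTA^1,X)\to X_0$ be a cover) is exactly what forces $\alpha$ to be a hypercover, with section $\iota:Y\to\T Y$ induced by the contraction $\DELTA^1\to\D^0$ onto $a$. Setting $P=X\times_{Y,\alpha}\T Y$, the projection $p:P\to X$ is then a hypercover (a pullback of $\alpha$) with section $s=(\mathrm{id},\iota f)$, the composite $q=\beta\circ\mathrm{pr}_{\T Y}:P\to Y$ is a fibration, and $q\circ s=f$ because $\beta\iota=\mathrm{id}_Y$.

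With this factorization in hand, the content of the theorem is the translation of the hypercover condition, that $P_n\to\Map(\p\D^n,P)\times_{\Map(\p\D^n,Y)}Y_n$ be a cover for every $n$, into the displayed statements. For $n=0$ the boundary $\p\D^0$ is empty, the target collapses to $Y_0$, and the condition becomes $X_0\times_{Y_0}\Map(\DELTA^1,Y)\to Y_0$, the essential-surjectivity clause. For $n>0$ one applies the exact functor $\Map(\p\D^n,-)$ to $P=X\times_{Y,\alpha}\T Y$ and unwinds the resulting iterated fibre product of mapping spaces; the join bookkeeping for $\T Y$, together with the extra simplex recorded by the $Y_n$ factor and the second vertex of $\DELTA^1$, reorganizes the target into $\Map(\p\D^n,X)\times_{\Map(\p\D^n,Y)}\Map(\DELTA^1\star\p\D^{n-1}\cup\LAMBDA^1_0\star\D^{n-1},Y)$, which is precisely the asserted cover.

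The main obstacle is this last combinatorial identification. One must show that the iterated fibre product
\begin{equation*}
  \Map(\p\D^n,\T Y)\times_{\Map(\p\D^n,Y)}Y_n
\end{equation*}
is canonically the mapping space out of the subcomplex $\DELTA^1\star\p\D^{n-1}\cup\LAMBDA^1_0\star\D^{n-1}$ of $\DELTA^1\star\D^{n-1}$. This rests on a careful analysis of the cosimplicial structure of $[n]\mapsto\DELTA^1\star\D^{n-1}$: of the $n+1$ face operators on $(\T Y)_n$, the $n$ coming from the cofaces of the base $\D^{n-1}$ account for the term $\DELTA^1\star\p\D^{n-1}$, while the remaining operator acts through $\DELTA^1$ and, together with the $\beta$-evaluation encoded by the $Y_n$ factor, accounts for $\LAMBDA^1_0\star\D^{n-1}$. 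Once this identity and the coskeletal finiteness of the complexes involved are secured, the verifications that $\T Y$ is a $k$-category and that $\alpha$ is a trivial fibration are routine consequences of the completeness condition and Lemma~\ref{coskeletal}, and the theorem follows from the category-of-fibrant-objects formalism as above.
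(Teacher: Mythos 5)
Your global strategy is sound in outline: with your $\T Y$ one gets $P_n = X_n\times_{Y_n}\Map(\DELTA^1\star\D^{n-1},Y)$, the matching object of $q$ unwinds to the displayed target, so the stated conditions say literally that $q$ is a hypercover, and the category-of-fibrant-objects formalism (two-out-of-three, plus the fact that a quasi-fibration is a weak equivalence exactly when it is a hypercover) would convert this into the theorem \emph{provided} your factorization has the properties you claim. But you have mislocated the difficulty. The fibre-product identification you single out as the main obstacle is essentially formal: once the cosimplicial structure on $[n]\mapsto\DELTA^1\star\D^{n-1}$ is in place (itself mildly delicate, since a monotone map $\phi$ with $\phi(j)=0$ for some $j\ge1$ must route ordinal vertices into the $\DELTA^1$ factor, using that $\DELTA^1$ is the nerve of the indiscrete groupoid), one computes $\Map(\p\D^n,\T Y)\cong\Map(\DELTA^1\star\p\D^{n-1},Y)$ and the rest is limit bookkeeping. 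The genuine gap is the cluster of claims you dismiss as routine: that $\T Y$ is a $k$-category, that $\alpha$ is a hypercover, and that $q$ is a quasi-fibration. None of these follows from the completeness axiom plus Lemma~\ref{coskeletal}; completeness gives only the level-$0$ statement. Already at level $1$, the hypercover condition for $\alpha$ asks that restriction along $\DELTA^1\cup(\{v\}\star\D^0)\into\DELTA^1\star\D^0$ (with $v$ the vertex defining $\alpha$) induce a cover, and this inclusion is \emph{not} an inner expansion of finite length: after one inner filling, the other nondegenerate edge of $\DELTA^1$ joined with the cone point has its entire boundary in the subcomplex, so adjoining it forces an infinite interleaved inner expansion climbing through the nondegenerate simplices of $\DELTA^1$ --- exactly the mechanism of Lemmas~\ref{lambda} and~\ref{DD1} --- and no join analogue of those lemmas exists in the paper or is supplied by you. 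These covering statements for $\T Y$ are where the entire content of the theorem resides.

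This is precisely the combinatorics the paper's proof is engineered to avoid. It keeps the prism path object $\P_1Y$, with $(\P_1Y)_m=\Map(\D^m\times\DELTA^1,Y)$, so that ``$f$ is a weak equivalence'' unwinds with no new input into the condition that the morphisms \eqref{we**} built from $\D^n\times\DELTA^1$ be covers; for $n>0$ it then runs the inner-expansion filtration of Lemma~\ref{DD1}, observing that every horn-filler step is automatically a cover for an \emph{arbitrary} morphism of $k$-categories except the single step attaching $[\cell{1}^*;\cell{0}^*;\dots;\cell{0}^*]$, the only simplex of the filtration with a face in $\D^n\times\LAMBDA^1_1$ and hence the only step interacting with the $X_n$-datum; finally it identifies that one step with your join condition by means of an explicit simplicial map $\D^n\times\DELTA^1\to\DELTA^1\star\D^{n-1}$ carrying $\p\D^n\times\DELTA^1\cup\D^n\times\LAMBDA^1_1$ to $\DELTA^1\star\p\D^{n-1}\cup\LAMBDA^1_0\star\D^{n-1}$ and inducing a pullback square of relative mapping spaces. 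To complete your route you would have to prove join versions of Lemma~\ref{DD1} (one for the hypercover property of $\alpha$, one for the quasi-fibration property of $q$, and horn variants for the $k$-category property of $\T Y$, all feeding into Lemma~\ref{inner-expansion}), or else transfer these statements from the prism along the same comparison map --- at which point you will have reconstructed the paper's argument with an extra layer of machinery on top.
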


In a finite-dimensional algebra or a Banach algebra, invertibility is
an open condition. To formulate this property in our general setting,
we need the notion of a regular descent category.

A morphism in a category is an effective epimorphism if it equals its
own coimage. (We recall the definition of the coimage of a morphism in
Section~2.)
\begin{definition*}
  A \textbf{subcanonical} descent category is a descent category such
  that every cover is an effective epimorphism.
\end{definition*}

\begin{definition*}
  A \textbf{regular} descent category is a subcanonical descent
  category with a subcategory of \textbf{regular} morphisms,
  satisfying the following axioms:
  \begin{enumerate}[label=(R\arabic*),labelwidth=\widthof{(R3)},leftmargin=!]
  \item every cover is regular;
  \item the pullback of a regular morphism is regular;
  \item every regular morphism has a coimage, and its coimage is a
    cover.
  \end{enumerate}
\end{definition*}

All of the descent categories that we consider are regular. In the
case of a topos, we take all of the morphisms to be regular. When
$\CV$ is the category of schemes with covers the surjective \'etale
(respectively smooth or flat) morphisms, the regular morphisms are the
the \'etale (respectively smooth or flat) morphisms. When $\CV$ is the
category of Banach analytic spaces with covers the surjective
submersions, the regular morphisms are the submersions.
\begin{definition*}
  A $k$-category in a regular descent category $\V$ is
  \textbf{regular} if the morphism
  \begin{equation*}
    \Map(\DELTA^1,X) \to \Map(\D^1,X) = X_1
  \end{equation*}
  is regular.
\end{definition*}

\begin{theorem*}
  Let $\V$ be a regular descent category, and let $X$ be a regular
  $k$-category in $\V$. Then for all $n\ge0$, the morphism
  \begin{equation*}
    \Map(\DELTA^n,X) \to \Map(\D^n,X) = X_n
  \end{equation*}
  is regular. Let $\G(X)_n$ be the image of this morphism (that is,
  the codomain of its coimage). Then the spaces $\G(X)$ form a
  simplicial space, this simplicial space is a $k$-groupoid, and the
  induced morphism
  \begin{equation*}
    \GG(X) \to \G(X)
  \end{equation*}
  is a hypercover.
\end{theorem*}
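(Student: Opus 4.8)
The plan is to realize $\G(X)$ as the levelwise image of the canonical morphism $\GG(X)\to X$. The inclusions $\D^n\into\DELTA^n$ are natural in $[n]$, so they assemble into a simplicial morphism $\GG(X)\to X$ whose degree-$n$ component is exactly the map $\Map(\DELTA^n,X)\to\Map(\D^n,X)=X_n$ of the statement; here the source $\GG(X)$ is already known to be a $k$-groupoid. Once these degreewise maps are shown to be regular, axiom~(R3) produces the factorization $\GG(X)_n\to\G(X)_n\into X_n$ into a cover followed by a monomorphism, and the theorem reduces to: (i) transporting the simplicial structure of $X$ onto the subobjects $\G(X)_n$, (ii) upgrading this degreewise factorization to a hypercover $\GG(X)\to\G(X)$, and (iii) transporting the $k$-groupoid structure of $\GG(X)$ across that hypercover. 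Throughout I use that the regular morphisms form a pullback-stable subcategory containing every cover, so they contain all isomorphisms and are closed under composition and base change.

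First I would prove that $\Map(\DELTA^n,X)\to X_n$ is regular for each $n$, the degree-$0$ case being an isomorphism and the degree-$1$ case the hypothesis. By the coskeletal lemma $\Map(\DELTA^n,X)\cong\Map(\sk_{k+1}\DELTA^n,X)$ it suffices to factor the inclusion $\D^n\into\sk_{k+1}\DELTA^n$ as a finite composite of elementary extensions of two kinds. The first attaches a (truncated) thick edge $\DELTA^1$ along an edge already present, inverting that edge and adjoining the accompanying alternating simplices; on mapping spaces this is the pullback of the regular morphism $\Map(\DELTA^1,X)\to X_1$, hence regular by~(R2). The second fills an inner horn $\L^d_i\into\D^d$ with $0<i<d\le k+1$; on mapping spaces this is the pullback of the morphism $X_d\to\Map(\L^d_i,X)$, which is a cover for $d>1$ and an isomorphism for $d>k$, hence regular by~(R1). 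Since each elementary map is regular and regular morphisms compose, the composite $\Map(\DELTA^n,X)\to X_n$ is regular.

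Taking coimages degreewise defines $\G(X)_n=\coim\!\bigl(\Map(\DELTA^n,X)\to X_n\bigr)$, with $\GG(X)_n\to\G(X)_n$ a cover by~(R3) and $\G(X)_n\into X_n$ a monomorphism. As the descent category is subcanonical, these covers are effective, hence strong, epimorphisms; their lifting property against the monomorphisms $\G(X)_m\into X_m$ then lets each face and degeneracy of $X$ descend uniquely to $\G(X)$, so $\G(X)$ is a sub-simplicial-space of $X$ and the cover $\GG(X)\to\G(X)$ is simplicial. To see it is a hypercover I would run the same filtration relative to the thick boundary, showing the matching maps $\Map(\DELTA^n,X)\to\Map(A_n,X)$ are regular, where $A_n=\p\DELTA^n\cup_{\p\D^n}\D^n$ and $\Map(\p\D^n,\GG(X))=\Map(\p\DELTA^n,X)$ by the thickening identity. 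Their coimages, together with an induction on $n$ identifying $\Map(\p\D^n,\G(X))$ with the image of $\Map(\p\D^n,\GG(X))$, exhibit $\GG(X)_n\to\Map(\p\D^n,\GG(X))\times_{\Map(\p\D^n,\G(X))}\G(X)_n$ as a cover, which is the hypercover condition.

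Finally I would show $\G(X)$ is a $k$-groupoid. Fix $0\le i\le n$ with $n>0$. Since $\GG(X)$ is a $k$-groupoid it fills this (possibly outer) horn, so $\GG(X)_n\to\Map(\L^n_i,\GG(X))$ is a cover; and since $\GG(X)\to\G(X)$ is a hypercover, $\Map(\L^n_i,-)$ carries it to a cover $\Map(\L^n_i,\GG(X))\to\Map(\L^n_i,\G(X))$ (a finite composite of pullbacks of matching covers). Thus the composite $\GG(X)_n\to\Map(\L^n_i,\G(X))$ is a cover, and as $\GG(X)_n\to\G(X)_n$ is a cover, axiom~(D3) forces $\G(X)_n\to\Map(\L^n_i,\G(X))$ to be a cover. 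When $n>k$ and $0<i<n$, the monomorphism $\G(X)_n\into X_n$ composed with the isomorphism $X_n\cong\Map(\L^n_i,X)$ factors through $\Map(\L^n_i,\G(X))\into\Map(\L^n_i,X)$, so this horn map is a monomorphism as well as a cover, hence an isomorphism; the outer cases for $n>k$ then follow from the interdependence of the Duskin--Glenn conditions once all horns are filled. The main obstacle, I expect, is the hypercover step: images do not commute with limits in a general regular category, so identifying $\Map(\p\D^n,\G(X))$ with the image of $\Map(\p\D^n,\GG(X))$ demands careful use of effective epimorphisms and the inductive hypothesis, resting in turn on the combinatorial claim that, after the thick edges are attached, the residual inclusion into $\sk_{k+1}\DELTA^n$ is built from inner horns.
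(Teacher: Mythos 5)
Your first half is, in substance, the paper's own proof of Proposition~\ref{G}: attaching a thick edge over an edge already present is a pushout along $\D^1\into\DELTA^1$, so on mapping spaces it is a pullback of the regular morphism $\Map(\DELTA^1,X)\to X_1$, and the residual inclusion is handled by inner expansions, which give covers. What you defer is exactly the paper's supporting combinatorics: only the $n$ spine edges $(j-1,j)$ need thickening, the spine inclusion $\T^n_n\into\D^n$ is proved by hand to be an inner expansion, and its thickening $\TT^n_n=(\T^n_n\times_\D\DELTA)\cup\D^n\into\DELTA^n$ is again one by Lemma~\ref{lambda1}; this gap is real but fillable, and you flag it. Your descent of the simplicial operators to $\G(X)$, using that covers are effective (hence strong) epimorphisms orthogonal to the monomorphisms $\G(X)_m\into X_m$, is correct and in fact more detailed than the paper.

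The genuine gap is your hypercover step, and it is load-bearing, since your $k$-groupoid argument consumes the hypercover (to get the covers $\Map(\L^n_i,\GG(X))\to\Map(\L^n_i,\G(X))$ via Lemma~\ref{cofibration}) rather than feeding it. You propose to show the relative matching maps are regular and then, by induction, to identify $\Map(\p\D^n,\G(X))$ with the image of $\Map(\p\D^n,\GG(X))=\Map(\p\DELTA^n,X)$ in $\Map(\p\D^n,X)$, so that the coimage of $\GG(X)_n\to\Map(\p\D^n,\GG(X))\times_{\Map(\p\D^n,X)}X_n$ lands on $\Map(\p\D^n,\GG(X))\times_{\Map(\p\D^n,\G(X))}\G(X)_n$. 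But that identification is essentially equivalent to the hypercover statement itself: the image of a morphism into a fibre product is not the fibre product of images, matching spaces are limits, and you offer no mechanism beyond ``careful use of effective epimorphisms,'' so the proof stalls exactly at its crux. The paper never takes images of limits; it inverts your order. It first proves $\G(X)$ is a $k$-groupoid by a mutual induction on the assertions $\textup{A}_n$ (the horn maps $\G(X)_n\to\Map(\L^n_i,\G(X))$ are covers) and $\textup{B}_n$ (the relative horn maps $\GG(X)_n\to\Map(\L^n_i\to\D^n,\GG(X)\to\G(X))$ are covers), each step an application of Axiom~\ref{cancellation}, with $\textup{B}_{n-1}$ yielding the cover $\Map(\L^n_i,\GG(X))\to\Map(\L^n_i,\G(X))$ via the expansion $\D^0\into\L^n_i$ as in Lemma~\ref{expansion}. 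The hypercover is then free: $\GG(\G(X))\cong\GG(X)$ by Lemma~\ref{GGG}, and $\GG(Y)\to Y$ is a hypercover for any $k$-groupoid $Y$ because $\p\DELTA^n\cup\D^n\into\DELTA^n$ is an expansion (Lemma~\ref{mu}). These two lemmas are the missing ideas in your plan. Finally, your appeal to ``the interdependence of the Duskin--Glenn conditions'' for the outer-horn isomorphisms in dimensions $n>k$ is an assertion, not an argument, and would need separate justification in this enriched setting.
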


In fact, as shown by Joyal (Corollary~1.5, \cite{Joyal}), $\G(X)_n$ is
the space of $n$-simplices of $X$ such that for each inclusion
$\D^1\into\D^n$, the induced $1$-simplex lies in $\G(X)_1$. The
simplices of $\G(X)_1$ are called \textbf{quasi-invertible}.

In the case where $\V$ is the category of sets, this theorem relates
two different $k$-groupoids associated to a $k$-category: the
$k$-groupoid $\GG(X)$ was introduced by Rezk \cite{Rezk} and further
studied by Joyal and Tierney \cite{JT}, while the $k$-groupoid $\G(X)$
was introduced by Joyal \cite{Joyal}.

In the last section of this paper, we construct examples of
$k$-groupoids associated to differential graded algebras over a
field. Let $A$ be a differential graded algebra such that $A^i$ is
finite-dimensional for all $i$. The \textbf{Maurer-Cartan locus}
$\MC(A)$ of $A$ is the affine variety
\begin{equation*}
  \MC(A) = Z(da+a^2) \subset A^1 .
\end{equation*}
If $K$ is a finite simplicial set, let $C^\bull(K)$ be the
differential graded algebra of normalized simplicial cochains on
$K$. The \textbf{nerve} of $A$ is the simplicial scheme
\begin{equation*}
  N_nA = \MC(C^\bull(\Delta)\o A) .
\end{equation*}
This simplicial scheme has also been discussed by Lurie \cite{Lurie}.
\begin{theorem*}
  Let $A$ be a differential graded algebra finite-dimensional in each
  degree and vanishing in degree $-k$ and below. The nerve $N_\bull A$
  of $A$ is a regular $k$-category in the descent category of schemes
  (with surjective submersions as covers).
\end{theorem*}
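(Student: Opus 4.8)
The plan is to reduce every clause in the definition of a regular $k$-category to a statement about Maurer--Cartan loci, using that the nerve is representable on finite simplicial sets. First I would establish the identity
\begin{equation*}
  \Map(T,N_\bull A) \cong \MC\bigl(C^\bull(T)\o A\bigr)
\end{equation*}
for every finite simplicial set $T$, and, via the coskeletal reduction of Lemma~\ref{coskeletal}, for the thick simplices $\DELTA^n$ as well. This holds because $N_n A=\MC(C^\bull(\D^n)\o A)$, because $\Map(-,N_\bull A)$ carries colimits of simplicial sets to limits, and because the functor $T\mapsto\MC(C^\bull(T)\o A)$ carries the gluings that build $T$ from its simplices to the corresponding fibre products: normalized cochains turn pushouts of simplicial sets into pullbacks of dg algebras, tensoring with $A$ over $\k$ is exact and preserves them, and $\MC$ preserves limits. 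Under this identity, each map to be analysed is induced by an inclusion $S\into T$, hence by a surjection $C^\bull(T)\o A\to C^\bull(S)\o A$ whose kernel is $I=\bar C^\bull(T,S)\o A$, the relative normalized cochains tensored with $A$.

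The technical engine is then a deformation lemma about such a surjection $\phi\colon B\to\bar B$ with kernel $I$. Fixing $\bar a\in\MC(\bar B)$ and a lift $\tilde a\in B^1$, the fibre of $\MC(\phi)$ over $\bar a$ is the Maurer--Cartan locus of $(I,d_{\tilde a})$ with $d_{\tilde a}=d+[\tilde a,-]$. If $(I,d_{\tilde a})$ is acyclic, the obstruction in $H^2(I,d_{\tilde a})$ vanishes, so $\bar a$ lifts and $\MC(\phi)$ is formally smooth and surjective, hence a cover; the vanishing in degree two is exactly the smoothness, i.e.\ the regularity, of $\MC(\phi)$. If moreover $I^0=0$, then acyclicity gives $Z^1(I,d_{\tilde a})=d_{\tilde a}I^0=0$, so the fibre is a single reduced point and $\MC(\phi)$ is an isomorphism. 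Everything thus reduces to acyclicity of the twisted relative complex $(\bar C^\bull(T,S)\o A,\,d_{\tilde a})$ whenever $S\into T$ is a homotopy equivalence --- a homotopy invariance statement for cohomology with coefficients in the $\infty$-local system determined by $\bar a$.

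With the lemma in hand I would treat the three clauses. For an inner horn $\L^n_i\into\D^n$ ($0<i<n$) the relative complex $J_{n,i}$ is two-dimensional, spanned by the top cell in degree $n$ and the $i$-th face $\nu$ in degree $n-1$, and is acyclic because the horn is a deformation retract. Here the twist simplifies: since $0<i<n$, the omitted vertex is internal, so $\nu$ is neither the front nor the back $(n-1)$-face of the top simplex, and the Alexander--Whitney products $e^*\cup\nu$ and $\nu\cup e^*$ with any edge cochain vanish; consequently $I=J_{n,i}\o A$ is a square-zero ideal, $[\tilde a,-]|_I$ reduces to multiplication by the degree-zero part of $\tilde a$, the induced perturbation is nilpotent, and $(I,d_{\tilde a})$ is acyclic, giving the cover for $n>1$. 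A degree count shows $I^0=J^{n-1}_{n,i}\o A^{1-n}\oplus J^{n}_{n,i}\o A^{-n}$ vanishes precisely when $n>k$, since $A$ is concentrated in degrees $>-k$, yielding the isomorphism for $n>k$. For the vertex $\D^0\into\DELTA^1$ I would lift $\bar a\in X_0=\MC(A)$ by the constant element $\tilde a=r^*\bar a$ along the retraction $r\colon\DELTA^1\to\D^0$; then $[\tilde a,-]$ acts only on $A$, so $(I,d_{\tilde a})=\bar C^\bull(\DELTA^1,\D^0)\o(A,d+[\bar a,-])$ is a tensor product with an acyclic factor, while $r^*$ is a section, so $\Map(\DELTA^1,X)\to X_0$ is surjective and (by the lemma) smooth, hence a cover. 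Finally, regularity follows by applying the lemma to $\D^1\into\DELTA^1$, again a homotopy equivalence, so that $\Map(\DELTA^1,X)\to X_1$ is smooth, hence regular.

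The hard part will be the homotopy invariance of the twisted relative cochains in the \emph{non-constant} case needed for the inclusion $\D^1\into\DELTA^1$ (and for smoothness of the vertex map away from the constant section), where $\bar a$ is a genuine $1$-simplex rather than a point, so the tensor-product simplification is unavailable and the two-term argument of the horn case does not apply. I would prove this first, by promoting the simplicial deformation retraction to an explicit contraction of $\bar C^\bull(\DELTA^1,\D^1)$ and controlling the perturbation $[\tilde a,-]$ against it by the perturbation lemma; termination of the resulting series should follow from the finiteness of the $(k+1)$-skeleton of $\DELTA^1$ together with the lower bound on the degrees of $A$, which together confine the relevant $A$-degrees to a finite window in each fixed total degree and force the operator $[\tilde a,-]\circ h$ to be nilpotent.
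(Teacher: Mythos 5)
Your reduction to Maurer--Cartan loci and your treatment of the inner horns are sound, and in fact parallel the paper: for $0<i<n$ the paper solves the Maurer--Cartan equations for $\mu_{0\dots n}$ and $\mu_{0\dots\widehat{\imath}\dots n}$ explicitly, obtaining $N_nA\cong\Map(\L^n_i,N_\bull A)\times A^{1-n}$, which your square-zero-ideal analysis recovers. The genuine gap is exactly at your declared ``hard part,'' and your proposed fix does not work: the operator $[\tilde a,-]\circ h$ is \emph{not} nilpotent. Decompose the perturbation by simplicial degree: $h$ lowers simplicial degree by $1$, and cup product with the simplicial-degree-$m$ component of $\tilde a$ raises it by $m$, so the composite shifts simplicial degree by $m-1$. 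The components with $m=0$ and $m\ge2$ are indeed controlled by your finite window (simplicial degree is bounded in each total degree because $A$ is bounded below), but the $m=1$ components --- the edge entries $f_{01}=1+\mu_{01}$, $f_{10}=1+\mu_{10}$, which take values in $A^0$ --- give a summand of $[\tilde a,-]\circ h$ that preserves \emph{both} the simplicial and the $A$-grading, so no degree bookkeeping forces its powers to vanish. Already for $A=\K$ concentrated in degree $0$ one computes $\Map(\DELTA^1,N_\bull A)=\{fg=gf=1\}\cong\{\mu_{01}\ne-1\}$ mapping to $N_1A\cong\K$ by an open immersion: the twisted relative complex $\bigl(\bar C^\bull(\DELTA^1,\D^1)\o A,\,d_{\tilde a}\bigr)$ is contractible at a source point only because $f$ is invertible there, and the contraction is in essence multiplication by $f^{-1}$, a geometric series in $\mu_{01}$ that never terminates. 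So acyclicity is not a formal perturbation of the untwisted simplicial contraction; it genuinely uses the quasi-inverse data carried by the higher components of the Maurer--Cartan element. This breaks your proof of clause 3) (regularity over $N_1A$) and of smoothness of the vertex map away from the constant section.

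The paper circumvents precisely this point with an algebraic device: the isomorphism $\psi$ identifying $C^\bull(\DELTA^1)\o A$ with the matrix algebra $\VA\subset 2\times2$ matrices over $A[u]$ ($u$ of degree $2$), under which a point $\bbmu$ becomes a solution of $da+a^2=u$; differentiating this identity in $u$ produces $b(\bbmu)$ with $d_{a(\bbmu)}b(\bbmu)=1$ --- a contracting homotopy that is \emph{polynomial} in $\bbmu$ and packages the quasi-inverse, rather than a perturbation series. Lemma~\ref{smooth} then converts such a homotopy into smoothness via an explicit \'etale chart on the open locus where $1+h\ad(\mu)$ is invertible. That lemma also repairs a second, smaller defect in your plan: pointwise vanishing of $H^2(I,d_{\tilde a})$ at $\K$-points is not by itself smoothness of a finite-type morphism between possibly singular schemes --- you need the lifting property uniformly over arbitrary test rings and residue fields, which is what a contracting homotopy varying algebraically with the base point delivers. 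If you want to salvage your outline, replace the perturbation-off-the-simplicial-contraction step by a direct construction of a $d_{\tilde a}$-contraction out of the Maurer--Cartan element itself; that construction is exactly the content of the paper's $b(\bbmu)$.
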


The $k$-groupoid $\N_\bull A=\GG(NA)$ is the simplicial scheme
\begin{equation*}
  \N_nA = \MC(C^\bull(\DELTA^n)\o A) .
\end{equation*}
We see that $\N_\bull A$ and $\G(N_\bull A)$ are $k$-groupoids, and
that the simplicial morphism
\begin{equation*}
  \N_\bull A \to \G(N_\bull A)
\end{equation*}
is a hypercover. The statement that $\G(N_\bull A)$ is a $k$-groupoid
has also been proved by Benzeghli~\cite{Benzeghli}.

This theorem has an evident generalization to differential graded
categories. It may also be generalized to differential graded Banach
algebras, in which case the nerve is a $k$-category in the descent
category of Banach analytic spaces. There is also a more refined
version of the theorem in which the Maurer-Cartan locus is taken in
the category of derived schemes; this will be the topic of a sequel to
this paper.

\section{Categories of fibrant objects}

\begin{definition}
  \label{CFO}
  A \textbf{category with weak equivalences} is a category $\CV$
  together with a subcategory $\CW\subset\CV$ containing all
  isomorphisms, such that whenever $f$ and $g$ are composable
  morphisms such that $gf$ is a weak equivalence, then $f$ is a weak
  equivalence if and only if $g$ is.
\end{definition}

Associated to a small category with weak equivalences is its
simplicial localization $L(\CV,\CW)$. This is a category enriched in
simplicial sets, with the same objects as $\CV$, which refines the
usual localization. (In fact, the morphisms of the localization are
the components of the simplicial sets of morphisms of $L(\CV,\CW)$.)
The simplicial localization was introduced by Dwyer and Kan
\cites{dk1,dk2}, and studied further in Dwyer and Kan \cite{dk3},
Weiss \cite{weiss}, and Cisinski \cite{cisinski}: one may even say
that abstract homotopy theory is the study of simplicial
localizations. The simplicial category of \textbf{\boldmath$k$-stacks}
is the simplicial localization of the category of $k$-groupoids.

Categories of fibrant objects, introduced by Brown \cite{Brown}, form
a class of categories with weak equivalences for which the simplicial
localization is quite tractable: the simplicial sets of morphisms
between objects may be realized as nerves of certain categories of
diagrams.
\begin{definition}
  A \textbf{category of fibrant objects} $\CV$ is a small category
  with weak equivalences $\CW$ together with a subcategory
  $\CF\subset\CV$ of fibrations, satisfying the following
  axioms. Here, we refer to morphisms which are both fibrations and
  weak equivalences as \textbf{trivial fibrations}.
  \begin{enumerate}[label=(F\arabic*),labelwidth=\widthof{(F5)},leftmargin=!]
  \item \label{F:fibrant} There exists a terminal object $e$ in $\CV$,
    and any morphism with target $e$ is a fibration.
  \item \label{F:pullback} Pullbacks of fibrations are fibrations.
  \item \label{F:trivial-pullback} Pullbacks of trivial fibrations are
    trivial fibrations.
  \item \label{factorize} Every morphism $f:X\to Y$ has a
    factorization
    \begin{equation*}
      \begin{xy}
        \Atriangle/{<-}`>`>/<500,200>[P`X`Y;r`q`f]
      \end{xy}
    \end{equation*}
    where $r$ is a weak equivalence and $q$ is a fibration.
  \end{enumerate}
\end{definition}

An object $X$ such that the morphism $X\to e$ is a fibration is called
fibrant: Axiom~\ref{F:fibrant} states that every object is fibrant.

The reason for the importance of categories of fibrant objects is that
they allow a simple realization of the simplicial localization
$L(\CV,\CW)$ solely in terms of the trivial fibrations. Namely, by a
theorem of Cisinski \cite{cisinski}*{Proposition~3.23}, the simplicial
Hom-set $\HOM(X,Y)$ of morphisms from $X$ to $Y$ in the simplicial
localization of a category of fibrant objects is the nerve of the
category whose objects are the spans
\begin{equation*}
  \begin{xy}
    \Atriangle/{->}`{->}`{}/<400,300>[P`X`Y;f`g`]
  \end{xy}
\end{equation*}
where $f$ is a trivial fibration, and whose morphisms are commuting
diagrams
\begin{equation*}
  \begin{xy}
    \Atriangle/{->}`{->}`{}/<400,400>[P_0`X`Y;f_0`g_0`]
    \morphism(400,400)<0,-800>[P_0`P_1;h]
    \Vtriangle(0,-400)/{}`{<-}`{<-}/<400,400>[X`Y`P_1;`f_1`g_1]
  \end{xy}
\end{equation*}
(In the examples considered in this paper, in which the factorizations
in the category of fibrant objects are functorial, this result already
follows from the papers \cites{dk1,dk2}.)

The following lemma is due to Brown; the idea behind the proof goes
back to Serre's thesis (Chap\^{\i}tre~IV, Proposition 4 \cite{Serre}).
\begin{lemma}
  \label{P(f)}
  The weak equivalences of a category of fibrant objects are
  determined by the trivial fibrations: a morphism $f$ is a weak
  equivalence if and only if it factorizes as a composition $q\*s$,
  where $q$ is a trivial fibration and $s$ is a section of a trivial
  fibration.
\end{lemma}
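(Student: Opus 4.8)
The plan is to prove the two implications separately. The forward direction is an immediate consequence of the two-out-of-three property built into the definition of a category with weak equivalences, while the reverse direction rests on constructing a mapping path space in the style of Serre.

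For the easy direction, suppose $f=q\*s$ where $q$ is a trivial fibration and $s$ is a section of a trivial fibration $p$, so that $p\*s=\mathrm{id}$. Since $\mathrm{id}$ and $p$ are weak equivalences, the defining property of $\CW$ forces $s$ to be a weak equivalence; as $q$ is a weak equivalence as well, the composite $f=q\*s$ lies in $\CW$ because $\CW$ is a subcategory.

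For the converse, suppose $f\colon X\to Y$ is a weak equivalence. First I would build a path object for $Y$: the product $Y\times Y$ exists as the pullback of the fibration $Y\to e$ against itself (axioms \ref{F:fibrant}, \ref{F:pullback}), and applying the factorization axiom \ref{factorize} to the diagonal $Y\to Y\times Y$ produces
\begin{equation*}
  Y \xrightarrow{\ j\ } Y^I \xrightarrow{(d_0,d_1)} Y\times Y ,
\end{equation*}
with $j$ a weak equivalence and $(d_0,d_1)$ a fibration. Since $d_i\*j=\mathrm{id}_Y$, each endpoint map $d_i=\mathrm{pr}_i\circ(d_0,d_1)$ is a fibration and, by two-out-of-three, a weak equivalence, hence a trivial fibration. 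I then form the mapping path space $N_f=X\times_Y Y^I$, the pullback of $d_0$ along $f$; by \ref{F:trivial-pullback} its projection $\pi\colon N_f\to X$ is a trivial fibration. The section $s=(\mathrm{id}_X,\,j\*f)\colon X\to N_f$ of $\pi$, together with $g=d_1\circ\mathrm{pr}_{Y^I}\colon N_f\to Y$, satisfies $g\*s=(d_1\*j)\*f=f$.

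It remains to check that $g$ is a trivial fibration, which is the crux of the argument. The combined map $(\pi,g)\colon N_f\to X\times Y$ is isomorphic to the pullback of the fibration $(d_0,d_1)$ along $f\times\mathrm{id}_Y$, so it is a fibration by \ref{F:pullback}; composing with the fibration $X\times Y\to Y$ then shows that $g$ is a fibration. Since $s$ is a section of the trivial fibration $\pi$ it is a weak equivalence, and $g\*s=f$ is a weak equivalence, so two-out-of-three makes $g$ a weak equivalence; thus $g$ is a trivial fibration, and $f=g\*s$ exhibits $f$ in the required form. The main obstacle is precisely the path-object construction together with the verification that the endpoint map $g$ is a fibration via the pullback of $(d_0,d_1)$; once these are in place, everything else is a routine application of the two-out-of-three property.
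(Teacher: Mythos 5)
Your proof is correct and follows essentially the same route as the paper: factor the diagonal of $Y$ to get a path object, pull back an endpoint map along $f$ to form the mapping path space $N_f \cong P(f)$, and verify that the second endpoint map is a fibration by exhibiting $(\pi,g)$ as a pullback of $(d_0,d_1)$ along $f\times\mathrm{id}_Y$ and composing with the projection $X\times Y\to Y$. You additionally spell out the easy converse direction and the final two-out-of-three step making $g$ a \emph{trivial} fibration, both of which the paper leaves implicit.
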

\begin{proof}
  Let $Y$ be an object of $\CV$. The diagonal $Y\to Y\times Y$ has a
  factorization into a weak equivalence followed by a fibration:
  \begin{equation*}
    \begin{xy}
      \morphism<600,0>[Y`PY;s] \morphism(600,0)<700,0>[PY`Y \times Y
      \, .;\p_0\times\p_1]
    \end{xy}
  \end{equation*}
  The object $PY$ is called a \textbf{path space} of $Y$.

  Since $Y$ is fibrant, the two projections from $Y\times Y$ to $Y$
  are fibrations, since they are pullbacks of the fibration $Y\to e$:
  it follows that the morphisms
  \begin{equation*}
    \p_0,\p_1:PY\too Y
  \end{equation*}
  are fibrations as well. Since they are weak equivalences (by
  saturation of weak equivalences), they are actually trivial
  fibrations.

  Given a morphism $f:X\to Y$, form the pullback
  \begin{equation*}
    \begin{xy}
      \Square[P(f)`PY`X`Y;\pi`p(f)`\p_0`f]
    \end{xy}
  \end{equation*}
  We see that the projection $p(f):P(f)\to X$ is a trivial fibration,
  with section $s(f):X\to P(f)$ induced by the morphisms $s:Y\to PY$
  and $f:X\to Y$.

  We may also express $P(f)$ as a pullback
  \begin{equation*}
    \begin{xy}
      \Square[P(f)`PY`X\times Y`Y\times Y;%
      \pi`p(f)\times q(f)`\p_0\times\p_1`f\times1_Y]
    \end{xy}
  \end{equation*}
  This shows that $p(f)\times q(f)$ is a fibration. Composing with the
  projection $X\times Y\to Y$, which is a fibration since $X$ is
  fibrant, it follows that $q(f):P(f)\to Y$ is a fibration. In this
  way, we obtain the desired factorization of $f$:
  \begin{equation*}
    \begin{xy}
      \Atriangle/{<-}`{->}`{->}/<800,400>[P(f)`X`Y;s(f)`q(f)`f]
    \end{xy}
    \qedhere
  \end{equation*}
\end{proof}

The proof of this lemma shows that Axiom \ref{factorize} is implied by
the following special case:
\begin{enumerate}[label=(F\arabic*$\ast$),start=4,labelwidth=\widthof{(F4*)},leftmargin=!]
\item Each diagonal morphism $f:X\to X\times X$ has a factorization
  \begin{equation*}
    \begin{xy}
      \Atriangle/{<-}`>`>/<500,200>[P`X`X\times X;r`q`f]
    \end{xy}
  \end{equation*}
  where $r$ is a weak equivalence and $q$ is a fibration.
\end{enumerate}

\section{Descent categories}

Recall the axioms for a descent category, which we stated in the
introduction.
\begin{enumerate}[label=(D\arabic*),labelwidth=\widthof{(D3)},leftmargin=!]
\item $\CV$ has finite limits;
\item the pullback of a cover is a cover;
\item if $f$ is a cover and $g\*f$ is a cover, then $g$ is a cover.
\end{enumerate}

The covers in a descent category form a pre-topology on $\CV$
(Grothendieck and Verdier~\cite{Verdier}) with the special property
that every cover consists of a single morphism: this class of
pre-topologies will be sufficient for our purposes. Axiom
\ref{cancellation}, which has no counterpart in the usual theory of
Grothendieck topologies, plays a key role in this article.

The above axioms hold in the category of Kan complexes, with the
trivial fibrations as covers.  In the study of higher stacks, an
additional axiom is sometimes assumed, that covers are closed under
formation of retracts (c.f.\ Henriques \cite{Henriques}); we will not
need this axiom here.

\begin{example} \mbox{}
  \begin{enumerate}[label=\alph*)]
  \item The category of schemes is a descent category, with surjective
    \'etale, smooth or flat morphisms as the covers.
  \item The category of analytic spaces is a descent category, with
    surjective submersions as covers. A morphism $f:X\to Y$ of
    analytic spaces is a submersion if for every point $x\in X$, there
    is a neighbourhood $U$ of $x$, a neighbourhood $V$ of $f(x)$, and
    an isomorphism of analytic spaces $U\cong B\times V$ for which $f$
    is identified with projection to $V$, where $B$ is an open ball in
    a complex vector pace.
  \item More generally, by Douady \cite{douady}, the category of
    Banach analytic spaces is a descent category, again with
    surjective submersions as covers.
  \end{enumerate}
\end{example}

\begin{example}
  A \Cinf-ring (Dubuc \cite{dubuc}) is a real vector space $R$ with
  operations
  \begin{equation*}
    \rho_n : A(n) \times R^n \to R , \quad n\ge0 ,
  \end{equation*}
  where $A(n)=C^\infty(\R^n,\R)$. For every natural number $n$ and
  $n$-tuple $(m_1,\dots,m_n)$, the following diagram must commute:
  \begin{equation*}
    \begin{xy}
      \SQUARE<2200,500>[A(n) \times A(m_1) \times \cdots \times A(m_n)
      \times R^{m_1}\times\cdots\times R^{m_n}`A(n) \times R^n`
      A(m_1+\dots+m_n) \times R^{m_1}\times\cdots\times R^{m_n}`R;
      A(n)\times\rho_{m_1}\times\cdots\times\rho_{m_n}``\rho_n`\rho_{m_1+\cdots+m_n}]
    \end{xy}
  \end{equation*}
  The category of \Cinf-schemes is the opposite of the category of
  \Cinf-rings. This category has finite limits, and contains the
  category of differentiable manifolds as a full subcategory. It is
  also a descent category, with covers the surjective submersions. The
  category of Lie groupoids in the category of \Cinf-schemes is a
  natural generalization of the category of Lie groupoids in the usual
  sense: one of the results of this paper is that it is a category of
  fibrant objects.
\end{example}

The \textbf{kernel pair} of a morphism $f:X\to Y$ in a category with
finite limits is the diagram
\begin{equation*}
  \begin{xy}
    \morphism/{@{>}@<3pt>}/<600,0>[X\times_YX`X;]
    \morphism/{@{>}@<-3pt>}/<600,0>[X\times_YX`X;]
  \end{xy}
\end{equation*}
The coequalizer $p$ of the kernel pair of $f$, if it exists, is called
the \textbf{coimage} of $f$:
\begin{equation*}
  \begin{xy}
    \morphism/{@{>}@<3pt>}/<600,0>[X\times_YX`X;]
    \morphism/{@{>}@<-3pt>}/<600,0>[X\times_YX`X;]
    \morphism(600,0)<400,0>[X`Z;p]
    \morphism(1000,0)/{.>}/<400,0>[Z`Y;i]
    \morphism(600,0)/{@{>}@/^20pt/}/<800,0>[X`Y;f]
  \end{xy}
\end{equation*}
The \text{image} of $f$ is the morphism $i:Z\to Y$.

A morphism $f:X\to Y$ in a category $\CV$ is an \textbf{effective
  epimorphism} if $p$ equals $f$, in the sense that $i$ is an
isomorphism. One of the reasons for the importance of effective
epimorphisms is that pullback along an effective epimorphism is
conservative (reflects isomorphisms).
\begin{definition}
  A descent category is \textbf{subcanonical} if covers are effective
  epimorphisms.
\end{definition}
All of the descent categories which we have defined above have this
property.

In the study of categories, regular categories play a special role:
these are categories with finite limits in which pullbacks of
effective epimorphisms are effective epimorphisms, and kernel pairs
have coequalizers. Such categories share some basic properties with
the category of sets: in particular, every morphism factors into an
effective epimorphism followed by a monomorphism, and such a
factorization is unique up to isomorphism.

Recall from the introduction that a regular descent category is a
subcanonical descent category $\CV$ together with a subcategory of
\textbf{regular} morphisms satisfying the following axioms.
\begin{enumerate}[label=(R\arabic*),labelwidth=\widthof{(R3)},leftmargin=!]
\item every cover is regular;
\item the pullback of a regular morphism is regular;
\item every regular morphisms has a coimage, and its coimage is a
  cover.
\end{enumerate}
The following lemma is an example of the way in which a number of
properties of regular categories, suitably reformulated, extend to
regular descent categories.
\begin{lemma}
  Let $\CV$ be a regular descent category, and consider the
  factorization of a regular morphism $f:X\to Y$ into a cover
  $p:X\to Z$ followed by a morphism $i:Z\to Y$. Then $i$ is a
  monomorphism.
\end{lemma}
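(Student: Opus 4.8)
The plan is to prove that $i$ is a monomorphism by showing that the two projections $\pi_0,\pi_1:Z\times_Y Z\to Z$ of its kernel pair $Z\times_Y Z$ coincide; recall that a morphism is a monomorphism exactly when the two legs of its kernel pair agree. By axiom~(R3) the coimage $p:X\to Z$ is a cover, so, since the descent category is subcanonical, $p$ is an effective epimorphism, and in particular an epimorphism. Moreover, by construction $p$ is the coequalizer of the kernel pair of $f$; writing $d_0,d_1:X\times_Y X\to X$ for the two projections of that kernel pair, this says precisely that $p\*d_0=p\*d_1$.

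The central object is the morphism $p\times_Y p:X\times_Y X\to Z\times_Y Z$, which applies $p$ to each of the two coordinates; it is well defined because $f=i\*p$. First I would check that $p\times_Y p$ is itself a cover. The clean way is to factor it as a composite $X\times_Y X\to Z\times_Y X\to Z\times_Y Z$, where the first map applies $p$ to the first coordinate and the second applies $p$ to the second. A direct inspection of the defining fibre products identifies each of these two maps with a pullback of $p:X\to Z$ (along the projection $Z\times_Y X\to Z$, respectively the second projection $Z\times_Y Z\to Z$), so each is a cover by~(D2). Since covers form a subcategory and are therefore closed under composition, $p\times_Y p$ is a cover, hence again an effective epimorphism, and in particular an epimorphism.

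Finally I would compose $p\times_Y p$ with the two projections of $Z\times_Y Z$. From the description of $p\times_Y p$ one reads off $\pi_0\*(p\times_Y p)=p\*d_0$ and $\pi_1\*(p\times_Y p)=p\*d_1$. Since $p\*d_0=p\*d_1$ by the coequalizer property recorded above, the two composites agree, and because $p\times_Y p$ is an epimorphism it may be cancelled, yielding $\pi_0=\pi_1$. This is exactly the assertion that $i$ is a monomorphism. The only step that requires genuine care is the verification that $p\times_Y p$ is a cover, namely the fibre-product bookkeeping identifying its two factors with pullbacks of $p$; everything else is a formal consequence of the subcanonical hypothesis (covers are epimorphisms) together with the defining property of the coimage.
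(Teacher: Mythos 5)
Your proposal is correct and follows essentially the same argument as the paper: factor $p\times_Yp$ through an intermediate fibred product (you use $Z\times_YX$, the paper uses $X\times_YZ$, a trivial symmetric variant) to see it is a cover, hence an epimorphism by subcanonicality, then cancel it against the equal composites $p\*d_0=p\*d_1$ to conclude $\pi_0=\pi_1$ and hence that $i$ is a monomorphism. Your writeup even makes explicit two steps the paper leaves implicit, namely that the equality of the two composites comes from the coequalizer property of the coimage, and that the agreement of the kernel-pair projections characterizes monomorphisms.
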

\begin{proof}
  The morphism
  \begin{equation*}
    p\times_Yp : X\times_YX \to Z\times_YZ
  \end{equation*}
  is the composition of a pair of covers
  \begin{equation*}
    X\times_YX \too^{X\times_Yp} X\times_YZ \too^{p\times_YZ}
    Z\times_YZ ,
  \end{equation*}
  hence itself a cover. The two compositions $\pi_1\circ(p\times_Yp)$
  and $\pi_2\circ(p\times_Yp)$ from $X\times_YX$ to $Z$ are
  equal. Since $p\times_Yp$ is a cover, it is an effective
  epimorphism, hence $\pi_1=\pi_2:Z\times_YZ\to Z$. This implies that
  $i:Z\to Y$ is a monomorphism.
\end{proof}

\section{$k$-groupoids}

Fix a descent category $\CV$.  We refer to simplicial objects taking
values in $\CV$ as \textbf{simplicial spaces}. Denote the category of
simplicial spaces by $s\CV$.
\begin{definition}
  Let $T$ be a finite simplicial set, and let $S\into T$ be a
  simplicial subset. If $f:X\to Y$ is a morphism of simplicial spaces,
  define the space
  \begin{equation*}
    \Map(S\into T,f) = \Map(S,X)\times_{\Map(S,Y)}\Map(T,Y) .
  \end{equation*}
This space parametrizes simplicial maps from $T$ to $Y$
with a lift to $X$ along $S$.
\end{definition}

Let $n\ge0$ be a natural number. The \textbf{matching space}
$\Map(\p\D^n,X)$ of a simplicial space $X$ (also denoted $M_n(X)$) is
the finite limit $\Map(\p\D^n,X)$, which represents simplicial
morphisms from the boundary $\p\D^n$ of the $n$-simplex $\D^n$ to
$X$. More generally, the matching space of a simplicial morphism
$f:X\to Y$ between simplicial spaces is the finite limit
\begin{equation*}
  \Map(\p\D^n\into\D^n,f) =
  \Map(\p\D^n,X)\times_{\Map(\p\D^n,Y)} Y_n .
\end{equation*}

\begin{definition}
  A simplicial morphism $f:X\to Y$ in $s\CV$ is a \textbf{hypercover}
  if for all $n\ge0$ the morphism
  \begin{equation*}
    X_n \too \Map(\p\D^n\into\D^n,f)
  \end{equation*}
  is a cover.
\end{definition}

\begin{lemma}
  \label{cofibration}
  Let $T$ be a finite simplicial set, and let $S\into T$ be a
  simplicial subset. If $f:X\to Y$ is a hypercover, then the induced
  morphism
  \begin{equation*}
    \Map(T,X) \too \Map(S\into T,f)
  \end{equation*}
  is a cover.
\end{lemma}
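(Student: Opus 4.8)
The plan is to reduce the statement to the defining property of a hypercover
(the case $S=\p\D^n\into T=\D^n$) by attaching the nondegenerate simplices of
$T$ that do not lie in $S$ one at a time, and inducting on their number. Since
$T$ is finite there are finitely many such simplices, and listing them in order
of nondecreasing dimension produces a filtration by simplicial subsets
\begin{equation*}
  S = T_0 \into T_1 \into \dots \into T_m = T ,
\end{equation*}
in which each $T_{j+1}$ is obtained from $T_j$ by adjoining a single
nondegenerate simplex $\sigma$ of some dimension $n$. Because $\sigma$ has
maximal dimension among the simplices not yet attached, every nondegenerate
face of $\sigma$ has dimension $<n$ and so already lies in $T_j$; hence the
attaching map $\p\D^n\to T_j$ is defined and $T_{j+1}$ is the pushout of
$T_j\leftarrow\p\D^n\into\D^n$.

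The first ingredient is that $\Map(-,X)$ carries colimits of finite simplicial
sets to limits in $\CV$: for any test object, morphisms into $\Map(T,X)$
correspond to simplicial maps out of $T$, and a map out of a pushout is a
compatible pair of maps. Applied to the attaching square this gives
\begin{equation*}
  \Map(T_{j+1},X) \cong X_n \times_{\Map(\p\D^n,X)} \Map(T_j,X) ,
\end{equation*}
and similarly for $Y$. Unwinding these fibre products, I would then check that
the single-cell comparison morphism
\begin{equation*}
  \Map(T_{j+1},X) \too \Map(T_j\into T_{j+1},f)
\end{equation*}
is exactly the pullback of the hypercover morphism $X_n\to\Map(\p\D^n\into\D^n,f)$
along the restriction $\Map(T_j\into T_{j+1},f)\to\Map(\p\D^n\into\D^n,f)$
induced by $\p\D^n\into T_j$ and $\D^n\into T_{j+1}$. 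By stability of covers
under pullback (D2), it is therefore a cover.

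To run the induction I would establish a telescoping identity: for subcomplexes
$S\into U\into T$, the restriction morphism $\Map(U\into T,f)\to\Map(S\into T,f)$
is the pullback of $\Map(U,X)\to\Map(S\into U,f)$ along the evident projection.
Taking $U=T_{m-1}$ and factoring the morphism of the lemma as
\begin{equation*}
  \Map(T,X) \too \Map(T_{m-1}\into T,f) \too \Map(S\into T,f) ,
\end{equation*}
the first arrow is a cover by the single-cell step, while the second is a cover
because, by the telescoping identity, it is the pullback of
$\Map(T_{m-1},X)\to\Map(S\into T_{m-1},f)$, which is a cover by the inductive
hypothesis (one fewer attached cell). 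Since the covers form a pre-topology they
are closed under composition, so the composite---which is the morphism in the
statement---is a cover. The base case $S=T$ gives the identity morphism, which
is an isomorphism and hence a cover.

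The two pullback identifications are routine bookkeeping with the defining fibre
products of the relative mapping spaces. The step I expect to require the most
care is the construction of the relative cellular filtration: one must verify
that the nondegenerate simplices of $T$ outside $S$ can genuinely be attached
one at a time with their boundaries already present---tracking degenerate faces
so that each stage is a pushout of the asserted form. Once this is in place, the
remainder is a formal application of stability of covers under pullback and
composition.
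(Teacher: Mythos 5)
Your proposal is correct and follows essentially the same route as the paper: both filter $T$ relative to $S$ by attaching cells along pushouts of $\p\D^n\into\D^n$ and observe that each one-cell comparison morphism $\Map(F_\ell T,X)\to\Map(F_{\ell-1}T\into F_\ell T,f)$ is a pullback of the hypercover condition $X_n\to\Map(\p\D^n\into\D^n,f)$, hence a cover. Your explicit telescoping identity and closure of covers under composition merely spell out the final step that the paper's terse proof leaves implicit, so there is nothing to correct.
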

\begin{proof}
  There is a finite filtration of $T$
  \begin{equation*}
    S=F_{-1}T \subset F_0T \subset F_1T \subset \dots \subset T
  \end{equation*}
  satisfying the following conditions:
  \begin{enumerate}[label=\alph*)]
  \item $T=\bigcup_\ell F_\ell T$;
  \item there is a weakly monotone sequence $n_\ell$, $\ell\ge0$, and
    maps $x_\ell : \p\D^{n_\ell} \too F_{\ell}T$ and
    $y_\ell : \p\D^{n_\ell} \too F_{\ell-1}T$ such that the following
    diagram is a pushout:
    \begin{equation*}
      \begin{xy}
        \Square[\p\D^{n_\ell}`F_{\ell-1}T`\D^{n_\ell}`F_\ell T;y_\ell```x_\ell]
      \end{xy}
    \end{equation*}
  \end{enumerate}
  The morphism
  \begin{equation*}
    \Map(F_\ell T,X) \to \Map(F_{\ell-1}T\into F_\ell T,f)
  \end{equation*}
  is a cover, since it is a pullback of the cover
  $X_{n_\ell}\to\Map(\p\D^{n_\ell}\into\D^{n_\ell},f)$.
\end{proof}

\begin{definition}
  Let $k$ be a natural number. A simplicial space is a
  \textbf{\boldmath{$k$}-groupoid} if the morphism
  \begin{equation*}
    X_n \too \Map(\L^n_i,X)
  \end{equation*}
  is a cover for all $n>0$ and $0\le i\le n$, and an isomorphism when
  $n>k$. Denote the category of $k$-groupoids by $s_k\CV$.
\end{definition}

\begin{definition}
  A simplicial map $f:X\to Y$ in $s\CV$ is a \textbf{fibration} if the
  morphism
  \begin{equation*}
    X_n \too \Map(\L^n_i\into\D^n,f)
  \end{equation*}
  is a cover for all $n>0$ and $0\le i\le n$.
\end{definition}

Our goal in the remainder of this section is to show that the
$k$-groupoids in a descent category form a category of fibrant
objects.
\begin{theorem}
  \label{main}
  With fibrations and hypercovers as fibrations and trivial
  fibrations, the category of $k$-groupoids $s_k\CV$ is a category of
  fibrant objects.
\end{theorem}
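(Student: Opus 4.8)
The plan is to verify the four axioms of a category of fibrant objects (Definition~\ref{CFO}) for $s_k\CV$, taking fibrations to be the fibrations defined above and trivial fibrations to be the hypercovers. Before touching the axioms I would need to establish the underlying category-with-weak-equivalences structure, namely that the weak equivalences (defined via spans of hypercovers) satisfy the two-out-of-three--style saturation condition of Definition~\ref{CFO}; this is best deferred, since by Lemma~\ref{P(f)} the weak equivalences of any category of fibrant objects are automatically recovered from the trivial fibrations, so the cleanest strategy is to check the axioms directly for fibrations and hypercovers and let the abstract machinery identify the weak equivalences afterward.

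\begin{enumerate}[label=\roman*)]
\item For \ref{F:fibrant}, I would first confirm that the terminal simplicial space $e$ (constant at the terminal object of $\CV$, which exists by \ref{finite}) is a $k$-groupoid, and that for any $k$-groupoid $X$ the map $X\to e$ is a fibration. The latter reduces to showing $X_n\to\Map(\L^n_i\into\D^n, X\to e)=\Map(\L^n_i,X)$ is a cover, which is exactly the defining condition of a $k$-groupoid.
\item For \ref{F:pullback} and \ref{F:trivial-pullback}, the key observation is that $\Map(S\into T,-)$ commutes with pullbacks, since it is built from finite limits. Given a pullback square of a fibration $f:X\to Y$ along $g:Y'\to Y$, the relative horn map for the pullback $X'=X\times_Y Y'$ is a pullback of the relative horn map for $f$; by axiom \ref{finite} the pullback exists, and by \ref{cancellation}-free reasoning (pullback of a cover is a cover, \ref{cancellation}'s dual) it is again a cover. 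I must also check that the pullback $X'$ is itself a $k$-groupoid, which again follows by writing its horn maps as pullbacks of those of $X$ and $Y'$ over $Y$. The identical argument with $\p\D^n$ in place of $\L^n_i$ handles hypercovers, giving \ref{F:trivial-pullback}.
\item The substantial axiom is \ref{factorize}. By the remark following Lemma~\ref{P(f)}, it suffices to factor each diagonal $X\to X\times X$, and the natural candidate is a path-space construction $PX=\Map(\DELTA^1, X)$ (or an analogous internal-hom by a suitable interval), with the two endpoint evaluations furnishing the map to $X\times X$. I would show that $PX$ is a $k$-groupoid, that $X\to PX$ is a weak equivalence, and that $PX\to X\times X$ is a fibration; the endpoint maps $PX\to X$ being hypercovers should follow from Lemma~\ref{cofibration} applied to the inclusions $\D^0\into\DELTA^1$. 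This is where I expect the real work to lie.
\end{enumerate}

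The main obstacle is thus the factorization axiom, and specifically two points within it: first, verifying that the path space $PX$ is again a $k$-groupoid, which requires controlling the horn-filling maps $\Map(\DELTA^1\otimes\D^n, X)\to\cdots$ via the product/join combinatorics of $\DELTA^1$ with simplices and reducing them, through a filtration argument of the same flavor as the proof of Lemma~\ref{cofibration}, to the horn covers that define $X$; and second, confirming that the endpoint evaluation $PX\to X\times X$ is a fibration rather than merely componentwise a cover. The cancellation axiom \ref{cancellation} is what makes the second point tractable: it lets me deduce that the relevant map is a cover from the fact that its composite with an already-known cover is a cover, which is the non-classical ingredient the paper flags as essential. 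I would organize the path-space verification as a separate lemma so that the final assembly of \ref{factorize} via Lemma~\ref{P(f)} and its \ref{factorize}$\ast$ refinement becomes routine.
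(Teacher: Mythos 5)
There is a genuine gap, and it sits exactly where you chose to defer work. You propose to check only the fibration/trivial-fibration axioms and ``let the abstract machinery identify the weak equivalences afterward,'' citing Lemma~\ref{P(f)}. This is circular: Lemma~\ref{P(f)} \emph{presupposes} a category of fibrant objects, i.e.\ a class of weak equivalences already satisfying the saturation condition of Definition~\ref{CFO}. There is no abstract principle that manufactures such a class from fibrations and hypercovers alone; the two-out-of-three properties must be proved, and in this paper that is where the bulk of the proof of Theorem~\ref{main} lives. The paper defines $f$ to be a weak equivalence when $q(f):P(f)\to Y$ is a hypercover, and then proves by hand: closure of weak equivalences under composition (using the iterated path space $P_2$ and the auxiliary object $P(g,f)$), the cancellation ``$f$ and $gf$ weak equivalences imply $g$ is,'' the equivalence of hypercover and trivial fibration, and finally the hardest step, Lemma~\ref{Hard} (if $f$ is a fibration and $g$, $g\*f$ are hypercovers, then $f$ is a hypercover), which is needed for ``$g$ and $gf$ weak equivalences imply $f$ is.'' All of these rest on Lemma~\ref{groupoid:complete}, the right-cancellation property of hypercovers, which is where Axiom~\ref{cancellation} actually earns its keep --- not, as you suggest, in showing $PX\to X\times X$ is a fibration. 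None of this is addressed in your outline.

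Your path space also differs from the paper's in a way that creates technical obstructions. The paper uses $(P_1X)_m=\Map(\D^m\times\D^1,X)$ with the \emph{ordinary} $1$-simplex, so all mapping spaces are finite limits, and the combinatorial input is Lemma~\ref{Moore} (the shuffle filtration showing $\L^{m,n}_i\into\D^{m,n}$ and $\tilde{\L}^{m,n}_j\into\D^{m,n}$ are expansions), packaged as Theorem~\ref{resolution}. Your choice $PX=\Map(\DELTA^1,X)$ is not a priori defined, since $\DELTA^1$ has infinitely many nondegenerate simplices; the paper only legitimizes such mapping spaces via coskeletality (Lemma~\ref{coskeletal}), developed in the $k$-category section, and reserves the thick path space $\P_\bull$ for $k$-categories. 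Moreover, your justification that the endpoint maps $PX\to X$ are hypercovers ``by Lemma~\ref{cofibration} applied to $\D^0\into\DELTA^1$'' misapplies that lemma: it takes a hypercover $f$ as \emph{input} and produces covers of relative mapping spaces; it cannot certify that an evaluation map is a hypercover. What is actually required is that the inclusion $\p\D^n\times\DELTA^1\cup\D^n\times\LAMBDA^1_1\into\D^n\times\DELTA^1$ is an (inner) expansion --- the paper's Lemma~\ref{DD1} --- or, in the thin model, Moore's lemma; your ``filtration argument of the same flavor'' correctly gestures at this for the horn maps but does not supply the nontrivial combinatorics, which is the second substantial ingredient of the paper's proof.
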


The proof of Theorem~\ref{main} will consist of a sequence of lemmas;
we also take the opportunity to derive some additional useful
properties of fibrations and hypercovers along the way. Axiom
\ref{F:fibrant} is clear.

\begin{definition}
  Let $m>0$. An \textbf{{\boldmath$m$}-expansion} $S\into T$
  (\textbf{expansion}, if $m=1$) is a map of simplicial sets such that
  there exists a filtration
  \begin{equation*}
    S=F_{-1}T \subset F_0T \subset F_1T \subset \dots \subset T
  \end{equation*}
  satisfying the following conditions:
  \begin{enumerate}[label=\alph*)]
  \item $T=\bigcup_\ell F_\ell T$;
  \item there is a weakly monotone sequence $n_\ell\ge m$, $\ell\ge0$,
    a sequence $0\le i_\ell\le n_\ell$, and maps
    $x_\ell : \D^{n_\ell} \too F_{\ell}T$ and
    $y_\ell : \L^{n_\ell}_{i_\ell} \too F_{\ell-1}T$ such that the
    following diagram is a pushout:
    \begin{equation*}
      \begin{xy}
        \Square[\L^{n_\ell}_{i_\ell}`F_{\ell-1}T`\D^{n_\ell}`F_\ell T;y_\ell```x_\ell]
      \end{xy}
    \end{equation*}
  \end{enumerate}
\end{definition}

\begin{lemma}
  \label{proper}
  If $S\subset\D^n$ is the union of $0<m\le n$ faces of the
  $n$-simplex $\D^n$, the inclusion $S\into\D^n$ is an $m$-expansion.
\end{lemma}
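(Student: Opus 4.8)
The plan is to argue entirely combinatorially, identifying the nondegenerate simplices of $\D^n$ with the nonempty subsets of the vertex set $\{0,\dots,n\}$, where $A$ is a face of $A'$ precisely when $A\subseteq A'$. Writing $S=\bigcup_{i\in I}\p_i\D^n$ with $I\subseteq\{0,\dots,n\}$ and $|I|=m$, the nondegenerate simplices of $S$ are exactly the subsets $A$ with $I\not\subseteq A$; hence the simplices that must be attached to pass from $S$ to $\D^n$ are the $2^{n+1-m}$ subsets $A$ with $I\subseteq A$, ranging in dimension from $m-1$ (the face $I$ itself) up to $n$ (the top simplex).

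Since $m\le n$, I would fix a vertex $v_0\in\{0,\dots,n\}\setminus I$ and pair the missing simplices by the fixed-point-free involution $A\mapsto A\triangle\{v_0\}$. Each pair has the form $\{B,\,B\cup\{v_0\}\}$ with $I\subseteq B$ and $v_0\notin B$; I take $A_\ell=B\cup\{v_0\}$ to be the top cell of the pair and $B$ the opposite face. The filtration is then built by processing these pairs in order of increasing $\dim A_\ell=|B|$, attaching at stage $\ell$ the simplex $\D^{n_\ell}=A_\ell$ along the horn $\L^{n_\ell}_{i_\ell}$ obtained by deleting the face $B$ opposite $v_0$. Because $I\subseteq B$ and $v_0\notin I$, one has $|B|\ge m$, so $n_\ell=|B|\ge m$ and the sequence $n_\ell$ is weakly monotone, as the definition of $m$-expansion demands.

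The key verification—and the point where the bookkeeping must be done carefully—is that at stage $\ell$ the horn $\L^{n_\ell}_{i_\ell}$ already lies in $F_{\ell-1}T$, so that the square is a genuine horn-filling pushout adding exactly the two new nondegenerate simplices $A_\ell$ and $B$. The codimension-one faces of $A_\ell$ are the sets $A_\ell\setminus\{w\}$: for $w\in I$ this omits an element of $I$ and so lies in $S=F_{-1}T$; for $w\in B\setminus I$ it is a proper subset still containing both $v_0$ and $I$, hence is the top cell of a pair of strictly smaller dimension, attached at an earlier stage; and for $w=v_0$ it is exactly $B$, the opposite face supplied by this pushout. One checks likewise that $B$ has not appeared before stage $\ell$: it is the smaller member of its own pair, and since it omits $v_0$ it cannot be the opposite face of any other pair nor a face of any earlier top cell. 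As the involution is a perfect matching, the union $\bigcup_\ell F_\ell T$ exhausts all of $\D^n$, and the filtration thus exhibits $S\into\D^n$ as an $m$-expansion.
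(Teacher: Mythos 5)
Your proof is correct, but it takes a genuinely different route from the paper's. The paper argues by induction on $n$: it adjoins the missing codimension-one faces one at a time, checks that each intersection $F_{\ell-1}\D^n\cap\p_{i_\ell}\D^n$ is a union of at least $m$ but not all faces of the $(n-1)$-simplex $\p_{i_\ell}\D^n$ (so the induction hypothesis makes each adjunction an $m$-expansion), and the process terminates with the horn $\L^n_{i_{n-m}}$, whose filling supplies both the last face and the top cell. Your argument is instead direct and non-inductive: after identifying nondegenerate simplices with subsets $A\subseteq\{0,\dots,n\}$ and the simplices missing from $S$ with those satisfying $I\subseteq A$, the fixed-point-free involution $A\mapsto A\mathbin{\triangle}\{v_0\}$ (where $v_0\notin I$, using $m\le n$, and $v_0\notin A\mathbin{\triangle}\{v_0\}\supseteq I$ uses $m>0$) partitions them into $2^{n-m}$ pairs, each attached by one horn-filling, processed in order of increasing dimension. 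This is in fact the style of matching argument the paper itself deploys for its other expansion results (the proofs of Lemmas~\ref{lambda} and \ref{mu} pair simplices by inserting a vertex), just not for this particular lemma. Your route buys explicitness: the bound $n_\ell=|B|\ge m$ and the weak monotonicity of the sequence $n_\ell$ are immediate from the construction, whereas in the paper's induction the globally monotone ordering is left implicit (the horn-fillings produced by the nested inductions must be reordered by dimension, which is legitimate since each attachment depends only on lower-dimensional simplices). The paper's induction is shorter, at the cost of hiding exactly the bookkeeping you rightly identify as the delicate point; your verification of it is sound --- the horn faces $A_\ell\setminus\{w\}$ lie in $S$ for $w\in I$ and arise from strictly lower-dimensional pairs for $w\in B\setminus I$, and $B$ cannot appear earlier since a cardinality count rules out lower-dimensional pairs while an equal-dimensional top cell $A'$ containing $B$ would force $B=A'\setminus\{v_0\}=B'$, contradicting disjointness of the pairs.
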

\begin{proof}
  The proof is by induction on $n$: the initial step $n=1$ is clear.

  Enumerate the faces of $\D^n$ not in $S$:
  \begin{equation*}
    \{ \p_{i_0}\D^n , \dots , \p_{i_{n-m}}\D^n \} ,
  \end{equation*}
  where $0\le i_0 < \dots < i_{n-m} \le n$. Let
  \begin{equation*}
    F_\ell \D^n = S \cup \bigcup_{j\le\ell} \p_{i_j}\D^n , \quad
    0\le \ell\le n-m .
  \end{equation*}
  By the induction hypothesis, we see that
  $F_{\ell-1}\D^n\cap\p_{i_\ell}\D^n\into\p_{i_\ell}\D^n$ is an
  $m$-expansion: on the one hand, each face of $\D^n$ contained in $S$
  contributes a face of $\p_{i_\ell}\D^n$ to
  $F_{\ell-1}\D^n\cap\p_{i_\ell}\D^n$, and hence
  $F_{\ell-1}\D^n\cap\p_{i_\ell}\D^n$ contains at least $m$ faces of
  $\p_{i_\ell}\D^n$; on the other hand,
  $F_{\ell-1}\D^n\cap\p_{i_\ell}\D^n$ does not contain the face
  $\p_{i_{n-m}}\D^n\cap\p_{i_\ell}\D^n$ of $\p_{i_\ell}\D^n$.
\end{proof}

\begin{lemma}
  \label{expansion}
  Let $T$ be a finite simplicial set, and let $S\into T$ be an
  $m$-expansion.
  \begin{enumerate}[label=\roman*)]
  \item If $X$ is a $k$-groupoid, the induced morphism
    \begin{equation*}
      \Map(T,X) \too \Map(S,X)
    \end{equation*}
    is a cover, and an isomorphism if $m>k$.
  \item If $f:X\to Y$ is a fibration of $k$-groupoids, the induced
    morphism
    \begin{equation*}
      \Map(T,X) \too \Map(S\into T,f)
    \end{equation*}
    is a cover, and an isomorphism if $m>k$.
  \end{enumerate}
\end{lemma}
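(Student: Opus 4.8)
The plan is to exploit the filtration $S=F_{-1}T\subset F_0T\subset\cdots\subset T$ provided by the definition of an $m$-expansion, showing that each one-step inclusion $F_{\ell-1}T\into F_\ell T$ contributes a cover---an isomorphism once $m>k$---and then composing. The filtration is finite because $T$ is finite, and composites of covers are covers, so this will suffice. Throughout, the only homotopical input is that the horn maps are covers; everything else is closure of covers, and of isomorphisms, under pullback and composition.

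For part (i) I would apply the contravariant functor $\Map(-,X)$, which sends colimits of simplicial sets to limits in $\CV$, to the pushout square defining the $\ell$-th stage. The square becomes a pullback, identifying $\Map(F_\ell T,X)\cong\Map(F_{\ell-1}T,X)\times_{\Map(\L^{n_\ell}_{i_\ell},X)}X_{n_\ell}$. Thus $\Map(F_\ell T,X)\to\Map(F_{\ell-1}T,X)$ is a pullback of the horn map $X_{n_\ell}\to\Map(\L^{n_\ell}_{i_\ell},X)$, which is a cover since $X$ is a $k$-groupoid and $n_\ell\ge m\ge1$, and is an isomorphism when $n_\ell>k$. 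As $n_\ell\ge m$, the hypothesis $m>k$ makes every stage an isomorphism; otherwise the pullback axiom makes every stage a cover, and composing over the finite filtration gives the result.

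For part (ii) I would run the same argument with relative mapping spaces. The needed input is the relative (Leibniz) form of the exponential law: the same pushout square shows that $\Map(F_\ell T,X)\to\Map(F_{\ell-1}T\into F_\ell T,f)$ is a pullback of $X_{n_\ell}\to\Map(\L^{n_\ell}_{i_\ell}\into\D^{n_\ell},f)$---precisely the building block already used in Lemma~\ref{cofibration}, here a cover because $f$ is a fibration. To compose the stages I would introduce the tower $Q_\ell=\Map(F_\ell T,X)\times_{\Map(F_\ell T,Y)}\Map(T,Y)$, whose bottom term is $Q_{-1}=\Map(S\into T,f)$ and whose top term is $\Map(T,X)$, and check by unwinding definitions that $Q_\ell\cong\Map(F_\ell T,X)\times_{\Map(F_{\ell-1}T\into F_\ell T,f)}Q_{\ell-1}$, so that each $Q_\ell\to Q_{\ell-1}$ is a pullback of the cover above. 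Composing down the finite tower shows $\Map(T,X)\to\Map(S\into T,f)$ is a cover. For the isomorphism claim one extra observation is needed: when $X$ and $Y$ are both $k$-groupoids, the relative horn map $X_n\to\Map(\L^n_i\into\D^n,f)$ is an isomorphism for $n>k$, since $X_n\cong\Map(\L^n_i,X)$ and $Y_n\cong\Map(\L^n_i,Y)$ give $\Map(\L^n_i,X)\times_{\Map(\L^n_i,Y)}Y_n\cong X_n$; as $n_\ell\ge m>k$ at each stage, each $Q_\ell\to Q_{\ell-1}$, and hence the composite, is an isomorphism.

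The main obstacle I expect is purely bookkeeping: verifying the relative exponential law and the identification of $Q_\ell$ as a pullback by hand, these being the only points where the definition of the relative mapping space must be unwound carefully. Once those are in place, the proof is a mechanical induction along the filtration, structurally identical to that of Lemma~\ref{cofibration} but drawing its covers from the $k$-groupoid and fibration hypotheses rather than from a hypercover.
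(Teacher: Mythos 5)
Your proof is correct and takes essentially the same route as the paper's: the paper likewise inducts along the filtration, observing that each stage $\Map(F_\ell T,X)\to\Map(F_{\ell-1}T\into F_\ell T,f)$ is a pullback of the cover $X_{n_\ell}\to\Map(\L^{n_\ell}_{i_\ell}\into\D^{n_\ell},f)$, an isomorphism when $n_\ell\ge m>k$. Your tower $Q_\ell$ just makes explicit the composition bookkeeping the paper leaves implicit, and your closing observation that the relative horn map is an isomorphism for $n>k$ is exactly the content of the paper's Lemma~\ref{fibrations}.
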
  
\begin{proof}
  The proof is by induction on the length of the filtration of $T$
  exhibiting $S\into T$ as an expansion. In the first case, the
  morphism $\Map(F_\ell T,X)\to\Map(F_{\ell-1}T,X)$ is a cover, since
  it is a pullback of the cover
  $X_{n_\ell}\to\Map(\L^{n_\ell}_{i_\ell},X)$ (which is an isomorphism
  if $m>k$), and in the second case, the morphism
  \begin{equation*}
    \Map(F_\ell T,X)\to\Map(F_{\ell-1}T\into S_\ell,f)
  \end{equation*}
  is a cover, since it is a pullback of the cover
  $X_{n_\ell}\to\Map(\L^{n_\ell}_{i_\ell}\into\D^n,f)$ (which is again an
  isomorphism if $m>k$).
\end{proof}

\begin{corollary}
  \label{face-cover}
  If $X$ is a $k$-groupoid, the face map $\p_i:X_n\to X_{n-1}$ is a
  cover.
\end{corollary}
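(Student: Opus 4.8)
The plan is to recognize the face map $\p_i\colon X_n\to X_{n-1}$ as a restriction map of exactly the type governed by Lemma~\ref{expansion}, and then apply that lemma directly. By the Yoneda lemma for simplicial sets, $X_n=\Map(\D^n,X)$ and $X_{n-1}=\Map(\D^{n-1},X)$, and the coface inclusion $\D^{n-1}\cong\p_i\D^n\into\D^n$ induces, by restriction, precisely the face map $\p_i$. Hence it suffices to show that the restriction morphism
\begin{equation*}
  \Map(\D^n,X) \too \Map(\p_i\D^n,X)
\end{equation*}
is a cover.

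First I would observe that $S=\p_i\D^n$ is the union of exactly one face of $\D^n$, so it is an $m$-expansion with $m=1$. The numerical constraint $0<m\le n$ from Lemma~\ref{proper} then reads $1\le n$, which holds throughout the range $n\ge1$ in which the face map is defined. Applying Lemma~\ref{proper} thus shows that $\p_i\D^n\into\D^n$ is a $1$-expansion. Feeding this expansion into part~(i) of Lemma~\ref{expansion} yields that $\Map(\D^n,X)\to\Map(\p_i\D^n,X)$ is a cover, completing the argument.

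There is essentially no obstacle here, since the corollary is simply the specialization of Lemma~\ref{expansion} to a single face; the two points that warrant a moment's care are the Yoneda identification of the restriction morphism with $\p_i$, and the verification that $m=1$ satisfies the hypothesis of Lemma~\ref{proper}, which is automatic because face maps exist only for $n\ge1$. Note also that no isomorphism statement is needed: we only claim $\p_i$ is a cover, so the weaker conclusion of Lemma~\ref{expansion}~(i) (valid for every $m$, without the restriction $m>k$) already suffices.
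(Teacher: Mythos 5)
Your proof is correct and is exactly the paper's intended argument: the corollary is stated without proof immediately after Lemma~\ref{expansion} precisely because, as you observe, $\p_i$ is restriction along the single-face inclusion $\p_i\D^n\into\D^n$, which is a $1$-expansion by Lemma~\ref{proper} (the case $m=1\le n$), so Lemma~\ref{expansion}~(i) gives that $X_n\to\Map(\p_i\D^n,X)\cong X_{n-1}$ is a cover. Your closing remarks --- the Yoneda identification and the observation that only the cover conclusion, not the isomorphism clause $m>k$, is needed --- are both accurate.
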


\begin{lemma}
  \label{fibrations}
  If $f:X\to Y$ is a fibration of $k$-groupoids, then
  \begin{equation*}
    X_n \too \Map(\L^n_i\into\D^n,f)
  \end{equation*}
  is an isomorphism for $n>k$.
\end{lemma}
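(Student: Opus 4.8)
The plan is to deduce the isomorphism directly from the two expansion lemmas just proved, exploiting the fact that the horn $\L^n_i$ is a very large piece of $\D^n$. The whole content has already been packaged into Lemma~\ref{proper} and Lemma~\ref{expansion}, so the proof should be short.

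First I would observe that $\L^n_i$ is the union of exactly $n$ of the $n+1$ faces of $\D^n$, namely all faces except $\p_i\D^n$. Since $n>k\ge0$ forces $n\ge1$, Lemma~\ref{proper} applies with $m=n$ (and $0<m\le n$), and exhibits the inclusion $\L^n_i\into\D^n$ as an $n$-expansion.

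Next I would invoke Lemma~\ref{expansion}(ii) for this $n$-expansion and the fibration $f:X\to Y$: that lemma asserts that
\begin{equation*}
  \Map(\D^n,X) \too \Map(\L^n_i\into\D^n,f)
\end{equation*}
is a cover, and an isomorphism as soon as the expansion-degree exceeds $k$, i.e.\ when $m=n>k$. Since $\Map(\D^n,X)=X_n$, this is precisely the claim that $X_n\to\Map(\L^n_i\into\D^n,f)$ is an isomorphism for $n>k$, which finishes the proof.

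I do not expect any real obstacle here: the only thing to verify is the bookkeeping that $\L^n_i$ consists of $n$ faces, so that the relevant expansion-degree $m$ equals $n$ and the hypothesis $m>k$ of Lemma~\ref{expansion}(ii) reduces to $n>k$. If one wanted a self-contained argument bypassing the expansion machinery, one could instead note that for $n>k$ the map $Y_n\to\Map(\L^n_i,Y)$ is an isomorphism (as $Y$ is a $k$-groupoid), so that base change along it makes the projection $\Map(\L^n_i,X)\times_{\Map(\L^n_i,Y)}Y_n\to\Map(\L^n_i,X)$ an isomorphism; composing with the isomorphism $X_n\to\Map(\L^n_i,X)$ coming from $X$ being a $k$-groupoid then identifies $X_n\to\Map(\L^n_i\into\D^n,f)$ as an isomorphism. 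I would favour the expansion-lemma route, since it is shorter and reuses machinery already in place.
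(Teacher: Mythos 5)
Your preferred route is circular relative to the paper's logical development. The isomorphism clause of Lemma~\ref{expansion}(ii) is not an independent input: the definition of a fibration only requires $X_n\to\Map(\L^n_i\into\D^n,f)$ to be a \emph{cover} for $n>0$, with no isomorphism condition at all, and the paper's proof of Lemma~\ref{expansion}(ii) justifies its isomorphism assertion by the parenthetical claim that $X_{n_\ell}\to\Map(\L^{n_\ell}_{i_\ell}\into\D^{n_\ell},f)$ ``is again an isomorphism if $m>k$'' --- which, since $n_\ell\ge m$, is word-for-word the statement of Lemma~\ref{fibrations}. Worse, for the particular expansion you use the appeal to Lemma~\ref{expansion} is vacuous: $\L^n_i\into\D^n$ is an $n$-expansion by a filtration of length one (Lemma~\ref{proper} is not even needed), so unwinding the induction in Lemma~\ref{expansion}(ii) for this expansion degenerates to its base case, which is exactly the claim to be proved. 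Lemma~\ref{fibrations} exists precisely to supply that base case with an independent proof; deriving it from Lemma~\ref{expansion}(ii) assumes what is to be shown.

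The ``self-contained argument'' you relegate to a closing remark is in fact the paper's own proof, and it is correct: writing $\alpha:X_n\to\Map(\L^n_i\into\D^n,f)$, $\beta:X_n\to\Map(\L^n_i,X)$, and $\gamma:\Map(\L^n_i\into\D^n,f)\to\Map(\L^n_i,X)$ for the projection, one has $\gamma\circ\alpha=\beta$; the morphism $\gamma$ is the base change of $Y_n\to\Map(\L^n_i,Y)$ along $\Map(\L^n_i,X)\to\Map(\L^n_i,Y)$, hence an isomorphism for $n>k$ since $Y$ is a $k$-groupoid, and $\beta$ is an isomorphism for $n>k$ since $X$ is; therefore $\alpha$ is an isomorphism. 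Promote this from a remark to the proof. (Note in passing that this argument never uses the fibration property of $f$: the conclusion holds for an arbitrary simplicial morphism between $k$-groupoids.)
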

\begin{proof}
  We have the following commutative diagram, in which the square is a
  pullback:
  \begin{equation*}
    \begin{xy}
      \SQUARE<1000,600>[\Map(\L^n_i\into\D^n,f)`Y_n`\Map(\L^n_i,X)`%
      \Map(\L^n_i,Y);``\gamma`]
      \morphism(-1000,600)<1000,0>[X_n`\Map(\L^n_i\into\D^n,f);\alpha]
      \morphism(-1000,600)|b|<1000,-600>[X_n`\Map(\L^n_i,X);\beta]
    \end{xy}
  \end{equation*}
  If $n>k$ and $0\le i\le n$, $\beta$ and $\gamma$ are isomorphisms,
  and hence $\alpha$ is an isomorphism.
\end{proof}

\begin{lemma}
  \label{fibration}
  A hypercover $f:X\to Y$ of $k$-groupoids is a fibration.
\end{lemma}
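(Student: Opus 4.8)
The plan is to reduce the statement directly to Lemma~\ref{cofibration}, which already packages exactly the closure property we need. The key observation is that for every $n>0$ and $0\le i\le n$ the horn $\L^n_i$ is a simplicial subset of $\D^n$, so the inclusion $\L^n_i\into\D^n$ is an instance of the data $S\into T$ appearing in that lemma, with $T=\D^n$ a finite simplicial set.

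First I would recall that $\Map(\D^n,X)=X_n$ by the definition of the mapping-space construction. Applying Lemma~\ref{cofibration} to the hypercover $f:X\to Y$ with $S=\L^n_i$ and $T=\D^n$ then shows that the induced morphism
\begin{equation*}
  X_n = \Map(\D^n,X) \too \Map(\L^n_i\into\D^n,f)
\end{equation*}
is a cover. Since this holds for every $n>0$ and every $0\le i\le n$, it is precisely the defining condition for $f$ to be a fibration, and the lemma follows. I would point out that the hypothesis that $X$ and $Y$ are $k$-groupoids plays no role: Lemma~\ref{cofibration} is stated for an arbitrary hypercover in $s\CV$, and the horn-filling condition refers only to $f$.

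Alternatively, one could argue more explicitly by factoring through the matching space along the chain of simplicial subsets $\L^n_i\subset\p\D^n\subset\D^n$: the morphism $X_n\to\Map(\p\D^n\into\D^n,f)$ is a cover by the definition of hypercover, while the restriction $\Map(\p\D^n\into\D^n,f)\to\Map(\L^n_i\into\D^n,f)$ is a pullback of the matching-space cover $X_{n-1}\to\Map(\p\D^{n-1}\into\D^{n-1},f)$, using that $\L^n_i\into\p\D^n$ is the pushout attaching the single face $\p_i\D^n$ along its boundary $\p\D^{n-1}$. This is exactly the filtration argument internal to the proof of Lemma~\ref{cofibration}, so the direct appeal is preferable. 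There is essentially no obstacle here beyond correctly matching the two definitions; the substantive work has already been carried out in Lemma~\ref{cofibration}.
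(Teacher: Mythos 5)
Your proof is correct and takes essentially the same approach as the paper: the paper's own argument factors $X_n\to\Map(\L^n_i\into\D^n,f)$ through $\Map(\p\D^n\into\D^n,f)$ and exhibits the second map as a pullback of the matching-space cover $X_{n-1}\to\Map(\p\D^{n-1}\into\D^{n-1},f)$, which is exactly your alternative argument and is the two-step special case of the filtration in Lemma~\ref{cofibration} that your main argument invokes directly. Your observation that the $k$-groupoid hypothesis is not used is also accurate.
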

\begin{proof}
  For $n>0$ and $0\le i\le n$, we have the following commutative
  diagram, in which the square is a pullback:
  \begin{equation}
    \label{lambda:kstack}
    \begin{aligned}
      \begin{xy}
        \SQUARE<1200,600>[\Map(\p\D^n\into\D^n,f)`X_{n-1}`%
        \Map(\L^n_i\into\D^n,f)`\Map(\p\D^{n-1}\into\D^{n-1},f);``\gamma`\delta]
        \morphism(-1000,600)<1000,0>[X_n`\Map(\p\D^n\into\D^n,f);\alpha]
        \morphism(-1000,600)|b|<1000,-600>[X_n`\Map(\L^n_i\into\D^n,f);\beta]
      \end{xy}
    \end{aligned}
  \end{equation}
  If $n>0$ and $0\le i\le n$, then $\alpha$ and $\gamma$ are covers,
  hence $\beta$ is a cover.
\end{proof}

\begin{lemma}
  Suppose the descent category $\CV$ is subcanonical. If $f:X\to Y$ is
  a hypercover of $k$-groupoids, then
  $X_n \to \Map(\p\D^n\into\D^n,f)$ is an isomorphism for $n\ge k$.
\end{lemma}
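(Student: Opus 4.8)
The plan is to reduce the statement to the assertion that the matching map $\alpha\colon X_n\to\Map(\p\D^n\into\D^n,f)$ is a monomorphism for $n\ge k$. Since $f$ is a hypercover, $\alpha$ is already a cover; and in a subcanonical descent category a cover is an effective epimorphism. An effective epimorphism that is also a monomorphism is an isomorphism: if $p$ is mono, its kernel pair has both projections equal to the identity, so its coimage is $1_{X_n}$ and $p$ coincides with this coimage. Thus for each $n\ge k$ it suffices to prove that $\alpha$ is a monomorphism.

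For $n>k$ I would argue directly. The inclusion $\L^n_i\into\p\D^n$ exhibits the horn map $\beta\colon X_n\to\Map(\L^n_i\into\D^n,f)$ as the composite of $\alpha$ with the restriction $\Map(\p\D^n\into\D^n,f)\to\Map(\L^n_i\into\D^n,f)$. By Lemma~\ref{fibration} the hypercover $f$ is a fibration, so Lemma~\ref{fibrations} shows that $\beta$ is an isomorphism for $n>k$. As $\beta$ is a monomorphism that factors through $\alpha$, the map $\alpha$ is a monomorphism, hence an isomorphism.

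The boundary case $n=k$ is the main obstacle, since Lemma~\ref{fibrations} no longer forces $\beta$ to be an isomorphism at level $k$. Here I would exploit the pullback square \eqref{lambda:kstack} at level $n=k+1$. In the notation of that square, the right-hand vertical map $\gamma$ is exactly the level-$k$ matching map $X_k\to\Map(\p\D^k\into\D^k,f)$ whose invertibility we seek, and the left-hand vertical map is the restriction $\Map(\p\D^{k+1}\into\D^{k+1},f)\to\Map(\L^{k+1}_i\into\D^{k+1},f)$, which, as a side of a pullback square, is the pullback of $\gamma$ along the bottom map $\delta$. By the case $n=k+1>k$ treated above, the map $\alpha$ at level $k+1$ is an isomorphism, and $\beta$ at level $k+1$ is an isomorphism by Lemma~\ref{fibrations}; since these two isomorphisms differ precisely by the left-hand restriction, that restriction is an isomorphism. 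In other words, the pullback of $\gamma$ along $\delta$ is an isomorphism.

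It then remains to see that $\delta$ is a cover, for then $\delta$ is an effective epimorphism, and since pullback along an effective epimorphism is conservative, the isomorphy of the pullback of $\gamma$ along $\delta$ forces $\gamma$ itself to be an isomorphism. To identify $\delta$, I would precompose the commutativity relation of \eqref{lambda:kstack} with the isomorphism $\alpha\colon X_{k+1}\to\Map(\p\D^{k+1}\into\D^{k+1},f)$; the top edge composed with $\alpha$ is the face map $\p_i\colon X_{k+1}\to X_k$, and the left edge composed with $\alpha$ is $\beta$, whence $\delta\circ\beta=\gamma\circ\p_i$ at level $k+1$. The face map $\p_i$ is a cover by Corollary~\ref{face-cover}, and $\gamma$ is a cover since $f$ is a hypercover, so $\gamma\circ\p_i$ is a cover; as $\beta$ is an isomorphism, $\delta$ is a cover as well. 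The only ingredient beyond the $n>k$ case is the conservativity of pullback along effective epimorphisms, which is exactly where subcanonicality enters.
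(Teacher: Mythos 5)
Your proof is correct and takes essentially the same route as the paper's: for $n>k$ you combine the isomorphism $\beta$ of Lemma~\ref{fibrations} with subcanonicality (a cover is an effective epimorphism, and an effective epimorphism which is a monomorphism is an isomorphism), and for $n=k$ you analyze the pullback square \eqref{lambda:kstack} at level $k+1$, show $\delta$ is a cover, and invoke conservativity of pullback along effective epimorphisms. Your relation $\delta\circ\beta=\gamma\circ\p_i$ (with $\beta$ invertible) simply makes explicit the paper's terser assertion that $\delta$ factors as a composition of the covers $\p_i$ and $\gamma$.
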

\begin{proof}
  Consider the diagram \eqref{lambda:kstack}. If $n>k$, so that
  $\beta$ is an isomorphism, we see that $\alpha$ is both a regular
  epimorphism and a monomorphism, and hence is an isomorphism.

  To handle the remaining case, consider the diagram
  \eqref{lambda:kstack} with $n=k+1$. We have already seen that all
  morphisms in the triangle forming the left side of the diagram are
  isomorphisms. But $\delta$ factors as the composition of the covers
  $\p_i:X_{k+1}\to X_k$ and $\gamma$; hence, it is a cover. Since
  pullback along a cover in $\CV$ reflects isomorphisms, we conclude
  that $\gamma$ is an isomorphism.
\end{proof}

Next, we show that fibrations and hypercovers are closed under
composition.

\begin{lemma}
  \label{compose-Reedy}
  If $f:X\to Y$ and $g:Y\to Z$ are hypercovers, then $g\*f$ is a
  hypercover.
\end{lemma}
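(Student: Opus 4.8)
The plan is to show that the composite $g \circ f$ satisfies the hypercover condition: for each $n \ge 0$, the morphism $X_n \to \Map(\p\D^n \into \D^n, g \circ f)$ is a cover. The key observation is that the matching space for the composite factors through the matching space for $f$, producing a pullback relationship that lets me invoke the closure axioms \ref{cancellation} and (D2) for covers.

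First I would unwind the definitions. The target $\Map(\p\D^n \into \D^n, g\circ f) = \Map(\p\D^n, X) \times_{\Map(\p\D^n, Z)} Z_n$. The strategy is to insert the intermediate object $Y$ and factor the structure morphism as
\begin{equation*}
  X_n \too \Map(\p\D^n, X) \times_{\Map(\p\D^n, Y)} Y_n \too \Map(\p\D^n, X) \times_{\Map(\p\D^n, Z)} Z_n.
\end{equation*}
The first arrow is exactly the matching morphism for $f$, which is a cover because $f$ is a hypercover. So by axiom \ref{cancellation}, it suffices to show that the second arrow is a cover, since then the composite (which is the matching morphism for $g\circ f$) is a cover as well.

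Next I would analyze the second arrow. I expect it to be a base change of the matching morphism for $g$. Concretely, the morphism
\begin{equation*}
  \Map(\p\D^n, X) \times_{\Map(\p\D^n, Y)} Y_n \too \Map(\p\D^n, X) \times_{\Map(\p\D^n, Z)} Z_n
\end{equation*}
should be obtained from $Y_n \to \Map(\p\D^n, Y) \times_{\Map(\p\D^n, Z)} Z_n$ by pulling back along the projection $\Map(\p\D^n, X) \to \Map(\p\D^n, Y)$ induced by $f$. Since $g$ is a hypercover, $Y_n \to \Map(\p\D^n \into \D^n, g)$ is a cover, and by axiom (D2) its pullback is a cover. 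The main bookkeeping obstacle is verifying that the relevant square really is a pullback: one has to carefully identify the fiber products, using that limits commute with limits, so that $\Map(\p\D^n, X) \times_{\Map(\p\D^n, Y)} \bigl(\Map(\p\D^n, Y) \times_{\Map(\p\D^n, Z)} Z_n\bigr) \cong \Map(\p\D^n, X) \times_{\Map(\p\D^n, Z)} Z_n$. This is a routine manipulation of iterated fiber products, but it is the crux of the argument.

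The hardest part is thus purely the diagram-chasing identification of the pullback square, rather than any deep geometric input; once that square is in place, the two descent axioms \ref{cancellation} and (D2) finish the proof immediately. I would present the argument as a single commutative diagram exhibiting both the factorization and the pullback square, then conclude by applying the axioms in the order: pullback of a cover is a cover (for the second arrow), then cancellation (to pass from the composite to the full matching morphism of $g \circ f$).
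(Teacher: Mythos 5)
Your proposal is correct and is essentially the paper's own argument: the paper's diagram \eqref{basic:1} encodes precisely your factorization of $X_n \too \Map(\p\D^n\into\D^n,g\*f)$ through $\Map(\p\D^n\into\D^n,f) = \Map(\p\D^n,X)\times_{\Map(\p\D^n,Y)}Y_n$, with the second arrow exhibited, via the same iterated fiber-product identification you describe, as a pullback of the matching morphism $Y_n\too\Map(\p\D^n\into\D^n,g)$, hence a cover by (D2). One small correction: the concluding step uses closure of covers under composition, which holds because the covers form a \emph{subcategory} of $\CV$ by definition of a descent category, not Axiom~\ref{cancellation} as you twice cite --- that axiom runs the other way (from $f$ and $g\*f$ covers it deduces that $g$ is a cover) and is not what licenses ``a composite of two covers is a cover.''
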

\begin{proof}
  Consider the commutative diagram
  \begin{equation}
    \label{basic:1}
    \begin{aligned}
      \begin{xy}
        \SQUARE<1200,600>[\Map(\p\D^n\into\D^n,f)`Y_n`%
        \Map(\p\D^n\into\D^n,g\*f)`\Map(\p\D^n\into\D^n,g);``\gamma`\delta]
        \morphism(-1000,600)<1000,0>[X_n`\Map(\p\D^n\into\D^n,f);\alpha]
        \morphism(-1000,600)|b|<1000,-600>[X_n`\Map(\p\D^n\into\D^n,g\*f);\beta]
      \end{xy}
    \end{aligned}
  \end{equation}
  in which the square is a pullback. Since $\alpha$ and $\gamma$ are
  covers, it follows that $\beta$ is a composition of two covers, and
  hence is itself a cover.
\end{proof}

\begin{lemma}
  \label{compose-fibrations}
  If $f:X\to Y$ and $g:Y\to Z$ are fibrations of $k$-groupoids, then
  $g\*f$ is a fibration.
\end{lemma}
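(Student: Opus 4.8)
The plan is to imitate the proof of Lemma~\ref{compose-Reedy} essentially verbatim, replacing the boundary $\p\D^n$ throughout by the horn $\L^n_i$. Fixing $n>0$ and $0\le i\le n$, what must be shown is that the morphism
\begin{equation*}
  X_n \too \Map(\L^n_i\into\D^n,g\*f)
\end{equation*}
is a cover.

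The first step is to record the pullback identity
\begin{equation*}
  \Map(\L^n_i\into\D^n,f) \cong Y_n \times_{\Map(\L^n_i\into\D^n,g)} \Map(\L^n_i\into\D^n,g\*f) ,
\end{equation*}
which is a routine unwinding of the fibre-product definitions: a point of the right-hand side is a horn $\xi\in\Map(\L^n_i,X)$ together with an $n$-simplex $y\in Y_n$ whose restriction to $\L^n_i$ agrees with $f_*\xi$ (the matching simplex in $Z$ being forced to equal $g(y)$), and this is precisely a point of $\Map(\L^n_i\into\D^n,f)$. Using this identity, I would assemble the commutative diagram
\begin{equation*}
  \begin{xy}
    \SQUARE<1200,600>[\Map(\L^n_i\into\D^n,f)`Y_n`\Map(\L^n_i\into\D^n,g\*f)`\Map(\L^n_i\into\D^n,g);``\gamma`\delta]
    \morphism(-1000,600)<1000,0>[X_n`\Map(\L^n_i\into\D^n,f);\alpha]
    \morphism(-1000,600)|b|<1000,-600>[X_n`\Map(\L^n_i\into\D^n,g\*f);\beta]
  \end{xy}
\end{equation*}
in which the square is a pullback and $\beta$ is the composite of $\alpha$ with the left-hand edge.

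The conclusion is then formal. Because $f$ is a fibration, $\alpha$ is a cover; because $g$ is a fibration, $\gamma:Y_n\to\Map(\L^n_i\into\D^n,g)$ is a cover. The left-hand edge of the square, being a pullback of $\gamma$, is therefore a cover, and $\beta$, as a composite of two covers, is a cover. This is exactly the assertion that $g\*f$ is a fibration. I expect no genuine obstacle here: the single point requiring verification is the pullback identity above, and once it is in place the argument is word-for-word the one already used for hypercovers in Lemma~\ref{compose-Reedy}.
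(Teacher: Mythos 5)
Your proof is correct and is essentially identical to the paper's own argument: the paper proves Lemma~\ref{compose-fibrations} with exactly this commutative diagram (its diagram~\eqref{basic:0}), in which the square is a pullback, the right-hand vertical $Y_n\to\Map(\L^n_i\into\D^n,g)$ is a cover because $g$ is a fibration, and hence $X_n\to\Map(\L^n_i\into\D^n,g\*f)$ is a composite of two covers. Your explicit pointwise verification of the pullback identity $\Map(\L^n_i\into\D^n,f)\cong Y_n\times_{\Map(\L^n_i\into\D^n,g)}\Map(\L^n_i\into\D^n,g\*f)$ is a detail the paper leaves implicit, but the route is the same.
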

\begin{proof}
  Consider the commutative diagram
  \begin{equation}
    \label{basic:0}
    \begin{aligned}
      \begin{xy}
        \SQUARE<1200,600>[\Map(\L^n_i\into\D^n,f)`Y_n`%
        \Map(\L^n_i\into\D^n,g\*f)`\Map(\L^n_i\into\D^n,g);``\beta`]
        \morphism(-1000,600)<1000,0>[X_n`\Map(\L^n_i\into\D^n,f);]
        \morphism(-1000,600)|b|<1000,-600>[X_n`\Map(\L^n_i\into\D^n,g\*f);\alpha]
      \end{xy}
    \end{aligned}
  \end{equation}
  in which the square is a pullback. If $n>0$ and $0\le i\le n$, then
  $\beta$ is a cover, implying that $\alpha$ is a composition of two
  covers, and hence itself a cover.
\end{proof}

Next, we prove Axioms \ref{F:pullback} and \ref{F:trivial-pullback}.
\begin{lemma}
  \label{pullback-Reedy}
  If $p:X\to Y$ is a hypercover and $f:Z\to Y$ is a morphism, the
  morphism $q$ in the pullback diagram
  \begin{equation*}
    \begin{xy}
      \Square[X\times_YZ`X`Z`Y;`q`p`f]
    \end{xy}
  \end{equation*}
  is a hypercover.
\end{lemma}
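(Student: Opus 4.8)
The plan is to exhibit, at each level $n$, the matching map of $q$ as a base change of the matching map of $p$, and then invoke the stability of covers under pullback. Write $W=X\times_YZ$, so that $q:W\to Z$ is the projection and $W_n=X_n\times_{Y_n}Z_n$; we must show that $W_n\to\Map(\p\D^n\into\D^n,q)$ is a cover for every $n\ge0$. First I would record the structural fact that drives everything: for any finite simplicial set $T$ the functor $\Map(T,-):s\CV\to\CV$ is a finite limit, and limits commute with limits, so $\Map(T,-)$ preserves pullbacks. Applied to $W=X\times_YZ$ with $T=\p\D^n$ this gives
\begin{equation*}
  \Map(\p\D^n,W)=\Map(\p\D^n,X)\times_{\Map(\p\D^n,Y)}\Map(\p\D^n,Z).
\end{equation*}

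Next I would substitute this into the definition of the matching object of $q$ and simplify the iterated fibre product. Since $\Map(\p\D^n\into\D^n,q)=\Map(\p\D^n,W)\times_{\Map(\p\D^n,Z)}Z_n$, cancelling the factor $\Map(\p\D^n,Z)$ against the matching map $Z_n\to\Map(\p\D^n,Z)$ yields
\begin{equation*}
  \Map(\p\D^n\into\D^n,q)\cong\Map(\p\D^n,X)\times_{\Map(\p\D^n,Y)}Z_n.
\end{equation*}
Comparing this with $\Map(\p\D^n\into\D^n,p)=\Map(\p\D^n,X)\times_{\Map(\p\D^n,Y)}Y_n$, and using naturality of the matching maps (so that $Z_n\to\Map(\p\D^n,Z)\to\Map(\p\D^n,Y)$ agrees with $Z_n\to Y_n\to\Map(\p\D^n,Y)$), I would identify
\begin{equation*}
  \Map(\p\D^n\into\D^n,q)\cong\Map(\p\D^n\into\D^n,p)\times_{Y_n}Z_n,
\end{equation*}
the base change along $f_n:Z_n\to Y_n$, where $\Map(\p\D^n\into\D^n,p)\to Y_n$ is the projection onto the second factor.

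Finally, since $W_n=X_n\times_{Y_n}Z_n$ is the base change of $X_n$ along the same map $f_n$ — here one checks that the composite $X_n\to\Map(\p\D^n\into\D^n,p)\to Y_n$ is just $p_n$ — the matching map $W_n\to\Map(\p\D^n\into\D^n,q)$ is obtained from $X_n\to\Map(\p\D^n\into\D^n,p)$ by applying the base-change functor $(-)\times_{Y_n}Z_n$; that is, the evident square with these two matching maps as its vertical edges is a pullback. Because $p$ is a hypercover the top matching map is a cover, and pullbacks of covers are covers, so the bottom one is a cover and $q$ is a hypercover. The only real subtlety, and the step I would treat most carefully, is the bookkeeping in cancelling the iterated fibre products and verifying that the resulting square is genuinely cartesian; once the identification $\Map(\p\D^n\into\D^n,q)\cong\Map(\p\D^n\into\D^n,p)\times_{Y_n}Z_n$ is in place the conclusion is immediate. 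Note that no induction on $n$ is needed here, in contrast to the lemmas on $m$-expansions, precisely because the hypercover condition refers directly to $\p\D^n\into\D^n$.
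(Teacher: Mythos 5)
Your proof is correct and is essentially the paper's own argument: the paper likewise exhibits the square with vertical edges $X_n\times_{Y_n}Z_n \to \Map(\p\D^n\into\D^n,q)$ and $X_n \to \Map(\p\D^n\into\D^n,p)$ as a pullback and concludes by stability of covers under base change. The only difference is that you spell out the fibre-product bookkeeping (that $\Map(\p\D^n\into\D^n,q)\cong\Map(\p\D^n\into\D^n,p)\times_{Y_n}Z_n$, using that $\Map(T,-)$ preserves limits) which the paper leaves implicit.
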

\begin{proof}
  In the pullback diagram
  \begin{equation*}
    \begin{xy}
      \Square[X_n\times_{Y_n}Z_n`X_n`\Map(\p\D^n\into\D^n,q)`%
      \Map(\p\D^n\into\D^n,p);`\alpha`\beta`]
    \end{xy}
  \end{equation*}
  the morphism $\alpha$ is a cover because $\beta$ is.
\end{proof}

\begin{lemma}
  If $p:X\to Y$ is a fibration of $k$-groupoids, and $f:Z\to Y$ is a
  morphism of $k$-groupoids, then $X\times_YZ$ is a $k$-groupoid, and
  the morphism $q$ in the pullback diagram
  \begin{equation*}
    \begin{xy}
      \Square[X\times_YZ`X`Z`Y;`q`p`f]
    \end{xy}
  \end{equation*}
  is a fibration.
\end{lemma}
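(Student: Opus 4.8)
The plan is to derive everything from one structural fact: for a fixed finite simplicial set $T$ the functor $\Map(T,-)$ is a finite limit, so it commutes with pullbacks. Since $\CV$ has finite limits by \ref{finite}, the pullback $W:=X\times_YZ$ exists in $s\CV$ and is formed levelwise, $W_n=X_n\times_{Y_n}Z_n$; and for every $T$ we get a canonical identification $\Map(T,W)=\Map(T,X)\times_{\Map(T,Y)}\Map(T,Z)$. I would record this identification once and then feed it to each relevant cospan.

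I would prove that $q$ is a fibration first, because the $k$-groupoid property of $W$ then follows cheaply from it. Unwinding the definition of the matching object and substituting the identification for $\Map(\L^n_i,W)$, one finds $\Map(\L^n_i\into\D^n,q)\cong\Map(\L^n_i,X)\times_{\Map(\L^n_i,Y)}Z_n$ (the factor $\Map(\L^n_i,Z)$ cancels). Under this description, the morphism $W_n\to\Map(\L^n_i\into\D^n,q)$ is precisely the base change along $f_n:Z_n\to Y_n$ of the matching morphism $\alpha:X_n\to\Map(\L^n_i\into\D^n,p)$. The latter is a cover because $p$ is a fibration, so its base change $W_n\to\Map(\L^n_i\into\D^n,q)$ is a cover, since the pullback of a cover is a cover; this is exactly the fibration condition for $q$, and is the content of axiom~\ref{F:pullback}. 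This step is the precise analogue of Lemma~\ref{pullback-Reedy}, with $\p\D^n$ replaced by $\L^n_i$, so I would present it as a single pullback square.

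For $W$ a $k$-groupoid, the key is to route the horn morphism through $q$. Using the identification again, the horn morphism $W_n\to\Map(\L^n_i,W)$ factors as
\begin{equation*}
  W_n \too \Map(\L^n_i\into\D^n,q) \too \Map(\L^n_i,W),
\end{equation*}
where the first arrow is the cover just constructed, and the second is the projection off $\Map(\L^n_i,W)\times_{\Map(\L^n_i,Z)}Z_n$, i.e.\ the base change of the horn cover $Z_n\to\Map(\L^n_i,Z)$, a cover since $Z$ is a $k$-groupoid and $n>0$. A composite of covers is a cover, giving the cover condition. For $n>k$ both arrows become isomorphisms --- the first by pulling back the isomorphism $\alpha$ furnished by Lemma~\ref{fibrations} (legitimately applied to the fibration $p$ between the genuine $k$-groupoids $X$ and $Y$), the second by pulling back the isomorphism $Z_n\to\Map(\L^n_i,Z)$ --- so their composite is the required isomorphism.

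I expect the only real obstacle to be conceptual rather than computational: one cannot conclude that $W_n\to\Map(\L^n_i,W)$ is a cover directly from the three horn covers of $X$, $Y$ and $Z$, since a morphism of cospans whose three legs are covers need not induce a cover on pullbacks. The fibration hypothesis on $p$ is exactly what lets one funnel the horn morphism through $\Map(\L^n_i\into\D^n,q)$ and recover it as an honest composite of base-changed covers. The remaining work is the bookkeeping that identifies the iterated fibre products and checks that the two triangles commute, which is routine pullback-pasting once the naturality square relating $f_n$ to the horn restrictions is written down.
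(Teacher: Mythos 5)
Your proposal is correct and takes essentially the same route as the paper: your two base-change identifications are exactly the paper's two pullback squares, the first exhibiting $W_n\to\Map(\L^n_i\into\D^n,q)$ as the pullback along $f_n$ of the matching map $X_n\to\Map(\L^n_i\into\D^n,p)$, the second exhibiting the projection $\Map(\L^n_i\into\D^n,q)\to\Map(\L^n_i,W)$ as the pullback of the horn map $Z_n\to\Map(\L^n_i,Z)$, with the horn condition for $W$ obtained by composing. If anything you are slightly more explicit than the paper, which leaves the $n>k$ isomorphism of the first arrow (via Lemma~\ref{fibrations}) implicit.
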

\begin{proof}
  Given $n>0$ and $0\le i\le n$, we have a pullback square
  \begin{equation*}
    \begin{xy}
      \Square[X_n\times_{Y_n}Z_n`X_n`\Map(\L^n_i\into\D^n,q)`%
      \Map(\L_i^n\into\D^n,p);`\alpha`\beta`]
    \end{xy}
  \end{equation*}
  The morphism $\alpha$ is a cover because $\beta$ is.

  There is also a pullback square
  \begin{equation*}
    \begin{xy}
      \Square
      [\Map(\L^n_i\into\D^n,q)`Z_n`\Map(\L^n_i,X\times_YZ)`\Map(\L^n_i,Z);%
      `\gamma``]
   \end{xy}
  \end{equation*}
  If $Z$ is a $k$-groupoid, then $\gamma$ is a cover, and an
  isomorphism if $n>k$.
\end{proof}

Next, we prove that $s\CV$ is a descent category, with hypercovers as
covers: that is, we show that hypercovers satisfy Axiom
\ref{cancellation}.
\begin{lemma}
  \label{groupoid:complete}
  If $f:X\to Y$ and $g:Y\to Z$ are morphisms of simplicial spaces and
  $f$ and $gf$ are hypercovers, then $g$ is a hypercover.
\end{lemma}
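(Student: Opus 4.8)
The plan is to re-use the pullback square \eqref{basic:1} from the proof of Lemma~\ref{compose-Reedy}, but to run the cancellation axiom \ref{cancellation} in the reverse direction. Fix $n\ge0$; we must show that the matching morphism $\gamma:Y_n\to\Map(\p\D^n\into\D^n,g)$ is a cover. In \eqref{basic:1} this $\gamma$ is precisely the right-hand edge of the square, while the two legs $\alpha:X_n\to\Map(\p\D^n\into\D^n,f)$ and $\beta:X_n\to\Map(\p\D^n\into\D^n,g\*f)$ are the matching morphisms of $f$ and $g\*f$, which are covers by hypothesis. Since $\beta$ factors as the left-hand edge $\Map(\p\D^n\into\D^n,f)\to\Map(\p\D^n\into\D^n,g\*f)$ composed with $\alpha$, the cancellation axiom \ref{cancellation} shows that this left-hand edge is a cover.

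Next I would check that the bottom edge $\delta:\Map(\p\D^n\into\D^n,g\*f)\to\Map(\p\D^n\into\D^n,g)$ is a cover. This follows because $\delta$ is the base change, along the projection to $\Map(\p\D^n,Y)$, of the morphism $\Map(\p\D^n,X)\to\Map(\p\D^n,Y)$; and the latter is a cover by Lemma~\ref{cofibration} applied to the hypercover $f$ with $S=\emptyset$ and $T=\p\D^n$. Hence $\delta$ is a cover, since pullbacks of covers are covers.

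It remains to descend the conclusion from the square to its right edge $\gamma$. Because \eqref{basic:1} is a pullback, the top edge $\Map(\p\D^n\into\D^n,f)\to Y_n$ is the base change of $\delta$, and is therefore a cover. Moreover the composite of the top edge with $\gamma$ agrees, by commutativity of the square, with the composite of the left edge followed by $\delta$, both of which we have just shown to be covers; so this composite is a cover. A final application of \ref{cancellation}, cancelling the top edge, then yields that $\gamma$ is a cover. As $n$ was arbitrary, $g$ is a hypercover.

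The one point requiring care---and the reason the argument routes through the top edge rather than concluding directly---is that a descent category is not assumed to satisfy descent for covers: knowing that the base change of $\gamma$ (namely the left-hand edge) is a cover does not by itself make $\gamma$ a cover. Establishing that $\delta$ is a cover is exactly what converts this into a statement about the genuine factorization $\gamma\circ(\text{top edge})$, to which \ref{cancellation} applies.
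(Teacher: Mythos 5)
Your proof is correct, and its skeleton coincides with the paper's: both work with the pullback square \eqref{basic:1} and reduce, via Axiom~\ref{cancellation}, to showing that $\delta$ is a cover. Where you diverge is in that key step. The paper proves $\delta$ is a cover by a fresh skeletal induction, introducing the intermediate spaces $M_n(f,g,j)=\Map(\sk_j\D^n,X)\times_{\Map(\sk_j\D^n,Y)}\Map(\p\D^n\into\D^n,g)$ and showing that each stage $M_n(f,g,j)\to M_n(f,g,j-1)$ is a pullback of a product of matching covers of $f$. You instead observe that $\delta$ is globally the base change of $\Map(\p\D^n,X)\to\Map(\p\D^n,Y)$ along the projection $\Map(\p\D^n\into\D^n,g)\to\Map(\p\D^n,Y)$, and that the former morphism is a cover by Lemma~\ref{cofibration} with $S=\emptyset$ and $T=\p\D^n$. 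This is legitimate: the empty subset admits the filtration required in that lemma (it is just the skeletal filtration, with the $0$-cells attached along $\p\D^0=\emptyset$, whose matching condition is the $n=0$ case of the hypercover hypothesis), and the identification $\Map(\p\D^n\into\D^n,g\*f)\cong\Map(\p\D^n,X)\times_{\Map(\p\D^n,Y)}\Map(\p\D^n\into\D^n,g)$ holds by associativity of fibred products. What your route buys is concision: the paper's $M_n(f,g,j)$ induction is precisely the base-changed form of the filtration in the proof of Lemma~\ref{cofibration}, so you have replaced an inlined argument by a citation of an already-proved lemma. You also spell out the endgame that the paper compresses into a single sentence --- the top edge is the base change of $\delta$ along $\gamma$, hence a cover, so $\gamma\circ(\text{top})=\delta\circ(\text{left})$ is a composite of covers and \ref{cancellation} cancels the top edge --- and your closing remark correctly identifies why this detour is needed: covers are not assumed to descend along covers, so knowing that the left edge (the base change of $\gamma$ along $\delta$) is a cover says nothing about $\gamma$ directly.
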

\begin{proof}
  In diagram \eqref{basic:1}, $\alpha$ and $\beta$ are covers. We will
  show that $\delta$ is a cover: applying Axiom \ref{cancellation}, it
  follows that $\gamma$ is a cover.

  For $-1\le j\le n-1$, let
  \begin{equation*}
    M_n(f,g,j) = \Map(\sk_j\D^n,X) \times_{\Map(\sk_j\D^n,Y)} \Map(\p^n\D\into\D,g) ,
  \end{equation*}
  where $\sk_j\D^n$, the $j$-skeleton of $\D^n$, is the union of the
  $j$-simplices of $\D^n$. The pullback square
  \begin{equation*}
    \begin{xy}
      \Square[M_n(f,g,j)`\bigl(X_j\bigr)^{\binom{n+1}{j+1}}`%
      M_n(f,g,j-1)`\Map(\p\D^j\into\D,f)^{\binom{n+1}{j+1}};```]
    \end{xy}
  \end{equation*}
  shows that the morphism $M_n(f,g,j)\to M_n(f,g,j-1)$ is a
  cover. Since
  \begin{align*}
    M_n(f,g,-1) &\cong \Map(\p\D^n\into\D^n,g) \intertext{and}
    M_n(f,g,n-1) &\cong \Map(\p\D^n\into\D^n,g\*f) ,
  \end{align*}
  we see that the $\delta$ is a cover.
\end{proof}

In order to show that $k$-groupoids form a category of fibrant
objects, we will need to construct path spaces. In fact, the proof
requires iterated path spaces as well: it is convenient to organize
these into a simplicial functor $P_n$. The proof of Theorem~\ref{main}
actually only requires the functors $P_1$ and $P_2$ (and $P_0$, the
identity functor).
\begin{definition}
  Let $P_n:s\CV\to s\CV$ be the functor on simplicial spaces such that
  \begin{equation*}
    (P_nX)_m = \Map(\D^{m,n},X) ,
  \end{equation*}
  where $\D^{m,n}$ is the prism $\D^m\times\D^n$.
\end{definition}

The functor $P_n$ is the space of maps from the $n$-simplex $\D^n$ to
$X$; in particular, there is a natural isomorphism between $P_0X$ and
$X$, and $PX=P_1X$ is a path space for $X$. Note that $P_n$ preserves
finite limits, and in particular, it preserves the terminal object
$e$. Motivated by Brown's Lemma \ref{P(f)}, we make the following
definition.
\begin{definition}
  A morphism $f:X\to Y$ of $k$-groupoids is a \textbf{weak
    equivalence} if the fibration
  \begin{equation*}
    q(f):P(f)\too Y
  \end{equation*}
  is a hypercover, where $P(f)=X\times_YP_1Y$.
\end{definition}

In the case of Kan complexes, this characterization of weak
equivalences amounts to the vanishing of the relative simplicial
homotopy groups. (A similar approach is taken, in the setting of
simplicial sheaves, by Dugger and Isaksen \cite{di}.)

If $T$ is a finite simplicial set and $X$ is a simplicial space,
denote by $P_TX$ the simplicial space
\begin{equation*}
  (P_TX)_n = \Map(T,P_\bull X_n) \cong \Map(T\times\D^n,X) .
\end{equation*}
The following theorem will be proved in the next section.
\begin{theorem}
  \label{resolution}
  The functor
  \begin{equation*}
    P_\bull : s\CV \too s^2\CV
  \end{equation*}
  satisfies the following properties:
  \begin{enumerate}[label=\alph*)]
  \item \label{Resolution:fibrant} if $n\ge0$ and $f:X\to Y$ is a
    fibration (respectively hypercover), the induced morphism
    \begin{equation*}
      P_nX \too P_{\p\D^n}X \times_{P_{\p\D^n}Y} Y_n 
    \end{equation*}
    is a fibration (respectively hypercover);
  \item if $f:X\to Y$ is a fibration, $n>0$ and $0\le i\le n$, the
    induced morphism
    \begin{equation*}
      P_nX \too P_{\L^n_i}X \times_{P_{\L^n_i}Y} Y_n 
    \end{equation*}
    is a hypercover.
  \end{enumerate}
\end{theorem}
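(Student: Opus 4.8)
The plan is to regard $P_\bull X$ and $P_\bull Y$ as simplicial objects in $s\CV$ via $n\mapsto P_nX$, so that $P_\bull f$ becomes a morphism of such objects and the spaces in the statement are recognized as matching and horn spaces of $P_\bull f$. The computational engine is the exponential law
\begin{equation*}
  \Map(K,P_LX) \cong \Map(K\times L,X),
\end{equation*}
valid and natural for all finite simplicial sets $K,L$: it holds because $-\times L$ preserves colimits while $\Map(-,X)$ turns them into limits. Writing $Y_n$ for $P_nY=\Map(\D^n,P_\bull Y)$, this identifies $P_{\p\D^n}X=\Map(\p\D^n,P_\bull X)$ and $P_{\L^n_i}X=\Map(\L^n_i,P_\bull X)$, so that the morphism in \ref{Resolution:fibrant} is the relative matching morphism of $P_\bull f$ and the morphism in (b) is its relative $\L^n_i$-horn morphism, both taken in $s\CV$.

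First I would reduce each of the three assertions to a single statement about restriction maps out of the prism $\D^n\times\D^m$. Testing a morphism of $s\CV$ for being a fibration applies the internal horn condition $\L^m_j\into\D^m$, and testing it for being a hypercover applies the internal boundary condition $\p\D^m\into\D^m$; feeding these into the morphisms of (a) and (b) and recombining the resulting fibre products by the exponential law shows that the theorem is equivalent to the following, for the given fibration (resp.\ hypercover) $f$:
\begin{enumerate}[label=\roman*)]
\item the fibration case of (a) asks that
  \begin{equation*}
    \Map(\D^n\times\D^m,X) \too \Map\bigl((\p\D^n\times\D^m)\cup(\D^n\times\L^m_j)\into\D^n\times\D^m,\ f\bigr)
  \end{equation*}
  be a cover for all $n\ge0$, $m>0$, $0\le j\le m$;
\item the hypercover case of (a) asks that
  \begin{equation*}
    \Map(\D^n\times\D^m,X) \too \Map\bigl((\p\D^n\times\D^m)\cup(\D^n\times\p\D^m)\into\D^n\times\D^m,\ f\bigr)
  \end{equation*}
  be a cover for all $n,m\ge0$;
\item assertion (b) asks that
  \begin{equation*}
    \Map(\D^n\times\D^m,X) \too \Map\bigl((\L^n_i\times\D^m)\cup(\D^n\times\p\D^m)\into\D^n\times\D^m,\ f\bigr)
  \end{equation*}
  be a cover for all $n>0$, $0\le i\le n$, $m\ge0$.
\end{enumerate}

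Case \roman{enumi}) with ii is immediate: the displayed subcomplex is a simplicial subset of the prism $\D^n\times\D^m$, so Lemma~\ref{cofibration} applies directly to the hypercover $f$. Cases i) and iii) are interchanged by the symmetry of $\D^n\times\D^m$ and both follow from Lemma~\ref{expansion}(ii) applied to the fibration $f$, \emph{provided} one knows the combinatorial heart of the argument, which I expect to be the main obstacle: for $q>0$, $0\le i\le q$ and $p\ge0$, the inclusion
\begin{equation*}
  (\L^q_i\times\D^p)\cup(\D^q\times\p\D^p) \too \D^q\times\D^p
\end{equation*}
is an expansion. This is the product counterpart of Lemma~\ref{proper}, and since the present theorem only demands that the maps be covers, I only need the $1$-expansion (cover) conclusion, not an $m$-expansion refinement. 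To prove it I would use the shuffle (staircase) decomposition of the prism: the nondegenerate simplices of $\D^q\times\D^p$ are the chains in the poset $[q]\times[p]$, the top-dimensional ones being the $\binom{p+q}{p}$ monotone lattice paths from $(0,0)$ to $(q,p)$. One orders these paths (for instance lexicographically in their step sequences) and builds the filtration required by the definition of an expansion by attaching, at each stage, the corresponding $(p+q)$-simplex together with its still-missing faces. Because the subcomplex already contains every vertex of the prism when $p\ge1$ (and reduces to the horn $\L^q_i$ itself when $p=0$), each attachment has dimension $\ge1$; the crux is to check that at every stage exactly one face of the simplex being attached is absent, so that the attachment is a pushout along a horn $\L^{p+q}_\ell\into\D^{p+q}$. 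This is the classical fact that the pushout--product of a horn inclusion with a boundary inclusion is anodyne, here upgraded to the cellular notion of an expansion, and tracking which face is omitted as a function of the chosen ordering is where essentially all of the work lies.
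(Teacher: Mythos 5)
Your reduction is correct, and it is exactly the paper's: via the exponential law $\Map(K,P_LX)\cong\Map(K\times L,X)$, parts (a) and (b) come down to the inclusions $(\L^q_i\times\D^p)\cup(\D^q\times\p\D^p)\into\D^q\times\D^p$ (and its mirror with the horn in the other factor) being expansions, applied through Lemma~\ref{expansion}, with the hypercover case of (a) disposed of by Lemma~\ref{cofibration} applied to the boundary of the prism, just as you say; your observation that only the $1$-expansion (cover) conclusion is needed, not the $m$-expansion refinement, is also sound for the statement as given. The genuine gap is that you have not proved the expansion lemma --- the paper's Lemma~\ref{Moore} --- and the mechanism you sketch for it would fail. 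You propose to linearly order the top-dimensional shuffles and attach each one by a single horn pushout $\L^{p+q}_\ell\into\D^{p+q}$, the ``crux'' being that exactly one face of the simplex being attached is absent at each stage. No ordering can achieve this: a pushout along $\L^{p+q}_\ell\into\D^{p+q}$ adjoins exactly one codimension-one face together with the top cell, whereas in general the first shuffle to be attached has at least two faces missing. Concretely, take $q=2$, $p=1$, $i=1$: relative to $(\L^2_1\times\D^1)\cup(\D^2\times\p\D^1)$, each of the three $3$-dimensional shuffles of $\D^2\times\D^1$ has exactly two of its four faces in the subcomplex (for the shuffle through $(1,0)$ and $(1,1)$, the faces over the edges $\{0,1\}$ and $\{1,2\}$ are present, while both faces over the full triangle are absent, and similarly for the other two shuffles). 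So whichever shuffle you attach first cannot be adjoined by a single top-dimensional horn.

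This is precisely why the paper's proof of Lemma~\ref{Moore} has the structure it does: a shuffle $\pi$ is attached not by one top-dimensional horn but by an expansion in the sense of Lemma~\ref{proper} --- the faces of $\pi$ already present form a union of $0<m'\le p+q$ of its faces, and the inclusion of such a union into the simplex is a composite of horn pushouts in a range of dimensions, which adjoins the missing lower faces along the way. Making this run requires global bookkeeping: the paper filters the shuffles by the invariant $b(\pi,i)=\sum_{j\le i}a_j(\pi)-\sum_{j>i}a_j(\pi)$ and verifies, case by case, both that at least one face of $\pi$ is still absent at stage $b(\pi,i)$ and that enough faces are present, lying either in the initial subcomplex or in shuffles of strictly smaller $b$-value. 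Your appeal to the classical fact that the pushout-product of a horn inclusion with a boundary inclusion is anodyne does not substitute for this: anodyne extensions form a saturated class and the classical argument exploits that saturation (retracts, cobase change), whereas the notion of expansion needed here is strictly cellular --- a finite composite of horn pushouts with no retracts available --- so the explicit filtration is genuinely necessary, and it is where the paper's proof, unlike your sketch, does the work.
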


In particular, the functor $P_1$ satisfies the conditions for a
(functorial) path space in a category of fibrant objects: the
simplicial morphism $P_1X\to X\times X$ is a fibration, and the face
maps $P_1X\to X$ are hypercovers. Lemma \ref{P(f)} now implies the
following.
\begin{lemma}
  Axiom \ref{factorize} holds in $s_k\CV$.
\end{lemma}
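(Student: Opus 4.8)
The plan is to specialize the construction in the proof of Lemma~\ref{P(f)} to the functorial path space $P_1$, and then to check, by unwinding the definition, that the section that construction produces is a weak equivalence. Thus the whole statement reduces to a single point: that the canonical section $s(f)$ of the trivial fibration $p(f)$ is a weak equivalence in the intrinsic sense fixed by the definition preceding the lemma.

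First I would record that $P_1$ genuinely is a path space. Taking $n=1$ in Theorem~\ref{resolution}\ref{Resolution:fibrant} and using the identification $P_{\p\D^1}X\cong X\times X$, the morphism $P_1X\to X\times X$ is a fibration, while each face map $\p_0,\p_1:P_1X\to X$ is a hypercover, with common section the degeneracy $s_0:X\to P_1X$. This is exactly the data required to run the proof of Lemma~\ref{P(f)} with $PY=P_1Y$ and $s=s_0$. So, given $f:X\to Y$, I would form $P(f)=X\times_YP_1Y$, the pullback of $\p_0:P_1Y\to Y$ along $f$. Then $p(f):P(f)\to X$ is a hypercover by Lemma~\ref{pullback-Reedy}, with section $s(f)$ induced by $1_X$ and $s_0\circ f$; rewriting $P(f)$ as the pullback of the fibration $P_1Y\to Y\times Y$ along $f\times1_Y$ shows that $p(f)\times q(f)$ is a fibration, and composing with the projection $X\times Y\to Y$ (a fibration since $X$ is fibrant) shows that $q(f):P(f)\to Y$ is a fibration. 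This produces the factorization $f=q(f)\circ s(f)$ with $q(f)$ a fibration, leaving only the claim that $s(f)$ is a weak equivalence.

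The hard part is precisely this last claim, because a weak equivalence is defined intrinsically by the requirement that $q(s(f)):P(s(f))\to P(f)$ be a hypercover, where $P(s(f))=X\times_{P(f)}P_1P(f)$ is a path space of a path space. I would first dispose of the easy half: every hypercover is a weak equivalence. Indeed, for a hypercover $g:A\to B$ the projection $P(g)\to P_1B$ is the pullback of $g$ along $\p_0$, hence a hypercover by Lemma~\ref{pullback-Reedy}, so $q(g)=\p_1$ composed with this projection is a composite of hypercovers and therefore a hypercover by Lemma~\ref{compose-Reedy}; in particular $p(f)$ itself is a weak equivalence. The genuine obstacle is that $s(f)$ is only a \emph{section} of the hypercover $p(f)$, not a hypercover, so it cannot be handled by this pullback-and-compose argument. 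Verifying directly that such a section is a weak equivalence forces one to analyze $P_1P(f)\cong X\times_YP_2Y$ and the endpoint maps out of it, which is exactly where the iterated path-space functor $P_2$ and the full strength of Theorem~\ref{resolution} (in particular part~(b)) enter. This is consistent with the remark that the proof of Theorem~\ref{main} requires $P_2$: the section step is the reason. (Should the two-out-of-three property of the intrinsic weak equivalences already be in hand, this step collapses, since $p(f)\circ s(f)=1_X$ and both $1_X$ and $p(f)$ are weak equivalences would immediately force $s(f)$ to be one; but that property itself rests on the same $P_2$ analysis.)
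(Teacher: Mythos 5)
Your proposal is correct and follows essentially the same route as the paper: the paper's entire proof is the sentence preceding the lemma, recording that $P_1$ is a functorial path space ($P_1X\to X\times X$ a fibration, the face maps $P_1X\to X$ hypercovers, both from Theorem~\ref{resolution}) and then invoking the construction in the proof of Lemma~\ref{P(f)} to get $f=q(f)\circ s(f)$. Your dissection of the residual point --- that $s(f)$ must be a weak equivalence in the intrinsic sense, i.e.\ that $q(s(f))$ is a hypercover --- is exactly what the paper leaves implicit, and your parenthetical is the right way to discharge it without circularity: the lemma that $g$ and $g\*f$ weak equivalences force $f$ to be one (resting on Lemma~\ref{Hard} and Lemma~\ref{groupoid:complete}, hence on Theorem~\ref{resolution} but not on Axiom~\ref{factorize}) applies with $g=p(f)$ and $g\*f=1_X$, since $p(f)$ is a weak equivalence by your hypercover argument and $q(1_X)=\p_1\colon P_1X\to X$ is a hypercover. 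Two small corrections. First, the face maps $\p_0,\p_1\colon P_1X\to X$ being hypercovers is part~b) of Theorem~\ref{resolution} (with $Y=e$, $n=1$, so that $P_{\L^1_i}X\cong X$), not part~a). Second, and more substantively, your claimed identification $P_1P(f)\cong X\times_YP_2Y$ is false: since $P_1$ preserves finite limits, $P_1P(f)\cong P_1X\times_{P_1Y}P_1P_1Y$, and $P_1P_1Y$ parametrizes maps out of the square $\D^1\times\D^1$, not out of $\D^2$. In the paper, $P_2$ enters not through $P_1P(f)$ but through the pullback $P(g,f)=P(f)\times_{P_1Z}P_2Z$ in the proofs of the composition and two-out-of-three lemmas. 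Your broader point survives --- the section step is ultimately underwritten by the $P_2$ analysis --- but via those lemmas, not via the direct computation you sketch; since you never actually pursue that direct route and your fallback is the valid one, this slip does not compromise the proof.
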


\begin{lemma}
  The weak equivalences form a subcategory of $s_k\CV$.
\end{lemma}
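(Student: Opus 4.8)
The plan is to verify the two defining properties of a subcategory: that every identity is a weak equivalence, and that weak equivalences are closed under composition. The first is immediate from the definition. Indeed, $P(1_X)=X\times_X P_1X\cong P_1X$, under which identification the projection $p(1_X)$ becomes the face map $\partial_0\colon P_1X\to X$ and the comparison morphism $q(1_X)$ becomes $\partial_1\colon P_1X\to X$. Since the face maps of the path space are hypercovers (as noted after Theorem~\ref{resolution}), $q(1_X)$ is a hypercover and $1_X$ is a weak equivalence.

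For composition it is cleanest to pass through the characterization, in the spirit of Lemma~\ref{P(f)}, that a morphism $f\colon X\to Y$ of $k$-groupoids is a weak equivalence if and only if it factors as $f=r\,s$ with $r$ a hypercover and $s$ a section of a hypercover. The forward implication is formal: the factorization $f=q(f)\,s(f)$ from the proof of Lemma~\ref{P(f)} exhibits $s(f)$ as a section of the hypercover $p(f)$, which is a pullback of $\partial_0\colon P_1Y\to Y$ along $f$ and hence a hypercover by Lemma~\ref{pullback-Reedy}, while $q(f)$ is a hypercover by the very definition of weak equivalence.

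Granting this characterization, closure under composition is a pullback construction. Suppose $f=r_f\,s_f$ and $g=r_g\,s_g$, where $p_f\colon W_f\to X$, $r_f\colon W_f\to Y$, $p_g\colon W_g\to Y$, $r_g\colon W_g\to Z$ are hypercovers, $p_f s_f=1_X$, and $p_g s_g=1_Y$. Form the pullback $W=W_f\times_Y W_g$ of $r_f$ and $p_g$, with projections $\pi_f\colon W\to W_f$ and $\pi_g\colon W\to W_g$; by Lemma~\ref{pullback-Reedy} both projections are hypercovers, and by Lemma~\ref{compose-Reedy} the composites $p_f\pi_f\colon W\to X$ and $r_g\pi_g\colon W\to Z$ are hypercovers. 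The relation $r_f s_f=f=p_g(s_g f)$ induces a morphism $s\colon X\to W$ with $\pi_f s=s_f$ and $\pi_g s=s_g f$; then $p_f\pi_f s=p_f s_f=1_X$, so $s$ is a section of the hypercover $p_f\pi_f$, while $r_g\pi_g s=r_g s_g f=gf$. Thus $gf$ factors as the hypercover $r_g\pi_g$ preceded by the section $s$, and so is a weak equivalence.

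The substantive point, and the main obstacle, is the converse half of the characterization: that every morphism of the form $r\,s$ (with $r$ a hypercover and $s$ a section of a hypercover $p$) is again a weak equivalence in the sense of the definition, i.e. that its comparison morphism to the target is a hypercover. Viewing $rs$ as the composite of $s$ followed by $r$, the same pullback description that underlies the composition argument identifies $q(rs)$ with $q(r)$ composed with the pullback of $s$ along the hypercover $p(r)\colon P(r)\to W$; here $q(r)$ is a hypercover (being $\partial_1\colon P_1Y\to Y$ composed with the pullback of the hypercover $r$ along $\partial_0$, by Lemmas~\ref{pullback-Reedy} and~\ref{compose-Reedy}), but the pullback of $s$ is only again a section of a hypercover, so no purely formal manipulation concludes. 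This is exactly the assertion that a section of a hypercover is a weak equivalence, and it is here that the homotopical content enters: one must invoke Theorem~\ref{resolution}, whose control over the path functor $P_\bull$ produces the homotopy exhibiting $s$ as a homotopy inverse of $p$ and thereby promotes the factorization to a genuine hypercover. Once this converse is in hand, the identity and composition statements above complete the proof that the weak equivalences form a subcategory.
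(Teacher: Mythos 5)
Your identity argument is correct, and your pullback construction for composing spans is the standard (and correct) argument that the \emph{factorization} property---``$f=rs$ with $r$ a hypercover and $s$ a section of a hypercover''---is closed under composition. But the lemma is about the operative definition in Section~3, namely that $q(f):P(f)\to Y$ is a hypercover, and the bridge between the two notions is exactly the converse you flag as ``the main obstacle.'' That obstacle is a genuine gap, not a step that Theorem~\ref{resolution} supplies. Your own reduction shows why: $q(rs)$ factors as $q(r)$ composed with a pullback of $s$, and a pullback of a section of a hypercover is again only a section of a hypercover, so the argument stalls precisely at the assertion that a section of a hypercover is a weak equivalence in the $q$-sense. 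No homotopy is actually constructed in your last paragraph, and even granted a homotopy between $sp$ and the identity, one still needs a mechanism to convert it into the statement that a comparison morphism is a levelwise cover. In the paper that conversion goes through the cancellation property of hypercovers (Lemma~\ref{groupoid:complete}), and the fact that a section of a hypercover is a weak equivalence is only available \emph{after} this lemma, via the two-out-of-three results at the end of Section~3, whose proofs rest on Lemma~\ref{Hard}---the hardest result of that section. So deriving your characterization first and then deducing composition from it is circular at this point in the development.

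The paper's proof avoids the characterization entirely and argues directly. Given weak equivalences $f:X\to Y$ and $g:Y\to Z$, it forms the double path object $P(g,f)$ as a pullback of $P_2Z\to P_1Z$ along $P(f)\to P_1Y\to P_1Z$; this is why Theorem~\ref{resolution} is stated for all $P_n$ and not just $P_1$. Using Theorem~\ref{resolution} for $P_2$, together with Lemmas~\ref{pullback-Reedy} and~\ref{compose-Reedy}, the morphisms $P(g,f)\to P(g)\times_YP(f)$, $P(g,f)\to P(gf)\times_XP(f)$, $P(g)\times_YP(f)\to P(g)$ (a pullback of the hypercover $q(f)$), $P(gf)\times_XP(f)\to P(gf)$ (a pullback of the hypercover $p(f)$), and $q(g):P(g)\to Z$ are all hypercovers. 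Two applications of the cancellation Lemma~\ref{groupoid:complete} then show first that $P(gf)\times_XP(f)\to Z$, and finally that $q(gf):P(gf)\to Z$, are hypercovers. If you want to rescue your route, you would in effect have to reproduce this diagram chase to prove that a section of a hypercover is a weak equivalence---at which point the direct argument is shorter.
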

\begin{proof}
  Let $f:X\to Y$ and $g:Y\to Z$ be weak equivalences in $s_k\CV$. Form
  the pullback
  \begin{equation*}
    \begin{xy}
      \SQUARE/>`>`>`/<2000,500>[P(g,f)`P_2Z`P(f)\cong X\times_YP_1Y`P_1Z;%
      ``\p_0`]
      \morphism|b|<1200,0>[P(f)\cong X\times_YP_1Y`P_1Y;f\times_YP_1Y]
      \morphism(1200,0)|b|<800,0>[P_1Y`P_1Z;P_1g]
    \end{xy}
  \end{equation*}
  In the following commutative diagram, the solid arrows are
  hypercovers:
  \begin{equation*}
    \begin{xy}
      \SQUARE/{->}`{->}`{}`{->}/<1200,600>%
      [P(g,f)`P(g\*f)\times_XP(f)`P(g)\times_YP(f)`P(g);```]
      \SQUARE(1200,0)/{->}`{}`{-->}`{->}/<1200,600>%
      [P(g\*f)\times_XP(f)`P(g\*f)`P(g)`Z;```]
    \end{xy}
  \end{equation*}
  The result now follows from Lemma~\ref{groupoid:complete}.
\end{proof}

\begin{lemma}
  If $f:X\to Y$ and $g:Y\to Z$ are morphisms of $k$-groupoids such
  that $f$ and $gf$ are weak equivalences, then $g$ is a weak
  equivalence.
\end{lemma}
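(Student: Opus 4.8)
The plan is to unwind the definition: $g$ is a weak equivalence precisely when the fibration $q(g):P(g)\to Z$ is a hypercover, where $P(g)=Y\times_ZP_1Z$. So the entire task is to prove that $q(g)$ is a hypercover, given that $q(f):P(f)\to Y$ and $q(gf):P(gf)\to Z$ are hypercovers. The governing tool will be the cancellation property of hypercovers, Lemma~\ref{groupoid:complete} (i.e.\ Axiom~\ref{cancellation} applied to the descent structure on $s\CV$).

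First I would settle the clean case in which $f$ is itself a hypercover. There is a canonical identification $P(gf)=X\times_ZP_1Z\cong X\times_YP(g)$, under which $q(gf)$ factors as $q(g)\circ\phi$, where $\phi:P(gf)\to P(g)$ is the base change of $f$ along $p(g):P(g)\to Y$. If $f$ is a hypercover, then $\phi$ is a hypercover by Lemma~\ref{pullback-Reedy}; since $q(gf)=q(g)\circ\phi$ is a hypercover, Lemma~\ref{groupoid:complete} forces $q(g)$ to be a hypercover. Thus $g$ is a weak equivalence whenever $f$ is a hypercover and $gf$ is a weak equivalence. To reach the general case, I would factor $f=q(f)\circ s(f)$ through its path-space replacement (Lemma~\ref{P(f)}), where $q(f):P(f)\to Y$ is a hypercover and $s(f)$ is the canonical constant-path section of the hypercover $p(f):P(f)\to X$. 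Writing $gf=(g\,q(f))\circ s(f)$ and applying the case just proved to the hypercover $q(f)$, the problem reduces to showing that $h:=g\,q(f)$ is a weak equivalence, knowing only that $h\circ s(f)=gf$ is one.

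This final \emph{section step} is where I expect the main obstacle to lie. The map $\phi$ attached to the merely-weak-equivalence $f$ is not a hypercover, so one cannot cancel directly; and because covers in a general descent category need not admit sections, one may not simply choose a homotopy $s(f)\,p(f)\simeq 1$ and transport along it. Instead I would transfer the hypercover property across the canonical contraction of $P_1$, exactly as in the proof that weak equivalences are closed under composition: build an iterated path-space object analogous to the $P(g,f)$ of that proof, using $P_2Y$ and the prism $\Delta^1\times\Delta^1$, which relates $P(h)$ to $P(h\,s(f))$; verify that every structural map in the resulting diagram is a cover, invoking Theorem~\ref{resolution}\ref{Resolution:fibrant} together with stability of covers under pullback; and then apply Lemma~\ref{groupoid:complete} to carry the hypercover property from $q(h\,s(f))=q(gf)$ to $q(h)$. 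The delicate point is to organize this diagram so that the two covers being cancelled are exactly $q(f)$ (from $f$ being a weak equivalence) and $q(gf)$, with all remaining edges supplied formally by the prism structure of the path-space functors rather than by any chosen section.
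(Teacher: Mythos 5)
Your first two steps are correct. The identification $P(gf)=X\times_ZP_1Z\cong X\times_YP(g)$, with $q(gf)=q(g)\circ\phi$ where $\phi$ is the pullback of $f$ along $p(g)$, together with Lemma~\ref{pullback-Reedy} and the cancellation Lemma~\ref{groupoid:complete}, cleanly disposes of the case where $f$ is a hypercover; and since $f$ being a weak equivalence means precisely that $q(f)$ is a hypercover, the factorization $f=q(f)\circ s(f)$ from Lemma~\ref{P(f)} validly reduces the lemma to the single remaining claim: if $s$ is a section of a hypercover $p$ and $h\circ s$ is a weak equivalence, then $h$ is a weak equivalence. This architecture is genuinely different from the paper's, which performs no case split at all.

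The problem is that this remaining claim is where all of the homotopy-theoretic content of the lemma lives, and your proposal does not prove it --- it only announces a plan. Observe that the section case is itself an instance of the very lemma being proved (take $f=s$), so the reduction has not made anything strictly easier: you cannot invoke two-out-of-three to see that $s$ is a weak equivalence (circular), you correctly note that the comparison map $P(hs)\to P(h)$ is a pullback of $s$ rather than a hypercover (so direct cancellation fails), and you correctly note that covers need not split (so no homotopy $sp\simeq 1$ may be chosen). What remains is exactly the choice-free diagram that the paper constructs for arbitrary $f$ in one stroke: the double path object $P(g,f)=P(f)\times_{P_1Z}P_2Z$, whose structural restriction maps $P(g,f)\to P(gf)\times_XP(f)$ and $P(g,f)\to P(g)\times_YP(f)$ are hypercovers (by the expansion machinery behind Theorem~\ref{resolution}), while $P(g)\times_YP(f)\to P(g)$ is a pullback of the hypercover $q(f)$ and $P(gf)\times_XP(f)\to P(gf)$ is a pullback of $p(f)$; composing (Lemma~\ref{compose-Reedy}) and cancelling twice via Lemma~\ref{groupoid:complete} then extracts that $q(g)$ is a hypercover. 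Your sentence ``build an iterated path-space object using $P_2Y$ and the prism, which relates $P(h)$ to $P(h\,s(f))$'' is precisely the assertion that needs proof: you specify neither the object, nor its maps to $P(h)$ and $P(hs)$, nor why those maps are hypercovers, and verifying these is the actual work. So the proposal has a genuine gap at its decisive step; filling it would essentially reproduce the paper's $P(g,f)$ argument, at which point your case analysis becomes unnecessary.
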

\begin{proof}
  In the following commutative diagram, the solid arrows are hypercovers:
  \begin{equation*}
    \begin{xy}
      \SQUARE/{->}`{->}`{}`{->}/<1200,600>%
      [P(g,f)`P(g\*f)\times_XP(f)`P(g)\times_YP(f)`P(g);```]
      \SQUARE(1200,0)/{->}`{}`{->}`{-->}/<1200,600>%
      [P(g\*f)\times_XP(f)`P(g\*f)`P(g)`Z;```]
    \end{xy}
  \end{equation*}
  Again, the result follows from Lemma~\ref{groupoid:complete}.
\end{proof}

\begin{lemma}
  A fibration $f:X\to Y$ of $k$-groupoids is a weak equivalence if and
  only if it is a hypercover.
\end{lemma}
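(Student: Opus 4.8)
The statement asserts that, among fibrations, the hypercovers coincide with the weak equivalences. I would prove the two implications separately. The implication ``hypercover $\Rightarrow$ weak equivalence'' is formal and in fact holds for an arbitrary morphism, whereas ``fibration and weak equivalence $\Rightarrow$ hypercover'' is the real content of the lemma and will be the main obstacle.

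For the first implication, recall that $P(f)=X\times_YP_1Y$ fits in a pullback square in which the projection $\pi:P(f)\to P_1Y$ is the pullback of $f:X\to Y$ along $\p_0:P_1Y\to Y$, while $p(f):P(f)\to X$ is the pullback of $\p_0$ along $f$; moreover $q(f)=\p_1\circ\pi$. If $f$ is a hypercover, then Lemma~\ref{pullback-Reedy} shows that $\pi$ is a hypercover, and since the face map $\p_1:P_1Y\to Y$ is a hypercover, Lemma~\ref{compose-Reedy} shows that $q(f)=\p_1\pi$ is a hypercover. Thus $f$ is a weak equivalence. (This argument does not use that $f$ is a fibration, so it also records the fact that every hypercover is a weak equivalence.)

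For the converse, suppose $f$ is a fibration and $q(f)$ is a hypercover; I would prove that $X_n\to\Map(\p\D^n\into\D^n,f)$ is a cover by induction on $n$. The base case $n=0$ already exhibits the mechanism. Since $\Map(\L^1_i\into\D^1,f)\cong X_0\times_{Y_0}Y_1\cong(P(f))_0$ for a suitable $i$, the fibration hypothesis at $n=1$ gives that the matching map $\lambda=(\p_0,f_1):X_1\to(P(f))_0$ is a cover, while the hypercover hypothesis for $q(f)$ at $n=0$ gives that $q(f)_0:(P(f))_0\to Y_0$ is a cover (here $\p\D^0=\emptyset$, so $\Map(\p\D^0\into\D^0,q(f))=Y_0$). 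Their composite is a cover, and it equals $f_0\circ\p_1$, where $\p_1:X_1\to X_0$ is a cover by Corollary~\ref{face-cover}; the cancellation axiom~\ref{cancellation} then yields that $f_0:X_0\to Y_0$ is a cover. Note that the role of the weak-equivalence hypothesis is precisely to supply the ``extra'' cover $q(f)_0$ that the horn-fillers of $f$ cannot produce on their own.

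The inductive step proceeds along the same pattern: at level $n$ one plays the horn-fillers of the fibration $f$ off against the covers coming from the hypercover $q(f)$, cancelling them via axiom~\ref{cancellation} and the face covers of Corollary~\ref{face-cover}. (An equivalent organizing principle is that $p(f)$ is a hypercover, so by Lemma~\ref{groupoid:complete} it is enough to show $f\,p(f)=\p_0\pi$, hence $\pi$, is a hypercover.) I expect the main obstacle to be the bookkeeping of the higher matching and horn objects: the weak-equivalence data naturally lives over the prism $\D^n\times\D^1$, equivalently over the join $\D^1\star\D^{n-1}$, and must be fed through the skeletal filtration and expansion technique of Lemmas~\ref{expansion} and~\ref{groupoid:complete} so that it complements the fibration's horn-fillers \emph{exactly}. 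Packaging the hypothesis that $q(f)$ is a hypercover into the correct matching object at each degree is the delicate point; it is this bookkeeping that the direct join-formula characterization of weak equivalences in Section~5 is designed to make systematic.
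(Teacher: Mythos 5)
Your first direction is correct: $\pi:P(f)\to P_1Y$ is a pullback of $f$ along $\p_0$, hence a hypercover by Lemma~\ref{pullback-Reedy}; the face map $\p_1:P_1Y\to Y$ is a hypercover by Theorem~\ref{resolution}; and $q(f)=\p_1\circ\pi$ is then a hypercover by Lemma~\ref{compose-Reedy}. As you note, this does not use that $f$ is a fibration, and it is a legitimate (marginally more general) alternative to the paper's treatment of this implication.

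The converse, however, contains a genuine gap, in two places. First, your parenthetical ``organizing principle'' is circular: Lemma~\ref{groupoid:complete} cancels only the \emph{first} factor of a composition (if $h$ and $g\circ h$ are hypercovers, then $g$ is), so from the hypercovers $\p_1$ and $q(f)=\p_1\circ\pi$ you cannot conclude anything about $\pi$; and since $\pi$ is by construction the pullback of $f$ along $\p_0$, proving that $\pi$ (equivalently $f\circ p(f)=\p_0\circ\pi$) is a hypercover is just the original problem restated. Second, the inductive step you defer as ``bookkeeping'' is the entire content of the lemma: the base-case mechanism (play one horn-filler of $f$ against one cover from $q(f)$, then cancel a face map using Corollary~\ref{face-cover} and Axiom~\ref{cancellation}) does not iterate naively, because at level $n$ the weak-equivalence datum lives over the prism $\D^n\times\D^1$, and matching it against $\Map(\p\D^n\into\D^n,f)$ requires the prism-expansion combinatorics of Lemma~\ref{Moore}. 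The paper's actual proof needs no induction at all: because $f$ is a fibration, part~b) of Theorem~\ref{resolution} (which is where the Moore combinatorics are spent) gives that the canonical morphism $P_1X\to P(f)=X\times_YP_1Y$ is a hypercover, and the face map $P_1X\to X$ is a hypercover; in the commutative square with $q(f)$ and $f$ as the other two sides, a single application of Lemma~\ref{groupoid:complete} then yields \emph{both} implications at once. The solid arrow $P_1X\to P(f)$ is precisely the idea missing from your proposal; without it, carrying out your induction would require something like the fibered-product apparatus of Lemma~\ref{Hard} ($V(f,g)$, $W(f,g)$ and the horn swap), which is far from routine bookkeeping.
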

\begin{proof}
  In the following commutative diagram, the solid arrows are
  hypercovers:
  \begin{equation*}
    \begin{xy}
      \SQUARE/{->}`{->}`{-->}`{-->}/<800,500>[P_1X`P(f)`X`Y;``q(f)`f]
    \end{xy}
  \end{equation*}
  It follows by Lemma \ref{groupoid:complete} that $f$ is a hypercover
  if and only if $q(f)$ is.
\end{proof}

In order to complete the proof that $s_k\CV$ is a category with weak
equivalences, we need the following result, which is familiar in the
case where $\CV$ is a topos.
\begin{lemma}
  \label{Hard}
  If $f:X\to Y$ is a fibration of $k$-groupoids, and $g:Y\to Z$ and
  $g\*f$ are hypercovers, then $f$ is a hypercover.
\end{lemma}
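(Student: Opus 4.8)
The plan is to prove, by induction on $n$, that the relative matching morphism $\alpha_n\colon X_n\to\Map(\p\D^n\into\D^n,f)$ is a cover; this is exactly the assertion that $f$ is a hypercover. First I would set up diagram \eqref{basic:1} for the present $f$ and $g$. Since $g$ and $g\*f$ are hypercovers, the right-hand morphism $\gamma\colon Y_n\to\Map(\p\D^n\into\D^n,g)$ and the morphism $\beta\colon X_n\to\Map(\p\D^n\into\D^n,g\*f)$ are covers, and because the square is a pullback the comparison $\Map(\p\D^n\into\D^n,f)\to\Map(\p\D^n\into\D^n,g\*f)$, being a pullback of $\gamma$, is a cover. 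Moreover, running the skeletal filtration $M_n(f,g,j)$ of Lemma~\ref{groupoid:complete}, but feeding in the inductive hypothesis that $\alpha_m$ is a cover for $m<n$ in place of a hypercover property of $f$ (the filtration only involves $\alpha_j$ for $j\le n-1$), shows that $\Map(\p\D^n\into\D^n,g\*f)\to\Map(\p\D^n\into\D^n,g)$ is a cover, and hence that $\Map(\p\D^n\into\D^n,f)\to Y_n$ is a cover.

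These facts surround $\alpha_n$ with covers, but they do not yet deliver it: the cancellation axiom~\ref{cancellation} lets one conclude that a morphism is a cover only when it occurs as the \emph{second} factor of a composite that is a cover whose first factor is a cover, whereas in the diagrams above $\alpha_n$ always occurs as a first factor. To reverse this orientation I would precompose with a cover into $X_n$. Since $X$ is a $k$-groupoid, the face morphism $\p_j\colon X_{n+1}\to X_n$ is a cover (Corollary~\ref{face-cover}); by axiom~\ref{cancellation} it therefore suffices to prove that $\alpha_n\circ\p_j$ is a cover.

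The next step is to factor $\alpha_n\circ\p_j$ through the fibration of $f$. Identifying $\Map(\p\D^n\into\D^n,f)$ with the relative matching object of the $j$th face $\p_j\D^{n+1}\cong\D^n$ inside $\D^{n+1}$, the composite $\alpha_n\circ\p_j$ factors as the fibration-filling morphism $X_{n+1}\to\Map(\L^{n+1}_j\into\D^{n+1},f)$, which is a cover because $f$ is a fibration, followed by a restriction morphism $\rho\colon\Map(\L^{n+1}_j\into\D^{n+1},f)\to\Map(\p\D^n\into\D^n,f)$ that remembers only the boundary of the $j$th face in $X$ and the $j$th face of the $(n+1)$-simplex in $Y$.

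The main obstacle is to show that $\rho$ is a cover. This cannot follow from the fibration of $f$ alone, since otherwise every fibration would be a hypercover; it is precisely here that the hypercover hypotheses on $g$ and $g\*f$ must enter. The plan is to analyze $\rho$ by comparing the matching objects of $f$, $g\*f$ and $g$ attached to the inclusion $\L^{n+1}_j\into\D^{n+1}$, exactly as in the first paragraph but one dimension up: the covers supplied by $g$ and $g\*f$, combined once more with the inductive hypothesis through a skeletal filtration, pin down the $Y$- and $Z$-components that the fibration leaves undetermined, and a final, correctly oriented application of axiom~\ref{cancellation} then promotes this information to the statement that $\rho$, and hence $\alpha_n\circ\p_j$, is a cover. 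Arranging the filtration so that every use of~\ref{cancellation} has the cover in first position is the crux of the argument.
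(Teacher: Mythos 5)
Your structural reduction is sound as far as it goes: precomposing with the cover $\p_j\colon X_{n+1}\to X_n$ of Corollary~\ref{face-cover} and factoring $\alpha_n\circ\p_j$ through the fibration morphism $X_{n+1}\to\Map(\L^{n+1}_j\into\D^{n+1},f)$ is legitimate, and it correctly isolates the difficulty in the restriction morphism $\rho$. But at that point you have reduced the lemma to an exactly equivalent statement, and the plan you offer for $\rho$ does not go through. The only mechanism you propose is a skeletal filtration ``one dimension up.'' Concretely, $\rho$ factors through $P=\Map(\p\D^n\into\D^n,f)\times_{Y_n}Y_{n+1}$ (the projection $P\to\Map(\p\D^n\into\D^n,f)$ is a pullback of $\p_j\colon Y_{n+1}\to Y_n$, a cover), so one is left to show that the morphism $\Map(\L^{n+1}_j\into\D^{n+1},f)\to P$ --- lifting the horn $\L^{n+1}_j$ to $X$, given the boundary of the $j$th face in $X$ and the full $(n+1)$-simplex in $Y$ --- is a cover. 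Filtering $\L^{n+1}_j$ from $\p\p_j\D^{n+1}$ by attaching the $n+1$ faces $\p_i\D^{n+1}$, $i\ne j$, one at a time, the $r$th face is attached along a union of $r$ of its own faces: for $r\le n$ this is an expansion (Lemma~\ref{proper}), so the fibration hypothesis on $f$ suffices via Lemma~\ref{expansion}, but the \emph{last} face is attached along its entire boundary $\p\D^n\into\D^n$, and that step is a pullback of $\alpha_n\colon X_n\to\Map(\p\D^n\into\D^n,f)$ itself --- precisely the morphism being proved to be a cover. Your appeal to the hypercovers of $g$ and $g\*f$ to ``pin down the $Y$- and $Z$-components'' does not address this, because those hypotheses control lifting of $X$-data against $Z$-data, whereas the obstruction is the $X$-lift of the final $n$-face relative to $Y$; since ``$\rho$ is a cover for all $n$'' is equivalent to the lemma, no formal rearrangement of Axiom~\ref{cancellation} alone can produce it.

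The missing idea is the degeneracy trick by which the paper breaks exactly this circle. The paper forms $V(f,g)=X_n\times_{\Map(\L^{n+1}_1\into\D^{n+1},g\*f)}X_{n+1}$ and $W(f,g)$ (the same with $\L^{n+1}_0$), where $X_n$ maps into the horn space through the degenerate simplex $s_0$, and observes that exchanging the roles of the faces $\p_0$ and $\p_1$ of the $(n+1)$-simplex gives an isomorphism $V(f,g)\cong W(f,g)$. Under this isomorphism the \emph{free-face} projection $V(f,g)\to X_n$ is identified with the structural projection of $W(f,g)$, which is a pullback of $X_{n+1}\to\Map(\L^{n+1}_0\into\D^{n+1},g\*f)$ --- a cover by Lemma~\ref{cofibration} because $g\*f$ (not $f$!) is a hypercover. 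This is the step that converts the hypercover hypotheses into a cover onto $X_n$ against which $\alpha_n$ can be tested; the chain of covers $V(f,g)\to U(f,g)\to T(f,g)\to\Map(\p\D^n\into\D^n,f)$, built from pullbacks of the fibration property of $f$ and the hypercover properties of $g$ and $g\*f$, then closes the triangle, and a single application of Axiom~\ref{cancellation} finishes the proof, with no induction on $n$ at all. Without this or a comparable device, your outline stalls exactly at the point you yourself flag as the crux.
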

\begin{proof}
  The idea is to use the fact that $X_{n+1}\to\L_{n+1,1}(f)$ is a
  cover in $\CV$ in order to show that $X_n\to\Map(\p\D^n\into\D,f)$
  is a cover.

  Define the fibred products
  \begin{equation*}
    \begin{xy}
      \SQUARE/>`>`>`/<1800,500>%
      [V(f,g)`X_{n+1}`X_n`\Map(\L^{n+1}_1\into\D^n,gf); a`b``]
      \morphism|b|<800,0>[X_n`X_{n+1};s_0]
      \morphism(800,0)|b|<1000,0>[X_{n+1}`\Map(\L^{n+1}_1\into\D^n,gf);`]
    \end{xy}
  \end{equation*}
  \begin{equation*}
    \begin{xy}
      \SQUARE/>`>`>`/<1800,500>%
      [W(f,g)`X_{n+1}`X_n`\Map(\L^{n+1}_0\into\D^n,gf);\tilde{a}`\tilde{b}``]
      \morphism|b|<800,0>[X_n`X_{n+1};s_0]
      \morphism(800,0)|b|<1000,0>[X_{n+1}`\Map(\L^{n+1}_0\into\D^n,gf);`]
    \end{xy}
  \end{equation*}
  The spaces $V(f,g)$ and $W(f,g)$ are isomorphic: there is a morphism
  from $V(f,g)$ to $W(f,g)$, defined by the diagram
  \begin{equation*}
    \begin{xy}
      \morphism(-700,1000)<2500,-500>[V(f,g)`X_{n+1};a]
      \morphism(-700,1000)|b|<700,-1000>[V(f,g)`X_n;\p_0\*a
]      \morphism(-700,1000)/.>/<700,-500>[V(f,g)`W(f,g);]
      \SQUARE/>`>`>`/<1800,500>%
      [W(f,g)`X_{n+1}`X_n`\Map(\L^{n+1}_0\into\D^n,gf);```]
      \morphism|b|<800,0>[X_n`X_{n+1};s_0]
      \morphism(800,0)|b|<1000,0>[X_{n+1}`\Map(\L^{n+1}_0\into\D^n,gf);`]
    \end{xy}
  \end{equation*}
  Likewise, there is a morphism from $W(f,g)$ to $V(f,g)$, induced by
  the morphisms $\tilde{a}:V(f,g)\to X_{n+1}$ and
  $\p_1\*\tilde{a}:V(f,g)\to X_n$.  These morphisms between $V(f,g)$
  and $W(f,g)$ are inverse to each other.

  In this way, we see that the morphism $\p_0\*a:V_n(f,g)\to X_n$ is a
  cover: under the isomorphism $V(f,g)\cong W(f,g)$, it is identified
  with the morphism $\tilde{b}:V(f,g)\to X_n$, and this map is a
  pullback of a cover by Lemma~\ref{cofibration}, since $gf$ is a
  hypercover.

  Define the additional fibred products
  \begin{equation*}
    \begin{xy}
      \SQUARE/>`>`>`/<1800,500>[T(f,g)`Y_n`X_n`\Map(\p\D^n\into\D^n,g);```]
      \morphism(0,0)<800,0>[X_n`Y_n;]
      \morphism(800,0)<1000,0>[Y_n`\Map(\p\D^n\into\D^n,g);]
    \end{xy}
  \end{equation*}
  \begin{equation*}
    \begin{xy}
      \SQUARE/>`>`>`/<2400,500>%
      [U(f,g)`Y_{n+1}`X_n`\Map(\L^{n+1}_0\into\D^n,g);```]
      \morphism(0,0)|b|<700,0>[X_n`X_{n+1};s_0]
      \morphism(700,0)<700,0>[X_{n+1}`Y_{n+1};]
      \morphism(1400,0)<1000,0>[Y_{n+1}`\Map(\L^{n+1}_0\into\D^n,g);]
    \end{xy}
  \end{equation*}
  We have the following morphisms between the spaces $T(f,g)$,
  $U(f,g)$, and $V(f,g)$, each of which is a cover:
  \begin{equation*}
    \begin{xy}
      \SQUARE|aaaa|/{}`{=}`{=}`{->}/<1730,250>%
      [T(f,g)`\Map(\p\D^n\into\D^n,f)`X_n\times_{\Map(\p\D^n\into\D^n,g)}Y_n`%
      \Map(\p\D^n\into\D^n,gf)\times_{\Map(\p\D^n\into\D^n,g)}Y_n;```]
      \SQUARE(0,-550)|aaaa|/{}`{=}`{=}`{->}/<1730,250>[U(f,g)`T(f,g)`%
      X_n\times_{\Map(\L^{n+1}_1\into\D^n,g)}Y_{n+1}`%
      X_n\times_{\Map(\L^{n+1}_1\into\D^n,g)}\Map(\p\D^{n+1}\into\D^n,g);```]
      \SQUARE(0,-1100)|aaaa|/{}`{=}`{=}`{->}/<1730,250>[V(f,g)`U(f,g)`%
      X_n\times_{\Map(\L^{n+1}_1\into\D^n,g\*f)}X_{n+1}`%
      X_n\times_{\Map(\L^{n+1}_1\into\D^n,g\*f)}\Map(\L^{n+1}_1\into\D^n,f);```]
    \end{xy}
  \end{equation*}
  In this way, we obtain a diagram
  \begin{equation*}
    \begin{xy}
      \Atriangle/{<-}`{->}`{-->}/<1200,300>%
      [V(f,g)`X_n`\Map(\p\D^n\into\D^n,f);``]
    \end{xy}
  \end{equation*}
  in which the solids arrows are covers, and hence the third arrow is
  as well.
\end{proof}

We can now complete the proof of Theorem~\ref{main}.
\begin{lemma}
  If $f:X\to Y$ and $g:Y\to Z$ are morphisms of $k$-groupoids such
  that $g$ and $gf$ are weak equivalences, then $f$ is a weak
  equivalence.
\end{lemma}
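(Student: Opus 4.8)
The plan is to re-use the mapping path space diagram that proved the two preceding cancellation lemmas, now reading off a different edge with the help of Lemma~\ref{Hard}. Form the double mapping path space $P(g,f)=P_2Z\times_{P_1Z}P(f)$ and consider the commutative diagram employed in the proof that the weak equivalences form a subcategory, with vertices $P(g,f)$, $P(gf)\times_XP(f)$, $P(gf)$, $P(g)\times_YP(f)$, $P(g)$, and $Z$. Exactly as in those proofs, the comparison maps
\begin{equation*}
  \alpha:P(g,f)\to P(gf)\times_XP(f),\qquad
  \beta:P(g,f)\to P(g)\times_YP(f),
\end{equation*}
together with the projection $\pi:P(gf)\times_XP(f)\to P(gf)$, are hypercovers independently of any hypothesis on $f$ and $g$ (the last because it is a pullback of the hypercover $p(f)$). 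Our hypotheses enter only through $g\in\CW$ and $gf\in\CW$, which make $q(g):P(g)\to Z$ and $q(gf):P(gf)\to Z$ hypercovers. Since $f\in\CW$ means exactly that $q(f):P(f)\to Y$ is a hypercover, and since the remaining projection $\rho:P(g)\times_YP(f)\to P(g)$ is the pullback of $q(f)$ along the hypercover $p(g):P(g)\to Y$ (itself a pullback of the face map $P_1Z\to Z$), it will suffice to prove that $\rho$ is a hypercover and then descend along $p(g)$.

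The crux is to extract $\rho$ from the diagram. Set $w=\rho\circ\beta:P(g,f)\to P(g)$. Both factors are fibrations: $\beta$ because it is a hypercover (Lemma~\ref{fibration}), and $\rho$ because it is a pullback of the fibration $q(f)$, so $w$ is a fibration by Lemma~\ref{compose-fibrations}. Commutativity of the diagram gives
\begin{equation*}
  q(g)\circ w = q(gf)\circ\pi\circ\alpha ,
\end{equation*}
whose right-hand side is a composite of hypercovers, hence a hypercover. Thus $q(g)$ and $q(g)\circ w$ are hypercovers while $w$ is a fibration, so Lemma~\ref{Hard} applies and shows that $w$ is a hypercover.

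It remains to unwind. Since $w=\rho\circ\beta$ and $\beta$ are hypercovers, Lemma~\ref{groupoid:complete} gives that $\rho$ is a hypercover. Finally, in the pullback square exhibiting $P(g)\times_YP(f)$ as the fibre product of $q(f):P(f)\to Y$ and $p(g):P(g)\to Y$, the projection $P(g)\times_YP(f)\to P(f)$ is a hypercover by Lemma~\ref{pullback-Reedy} (as $p(g)$ is one), while its composite with $q(f)$ equals $p(g)\circ\rho$, again a hypercover; a last application of Lemma~\ref{groupoid:complete} yields that $q(f)$ is a hypercover, that is, $f$ is a weak equivalence.

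The main obstacle is precisely the middle paragraph. In the two earlier cancellation lemmas the unknown edge lay on the outgoing side of the relevant composite, so plain cancellation (Lemma~\ref{groupoid:complete}) sufficed; here the unknown edge $\rho$ is reached only after first traversing $q(g)$, so one is forced to cancel a fibration from the \emph{wrong} side of a composite of hypercovers. That operation is exactly the content of Lemma~\ref{Hard}, which is why this lemma must be placed after it rather than alongside the other two.
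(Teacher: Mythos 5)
Your proof is correct and follows essentially the same route as the paper: the same diagram built from $P(g,f)$, $P(gf)\times_XP(f)$, and $P(g)\times_YP(f)$, with Lemma~\ref{Hard} extracting the hypercover property of the projection to $P(g)$ and a final cancellation via Lemma~\ref{groupoid:complete} yielding that $q(f)$ is a hypercover. The only (immaterial) difference is the order of the two middle steps: you apply Lemma~\ref{Hard} to the composite $w=\rho\circ\beta$ and then cancel $\beta$, whereas the paper first cancels $\beta$ to see that $q(g)\circ\rho$ is a hypercover and then applies Lemma~\ref{Hard} directly to $\rho$.
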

\begin{proof}
  In the following commutative diagram, the solid arrows are
  hypercovers, while the dashed arrow is a fibration:
  \begin{equation*}
    \begin{xy}
      \SQUARE/{->}`{->}`{->}`{}/<1500,600>%
      [P(g,f)`P(g\*f)\times_XP(f)`P(g)\times_YP(f)`P(g\*f);``
      P(g\*f)\times_Xp(f)`]
      \SQUARE(0,-600)/{}`{-->}`{->}`{->}/<1500,600>%
      [P(g)\times_YP(f)`P(g\*f)`P(g)`Z;%
      `P(g)\times_Yp(f)`q(g\*f)`q(g)]
    \end{xy}
  \end{equation*}
  It follows by Lemma~\ref{groupoid:complete} that the composition
  \begin{equation*}
    \begin{xy}
      \morphism/-->/<1200,0>[P(g)\times_YP(f)`P(g);P(g)\times_Yq(f)]
      \morphism(1200,0)<800,0>[P(g)`Z;q(g)]
    \end{xy}
  \end{equation*}
  is a hypercover. Lemma~\ref{Hard} implies that $P(g)\times_Yq(f)$ is
  a hypercover. In the following commutative diagram, the solid arrows
  are hypercovers, while the dashed arrow is a fibration:
  \begin{equation*}
    \begin{xy}
      \Square/{->}`{->}`{-->}`{->}/%
      [P(g)\times_YP(f)`P(f)`P(g)`Y;p(g)\times_YP(f)`P(g)\times_Yq(f)`q(f)`p(g)]
    \end{xy}
  \end{equation*}
  Applying Lemma~\ref{groupoid:complete} one final time, we conclude
  that $q(f)$ is a hypercover, and hence that $f$ is a weak
  equivalence.
\end{proof}

\section{The simplicial resolution for $k$-groupoids}

In this section, we prove Theorem \ref{resolution}.
Consider the following subcomplexes of the prism $\D^{m,n}$:
\begin{align*}
  \L^{m,n}_i &= (\L^m_i\times\D^n)\cup(\D^m\times\p\D^n) &                         \tilde{\L}^{m,n}_j &= (\p\D^m\times\D^n)\cup(\D^m\times\L^n_j) .
\end{align*}
Moore has proved that the inclusions $\L^{m,n}_i\into\D^{m,n}$ and
$\tilde{\L}^{m,n}_j\into\D^{m,n}$ are expansions. The following lemma
is a refinement of his theorem.
\begin{lemma}
  \label{Moore}
  The inclusions $\L^{m,n}_i\into\D^{m,n}$ and
  $\tilde{\L}^{m,n}_j\into\D^{m,n}$ are $m$- and $n$-expansions
  respectively.
\end{lemma}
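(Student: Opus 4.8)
The plan is to reprove Moore's theorem while bookkeeping the dimensions of the horns that get filled. Write a non-degenerate $p$-simplex of $\D^{m,n}=\D^m\times\D^n$ as a pair of weakly monotone maps $(\alpha,\beta)$, with $\alpha\colon[p]\to[m]$ and $\beta\colon[p]\to[n]$, subject to joint injectivity. First I would record which non-degenerate simplices must be attached, i.e.\ lie outside the subcomplex. A simplex $(\alpha,\beta)$ lies in $\D^m\times\p\D^n$ exactly when $\beta$ is not surjective, and in $\L^m_i\times\D^n$ exactly when $\operatorname{im}(\alpha)$ fails to contain $[m]\setminus\{i\}$. Hence $(\alpha,\beta)$ lies outside $\L^{m,n}_i$ precisely when $\beta$ is surjective and $\operatorname{im}(\alpha)\supseteq[m]\setminus\{i\}$; the description for $\tilde{\L}^{m,n}_j$ is obtained by exchanging the two factors, namely $\alpha$ surjective and $\operatorname{im}(\beta)\supseteq[n]\setminus\{j\}$.

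The key numerical observation is a lower bound on the dimension of such a simplex. Since $\beta$ is monotone and surjective onto $[n]$, it has exactly $n$ ascents (indices $\ell$ with $\beta(\ell)<\beta(\ell+1)$), and each ascent is one of the $p$ steps of the chain, so $p\ge n$; since $\operatorname{im}(\alpha)$ has at least $m$ elements, $\alpha$ has at least $m-1$ ascents, whence $p\ge m-1$. Thus every non-degenerate simplex outside $\L^{m,n}_i$ has dimension at least $m-1$. Symmetrically, every non-degenerate simplex outside $\tilde{\L}^{m,n}_j$ has dimension at least $n-1$.

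I would then invoke the combinatorial structure of Moore's filtration, whose essential feature is that the non-degenerate simplices outside the subcomplex are organized into matched pairs $(\tau,\sigma)$, where $\sigma$ has one more vertex than $\tau$ and $\tau=\p_{i_\ell}\sigma$ is the face of $\sigma$ opposite a distinguished vertex; attaching $\sigma$ along the horn $\L^{n_\ell}_{i_\ell}\into\D^{n_\ell}$, with $n_\ell=\dim\sigma$, simultaneously adjoins $\sigma$ and its missing face $\tau$, while all remaining faces of $\sigma$ already lie in the previous stage $F_{\ell-1}$. Granting this, the refinement is immediate: in each elementary step the missing face $\tau$ is itself a non-degenerate simplex outside $\L^{m,n}_i$ (if it lay in the subcomplex it would already be present, not newly attached), so $\dim\tau\ge m-1$ by the previous paragraph, and therefore $n_\ell=\dim\tau+1\ge m$. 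This exhibits $\L^{m,n}_i\into\D^{m,n}$ as an $m$-expansion, and the same argument with the roles of $m$ and $n$ interchanged exhibits $\tilde{\L}^{m,n}_j\into\D^{m,n}$ as an $n$-expansion.

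The main obstacle is not the dimension count, which is the short computation above, but the verification of the combinatorial input: that the matching is well defined and is a perfect matching on the simplices outside the subcomplex, that the pairs can be ordered into a filtration $F_\bullet$ by pushouts of horn inclusions so that at the moment $\sigma$ is attached the horn $\L^{n_\ell}_{i_\ell}$ (comprising all faces of $\sigma$ except $\tau$) is already present, and that both $\tau$ and $\sigma$ genuinely lie outside the subcomplex. I would construct the matching explicitly using the distinguished first-coordinate value $i$ (respectively second-coordinate value $j$): given $(\alpha,\beta)$ outside $\L^{m,n}_i$, one pairs it with the simplex obtained by inserting or deleting a vertex whose first coordinate is $i$ at a canonically determined position, the two cases distinguishing fillers from missing faces; one then checks that this operation preserves surjectivity of $\beta$ and the condition $\operatorname{im}(\alpha)\supseteq[m]\setminus\{i\}$, so that the paired simplex is again outside the subcomplex. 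Ordering the pairs by the dimension of $\sigma$ (refined arbitrarily within each dimension) makes the horn condition automatic, completing the reduction to the dimension estimate.
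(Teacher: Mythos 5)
Your dimension count is correct, and it is the genuinely attractive part of the proposal: a nondegenerate simplex $(\alpha,\beta)$ of $\D^{m,n}$ lying outside $\L^{m,n}_i$ must indeed have $\beta$ surjective and $\operatorname{im}(\alpha)\supseteq[m]\setminus\{i\}$, hence has dimension at least $m-1$; and since each elementary step of any expansion freely adjoins exactly one filler $\sigma$ and its missing face $\tau=\p_{i_\ell}\sigma$, with $\tau$ necessarily outside the subcomplex, every horn filled has dimension $\dim\tau+1\ge m$. Granting Moore's theorem (which the paper cites just before the lemma), this observation reduces the refinement to a two-line estimate. The paper instead reproves the expansion property from scratch by Cartan's method: it filters $\D^{m,n}$ by the top-dimensional shuffle simplices in order of the weight $b(\pi,i)$, checks that each $\pi$ has at least one face outside the preceding stage and at least $m$ faces inside it, and invokes Lemma~\ref{proper}. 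The explicit route buys more than your soft one: it records \emph{which} horns are filled, not merely their dimensions, and that extra information is what later yields the inner-expansion refinement (Lemma~\ref{Moore3}, ``already implicit in the proof of Lemma~\ref{Moore}'') needed for $k$-categories; a pure dimension count cannot detect innerness.

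The gap lies exactly where you place the ``main obstacle,'' and your closing sentence does not dispose of it. The claim that ordering the pairs by $\dim\sigma$, ``refined arbitrarily within each dimension,'' makes the horn condition automatic is false: the free face $\tau'$ of one pair can occur as a non-free face of another filler $\sigma$ of the \emph{same} dimension, forcing $\sigma'$ to precede $\sigma$. These within-dimension precedence constraints genuinely occur in the prism --- in the paper's proof the shared codimension-one face of two adjacent $(m+n)$-dimensional shuffle simplices $\pi$, $\tilde{\pi}$ is the free face for one attachment and a horn face for the other, which is precisely what the weight $b(\pi,i)$ is introduced to order --- so one must prove the dependency relation acyclic. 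That acyclicity, together with the well-definedness of your insert/delete-a-vertex matching (specified only as ``at a canonically determined position''), is the entire content of Moore's theorem, and neither is carried out; as a self-contained reproof the proposal therefore fails. If instead you cite Moore's theorem as a black box, the dimension bound applies to the inherited filtration, but you still owe the weak monotonicity of $n_\ell$ required by the paper's definition of $m$-expansion: this follows by re-sorting the given valid order \emph{stably} by filler dimension (stability, not arbitrariness, preserves the horn condition, since every dependency goes from a pair of filler dimension $\le d$ to one of dimension $d$). With that one-line repair, the cite-Moore version of your argument is a correct and pleasantly short alternative to the paper's proof of this lemma, though not of its inner-horn refinement.
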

\begin{proof}
  The proof is a modification of an argument of Cartan
  \cite{cartan}. The proofs of the two parts are formally identical,
  and we will concentrate on the former.

  An $(m,n)$-shuffle is a permutation $\pi$ of $\{1,\dots,m+n\}$ such
  that
  \begin{equation*}
    \text{$\pi(1)<\dots<\pi(m)$ and $\pi(m+1)<\dots<\pi(m+n)$.}
  \end{equation*}
  The $(m,n)$-shuffles index the $\binom{m+n}{m}$ non-degenerate
  simplices of the prism $\D^{m,n}$: we denote the simplex labeled by
  a shuffle $\pi$ by the same symbol $\pi$. Any simplex of dimension
  $m+n-1$ in $\D^{m,n}$ lies in at most two top-dimensional simplices.

  The geometric realization of the simplex $\D^n$ is the convex
  hull of the vertices
  \begin{equation*}
    v_i = ( \underbrace{0,\dots,0}_{\text{$n-i$ times}} ,
    \underbrace{1,\dots,1}_{\text{$i$ times}} ) \in \R^n .
  \end{equation*}
  Thus, the simplex is the convex set
  \begin{equation*}
    \D^n = \{ (t_1,\dots,t_n) \subset \R^n \mid 0 \le t_1 \le \dots
    \le t_n \le 1 \} .
  \end{equation*}
  Given sequences $0<s_1\dots<s_m<1$ and $0<t_1<\dots<t_n<1$ such that
  $s_i\ne t_j$, representing a pair of points in the interiors of
  $\D^m$ and $\D^n$ respectively, the union of these sequences
  determines a word of length $m+n$ in the letters $s$ and $t$, with
  $m$ letters $s$ and $n$ letters $t$, and hence an
  $(m,n)$-shuffle. The set of such points associated to a shuffle
  $\pi$ is the interior of the geometric realization
  $|\pi|\subset|\D^{m,n}|\cong|\D^m|\times|\D^n|$.

  Represent an $(m,n)$-shuffle $\pi$ by the sequence of natural
  numbers
  \begin{equation*}
    0 \le a_1(\pi) \le \dots \le a_m(\pi) \le n ,
  \end{equation*}
  in such a way that the associated shuffle has the form
  \begin{equation*}
    t^{a_1}st^{a_2-a_1}s\dots t^{a_m-a_{m-1}}st^{n-a_m} ,
  \end{equation*}
  in other words,
  \begin{equation*}
    0 = s_0 < \cdots < s_j < t_{a_j+1} < \cdots < t_{a_{j+1}} <
    s_{j+1} < \cdots s_{m+1} = 1 .
  \end{equation*}
  We adopt the convention that $a_0=0$ and $a_{m+1}=n$.

  Filter $\D^{m,n}$ by the subcomplexes
  \begin{equation*}
    F_\ell\D^{m,n}=\L^{m,n}_i\cup\bigcup_{\{\pi\mid b(\pi,i)\le \ell \}} \pi ,
  \end{equation*}
  where
  \begin{equation*}
    b(\pi,i) = \sum_{j=1}^i a_j(\pi) - \sum_{j=i+1}^m a_j(\pi) .
  \end{equation*}
  The faces of a top-dimensional simplex $\pi$ are as follows:
  \begin{itemize}
  \item the geometric realization of the face $\p_{a_j+j-1}\pi$ is the
    intersection of the geometric realization of the simplex $\pi$
    with the hyperplane
    \begin{equation*}
      t_{a_j} = s_j ,
    \end{equation*}
    when $a_{j-1}<a_j$, and the hyperplane
    \begin{equation*}
      s_{j-1} = s_j ,
    \end{equation*}
    when $a_{j-1}=a_j$;
  \item the geometric realization of the face $\p_{a_j+j}\pi$ is the
    intersection of the geometric realization of the simplex $\pi$
    with the hyperplane
    \begin{equation*}
      s_j = t_{a_j+1} ,
    \end{equation*}
    when $a_j<a_{j+1}$, and the hyperplane
    \begin{equation*}
      s_j = s_{j+1} ,
    \end{equation*}
    when $a_j=a_{j+1}$;
  \item when $a_j+j<k<a_{j+1}+j$, the geometric realization of the
    face $\p_k\pi$ is the intersection of the geometric realization of
    the simplex $\pi$ with the hyperplane
    \begin{equation*}
      t_{k-j} = t_{k-j+1} .
    \end{equation*}
  \end{itemize}
  We must show that at least one face of $\pi$ does not lie in
  $F_{b(\pi,i)-1}\D^{m,n}$:
  \begin{enumerate}[label=\roman*)]
  \item if $a_i(\pi)=a_{i+1}(\pi)$, the face $\p_{a_i+i}\pi$ is not
    contained in $\L^{m,n}_i$, nor in any top-dimensional simplex of
    $\D^{m,n}$ other than $\pi$;
  \item if $a_i(\pi)<a_{i+1}(\pi)$ and $i>0$, the face
    $\p_{a_i+i}\pi$ is contained in the simplex $\tilde{\pi}$ with
    \begin{equation*}
      a_j(\tilde{\pi}) =
      \begin{cases}
        a_j(\pi) , & j<i , \\
        a_j(\pi)+1 , & j=i , \\
        a_j(\pi) , & j>i ,
      \end{cases}
    \end{equation*}
    for which $b(\tilde{\pi},i)=b(\pi,i)+1$;
  \item if $a_i(\pi)<a_{i+1}(\pi)$ and $i<m$, the face
    $\p_{a_{i+1}+i-1}\pi$ is contained in the simplex $\tilde{\pi}$
    with
    \begin{equation*}
      a_j(\tilde{\pi}) =
      \begin{cases}
        a_j(\pi) , & j<i+1 , \\
        a_j(\pi)-1 , & j=i+1 , \\
        a_j(\pi) , & j>i+1 ,
      \end{cases}
    \end{equation*}
    for which $b(\tilde{\pi},i)=b(\pi,i)+1$.
  \end{enumerate}
  By Lemma~\ref{proper}, the proof is completed by enumerating at
  least $m$ faces of $\pi$ which lie in either $\L^{m,n}_i$ or a
  simplex $\tilde{\pi}$ for which $b(\tilde{\pi},i)=b(\pi,i)-1$:
  \begin{enumerate}[label=\roman*)]
  \item For each $j<i$ with $a_j<a_{j+1}$, we obtain $a_{j+1}-a_j$
    such faces as follows:
    \begin{enumerate}[label=a\arabic*)]
    \item the $a_{j+1}-a_j-1$ faces $\p_\ell\pi$ with
      $a_j+j<\ell<a_{j+1}+j-1$ lie in $\L^{m,n}_i$;
    \item the face $\p_{a_{j+1}+j-1}\pi$ lies in the simplex
      $\tilde{\pi}$ with
      \begin{equation*}
        a_j(\tilde{\pi}) =
        \begin{cases}
          a_k(\pi) , & k<j+1 , \\
          a_k(\pi)-1 , & k=j+1 , \\
          a_k(\pi) , & k>j+1 ,
        \end{cases}
      \end{equation*}
      for which $b(\tilde{\pi},i)=b(\pi,i)-1$.
    \end{enumerate}
  \item For each $j>i$ with $a_j<a_{j+1}$, we obtain $a_{j+1}-a_j$
    such faces as follows:
    \begin{enumerate}[label=b\arabic*)]
    \item the $a_{j+1}-a_j-1$ faces $\p_\ell\pi$ with
      $a_j+j+1<\ell<a_{j+1}+j$ lie in $\L^{m,n}_i$;
    \item the face $\p_{a_j+j+1}\pi$ lies in the simplex $\tilde{\pi}$
      with
      \begin{equation*}
        a_j(\tilde{\pi}) =
        \begin{cases}
          a_k(\pi) , & k<j , \\
          a_k(\pi)+1 , & k=j , \\
          a_k(\pi) , & k>j ,
        \end{cases}
      \end{equation*}
      for which $b(\tilde{\pi},i)=b(\pi,i)-1$.
    \end{enumerate}
  \item The $a_{i+1}-a_i-1$ faces $\p_\ell\pi$ with
    $a_i+i<\ell<a_{i+1}+i-1$ lie in $\L^{m,n}_i$.
  \item The face $\p_0\pi$ lies in $\L^{m,n}_i$ unless $i=0$ and
    $a_1=0$.
  \item The face $\p_{m+n}\pi$ lies in $\L^{m,n}_i$ unless $i=m$
    and $a_m=n$. \qedhere
  \end{enumerate}
\end{proof}

\begin{lemma}
  \label{Moore2}
  Let $T$ be a finite simplicial set, and let $S\into T$ be
  a simplicial subset. Then
  \begin{equation*}
    \D^m\times S \cup \L^m_i\times T \into \D^m\times T
  \end{equation*}
  is an $m$-expansion, and
  \begin{equation*}
    S \times \D^n \cup T \times \L^n_j \into T \times \D^n
  \end{equation*}
  is an $n$-expansion.
\end{lemma}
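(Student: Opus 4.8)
The plan is to bootstrap from Lemma~\ref{Moore} using the relative cell structure of $T$ over $S$, together with the closure of $m$-expansions under pushout and composition. I will carry out the first inclusion in detail; the second is formally identical after interchanging the two factors and using the $n$-expansions $\tilde{\L}^{m,n}_j\into\D^{m,n}$ in place of the $m$-expansions $\L^{m,n}_i\into\D^{m,n}$.

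First I would filter $T$ over $S$ by attaching its non-degenerate simplices one at a time, say $S=G_{-1}\subset G_0\subset G_1\subset\dots$ with $\bigcup_\ell G_\ell=T$, each step a pushout of $\p\D^{p_\ell}\into\D^{p_\ell}$ along the attaching map $x_\ell$ of a non-degenerate $p_\ell$-simplex. Setting
\begin{equation*}
  E_\ell = \D^m\times G_\ell \cup \L^m_i\times T,
\end{equation*}
we have $E_{-1}=\D^m\times S\cup\L^m_i\times T$ and $\bigcup_\ell E_\ell=\D^m\times T$. The geometric heart of the argument is to identify the intersection of the prism $\D^{m,p_\ell}=\D^m\times\D^{p_\ell}$, mapped in by $1\times x_\ell$, with $E_{\ell-1}$. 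Since the preimage of $G_{\ell-1}$ in $\D^{p_\ell}$ is exactly $\p\D^{p_\ell}$, this intersection is
\begin{equation*}
  (\D^m\times\p\D^{p_\ell})\cup(\L^m_i\times\D^{p_\ell})=\L^{m,p_\ell}_i,
\end{equation*}
so that $E_\ell$ is the pushout of $E_{\ell-1}$ and $\D^{m,p_\ell}$ along $\L^{m,p_\ell}_i$. By Lemma~\ref{Moore} each $\L^{m,p_\ell}_i\into\D^{m,p_\ell}$ is an $m$-expansion, and since pushing an $m$-expansion out along a map transports its filtration verbatim, each $E_{\ell-1}\into E_\ell$ is an $m$-expansion.

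It remains to splice these together into a single $m$-expansion $E_{-1}\into\D^m\times T$, and here lies the main obstacle: the weak-monotonicity clause in the definition of an $m$-expansion. Naively concatenating the filtrations of the steps $E_{\ell-1}\into E_\ell$ resets the simplex dimension down to $m$ at the start of each block, so the combined sequence of dimensions is not monotone. I would repair this by re-sorting all of the horn-fillings occurring in all the blocks globally by dimension, breaking ties so that a filling coming from an earlier prism precedes one from a later prism. This respects the attaching dependencies: each horn-filling of dimension $d$ introduces new non-degenerate simplices only in dimensions $d$ and $d-1$, so the horn of a $d$-dimensional filling, being $(d-1)$-dimensional, can only meet simplices belonging to fillings of dimension $\le d$. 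The re-sorted filtration therefore still has each $y_\ell$ landing in the preceding stage, its dimensions are weakly monotone and all $\ge m$, and it exhibits $E_{-1}\into\D^m\times T$ as an $m$-expansion. The second statement follows by the same argument with the factors reversed.
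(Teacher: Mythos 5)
Your proof is correct and follows essentially the same route as the paper's: the paper filters $T$ by $F_\ell T = S\cup\sk_\ell T$ and attaches all nondegenerate $\ell$-simplices at once as a pushout of copies of $\L^{m,\ell}_i\into\D^{m,\ell}$ (resp.\ $\tilde{\L}^{m,\ell}_j\into\D^{m,\ell}$), then composes the resulting $m$-expansions, exactly as you do cell by cell. Your explicit stable re-sorting argument for the weak-monotonicity clause is a point the paper leaves implicit in the word ``composing,'' so that extra care is welcome rather than a deviation.
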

\begin{proof}
  We prove the first statement: the proof of the second is analogous.

  Filter $T$ by the simplicial subsets $F_\ell T=S\cup\sk_\ell T$. Let
  $I_\ell$ be the set of nondegenerate simplices in
  $T_\ell\setminus S_\ell$. There is a pushout square
  \begin{equation*}
    \begin{xy}
      \Square[(\L^{m,\ell}_i)^{I_\ell}`%
      \D^m\times F_{\ell-1}T \cup \L^m_i\times T`%
      (\D^{m,\ell})^{I_\ell}`\D^m\times F_\ell T \cup \L^m_i\times T;```]
    \end{xy}
  \end{equation*}
  and by Lemma~\ref{Moore}, the vertical arrows of this diagram are
  $m$-expansions. Composing the $m$-expansions
  \begin{equation*}
    \D^m\times F_{\ell-1}T \cup \L^m_i\times T \into \D^m\times F_\ell T
    \cup \L^m_i\times T
  \end{equation*}
  for $\ell\ge0$, we obtain the result.
\end{proof}

\begin{proof}[Proof of Theorem~\ref{resolution}]
  Let $X$ be a $k$-groupoid. To show that $P_nX$ is a $k$-groupoid, we
  must show that for all $0\le i\le m$, the morphism
  \begin{equation*}
    P_nX_m \too \Map(\L^m_i,P_nX)
  \end{equation*}
  is a cover, if $m>0$, and an isomorphism, if $m>k$. This follows by
  Part~i) of Lemma~\ref{expansion}, since $\L^{m,n}_i\into\D^{m,n}$ is
  an $m$-expansion.

  If $f:X\to Y$ is a fibration, then for all $n\ge0$, the simplicial
  morphism
  \begin{equation*}
    P_nX \too \Map(\p\D^n,P_\bull X) \times_{\Map(\p\D^n,P_\bull Y)} P_nY
  \end{equation*}
  is a fibration since for all $m>0$, the morphism
  $\L^{m,n}_i \into \D^{m,n}$ is an expansion, and for all $n>0$, the
  simplicial morphism
  \begin{equation*}
    P_nX \too \Map(\L^n_j,P_\bull X) \times_{\Map(\L^n_j,P_\bull Y)} P_nY
  \end{equation*}
  is a cover since for all $m>0$, the morphism
  $\tilde{\L}^{m,n}_j \into \D^{m,n}$ is an expansion.

  If $f:X\to Y$ is a hypercover, then for all $n\ge0$, the simplicial
  morphism
  \begin{equation*}
    P_nX \too \Map(\p\D^n,P_\bull X) \times_{\Map(\p\D^n,P_\bull Y)} P_nY
  \end{equation*}
  is a cover, by Lemma \ref{cofibration} applied to the inclusion of
  simplicial sets
  \begin{equation*}
    (\p\D^m\times\D^n)\cup(\D^m\times\D^n) \into \D^{m,n} .
    \qedhere
  \end{equation*}
\end{proof}

\section{A characterization of weak equivalences between
  $k$-groupoids}

A morphism $f:X\to Y$ of $k$-groupoids is a weak equivalence if and
only if the morphism
\begin{equation*}
  P(f)_n \too \Map(\p\D^n\into\D^n,q(f))
\end{equation*}
is a cover for $n\ge0$. When $n=0$, this condition says that the
morphism
\begin{equation*}
  X_0\times_{Y_0}Y_1\to Y_0
\end{equation*}
is a cover, which is a translation to the setting of simplicial spaces
of the condition for a morphism between Kan complexes that the induced
morphism of components $\pi_0(f):\pi_0(X)\to\pi_0(Y)$ be
surjective. For $n>0$, it analogous to the condition for a morphism of
Kan complexes $f:X\to Y$ that the relative homotopy groups
$\pi_{n+1}(Y,X)$ (with arbitrary choice of basepoint) vanish.

The following theorem is analogous to Gabriel and Zisman's famous
theorem on anodyne extensions \cite{GZ}*{Chapter IV, Section 2}.
\begin{theorem}
  \label{We}
  A morphism $f:X\to Y$ of $k$-groupoids is a weak equivalence if and
  only if the morphisms
  \begin{equation}
    \label{we}
    \Map(\D^n\into\D^{n+1},f) \too \Map(\p\D^n\into\L^{n+1}_{n+1},f)
  \end{equation}
  are covers for $n\ge0$.
\end{theorem}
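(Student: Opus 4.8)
The plan is to translate the definition of weak equivalence into the prism picture and then to locate \eqref{we} inside the shuffle expansion of the prism. By the remark opening this section, $f$ is a weak equivalence precisely when the morphisms $P(f)_n\too\Map(\p\D^n\into\D^n,q(f))$ are covers for all $n\ge0$. Since $P(f)=X\times_YP_1Y$ with $(P_1Y)_m=\Map(\D^m\times\D^1,Y)$, I would first rewrite this morphism simplicially. Writing the bottom face of the prism $\D^{n,1}=\D^n\times\D^1$ as $\D^n\times\{0\}$ and setting $S=\tilde{\L}^{n,1}_1=(\p\D^n\times\D^1)\cup(\D^n\times\{1\})$, the pullback defining $P(f)$ gives $P(f)_n=\Map(\D^n\times\{0\}\into\D^{n,1},f)$, while---because $q(f)$ evaluates the top face $\D^n\times\{1\}$---unwinding the definition identifies the matching object as $\Map(\p\D^n\into\D^n,q(f))=\Map(\p\D^n\times\{0\}\into S,f)$. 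Thus $f$ is a weak equivalence iff the restriction
\begin{equation*}
  \rho_n:\Map(\D^n\times\{0\}\into\D^{n,1},f)\too\Map(\p\D^n\times\{0\}\into S,f)
\end{equation*}
is a cover for every $n\ge0$.

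The heart of the matter is to expand $\D^{n,1}$ from $S$. By Lemma~\ref{Moore} the inclusion $S\into\D^{n,1}$ is a $1$-expansion, attaching the $n+1$ non-degenerate top simplices of the prism one at a time along horns. Exactly one of these, $\sigma=[(0,0),\dots,(n,0),(n,1)]$, contains the bottom face: indeed $\D^n\times\{0\}=\p_{n+1}\sigma$ is the face opposite the top vertex. For every other top simplex the attaching horn and its missing face lie in the interior of the prism, away from the bottom, so on passing to relative mapping spaces each such step is a pullback of the horn cover $Y_{n+1}\too\Map(\L^{n+1}_i,Y)$ and hence a cover because $Y$ is a $k$-groupoid. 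The one remaining step attaches $\sigma$ along $\L^{n+1}_{n+1}$---the horn omitting $\D^n\times\{0\}$---while that face simultaneously acquires its lift to $X$; this step is exactly a pullback of the morphism \eqref{we}. Consequently $\rho_n$ factors as a composite of the horn covers above with one pullback of \eqref{we}, so if the morphisms \eqref{we} are covers then every $\rho_n$ is a composite of covers and $f$ is a weak equivalence.

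The reverse implication is where I expect the difficulty to lie. Axiom~\ref{cancellation} cancels covers only on the source side, so from ``$\rho_n$ is a cover'' and ``the horn part is a cover'' one cannot extract that the mixed step, and hence \eqref{we}, is a cover; and the paper deliberately avoids assuming covers are closed under retracts, which would otherwise finish it. I would therefore prove this direction separately, reducing to hypercovers. For a hypercover $g$ the morphism \eqref{we} is a cover by a clean cancellation argument: the map $X_{n+1}\too\Map(\D^n\into\D^{n+1},g)$ is a cover by Lemma~\ref{cofibration}, and its composite with \eqref{we} equals $X_{n+1}\to\Map(\L^{n+1}_{n+1},X)\to\Map(\p\D^n\into\L^{n+1}_{n+1},g)$, a composite of the $k$-groupoid horn cover with a cover from Lemma~\ref{cofibration}, so axiom~\ref{cancellation} yields \eqref{we}. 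By Brown's Lemma~\ref{P(f)} a weak equivalence factors as $q\*s$ with $q$ a hypercover and $s$ a section of a hypercover, so it then remains to show that condition \eqref{we} is closed under composition and is transported across the hypercover $p(f):P(f)\to X$ linking $f$ to $q(f)$. Carrying out this transfer---without retract-closure and in the presence of the connecting homotopy between $q(f)$ and $f\*p(f)$---is the step I expect to be the main obstacle.
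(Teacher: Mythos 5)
Your forward direction is correct and is essentially the paper's own argument: you filter the prism by the $n+1$ shuffle simplices, observe that each attachment except the last changes the relative mapping space by a base change of a morphism built only from $Y$ (a cover since $Y$ is a $k$-groupoid --- note, though, that these attaching horns are not ``away from the bottom'': they do meet $\p\D^n\times\D^1$; this is harmless only because the $X$-lift lives on $\p\D^n\times\{0\}$ and is part of the base), and that the final attachment, along the horn omitting the bottom face, is a pullback of \eqref{we}. This matches the paper's first half.

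The reverse direction is a genuine gap: you establish it only for hypercovers and then propose, via Lemma~\ref{P(f)}, a ``transfer'' across the factorization $f=q(f)\*s(f)$ which you yourself flag as the main obstacle --- and that transfer is essentially as hard as the theorem, since it amounts to showing the condition \eqref{we} is invariant under the homotopy relating $f$ and $q(f)$. The paper's proof of this direction is a one-step pullback argument requiring neither cancellation nor retract-closure. Consider the simplicial map $\pi:\D^n\times\D^1\to\D^{n+1}$ sending $(i,0)\mapsto i$ and $(i,1)\mapsto n+1$: it restricts to an isomorphism of the bottom face onto $\p_{n+1}\D^{n+1}$, carries your $S=\p\D^n\times\D^1\cup\D^n\times\{1\}$ into $\L^{n+1}_{n+1}$, and maps the unique shuffle containing the bottom face isomorphically onto $\D^{n+1}$ while sending every other shuffle into a face $\p_j\D^{n+1}$ with $j\le n$, hence into $\L^{n+1}_{n+1}$. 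Precomposition with $\pi$ therefore induces an identification
\begin{equation*}
  \Map(\D^n\into\D^{n+1},f) \cong
  \Map\bigl(\D^n\times\{0\}\into\D^n\times\D^1,f\bigr)
  \times_{\Map(\p\D^n\times\{0\}\into S,f)}
  \Map(\p\D^n\into\L^{n+1}_{n+1},f) ,
\end{equation*}
exhibiting \eqref{we} as a base change of your morphism $\rho_n$, which is a cover because $f$ is a weak equivalence; stability of covers under pullback finishes the proof. Your instinct that Axiom~\ref{cancellation} cannot be run in the needed direction is sound; the idea you are missing is that no cancellation is needed at all.
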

\begin{proof}
  We have
  \begin{equation*}
    P(f)_n \cong \Map(\D^n\into\D^{1,n},f) ,
  \end{equation*}
  and
  \begin{equation*}
    \Map(\p\D^n\into\D^n,q(f)) \cong \Map(\p\D^n\into\L^{1,n}_1,f) .
  \end{equation*}
  This shows that $f$ is a weak equivalence if and only if the
  morphisms
  \begin{equation}
    \label{we*}
    \Map(\D^n\into\D^{1,n},f) \too \Map(\p\D^n\into\L^{1,n}_1,f)
  \end{equation}
  are covers for all $n\ge0$.

  Suppose that the morphism \eqref{we} is a cover for $n\ge0$; we show
  that \eqref{we*} is a cover for $n\ge0$. For $0\le i\le n$, let
  $\D^{n+1}_i\subset\D^{1,n}$ be the simplex whose vertices
  are
  \begin{equation*}
    \{ (0,0) , \dots , (0,i) , (1,i) , \dots , (1,n) \} .
  \end{equation*}
  Observe that
  \begin{equation*}
    \D^{n+1}_{i-1} \cap \D^{n+1}_i = \p_i\D^{n+1}_{i-1} =
    \p_i\D^{n+1}_i .
  \end{equation*}

  Filter the prism:
  \begin{equation*}
    F_i\D^{1,n} = \L^{1,n}_1 \cup \D^{n+1}_0 \cup \dots
    \cup \D^{n+1}_i .
  \end{equation*}
  If $i<n$, there is a pullback diagram
  \begin{equation*}
    \begin{xy}
      \SQUARE<1500,500>[\Map(\p\D^n\into F_i\D^{1,n},f)`%
      Y_{n+1}`\Map(\p\D^n\into F_{i-1}\D^{1,n},f)`Y_n;``\p_i`]
    \end{xy}
  \end{equation*}
  The vertical morphisms are covers by part i) of
  Lemma~\ref{expansion}: composing them for $0\le i<n$, we see that
  the morphism
  \begin{equation*}
    \Map(\p\D^n\into F_{n-1}\D^{1,n},f) \too
    \Map(\p\D^n\into\L^{1,n}_1,f)
  \end{equation*}
  is a cover.

  There is also a pullback diagram
  \begin{equation*}
    \begin{xy}
      \Square[\Map(\D^n\into\D^{1,n},f)`%
      \Map(\D^n\into\D^{n+1},f)`%
      \Map(\p\D^n\into F_{n-1}\D^{1,n},f)`%
      \Map(\p\D^n\into\L^{n+1}_{n+1},f);```]
    \end{xy}
  \end{equation*}
  The right-hand vertical morphism is a cover by hypothesis, and hence
  the left-hand vertical morphism, namely \eqref{we*}, is also a
  cover.

  Now, suppose that \eqref{we*} is a cover for $n\ge0$; we show that
  \eqref{we} is a cover for $n\ge0$. There is a map from $\D^{1,n}$ to
  $\D^{n+1}$, which takes the vertex $(0,i)$ to $i$, and the vertices
  $(1,i)$ to $n+1$. This map takes the simplicial subset
  $\L^{1,n}_1\subset\D^{1,n}$ to the horn
  $\L^{n+1}_{n+1}\subset\D^{n+1}$, and induces a pullback square
  \begin{equation*}
    \begin{xy}
      \Square[\Map(\D^n\into\D^{n+1},f)`%
      \Map(\D^n\into\D^{1,n},f)`%
      \Map(\p\D^n\into\L^{n+1}_{n+1},f)`%
      \Map(\p\D^n\into\L^{1,n}_1,f);```]
    \end{xy}
  \end{equation*}
  It follows that \eqref{we} is a cover for $n\ge0$.
\end{proof}

\section{$k$-categories}

In this section, we study a class of simplicial spaces bearing the
same relationship to $k$-groupoids as categories bear to groupoids.
The definition of $k$-categories is inspired by Rezk's definition of a
complete Segal space \cite{Rezk}.

Recall that the thick $1$-simplex $\DELTA^1$ is the nerve of the
groupoid $\[1\]$ with objects $\{0,1\}$ and a single morphism between
any pair of objects.
\begin{definition}
  Let $k>0$. A \textbf{\boldmath{$k$}-category} in a descent category
  $\CV$ is a simplicial space $X$ such that
  \begin{enumerate}[label=\arabic*)]
  \item if $0<i<n$, the morphism
    \begin{equation*}
      X_n \to \Map(\L^n_i,X)
    \end{equation*}
    is a cover, and an isomorphism if $n>k$;
  \item if $i\in\{0,1\}$, the morphism
    \begin{equation*}
      \Map(\DELTA^1,X) \to \Map(\L^1_i,X) \cong X_0
    \end{equation*}
    is a cover.
  \end{enumerate}
\end{definition}

There is an involution permuting the two vertices of $\DELTA^1$. Thus,
in the second axiom above, it suffices to consider one of the two
morphisms $\Map(\DELTA^1,X) \to \Map(\L^1_i,X)$, since they are
isomorphic.

\begin{lemma}
  \label{coskeletal}
  A $k$-category $X$ is $k+1$-coskeletal, that is, for every $n\ge0$,
  \begin{equation*}
    X_n \cong \cosk_{k+1}X_n = \Map(\sk_{k+1}\D^n,X) .
  \end{equation*}
\end{lemma}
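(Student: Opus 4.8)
The plan is to prove the coskeletality $X_n \cong \Map(\sk_{k+1}\D^n, X)$ by establishing that adding any simplex of dimension $\ge k+2$ to a subcomplex of $\D^n$ does not change the mapping space out of $X$. Concretely, I would show that the inclusion $\sk_{k+1}\D^n \into \D^n$ induces an isomorphism on $\Map(-,X)$, and then extend to the general claim. The key tool is Lemma~\ref{expansion}, part~i), which says that for an $m$-expansion $S\into T$, the restriction $\Map(T,X)\to\Map(S,X)$ is a cover, and an \emph{isomorphism} when $m>k$.

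First I would set up a filtration of $\D^n$ by skeleta, interpolating between $\sk_{k+1}\D^n$ and $\D^n=\sk_n\D^n$. For each $\ell$ with $k+1\le \ell < n$, the passage from $\sk_\ell\D^n$ to $\sk_{\ell+1}\D^n$ attaches the nondegenerate $(\ell+1)$-simplices of $\D^n$ along their boundaries. The subtlety is that Lemma~\ref{expansion} is phrased for \emph{horn}-expansions, whereas a skeletal filtration attaches along full boundaries $\p\D^{\ell+1}$, not horns $\L^{\ell+1}_i$. To bridge this, I would argue that attaching a simplex $\D^{\ell+1}$ along $\p\D^{\ell+1}$ is the composite of an expansion (attaching along a horn $\L^{\ell+1}_i\into\D^{\ell+1}$) followed by attaching the single remaining face; but more directly, each face $\p_i\D^{\ell+1}$ that is not already present is itself glued along its boundary, so I can refine the skeletal filtration into a filtration whose successive stages are pushouts along horn inclusions $\L^{m}_{i}\into\D^{m}$ with $m=\ell+1\ge k+2>k$. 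By Lemma~\ref{proper} (the union of $m$ faces of $\D^m$ is an $m$-expansion into $\D^m$) and the pushout characterization in the definition of $m$-expansion, each such attachment with $m>k$ exhibits an $m$-expansion. Applying Lemma~\ref{expansion} i) to each stage, the restriction maps $\Map(\sk_{\ell+1}\D^n,X)\to\Map(\sk_\ell\D^n,X)$ are isomorphisms for $\ell\ge k+1$. Composing these isomorphisms over $k+1\le \ell<n$ yields
\begin{equation*}
  X_n = \Map(\D^n,X) \xrightarrow{\ \cong\ } \Map(\sk_{k+1}\D^n,X) = \cosk_{k+1}X_n .
\end{equation*}

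The main obstacle I anticipate is the bookkeeping in the previous paragraph: verifying that each stage of the refined filtration genuinely fits the pushout pattern of an $m$-expansion with $m>k$, and in particular checking that the relevant horn inclusions arise with dimension strictly exceeding $k$ so that the "isomorphism if $m>k$" clause of Lemma~\ref{expansion} applies rather than merely the "cover" clause. The point to be careful about is that I only add cells of dimension $\ell+1\ge k+2$, so the expansion parameter $m=\ell+1$ always satisfies $m>k$; this is exactly what forces isomorphisms and not just covers. Note that the argument only uses condition~1) of the definition of $k$-category (the horn-filling isomorphisms in dimensions $>k$), so the same reasoning in fact gives coskeletality for $k$-groupoids via Lemma~\ref{expansion} with no change.
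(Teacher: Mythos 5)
There is a genuine gap at the heart of your argument: the passage from $\sk_\ell\D^n$ to $\sk_{\ell+1}\D^n$ attaches each nondegenerate $(\ell+1)$-simplex along its \emph{entire} boundary $\p\D^{\ell+1}$, and this cannot be refined into horn pushouts. A pushout along $\L^m_i\into\D^m$ adjoins \emph{two} nondegenerate simplices --- the $m$-simplex together with a new, free $i$th face --- whereas in the skeletal filtration every face of the simplex being attached is already present; identifying the would-be free face with the existing one is a quotient, not an inclusion of a subcomplex, so it falls outside the expansion formalism entirely. (Your proposed decomposition ``horn first, then the remaining face'' runs backwards for the same reason.) Indeed $\p\D^m\into\D^m$ is not an expansion of any kind: expansions are anodyne, hence weak homotopy equivalences, and $\p\D^m\into\D^m$ is not. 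So Lemma~\ref{expansion} does not apply, and the asserted isomorphisms $\Map(\sk_{\ell+1}\D^n,X)\to\Map(\sk_\ell\D^n,X)$ are precisely the nontrivial content of the lemma rather than a consequence of horn-filling. A sanity check that the mechanism is unsound: nothing in your argument distinguishes $\ell+1\ge k+2$ from $\ell+1\ge k+1$, so the same reasoning run from $\sk_k\D^n$ would prove $X$ is $k$-coskeletal --- false already for the nerve of a group ($k=1$), where $X_2\to\Map(\p\D^2,X)$ is $(g,h)\mapsto(h,gh,g)$, injective but not surjective. A secondary issue is that Lemma~\ref{expansion} is stated for $k$-groupoids, while here $X$ is only a $k$-category, so at most inner expansions would be available to you; but this is moot given the main gap.

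For comparison, the paper proves that $\alpha_n:X_n\to\Map(\p\D^n,X)$ is an isomorphism for $n>k+1$ by treating surjectivity and injectivity separately, using only inner horns. Writing $\beta_n:\Map(\p\D^n,X)\to\Map(\L^n_{n-1},X)$ for the restriction, one has $\gamma_n=\beta_n\alpha_n$ with $\gamma_n$ an isomorphism for $n>k$, so $\beta_n$ is a split epimorphism; and for $n>k+1$ the pullback $\Map(\p\D^n,X)\cong\Map(\L^n_{n-1},X)\times_{\Map(\p\D^{n-1},X)}X_{n-1}$, combined with a factorization of the edge $\Map(\p\D^n,X)\to X_{n-1}$ through $\beta_n$ (via $\gamma_{n-1}^{-1}$, which is where the hypothesis $n-1>k$ enters), shows $\beta_n$ is a monomorphism, hence an isomorphism, hence so is $\alpha_n$; a downward induction on coskeleta then finishes. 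Some such two-sided argument is unavoidable: the uniqueness of the filler given the full boundary is exactly why the truncation index is $k+1$ and not $k$.
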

\begin{proof}
  Consider the commutative diagram
  \begin{equation*}
    \begin{xy}
      \Atriangle/{<-}`>`>/<1200,300>[\Map(\p\D^n,X)`X_n`\Map(\L^n_{n-1},X);%
      \alpha_n`\beta_n`\gamma_n]
    \end{xy}
  \end{equation*}
  If $n>k$, the morphism $\gamma_n$ is an isomorphism, and hence
  $\beta_n$ is a split epimorphism.

  If furthermore $n>k+1$, the upper horizontal morphism of the
  pullback square
  \begin{equation*}
    \begin{xy}
      \Square[\Map(\p\D^n,X)`X_{n-1}`\Map(\L^n_{n-1},X)`%
      \Map(\p\D^{n-1},X);`\beta_n`\alpha_{n-1}`\p_{n-1}]
    \end{xy}
  \end{equation*}
  factors into a composition
  \begin{equation*}
    \Map(\p\D^n,X) \xrightarrow{\beta_n} \Map(\L^n_{n-1},X)
    \xrightarrow{\beta_{n-1}\p_{n-1}} \Map(\L^{n-1}_{n-2},X)
    \xrightarrow{\gamma_{n-1}^{-1}} X_{n-1}
  \end{equation*}
  and hence, by universality of the pullback square, the morphism
  $\beta_n$ is a monomorphism. Since this morphism is also a split
  epimorphism, it follows that $\beta_n$ is an isomorphism. We
  conclude that $\alpha_n$ is an isomorphism.

  The pullback square
  \begin{equation*}
    \begin{xy}
      \Square[\cosk_\ell X_n`\bigl(X_\ell\bigr)^{\binom{n+1}{\ell+1}}`%
      \cosk_{\ell-1}X_n`\Map(\p\D^\ell,X)^{\binom{n+1}{\ell+1}};%
      ``\alpha_\ell^{\binom{n+1}{\ell+1}}`]
    \end{xy}
  \end{equation*}
  shows that the morphism $\cosk_\ell X_n\cosk_{\ell-1}X_n$ is an
  isomorphism if $\ell>k+1$. The lemma follows by downward induction
  in $\ell$, since $X_n\cong\cosk_nX_n$.
\end{proof}

If $T$ is a finite simplicial set, form the coend
\begin{equation*}
  T\times_\D\DELTA = \textstyle\int^{n\in\D} T_n \times \DELTA^n .
\end{equation*}
(This is denoted $k_!T$ by Joyal and Tierney \cite{JT}.) As examples
of this construction, we have the thick horns
\begin{equation*}
  \LAMBDA^n_i = \L^n_i\times_\D\DELTA \subset \DELTA^n
\end{equation*}
and the thick boundary
\begin{equation*}
  \p\DELTA^n = \D^n\times_\D\DELTA \subset \DELTA^n
\end{equation*}
Of course, $\LAMBDA^1_i\cong\L^1_i$, and $\p\DELTA^1\cong\p\D^1$.
 
Inner expansions play the same role in the theory of
$k$-categories that expansions play in the theory of $k$-groupoids.
\begin{definition}
  An \textbf{\boldmath inner $m$-expansion} (inner expansion, if
  $m=1$) is a map of simplicial sets such that there exists a
  filtration
  \begin{equation*}
    S = F_{-1}T \subset F_0T \subset F_1T \subset \dots \subset T
  \end{equation*}
  satisfying the following conditions:
  \begin{enumerate}[label=\arabic*)]
  \item $T=\bigcup_\ell F_\ell T$;
  \item there is a weakly monotone sequence $n_\ell\ge m$, a sequence
    $0<i_\ell<n_\ell$, and maps $x_\ell : \D^{n_\ell} \too F_\ell T$
    and $y_\ell : \L^{n_\ell}_{i_\ell} \too F_{\ell-1}T$ such that the
    following diagram is a pushout:
    \begin{equation*}
      \begin{xy}
        \SQUARE<600,400>[\L^{n_\ell}_{i_\ell}`F_{\ell-1}T`\D^{n_\ell}`%
        F_\ell T;y_\ell```x_\ell]
      \end{xy}
    \end{equation*}
  \end{enumerate}
\end{definition}

It is not hard to see that inner $n$-expansions form a category.

\begin{lemma}
  \label{lambda}
  If $0<i<n$, the inclusion $\LAMBDA^n_i\cup\D^n \into \DELTA^n$ is an
  inner $n$-expansion.
\end{lemma}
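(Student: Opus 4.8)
The plan is to exhibit the inclusion as an inner $n$-expansion by an explicit filtration of $\DELTA^n$, in the spirit of the Cartan--Moore argument of Lemma~\ref{Moore}. First I would record the combinatorial model: a nondegenerate $p$-simplex of $\DELTA^n=\cosk_0\D^n$ is a word $w=(w_0,\dots,w_p)$ in the alphabet $\{0,\dots,n\}$ with no two adjacent letters equal, its $j$th face being the word $\p_j w$ obtained by deleting $w_j$. In these terms $w$ lies in $\LAMBDA^n_i$ precisely when its image omits some vertex other than $i$, and in $\D^n$ precisely when it is strictly increasing. Hence the simplices that must be adjoined to $S=\LAMBDA^n_i\cup\D^n$ to build $\DELTA^n$ are exactly the words $w$ that are not strictly increasing and whose image contains $\{0,\dots,n\}\setminus\{i\}$. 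Every such word has length at least $n$, so it has dimension at least $n-1$; each will be attached either as a filling simplex or as the unique new face of one, and the filling simplices will have dimension at least $n$, as the definition of an $n$-expansion requires.

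The heart of the argument is to organize these words into attaching data for inner horns. I would set up a matching (a discrete vector field) on the set of words to be added, pairing each word $w$ with a word $\widehat w$ of one higher dimension obtained by inserting a single letter, in such a way that $w=\p_{j}\widehat w$ for an inner index $0<j<\dim\widehat w$; the partner $\widehat w$ then serves as the filling simplex and $w$ as its unique missing face. The pairing would be governed by the distinguished vertex $i$ together with the descent structure of $w$: I would locate a canonical interior position at which the letter $i$ can be inserted or deleted, so that toggling $i$ there interchanges $w$ and $\widehat w$. Ordering the filtration by dimension, refined by a Cartan-type statistic $b(w,i)$ weighting this position on either side of $i$ exactly as $b(\pi,i)$ orders the shuffles in Lemma~\ref{Moore}, I would then check that in the pushout attaching $\widehat w$ along $\L^{\dim\widehat w}_{j}$, every face $\p_{j'}\widehat w$ with $j'\ne j$ already lies in an earlier stage: such a face is either strictly increasing, hence in $\D^n$; or omits a vertex $\ne i$, hence in $\LAMBDA^n_i$; or has strictly smaller statistic, hence previously attached. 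The verification is a face-by-face enumeration parallel to cases i)--iii) and a1)--b2) of Lemma~\ref{Moore}.

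The main obstacle is guaranteeing that the designated index $j$ is always \emph{inner}, $0<j<\dim\widehat w$, rather than an endpoint; this is the only place the hypothesis $0<i<n$ is genuinely used, and it is also why both $\LAMBDA^n_i$ and $\D^n$ must be present in $S$. Informally, a word whose sole non-increasing feature sits at its first or last letter would force an outer horn; the increasing simplices supplied by $\D^n$ and the reversed edges supplied by $\LAMBDA^n_i$ (which encode the invertibility built into the thick simplex) are exactly what let such a word be reached instead as an interior face of a higher simplex. I would therefore define the canonical position so that the toggled letter $i$, lying strictly between $0$ and $n$ in value, always admits an interior insertion point, the boundary words where this threatens to fail being precisely those already accounted for by $\LAMBDA^n_i\cup\D^n$. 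Finally I would confirm that the matching is acyclic, so that the statistic induces a genuine filtration $S=F_{-1}\subset F_0\subset\cdots$ with $\bigcup_\ell F_\ell=\DELTA^n$, completing the proof that the inclusion is an inner $n$-expansion.
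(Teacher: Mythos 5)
Your setup is correct, and your strategy is in fact the paper's own: nondegenerate simplices of $\DELTA^n$ are words with no two adjacent letters equal, $\LAMBDA^n_i$ consists of the words whose image omits some vertex other than $i$, $\D^n$ of the strictly increasing ones, and the inclusion is exhibited as an inner $n$-expansion by pairing each remaining word $w$ with a filler $\widehat{w}$ of one higher dimension obtained by inserting the letter $i$ at one canonical interior position, so that $w=\p_j\widehat{w}$ is the unique missing inner face. Your dimension counts (free faces of dimension at least $n-1$, fillers of dimension at least $n$) and your diagnosis of where $0<i<n$ enters are also right.

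The gap is that the one construction carrying the entire proof --- the canonical insertion position --- is never defined, and it is not the naive one. In the paper the fillers are the sets $Q_{k,m}$, $0\le m<k-i$: words with the letter $i$ in position $i+m$, subject to the zigzag condition $i_{j-1}=i_{j+1}$ for $i\le j<i+m$ and to $i_{i+m-1}\ne i_{i+m+1}$; the free face is $\p_{i+m}$, and attachment is in order of increasing dimension $k$ and then \emph{decreasing} $m$. ``Insert $i$ at the nearest interior slot'' fails: for $n=2$, $i=1$, the free word $(1,0,2)$ cannot take $i$ at position $1$ (that yields the degenerate word $(1,1,0,2)$); its filler is $(1,0,1,2)\in Q_{3,1}$, i.e.\ one must slide past a zigzag prefix, and detecting how far to slide is exactly the maximality condition defining $m_s$ in the paper. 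Moreover, your closing trichotomy for the remaining faces (strictly increasing, in $\LAMBDA^n_i$, or of smaller statistic) omits degenerate faces and presupposes a statistic you never specify; the statistic $b(\pi,i)$ of Lemma~\ref{Moore} does not transfer, since there all fillers are top-dimensional simplices of a finite prism, whereas here fillers occur in every dimension $\ge n$ and the filtration is infinite ($\DELTA^n$ is not finite-dimensional). What actually makes the face check close up is the specific order above: for $j<i$ the face $\p_j\widehat{w}$ is degenerate, lies in $\LAMBDA^n_i\cup\D^n$, or lies in $Q_{k,m_s-1}$ (one lower dimension, hence attached earlier), while for $j>i$ with $j\ne i+m_s$ it is degenerate, lies in $\LAMBDA^n_i\cup\D^n$, or is a face of a filler in $Q_{k+1,m}$ with $m>m_s$ (same dimension, earlier precisely because the secondary order is decreasing in $m$). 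Until the pairing and the order are pinned down so that both halves of this enumeration go through, your text is a plan rather than a proof; once they are, no separate acyclicity argument is needed, since the explicit filtration already provides it.
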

\begin{proof}
  The $k$-simplices of $\DELTA^n$ have the form $(i_0,\dots,i_k)$,
  where $i_0,\dots,i_k\in\{0,\dots,n\}$; a $k$-simplex is
  nondegenerate if $i_{j-1}\ne i_j$ for $1\le j\le k$.

  Let $Q_{k,m}$, $0\le m<k-i$ be the set of non-degenerate $k$-simplices
  $s=(i_0\dots i_k)$ of $\DELTA^n$ which satisfy the following
  conditions:
  \begin{enumerate}[label=\alph*)]
  \item $s$ is not contained in $\LAMBDA^n_i\cup\D^n$;
  \item $i_{j-1}=i_{j+1}$ for $i\le j<i+m$;
  \item $i_{i+m}=i$;
  \item $i_{i+m-1}\ne i_{i+m+1}$.
  \end{enumerate}
  For example, if $n=2$ and $i=1$, then $Q_{2,0}=\{(2,1,0)\}$,
  \begin{equation*}
    Q_{3,1}=\{(1,0,1,2),(1,2,1,0)\} ,
  \end{equation*}
  and
  \begin{equation*}
    Q_{3,0}=\{(0,1,2,0),(0,1,2,1),(2,1,0,1),(2,1,0,2)\} .
  \end{equation*}
  Let $R_k$ be the set of non-degenerate $k$-simplices which do not
  lie in $\LAMBDA^n_i\cup\D^n$, nor in any of the sets $Q_{k,m}$.

  The simplicial set $\DELTA^n$ is obtained from $\LAMBDA^n_i\cup\D^n$
  by inner expansions along the simplices of type $Q_{k,m}$ in order
  first of increasing $k$, then of decreasing $m$. (The order in which
  the simplices are adjoined within the sets $Q_{k,m}$ is
  unimportant.)

  To prove this, consider a simplex $s=(i_0,\dots,i_k)$ in
  $R_k$. There is a unique natural number $0\le m_s<k-i$ such that the
  simplex
  \begin{equation*}
    \ts = (i_0,\dots,i_{i+m_s-1},i,i_{i+m_s},\dots,i_k)
  \end{equation*}
  has type $Q_{k+1,m_s}$. In fact, $m_s$ is either $0$ or the largest
  positive number $m$ satisfying the following conditions:
  \begin{enumerate}[label=\alph*)]
  \item $i_{j-1}=i_{j+1}$ for $i\le j<i+m$;
  \item $i_{i+m-2}=i$;
  \item $i_{i+m-1}\ne i$.
  \end{enumerate}

  The simplex $\ts$ is non-degenerate: $i_{i+m_s-1}$ does not equal
  $i$ by hypothesis, while $i_{i+m_s}$ does not equal $i$ by the
  maximality of $m_s$. It is easily seen that $\ts$ has type
  $Q_{k+1,m_s}$.

  We see that $s=\p_{i+m_s}\ts$ is an inner face of $\ts$. The faces
  $\p_j\ts$, $j<i$, are either degenerate, lie in
  $\LAMBDA^n_i\cup\Delta^n$, or lie in $Q_{k,m_s-1}$. The faces
  $\p_j\ts$, $j>i$, are either degenerate, lie in
  $\LAMBDA^n_i\cup\Delta^n$, or lie in the boundary of simplex in
  $Q_{k+1,m}$, $m>m_s$.
\end{proof}

\begin{corollary}
  \label{lambda1}
  If $S\into T$ is an inner $n$-expansion of finite simplicial sets,
  then
  \begin{equation*}
    S\times_\D\DELTA \cup T \into T\times_\D\DELTA
  \end{equation*}
  is an inner $n$-expansion.
\end{corollary}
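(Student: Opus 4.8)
The plan is to run, in the thick setting, the analogue of the proof of Lemma~\ref{Moore2}: take the filtration witnessing $S\into T$ as an inner $n$-expansion, apply the realization functor $k_!=(-)\times_\D\DELTA$ to it one cell at a time, and recognize each resulting attachment as a pushout of an inner $n$-expansion furnished by Lemma~\ref{lambda}.

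First I would record that $k_!=(-)\times_\D\DELTA$ is a left adjoint, its right adjoint being $Y\mapsto\bigl([n]\mapsto\Map(\DELTA^n,Y)\bigr)$ via the identification $\Map(T\times_\D\DELTA,Y)\cong\Map(T,\Map(\DELTA^\bull,Y))$; hence $k_!$ preserves colimits, in particular pushouts. Let $S=F_{-1}T\subset F_0T\subset\dots\subset T$ be a filtration exhibiting $S\into T$ as an inner $n$-expansion, so that $F_\ell T$ is the pushout of $\L^{n_\ell}_{i_\ell}\into\D^{n_\ell}$ along $y_\ell:\L^{n_\ell}_{i_\ell}\to F_{\ell-1}T$ and $x_\ell:\D^{n_\ell}\to F_\ell T$, with $n_\ell\ge n$ and $0<i_\ell<n_\ell$. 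Applying $k_!$ exhibits $F_\ell T\times_\D\DELTA$ as the pushout of the monomorphism $\LAMBDA^{n_\ell}_{i_\ell}\into\DELTA^{n_\ell}$ along the induced maps; since simplicial sets form a topos, this pushout is also a pullback, so that, viewed inside $T\times_\D\DELTA$,
\begin{equation*}
  F_\ell T\times_\D\DELTA = \bigl(F_{\ell-1}T\times_\D\DELTA\bigr)\cup\DELTA^{n_\ell}, \qquad \bigl(F_{\ell-1}T\times_\D\DELTA\bigr)\cap\DELTA^{n_\ell} = \LAMBDA^{n_\ell}_{i_\ell}.
\end{equation*}

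Next I would interpolate the thin part $T$. Set $G_\ell=\bigl(F_\ell T\times_\D\DELTA\bigr)\cup T$, a subcomplex of $T\times_\D\DELTA$, so that $G_{-1}=S\times_\D\DELTA\cup T$ and $\colim_\ell G_\ell=T\times_\D\DELTA$ (using that $T$ is finite, so the filtration is finite). The previous display and $F_{\ell-1}T\times_\D\DELTA\subset G_{\ell-1}$ give $G_\ell=G_{\ell-1}\cup\DELTA^{n_\ell}$, so, a union of subcomplexes being the pushout over their intersection, it remains to compute $G_{\ell-1}\cap\DELTA^{n_\ell}$. By distributivity in the subobject lattice,
\begin{equation*}
  G_{\ell-1}\cap\DELTA^{n_\ell} = \bigl((F_{\ell-1}T\times_\D\DELTA)\cap\DELTA^{n_\ell}\bigr)\cup\bigl(T\cap\DELTA^{n_\ell}\bigr) = \LAMBDA^{n_\ell}_{i_\ell}\cup\D^{n_\ell},
\end{equation*}
the first term being the thick horn identified above, the second being the thin simplex $\D^{n_\ell}\into\DELTA^{n_\ell}$ attached to $T$ via $x_\ell$. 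Thus $G_{\ell-1}\into G_\ell$ is a pushout of $\LAMBDA^{n_\ell}_{i_\ell}\cup\D^{n_\ell}\into\DELTA^{n_\ell}$, which is an inner $n_\ell$-expansion by Lemma~\ref{lambda}, hence an inner $n$-expansion since $n_\ell\ge n$. As inner $n$-expansions are stable under pushout (push the witnessing filtration out along the attaching map, using the pasting of pushouts) and under composition (they form a category), concatenating over $\ell$ and passing to the colimit exhibits $S\times_\D\DELTA\cup T\into T\times_\D\DELTA$ as an inner $n$-expansion.

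The step I expect to be the main obstacle is the intersection computation $T\cap\DELTA^{n_\ell}=\D^{n_\ell}$: one must check that, inside the thick realization $T\times_\D\DELTA$, the thin subcomplex $T$ meets each newly attached thick cell exactly in its thin $n_\ell$-simplex and in nothing of higher thickness. This is precisely what makes the attaching locus $\LAMBDA^{n_\ell}_{i_\ell}\cup\D^{n_\ell}$ rather than merely $\LAMBDA^{n_\ell}_{i_\ell}$, and hence what allows Lemma~\ref{lambda} to apply; it rests on the explicit description of the coend $T\times_\D\DELTA$, whose nondegenerate simplices lying over the cell $\DELTA^{n_\ell}$ are indexed by surjections onto $[n_\ell]$, of which only the monotone ones lie in the image of $T$.
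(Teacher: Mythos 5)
Your proposal is correct and is essentially the paper's own argument: the paper likewise inducts over the filtration, observing that each stage $\bigl(F_{\ell-1}T\times_\D\DELTA\bigr)\cup T \into \bigl(F_\ell T\times_\D\DELTA\bigr)\cup T$ is a pushout of the inner $n_\ell$-expansion $\LAMBDA^{n_\ell}_{i_\ell}\cup\D^{n_\ell}\into\DELTA^{n_\ell}$ supplied by Lemma~\ref{lambda}, with $n_\ell\ge n$, and then concludes by closure of inner $n$-expansions under pushout and composition. Your added details (that $(-)\times_\D\DELTA$ preserves pushouts and the intersection computation identifying the attaching locus as $\LAMBDA^{n_\ell}_{i_\ell}\cup\D^{n_\ell}$) simply verify the pushout square that the paper asserts without comment.
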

\begin{proof}
  The proof is by induction on the length of the filtration
  \begin{equation*}
    S = F_{-1}T \subset F_0T \subset F_1T \subset \dots \subset T
  \end{equation*}
  exhibiting $S\into T$ as an inner $n$-expansion. We see that there
  is a pushout square
  \begin{equation*}
    \begin{xy}
      \SQUARE<900,400>[\LAMBDA^{n_\ell}\cup\D^{n_\ell}`%
      \bigl(F_{\ell-1}T\times_\D\DELTA\bigr)\cup T`\DELTA^{n_\ell}`%
      \bigl(F_\ell T\times_\D\DELTA\bigr)\cup T;%
      ``(y_\ell\times_\D\DELTA)\cup x_\ell`]
    \end{xy}
  \end{equation*}
  It follows that
  $\bigl(F_\ell T\times_\D\DELTA\bigr)\cup T \into
  \bigl(F_{\ell-1}T\times_\D\DELTA\bigr)\cup T$
  is an $n_\ell$-expansion, where $n_\ell\ge n$. Since the inner
  $n$-expansions are closed under composition, the result follows.
\end{proof}

\begin{corollary}
  \label{lambda2}
  If $S\into T$ is an $m$-expansion of finite simplicial sets, where
  $m>1$, then
  \begin{equation*}
    S\times_\D\DELTA \into T\times_\D\DELTA
  \end{equation*}
  is an inner $m$-expansion.
\end{corollary}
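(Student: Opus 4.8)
The plan is to imitate the proof of Corollary~\ref{lambda1}, exploiting that the thickening functor $-\times_\D\DELTA$ (the functor $k_!$ of Joyal and Tierney) is cocontinuous, being a left adjoint. Applying it to each pushout square exhibiting a step of the $m$-expansion $S\into T$, and using the defining identity $\L^{n_\ell}_{i_\ell}\times_\D\DELTA=\LAMBDA^{n_\ell}_{i_\ell}$ together with the co-Yoneda identity $\D^{n_\ell}\times_\D\DELTA=\DELTA^{n_\ell}$, I would present each step as a cobase change
\begin{equation*}
  \begin{xy}
    \Square[\LAMBDA^{n_\ell}_{i_\ell}`F_{\ell-1}T\times_\D\DELTA`\DELTA^{n_\ell}`F_\ell T\times_\D\DELTA;```]
  \end{xy}
\end{equation*}
of the thick horn inclusion $\LAMBDA^{n_\ell}_{i_\ell}\into\DELTA^{n_\ell}$, with $n_\ell\ge m>1$. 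Since inner $m$-expansions are closed under composition (as noted after their definition) and under cobase change (as used in the proof of Corollary~\ref{lambda1}), concatenating over $\ell$ the resulting filtrations reduces the corollary to the following claim: for every $n\ge2$ and every $0\le i\le n$, the inclusion $\LAMBDA^n_i\into\DELTA^n$ is an inner $n$-expansion. This is exactly where the hypothesis $m>1$ is used: it forces $n_\ell\ge2$, so that inner horns of the relevant dimension exist.

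To prove the claim I would first reduce to the inner case $0<i<n$. The symmetric group on $\{0,\dots,n\}$ acts on $\DELTA^n=\cosk_0\D^n$ by permuting vertices, and a permutation $\sigma$ carries the thick horn $\LAMBDA^n_i$ (the simplices whose set of values omits some vertex other than $i$) to $\LAMBDA^n_{\sigma(i)}$. Since $n\ge2$, for $i\in\{0,n\}$ we may choose $\sigma$ with $0<\sigma(i)<n$; as being an inner $n$-expansion is invariant under isomorphism of arrows—transport the filtration along the automorphism $\sigma$ of $\DELTA^n$—it suffices to treat $0<i<n$.

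For $0<i<n$ I would factor the inclusion as $\LAMBDA^n_i\into\LAMBDA^n_i\cup\D^n\into\DELTA^n$. The second map is an inner $n$-expansion by Lemma~\ref{lambda}. For the first, the only nondegenerate thin simplices of $\DELTA^n$ not already lying in $\LAMBDA^n_i$ are the top simplex $\D^n$ and the face $\p_i\D^n$, so $\D^n\cap\LAMBDA^n_i=\L^n_i$ and there is a pushout square
\begin{equation*}
  \begin{xy}
    \Square[\L^n_i`\LAMBDA^n_i`\D^n`\LAMBDA^n_i\cup\D^n;```]
  \end{xy}
\end{equation*}
exhibiting $\LAMBDA^n_i\into\LAMBDA^n_i\cup\D^n$ as a single cobase change of the inner horn inclusion $\L^n_i\into\D^n$, inner precisely because $0<i<n$. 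Composing the two inner $n$-expansions proves the claim, and hence the corollary.

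The main obstacle is the claim itself: extracting the clean statement $\LAMBDA^n_i\into\DELTA^n$, \emph{without} the auxiliary thin simplex $\D^n$ that Lemma~\ref{lambda} carries along, for all $i$ including the outer ones that a general (as opposed to inner) expansion may involve. The two ingredients that dispose of it are the vertex-symmetry of $\DELTA^n$, handling $i\in\{0,n\}$, and the identification $\D^n\cap\LAMBDA^n_i=\L^n_i$, which lets the missing thin simplex be filled by one inner horn. I would verify this intersection carefully: a proper thin face of $\DELTA^n$ lies in $\LAMBDA^n_i$ if and only if its vertex set omits some vertex $\ne i$, and the only proper thin faces failing this are $\p_i\D^n$ and $\D^n$ itself.
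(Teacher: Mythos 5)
Your proposal is correct and follows essentially the same route as the paper's own proof: both reduce, via the pushout squares of the filtration (the paper phrases this as induction on the nondegenerate simplices of $T\setminus S$, you via cocontinuity of $-\times_\D\DELTA$, the same computation as in Corollary~\ref{lambda1}), to the claim that $\LAMBDA^n_i\into\DELTA^n$ is an inner $n$-expansion for $n>1$, then dispose of the outer indices by the vertex-permutation symmetry of $\DELTA^n$ and factor the remaining inner case as $\LAMBDA^n_i\into\LAMBDA^n_i\cup\D^n\into\DELTA^n$, the first step a single pushout of the inner horn $\L^n_i\into\D^n$ and the second given by Lemma~\ref{lambda}. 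Your explicit verification that $\D^n\cap\LAMBDA^n_i=\L^n_i$ simply makes precise a step the paper leaves implicit.
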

\begin{proof}
  The proof is by induction on the number of nondegenerate simplices
  in $T\setminus S$. For the induction step, it suffices to prove that
  if $n>1$ and $0\le i\le n$, the inclusion
  $\LAMBDA^n_i \into \DELTA^n$ is an inner $n$-expansion.

  The action of the symmetric group $S_{n+1}$ on the simplicial set
  $\DELTA^n$ induces a transitive permutation of the subcomplexes
  $\LAMBDA^n_i$. Thus, it suffices to establish the result when
  $i=1$. But in this case, the inclusion
  $\LAMBDA^n_1\into\LAMBDA^n_1\cup\D^n$ is an inner $n$-expansion, and
  the result follows from Lemma~\ref{lambda}.
\end{proof}

We will also need some results involving the simplicial set
$\DELTA^1$. This simplicial set has two nondegenerate simplices of
dimension $k$, which we denote by
\begin{align*}
  \cell{k} &= (0,1,\dots) & \cell{k}^* &= (1,0,\dots) .
\end{align*}
Let $\cell{k}^\circ$ be the mirror of $\cell{k}$:
\begin{equation*}
  \cell{k}^\circ = (\dots,1,0) =
  \begin{cases}
    \cell{k} & \text{$k$ even} \\
    \cell{k}^* & \text{$k$ odd}
  \end{cases} .
\end{equation*}
In particular, the simplicial subset $\LAMBDA^1_1\into\DELTA^1$ may be
identified with the vertex $\cell{0}=(0)$.
\begin{lemma}
  \label{DD1}
  The inclusion
  \begin{equation*}
    \p\D^n\times\DELTA^1 \cup \D^n\times\LAMBDA^1_1 \into
    \D^n\times\DELTA^1
  \end{equation*}
  is an expansion, and an inner expansion if $n>0$.
\end{lemma}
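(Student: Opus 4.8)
The plan is to realize this inclusion as a Leibniz (pushout-product) map and then to build an explicit expansion filtration by attaching simplices along horns, in direct analogy with the proof of Lemma~\ref{lambda}. First I would record that
\begin{equation*}
  \p\D^n\times\DELTA^1 \cup \D^n\times\LAMBDA^1_1 \into \D^n\times\DELTA^1
\end{equation*}
is exactly the pushout-product of the boundary inclusion $\p\D^n\into\D^n$ with the vertex inclusion $\LAMBDA^1_1=\{1\}\into\DELTA^1$. By the Eilenberg--Zilber description of simplices of a product, the nondegenerate simplices of $\D^n\times\DELTA^1$ are the staircase lattice paths $((c_0,e_0),\dots,(c_p,e_p))$ with $c$ weakly monotone into $\{0,\dots,n\}$, with $e$ an arbitrary word in $\{0,1\}$, and with consecutive vertices distinct. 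The simplices that must be attached are precisely those lying in neither $\p\D^n\times\DELTA^1$ nor $\D^n\times\{1\}$, that is, those whose $\D^n$-coordinate $c$ is surjective and whose $\DELTA^1$-coordinate $e$ takes the value $0$.

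The mechanism for attaching them is a canonical filler obtained by inserting a level-$1$ vertex next to a $0$-level vertex: for a path ending at level $1$ one inserts $(c_r,1)$ just after the last $0$ (at position $r$), recovering the path as $\p_{r+1}\tilde s$, while for a path ending at level $0$ one inserts just after the first $0$ (so the all-$0$ path $\D^n\times\{0\}$ acquires the filler $[(0,0),(0,1),(1,0),\dots,(n,0)]$). As in Lemma~\ref{lambda}, I would order the simplices to be attached first by dimension and then by a secondary oscillation statistic (the number of sign changes of $e$), and classify them into \emph{fillers} $\tilde s$, attached as the top cell of a horn, and \emph{filled faces} $s$, brought in as the single new face of the corresponding filler; the adjacency cases where the prescribed insertion would be degenerate are exactly those where $s$ is itself a filler. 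The key point for $n>0$ is that surjectivity of $c$ forces the inserted vertex into an \emph{interior} position: the lowest simplex to attach is $\D^n\times\{0\}=[(0,0),\dots,(n,0)]$, of dimension $n$, and for $n\ge1$ it is filled as the inner face $\p_1$ of $[(0,0),(0,1),(1,0),\dots,(n,0)]$, whereas for $n=0$ it is a single vertex, which can only be an endpoint face of an edge, forcing an outer horn.

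This last observation also delineates the two halves of the statement. For the \emph{expansion} claim, valid for all $n$, one may shortcut the construction: taking any expansion filtration $\{1\}=G_{-1}\subset G_0\subset\dots$ of $\LAMBDA^1_1\into\DELTA^1$ and setting $F_\ell=\p\D^n\times\DELTA^1\cup\D^n\times G_\ell$, each stage $F_{\ell-1}\into F_\ell$ is a pushout of $(\p\D^n\times\D^{m_\ell})\cup(\D^n\times\L^{m_\ell}_{i_\ell})\into\D^n\times\D^{m_\ell}$, which is an $m_\ell$-expansion by the second part of Lemma~\ref{Moore2}; this reduces the expansion claim to the base case $n=0$, namely that $\{1\}\into\DELTA^1$ is an expansion, which I would establish by the same insertion scheme, now permitting the outer horns that appear at the ends of the range. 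The refined \emph{inner} expansion claim for $n>0$ does \emph{not} follow layer-by-layer from a filtration of $\DELTA^1$: one checks directly that the pushout-product of even the single outer horn $\L^1_1\into\D^1$ with $\p\D^1\into\D^1$ attaches a triangle along the outer horn $\L^2_2$. Hence the inner version genuinely requires the global filtration of the preceding paragraph.

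I expect the main obstacle to be the face analysis for the fillers, exactly as in Lemma~\ref{lambda} but more intricate because the $\DELTA^1$-coordinate may oscillate. For each filler $\tilde s$ one must verify that, apart from the designated face $s$, every face $\p_j\tilde s$ is either degenerate, lies in $\p\D^n\times\DELTA^1\cup\D^n\times\{1\}$, or precedes $\tilde s$ in the chosen order, and that the omitted position is interior when $n>0$; the two end faces $\p_0\tilde s$ and $\p_{p+1}\tilde s$ and the faces adjacent to the inserted vertex are the delicate cases. The ``re-ascending'' faces, such as $\p_0[(0,0),(0,1),(1,0),\dots,(n,0)]$, are what make the filtration infinite and force the use of the secondary oscillation statistic to guarantee well-foundedness. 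Carrying out this bookkeeping, together with the corresponding pushout squares, completes the proof.
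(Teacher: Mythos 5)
Your reduction of the plain-expansion half to the second statement of Lemma~\ref{Moore2} is correct, and it is a genuinely different (and tidy) route: filtering by $F_\ell=\p\D^n\times\DELTA^1\cup\D^n\times G_\ell$ does exhibit each stage as a pushout of $\p\D^n\times\D^{m_\ell}\cup\D^n\times\L^{m_\ell}_{i_\ell}\into\D^n\times\D^{m_\ell}$, the attached dimensions $n+m_\ell$ are weakly monotone, and the base case $\LAMBDA^1_1\into\DELTA^1$ is exactly the paper's opening sentence (successive adjunction of $\cell{1},\cell{2},\dots$). You are also right that this layer-by-layer argument cannot yield innerness, since the horns of $\LAMBDA^1_1\into\DELTA^1$ are outer.

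The gap is in the inner-expansion half, which is the actual content of the lemma for $n>0$: your insertion rule is wrong, so the face analysis you defer cannot be completed for your scheme. Test it on $n=1$ with your convention $\LAMBDA^1_1=\{1\}$ and $s=[(0,1),(1,0)]$: this simplex ends at level $0$ and its first $0$ is its \emph{last} vertex, so your rule appends $(1,1)$, producing $\ts=[(0,1),(1,0),(1,1)]$ with $s=\p_2\ts$ --- an attachment along the \emph{outer} horn $\L^2_2$ (the faces $\p_0\ts=[(1,0),(1,1)]$ and $\p_1\ts=[(0,1),(1,1)]$ already lie in the base). Moreover this is forced, not an artifact of ordering: $s$ is not a filler under your degeneracy criterion, so in your matching it must enter as the designated face of $\ts$, and the same failure occurs for every simplex whose $\DELTA^1$-coordinate has the form $(1,\dots,1,0)$. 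The paper's filtration avoids this by keying the matching to the join/block decomposition $[\sigma_0;\dots;\sigma_n]$ rather than to the first or last $0$ of the whole word: each block is an alternating word, and the filler inserts the new vertex at the \emph{start} of the first block not equal to the base vertex (in your example, \emph{before} the $0$, giving $[(0,1),(1,1),(1,0)]$ with inner designated face $\p_1$), except when the offending block is the initial one, in which case it appends at the \emph{end} of that block (the $\cell{m}^\circ$ regime of $S_{k,\ell,m}$ with $\ell=n$); the omitted face is then inner precisely because $n>0$ guarantees at least one further block. These are the sets $S_{k,\ell,m}$, attached in order of ascending $k$, then $\ell$, then $m$, and your secondary ``oscillation statistic'' would likewise have to be replaced by this $(\ell,m)$ bookkeeping. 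Note also that the later characterization of weak equivalences of $k$-categories uses this explicit filtration (it isolates the single simplex $[\cell{1}^*;\cell{0}^*;\dots;\cell{0}^*]\in S_{n+1,n,1}$ with a face in $\D^n\times\LAMBDA^1_1$), so the precise form of the fillers matters beyond the present lemma.
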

\begin{proof}
  The expansion $\LAMBDA^1_1=\cell{0}\into\DELTA^1$ is obtained by
  successively adjoining the simplices $\cell{1}$, $\cell{2}$, \dots.

  The product $\D^n\times\DELTA^1$ is isomorphic to the iterated join
  of $n+1$ copies of $\DELTA^1$. Indeed, a $k$-simplex of
  $\D^n\times\DELTA^1$ may be identified with a pair consisting of a
  $k$-simplex $0^{a_0}\dots n^{a_n}$ of $\D^n$, where
  $a_0+\dots+a_n=k+1$, and a $k$-simplex $(i_0,\dots,i_k)$ of
  $\DELTA^1$. We may think of this $k$-simplex as a sequence of
  simplices $(\sigma_0,\dots,\sigma_n)$, where $\sigma_i$ is an
  $(a_i-1)$-simplex of $\DELTA^1$ if $a_i>0$, and is absent if
  $a_i=0$. Such a simplex is degenerate precisely when one of the
  $\sigma_i$ is degenerate. Denote the simplex
  $(i_0,\dots,i_k)\times0^{a_0}\dots n^{a_n}$ by
  $[\sigma_0;\dots;\sigma_n]$.

  The simplicial subset
  $\p\D^n\times\DELTA^1 \cup \D^n\times \LAMBDA^1_1 \subset \D^n\times
  \DELTA^1$
  is the union of the simplex $[\cell{0};\dots;\cell{0}]$, the
  simplices
  $[\sigma_0;\dots;\sigma_{i-1};;\sigma_{i+1};\dots;\sigma_n]$, and
  their faces.

  Let $S_{k,\ell,m}$ be the set of $k$-simplices in
  $\D^n\times\DELTA^1$ of the form
  \begin{equation*}
    [\cell{0};\dots;\cell{0};\cell{m};\sigma_{n-\ell+1};\dots;\sigma_n] ,
  \end{equation*}
  if $\ell<n$, and of the form
  \begin{equation*}
    [\cell{m}^\circ;\sigma_1;\dots;\sigma_n]
  \end{equation*}
  if $\ell=n$. The successive expansions of
  $\p\D^n\times\DELTA^1 \cup \D^n\times\LAMBDA^1_1$ along the
  simplices of $S_{k,\ell,m}$, in order first of ascending $k$, next
  of ascending $\ell$ (between $0$ and $n$), and lastly of ascending
  $m$ (between $1$ and $k-n$), exhibit the inclusion
  \begin{equation*}
    \p\D^n\times\DELTA^1 \cup \D^n\times\LAMBDA^1_1  \into
    \D^n\times\DELTA^1
  \end{equation*}
  as an inner expansion.
\end{proof}

\begin{corollary}
  A $k$-groupoid is a $k$-category.
\end{corollary}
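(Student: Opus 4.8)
The plan is to check the two conditions in the definition of a $k$-category against the $k$-groupoid axioms: one of them is automatic, and the other follows from the expansion machinery of Section~3 together with the computation underlying Lemma~\ref{DD1}.

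First I would dispose of condition~1). For a $k$-groupoid $X$, the morphism $X_n\to\Map(\L^n_i,X)$ is a cover for \emph{every} index $0\le i\le n$ with $n>0$, and an isomorphism for $n>k$. Restricting attention to the inner horns $0<i<n$ (which already forces $n\ge2>0$) yields precisely the statement that $X_n\to\Map(\L^n_i,X)$ is a cover, and an isomorphism for $n>k$. Thus condition~1) holds with nothing to prove.

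The real content is condition~2): that the morphism $\Map(\DELTA^1,X)\to\Map(\L^1_i,X)\cong X_0$ induced by a vertex inclusion is a cover. By the involution of $\DELTA^1$ interchanging its two vertices it suffices to treat one value of $i$, and I would identify $\Map(\L^1_1,X)$ with $\Map(\LAMBDA^1_1,X)=X_0$, since $\LAMBDA^1_1\cong\L^1_1$ is a single vertex $\cong\D^0$. The key input is that the inclusion $\LAMBDA^1_1\into\DELTA^1$ is an expansion; granting this, Part~i) of Lemma~\ref{expansion}, applied to this expansion and the $k$-groupoid $X$, shows at once that $\Map(\DELTA^1,X)\to\Map(\LAMBDA^1_1,X)=X_0$ is a cover, which is exactly what condition~2) demands.

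The only step requiring care --- and hence the main, though mild, obstacle --- is verifying that $\LAMBDA^1_1\into\DELTA^1$ genuinely is an expansion in the sense of the definition. This is precisely the observation opening the proof of Lemma~\ref{DD1}, and I would cite it rather than repeat it: since $\DELTA^1=\cosk_0\D^1$, its nondegenerate $k$-simplices are $\cell{k}=(0,1,0,\dots)$ and $\cell{k}^*=(1,0,1,\dots)$, and one builds $\DELTA^1$ from the vertex $\LAMBDA^1_1=\cell{0}$ by successively filling horns, adjoining $\cell{1},\cell{2},\dots$ in order of dimension. Each filling of $\cell{k}$ proceeds along the horn $\L^{k}_0$ --- whose faces $\p_1,\dots,\p_{k-1}$ are degenerate and whose face $\p_k=\cell{k-1}$ is already present --- and simultaneously introduces the missing face $\p_0\cell{k}=\cell{k-1}^*$, so that every nondegenerate simplex is eventually reached. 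Since every $n_\ell\ge1$, the inclusion is a $1$-expansion. This completes the verification of condition~2), and with it the corollary.
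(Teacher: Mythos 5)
Your proof is correct and is essentially the paper's own argument: condition~1) of the definition of a $k$-category is immediate from the $k$-groupoid axioms, and condition~2) follows by applying Part~i) of Lemma~\ref{expansion} to the expansion $\LAMBDA^1_1\into\DELTA^1$, which is precisely the special case of Lemma~\ref{DD1} that the paper invokes. The only difference is that you spell out the horn-filling filtration (adjoining $\cell{1},\cell{2},\dots$ along the horns $\L^k_0$, with the inner faces degenerate and $\p_0\cell{k}=\cell{k-1}^*$ appearing as the new free face) rather than merely citing it, and your verification of that filtration is accurate.
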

\begin{proof}
  This follows from Lemma~\ref{expansion} and the special case of the
  lemma where $n=1$.
\end{proof}

\begin{corollary}
  \label{TD1}
  If $S\subset T$ is a simplicial subset containing the vertices of
  $T$, then the inclusion
  \begin{equation*}
    S\times\DELTA ^1 \cup T\times\LAMBDA^1_1 \into T\times\DELTA^1
  \end{equation*}
  is an inner expansion.
\end{corollary}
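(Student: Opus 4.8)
The plan is to reduce the statement to Lemma~\ref{DD1} by means of the relative skeletal filtration of $T$ over $S$, in exactly the way that Corollary~\ref{lambda1} and Lemma~\ref{Moore2} reduce their statements to the case of a single simplex. Set $F_\ell T = S\cup\sk_\ell T$, so that $F_{-1}T=S$ and $T=\bigcup_\ell F_\ell T$, and filter $T\times\DELTA^1$ by the simplicial subsets
\begin{equation*}
  G_\ell = F_\ell T\times\DELTA^1 \cup T\times\LAMBDA^1_1 .
\end{equation*}
Then $G_{-1}=S\times\DELTA^1\cup T\times\LAMBDA^1_1$ and $\bigcup_\ell G_\ell = T\times\DELTA^1$, so it suffices to prove that each inclusion $G_{\ell-1}\into G_\ell$ is an inner expansion and then to invoke the closure of inner expansions under composition.

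First I would record the standard fact that, writing $I_\ell$ for the set of nondegenerate $\ell$-simplices of $T$ not lying in $S$, the simplicial set $F_\ell T$ is obtained from $F_{\ell-1}T$ by attaching a copy of $\D^\ell$ along $\p\D^\ell$ for each element of $I_\ell$. Multiplying by $\DELTA^1$ (which preserves colimits) and taking the union with $T\times\LAMBDA^1_1$ then produces the pushout square
\begin{equation*}
  \begin{xy}
    \Square[(\p\D^\ell\times\DELTA^1\cup\D^\ell\times\LAMBDA^1_1)^{I_\ell}`G_{\ell-1}`(\D^\ell\times\DELTA^1)^{I_\ell}`G_\ell;```]
  \end{xy}
\end{equation*}
exhibiting $G_{\ell-1}\into G_\ell$ as a pushout of copies of the inclusion of Lemma~\ref{DD1}. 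Here the part $\p\D^\ell\times\DELTA^1$ lands in $F_{\ell-1}T\times\DELTA^1\subset G_{\ell-1}$, since faces of an $\ell$-simplex lie in the $(\ell-1)$-skeleton, while the part $\D^\ell\times\LAMBDA^1_1$ lands in $T\times\LAMBDA^1_1\subset G_{\ell-1}$.

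The crucial point, and the place where the hypothesis that $S$ contain all the vertices of $T$ enters, is that this hypothesis forces $I_0=\emptyset$, so that every simplex attached in the filtration has dimension $\ell\ge1$. Lemma~\ref{DD1} then applies in its inner form: for $\ell>0$ the inclusion $\p\D^\ell\times\DELTA^1\cup\D^\ell\times\LAMBDA^1_1\into\D^\ell\times\DELTA^1$ is an inner expansion. Since a pushout of an inner expansion is again an inner expansion — its defining filtration together with the inner-horn pushouts transports along the outer pushout by pasting of pushout squares, just as in the inductive step of Corollary~\ref{lambda1} — each inclusion $G_{\ell-1}\into G_\ell$ is an inner expansion, and composing over $\ell$ gives the result.

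I expect the main obstacle to be the careful verification of the pushout square displayed above. The delicate point is to check, using the Eilenberg--Zilber description of the nondegenerate simplices of a product of simplicial sets, that the new cells contributed by distinct simplices of $I_\ell$ do not collide and that every simplex of $G_\ell$ not already present in $G_{\ell-1}$ arises exactly once from the attaching data. This is precisely the bookkeeping that underlies Lemma~\ref{Moore2}, so rather than redo it from scratch I would model the argument on that proof and simply note that the only modification is the replacement of the fixed region $\L^m_i\times T$ by $T\times\LAMBDA^1_1$, together with the observation that $I_0=\emptyset$ upgrades each expansion step to an \emph{inner} expansion.
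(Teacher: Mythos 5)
Your proposal is correct and coincides with the proof the paper intends: Corollary~\ref{TD1} is stated without proof precisely because it follows from Lemma~\ref{DD1} by the same relative skeletal induction used in Lemma~\ref{Moore2} and Corollary~\ref{lambda1}, attaching $(\p\D^\ell\times\DELTA^1\cup\D^\ell\times\LAMBDA^1_1)\into\D^\ell\times\DELTA^1$ cell by cell, with the hypothesis that $S$ contains the vertices of $T$ forcing $\ell\ge1$ so that each step is an \emph{inner} expansion. Your identification of the pushout bookkeeping and of the role of $I_0=\emptyset$ is exactly right, and your appeal to closure of inner expansions under pushout and composition matches the paper's (equally terse) treatment of that point.
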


The following definition is modeled on Joyal's definition of
quasi-fibrations between quasi-categories \cite{Joyal}.
\begin{definition}
  A \textbf{quasi-fibration} $f:X\to Y$ of $k$-categories is a
  morphism of the underlying simplicial spaces such that
  \begin{enumerate}[label=\arabic*)]
  \item if $0<i<n$, the morphism
    \begin{equation*}
      X_n \too \Map(\L^n_i\into\D^n,f)
    \end{equation*}
    is a cover;
  \item if $i\in\{0,1\}$, the morphism
    \begin{equation*}
      \Map(\DELTA^1,X) \too \Map(\D^0\into\DELTA^1,f) = X_0
      \times_{Y_0} \Map(\DELTA^1,Y)
    \end{equation*}
    is a cover.
  \end{enumerate}
\end{definition}
Clearly, the morphism from a $k$-category $X$ to the terminal
simplicial space $e$ is a quasi-fibration.

The proof of the following lemma is the same as that of
Lemma~\ref{expansion}. Note that $\Map(S\into T,f)$ is isomorphic to
$\Map(\sk_{k+1}S\into\sk_{k+1}T,f)$ by Lemma~\ref{coskeletal}; this is
important, since $\Map(S\into T,f)$ is only defined \emph{a priori}
when $T$ is a finite simplicial set.
\begin{lemma}
  \label{inner-expansion}
  Let $T$ be a simplicial set such that $\sk_nT$ is finite for all
  $n$.
  \begin{enumerate}[label=\roman*)]
  \item Let $i:S\into T$ be an inner expansion, and let $f:X\to Y$ be
    a quasi-fibration of $k$-categories. Then the morphism
    \begin{equation*}
      \Map(T,X) \too \Map(S\into T,f)
    \end{equation*}
    is a cover.
  \item Let $i:S\into T$ be an inclusion, and let $f:X\to Y$ be a
    hypercover of $k$-categories. Then the morphism
    \begin{equation*}
      \Map(T,X) \too \Map(S\into T,f)
    \end{equation*}
    is a cover.
  \end{enumerate}
\end{lemma}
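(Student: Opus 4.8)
The plan is to mirror the filtration argument of Lemma~\ref{expansion} for part~i) and of Lemma~\ref{cofibration} for part~ii); the one genuinely new ingredient is coskeletality (Lemma~\ref{coskeletal}), which simultaneously makes the mapping spaces well-defined for infinite $T$ and forces the relevant filtrations to be effectively finite. Throughout I would use that $X$ and $Y$, being $k$-categories, are $(k+1)$-coskeletal, so that $\Map(-,X)$ and $\Map(-,Y)$ depend only on the $(k+1)$-skeleton of their argument; in particular $\Map(S\into T,f)\cong\Map(\sk_{k+1}S\into\sk_{k+1}T,f)$ is a finite limit, since $\sk_nT$ is finite for all $n$ by hypothesis.

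For part~i) I would fix a filtration $S=F_{-1}T\subset F_0T\subset\dots$ exhibiting $S\into T$ as an inner expansion, so that each stage is a pushout along an inner horn $\L^{n_\ell}_{i_\ell}\into\D^{n_\ell}$ with $0<i_\ell<n_\ell$. Applying $\Map(-,-)$ turns this pushout into a pullback, exhibiting
\begin{equation*}
  \Map(F_\ell T,X)\too\Map(F_{\ell-1}T\into F_\ell T,f)
\end{equation*}
as a pullback of the morphism $X_{n_\ell}\to\Map(\L^{n_\ell}_{i_\ell}\into\D^{n_\ell},f)$, which is a cover because $f$ is a quasi-fibration and $0<i_\ell<n_\ell$. (The $\DELTA^1$ clause of the definition of a quasi-fibration plays no role, as inner expansions involve only inner horns.) Exactly as in Lemma~\ref{expansion}, I would then assemble these stagewise covers into the relative restriction maps $g_\ell\colon\Map(F_\ell T,X)\to\Map(S\into F_\ell T,f)$, writing each $g_\ell$ as the composite of the stage cover above with a pullback of $g_{\ell-1}$; since both factors are covers, every $g_\ell$ is a cover.

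The step I expect to be the only real subtlety --- and the reason coskeletality is indispensable --- is that this filtration may be infinite, since $T$ can carry nondegenerate simplices in arbitrarily high dimension. To dispose of this I would observe that, $\sk_{k+1}T$ being finite, all of its finitely many nondegenerate simplices have been adjoined by some finite stage $\ell_0$, whence $\sk_{k+1}F_\ell T=\sk_{k+1}T$ for all $\ell\ge\ell_0$. By coskeletality $\Map(F_\ell T,X)$ and $\Map(F_\ell T,Y)$, and hence $g_\ell$, are then independent of $\ell\ge\ell_0$, with common value the morphism $\Map(T,X)\to\Map(S\into T,f)$ under study. Thus the \emph{a priori} infinite composition collapses to the finite induction already carried out, and $g_{\ell_0}$ being a cover completes part~i). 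Equivalently, the stages with $n_\ell>k$ induce isomorphisms --- by the argument of Lemma~\ref{fibrations} transported from fibrations to quasi-fibrations --- so that only the finitely many stages with $n_\ell\le k$ contribute.

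Part~ii) requires no new argument: by coskeletality the morphism $\Map(T,X)\to\Map(S\into T,f)$ is identified with $\Map(\sk_{k+1}T,X)\to\Map(\sk_{k+1}S\into\sk_{k+1}T,f)$, an inclusion of \emph{finite} simplicial sets, to which Lemma~\ref{cofibration} applies verbatim since $f$ is a hypercover.
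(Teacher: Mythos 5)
Your proposal is correct and is essentially the paper's own argument: the paper disposes of this lemma by saying the proof is the same as that of Lemma~\ref{expansion} (for part~i), with Lemma~\ref{cofibration} handling part~ii), noting only that $\Map(S\into T,f)\cong\Map(\sk_{k+1}S\into\sk_{k+1}T,f)$ by Lemma~\ref{coskeletal} since the mapping spaces are \emph{a priori} defined only for finite $T$. Your fleshed-out treatment of the stabilization of the filtration (that $\sk_{k+1}F_\ell T=\sk_{k+1}T$ from some finite stage on, so the induction is effectively finite) is exactly the detail the paper's coskeletality remark is meant to supply, and your side observations --- that the $\DELTA^1$ clause of quasi-fibrations is unused because only inner horns occur, and that stages with $n_\ell>k$ give isomorphisms by the argument of Lemma~\ref{fibrations} --- are both accurate.
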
  

We now introduce a functor $X\mapsto\GG(X)$ from $k$-categories to
$k$-groupoids, which may be interpreted as the $k$-groupoid of
quasi-invertible morphisms in $X$.
\begin{theorem} \mbox{}
  \begin{enumerate}[label=\roman*)]
  \item If $X$ is a $k$-category, then the simplicial space 
    \begin{equation*}
      \GG(X)_n = \Map(\DELTA^n,X)
    \end{equation*}
    is a $k$-groupoid.
  \item If $f:X\to Y$ is a quasi-fibration of $k$-categories, then
    \begin{equation*}
      \GG(f) : \GG(X) \to \GG(Y)
    \end{equation*}
    is a fibration of $k$-groupoids.
  \item If $f:X\to Y$ is a hypercover of $k$-categories, then
    \begin{equation*}
      \GG(f) : \GG(X) \to \GG(Y)
    \end{equation*}
    is a hypercover of $k$-groupoids.
  \end{enumerate}
\end{theorem}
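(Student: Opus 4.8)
The plan is to reduce all three statements to the expansion lemmas by means of the fundamental identity
\begin{equation*}
  \Map(T,\GG(X)) \cong \Map(T\times_\D\DELTA,X),
\end{equation*}
valid for any simplicial set $T$ with finite skeleta. Indeed, $\Map(T,\GG(X))$ is the limit $\lim_{\D^n\to T}\GG(X)_n=\lim_{\D^n\to T}\Map(\DELTA^n,X)$ over the category of simplices of $T$, while $T\times_\D\DELTA=\colim_{\D^n\to T}\DELTA^n$; since $\Map(-,X)$ carries this colimit to that same limit, the two agree. (All limits in sight become finite after applying $\sk_{k+1}$, by Lemma~\ref{coskeletal}, so every space is well defined in $\CV$.) Feeding the horn inclusion $\L^m_i\into\D^m$, the boundary inclusion $\p\D^m\into\D^m$, and a morphism $f:X\to Y$ into this identity yields
\begin{align*}
  \GG(X)_m &\cong \Map(\DELTA^m,X), &
  \Map(\L^m_i,\GG(X)) &\cong \Map(\LAMBDA^m_i,X),
\end{align*}
and, writing $\p\D^m\times_\D\DELTA$ for the thick boundary,
\begin{align*}
  \Map(\L^m_i\into\D^m,\GG(f)) &\cong \Map(\LAMBDA^m_i\into\DELTA^m,f), \\
  \Map(\p\D^m\into\D^m,\GG(f)) &\cong \Map(\p\D^m\times_\D\DELTA\into\DELTA^m,f).
\end{align*}
Thus every structure map of $\GG$ that must be checked becomes, after this translation, a map $\Map(\DELTA^m,X)\to\Map(S\into\DELTA^m,f)$ for an appropriate subcomplex $S$, and Lemma~\ref{inner-expansion} does the rest.

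For part~i) I take $f:X\to e$, so the maps in question are $\Map(\DELTA^m,X)\to\Map(\LAMBDA^m_i,X)$. When $m>1$, Corollary~\ref{lambda2} shows that $\LAMBDA^m_i\into\DELTA^m$ is an inner $m$-expansion for \emph{every} $0\le i\le m$; this is the crux, since it is precisely here that the symmetry of the thick simplex turns the outer horns, which are not inner-fillable in $\D^m$, into inner expansions, so that $\GG(X)$ becomes a groupoid and not merely a category. By Lemma~\ref{inner-expansion} (whose proof, like that of Lemma~\ref{expansion}, also gives an isomorphism once $m>k$, because every horn $\L^{n_\ell}_{i_\ell}$ occurring in the expansion has $n_\ell\ge m>k$) this map is then a cover, and an isomorphism when $m>k$. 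In the remaining case $m=1$ the two outer horns give the maps $\Map(\DELTA^1,X)\to X_0$ induced by a vertex, which are covers by the second axiom for a $k$-category; since $k>0$, the isomorphism clause is vacuous at $m=1$. Hence $\GG(X)$ is a $k$-groupoid.

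Parts~ii) and~iii) follow the same pattern. For ii), with $f$ a quasi-fibration, the map to be shown a cover is $\Map(\DELTA^m,X)\to\Map(\LAMBDA^m_i\into\DELTA^m,f)$: for $m>1$ this is Lemma~\ref{inner-expansion}~i) applied to the inner $m$-expansion $\LAMBDA^m_i\into\DELTA^m$, and for $m=1$ it is exactly the second axiom for a quasi-fibration, that $\Map(\DELTA^1,X)\to X_0\times_{Y_0}\Map(\DELTA^1,Y)$ is a cover. For iii), with $f$ a hypercover, the map is $\Map(\DELTA^m,X)\to\Map(\p\D^m\times_\D\DELTA\into\DELTA^m,f)$; here $\p\D^m\times_\D\DELTA\into\DELTA^m$ is merely an inclusion of simplicial sets, so Lemma~\ref{inner-expansion}~ii), which for hypercovers needs no expansion hypothesis, applies directly for all $m\ge0$.

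I expect the genuine work to lie entirely in the first paragraph: establishing the adjunction identity and reading off the correct thick horns and thick boundaries, together with the care needed to treat the finiteness via coskeletality. The conceptually essential ingredient is Corollary~\ref{lambda2} — that for $m>1$ \emph{all} thick horns, outer ones included, are inner expansions — while the only place the argument must branch is the low-dimensional case $m=1$, where Corollary~\ref{lambda2} does not apply and one falls back on the $\DELTA^1$-axioms built into the definitions of $k$-category and quasi-fibration.
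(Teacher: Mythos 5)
Your proposal is correct and takes essentially the same route as the paper: both translate the horn and boundary conditions on $\GG$ through the identity $\Map(T,\GG(X))\cong\Map(T\times_\D\DELTA,X)$, then invoke Corollary~\ref{lambda2} for the thick horns $\LAMBDA^m_i\into\DELTA^m$ with $m>1$, fall back on the $\DELTA^1$-axioms of $k$-categories and quasi-fibrations at $m=1$, and handle hypercovers via Lemma~\ref{cofibration} (your citation of Lemma~\ref{inner-expansion}~ii) is the technically cleaner choice, since $\DELTA^m$ is not finite). You are in fact slightly more careful than the paper on two points it leaves implicit: spelling out the coend adjunction and its finiteness via Lemma~\ref{coskeletal}, and observing that an inner $m$-expansion with $m>k$ induces an isomorphism, which is needed for the $n>k$ clause of part~i) but is absent from the statement of Lemma~\ref{inner-expansion}.
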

\begin{proof}
  To prove Part i), we must show that the morphism
  \begin{equation*}
    \GG(X)_n\too\Map(\L^n_i,\GG(X)) ,
  \end{equation*}
  or equivalently, the morphism
  \begin{equation*}
    \Map(\DELTA^n,X) \too \Map(\LAMBDA^n_i,X) ,
  \end{equation*}
  is a cover for all $n>0$, and for $0\le i\le n$, and an isomorphism
  for $n>k$. For $n=1$, this is part of the definition of a
  quasi-fibration, and for $n>1$, it is a consequence of
  Corollary~\ref{lambda2}.

  The proof of Part~ii) is similar, since if $f:X\to Y$ is a
  quasi-fibration of $k$-categories, then the morphism
  \begin{equation*}
    \Map(\DELTA^n,X) \too \Map(\LAMBDA^n_i\into\DELTA^n,f) ,
  \end{equation*}
  is a cover for all $n>0$, and for $0\le i\le n$, by the same
  argument.

  To prove Part~iii), we must show that if $f:X\to Y$ is a hypercover,
  the morphism
  \begin{equation*}
    \GG(X)_n \too \Map(\p\D^n\into\D^n,\GG(f)) ,
  \end{equation*}
  or equivalently, the morphism
  \begin{equation*}
    \Map(\DELTA^n,X) \too \Map(\p\DELTA^n\into\DELTA^n,f) ,
  \end{equation*}
  is a cover for all $n\ge0$: this follows from
  Lemma~\ref{cofibration}, applied to the inclusion of simplicial sets
  $\p\DELTA^n\into\DELTA^n$.
\end{proof}

It is clear that $\GG$ takes pullbacks to pullbacks. We will show that
$k$-categories form a category of fibrant objects, and that $\GG$ is
an exact functor from this category to the category of $k$-groupoids.

The main step which remains in the proof that $k$-categories form a
category of fibrant objects is the construction of a simplicial
resolution for $k$-categories. We use the following refinement of
Lemma~\ref{Moore2}, which was already implicit in the proof of
Lemma~\ref{Moore}.
\begin{lemma}
  \label{Moore3}
  Let $T$ be a finite simplicial set, and let $S\into T$ be a
  simplicial subset. Then the morphism
  \begin{equation*}
    \D^m\times S \cup \L^m_i\times T \into \D^m\times T , \quad
    0<i<m ,
  \end{equation*}
  is an inner $m$-expansion, and the morphism
  \begin{equation*}
    S \times \D^n \cup T \times \L^n_j \into T \times \D^n , \quad
    0<j<n ,
  \end{equation*}
  is an inner $n$-expansion.
\end{lemma}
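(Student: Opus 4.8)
The plan is to follow the proof of Lemma~\ref{Moore2} essentially verbatim, upgrading every occurrence of ``$m$-expansion'' to ``inner $m$-expansion''. Recall that in that proof one filters $T$ by the subcomplexes $F_\ell T=S\cup\sk_\ell T$ and obtains, for each nondegenerate simplex of $T_\ell\setminus S_\ell$, a pushout square whose left-hand vertical arrow is the inclusion $\L^{m,\ell}_i\into\D^{m,\ell}$. Since inner $m$-expansions are closed under composition and stable under pushout, the first statement will follow once I know that, for $0<i<m$, the inclusion $\L^{m,n}_i\into\D^{m,n}$ is an \emph{inner} $m$-expansion; the second statement then follows by the symmetric argument interchanging the two tensor factors, using the inner refinement of the $\tilde{\L}^{m,n}_j\into\D^{m,n}$ part of Lemma~\ref{Moore}, valid for $0<j<n$.

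To obtain this inner refinement of Lemma~\ref{Moore}, I would first record the corresponding refinement of Lemma~\ref{proper}: if $S\subseteq\p\D^N$ is a subcomplex containing at least $m$ facets, among them the two extreme facets $\p_0\D^N$ and $\p_N\D^N$, then $S\into\D^N$ is an inner $m$-expansion. This is proved by the same induction on $N$ as Lemma~\ref{proper}. The presence of $\p_0\D^N$ and $\p_N\D^N$ forces every missing facet to have interior index $0<i_\ell<N$, so the final horn that fills the top cell omits a facet of interior index and is therefore an inner horn. For the inductive step that fills a facet $\p_{i_\ell}\D^N$, one checks that its two extreme facets are $\p_0\D^N\cap\p_{i_\ell}\D^N$ and $\p_N\D^N\cap\p_{i_\ell}\D^N$, which are present because $\p_0\D^N,\p_N\D^N\in S$; together with the count of at least $m$ present facets already used in Lemma~\ref{proper}, the induction hypothesis applies and makes this filling an inner $m$-expansion.

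Finally, I would feed this into the proof of Lemma~\ref{Moore}. There, each top-dimensional simplex $\pi$ of $\D^{m,n}$ is adjoined along the subcomplex $F_{b(\pi,i)-1}\D^{m,n}\cap\pi$, which was shown to contain at least $m$ facets of $\pi$. The proof of Lemma~\ref{Moore} also records that $\p_0\pi$ fails to lie in $\L^{m,n}_i$ only when $i=0$, and $\p_{m+n}\pi$ fails to lie in $\L^{m,n}_i$ only when $i=m$. Hence, precisely when $0<i<m$, both extreme facets of every such $\pi$ lie in $\L^{m,n}_i\subseteq F_{b(\pi,i)-1}\D^{m,n}$ and are therefore present when $\pi$ is adjoined. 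The refined Lemma~\ref{proper} then shows that each adjunction is an inner $m$-expansion, which is exactly the refinement required. The main obstacle is purely one of bookkeeping: one must verify that the ``two extreme facets present'' condition persists through the double recursion --- across the filtration by $b(\pi,i)$ in Lemma~\ref{Moore} and down the dimension in Lemma~\ref{proper} --- and confirm that the refined Lemma~\ref{proper} remains valid for subcomplexes $S$ carrying faces of dimension below $N-1$, and not merely unions of facets.
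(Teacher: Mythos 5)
Your proposal is correct and is essentially the paper's own argument: the paper gives no separate proof of Lemma~\ref{Moore3}, asserting only that it ``was already implicit in the proof of Lemma~\ref{Moore},'' and what is implicit there is exactly what you make explicit --- for $0<i<m$ the faces $\p_0\pi$ and $\p_{m+n}\pi$ always lie in $\L^{m,n}_i$ (items iv) and v) of that proof), so every horn used in the expansion is inner, and the filtration of Lemma~\ref{Moore2} then transports the inner refinement through pushouts. Your flagged bookkeeping points (the inner refinement of Lemma~\ref{proper} and its persistence through the double recursion) are the right ones and are no worse than what the paper itself leaves tacit.
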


\begin{definition}
  Define $\P_nX$ to be the simplicial space
  \begin{equation*}
    (\P_nX)_m = \Map(\D^m\times\DELTA^n,X) .
  \end{equation*}
\end{definition}

\begin{theorem}
  The functor $\P_\bull X$ is a simplicial resolution.
\end{theorem}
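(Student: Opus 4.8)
The plan is to run, \emph{mutatis mutandis}, the proof of Theorem~\ref{resolution}, systematically trading ``$k$-groupoid'' for ``$k$-category,'' ``fibration'' for ``quasi-fibration,'' ``expansion'' for ``inner expansion,'' the thin second factor $\D^n$ for the thick simplex $\DELTA^n$, and Lemma~\ref{expansion} for Lemma~\ref{inner-expansion}. Writing out $m$-simplices, the resolution direction of $\P_\bull X$ has matching objects $(\P_{\p\D^n}X)_m=\Map(\D^m\times\p\DELTA^n,X)$ and horn objects $(\P_{\L^n_i}X)_m=\Map(\D^m\times\LAMBDA^n_i,X)$, so that ``$\P_\bull X$ is a simplicial resolution'' unwinds, exactly as for Theorem~\ref{resolution}, to three assertions: that $\P_nX$ is a $k$-category; that for a quasi-fibration (resp.\ hypercover) $f:X\to Y$ the relative matching map $\P_nX\to\P_{\p\D^n}X\times_{\P_{\p\D^n}Y}Y_n$ is a quasi-fibration (resp.\ hypercover); and that for a quasi-fibration $f$ the map $\P_nX\to\P_{\L^n_i}X\times_{\P_{\L^n_i}Y}Y_n$ is a hypercover. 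The uniform mechanism is that every ``cover'' to be produced is the morphism of $\Map$-spaces induced by an inclusion $Z\into\D^m\times\DELTA^n$ relative to $f$; one identifies $Z$, checks it is an inner expansion, and invokes Lemma~\ref{inner-expansion}(i) for quasi-fibrations, while inclusions that need only be inclusions are handled for hypercovers by Lemma~\ref{inner-expansion}(ii).

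First I would verify that $\P_nX$ is a $k$-category. The inner-horn axiom, for $0<i<m$, is the map induced by $\L^m_i\times\DELTA^n\into\D^m\times\DELTA^n$, which is an inner $m$-expansion by Lemma~\ref{Moore3} (with $S=\emptyset$, $T=\DELTA^n$); Lemma~\ref{inner-expansion} gives a cover, and its filtration argument upgrades this to an isomorphism for $m>k$, since the attaching horns then have dimension $>k$ and $X$ is a $k$-category. The completeness axiom is the genuinely new point. Here I would factor the defining vertex inclusion as
\begin{equation*}
  \DELTA^n\times\LAMBDA^1_1 \into \sk_0\DELTA^n\times\DELTA^1\cup\DELTA^n\times\LAMBDA^1_1 \into \DELTA^n\times\DELTA^1 .
\end{equation*}
The right-hand inclusion is an inner expansion by Corollary~\ref{TD1} (with $T=\DELTA^n$, $S=\sk_0\DELTA^n$), hence induces a cover; and the $\Map$-space of the middle term is $\Map(\DELTA^n,X)\times_{X_0^{n+1}}\Map(\DELTA^1,X)^{n+1}$, whose projection to $\Map(\DELTA^n,X)$ is the pullback of the $(n+1)$-fold product of the $k$-category cover $\Map(\DELTA^1,X)\to X_0$. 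Composing the two covers yields the completeness axiom for $\P_nX$.

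For the relative statements I would argue as in Theorem~\ref{resolution}, reducing the matching condition at level $m$ (over $\p\D^m$ for hypercovers, over $\L^m_i$ with $0<i<m$ for the inner-horn part of a quasi-fibration) to an inclusion into $\D^m\times\DELTA^n$. For the quasi-fibration property over $\p\D^n$ the inner-horn inclusion is $(\D^m\times\p\DELTA^n)\cup(\L^m_i\times\DELTA^n)\into\D^m\times\DELTA^n$, an inner $m$-expansion by Lemma~\ref{Moore3} with $S=\p\DELTA^n\subset\DELTA^n=T$, and its completeness part is $(\DELTA^1\times\p\DELTA^n)\cup(\LAMBDA^1_1\times\DELTA^n)\into\DELTA^1\times\DELTA^n$, an inner expansion by Corollary~\ref{TD1} (here $\p\DELTA^n$ contains all vertices of $\DELTA^n$ for $n\ge1$, and $n=0$ is the trivial case $\P_0X=X$). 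For the hypercover property over $\p\D^n$ the inclusion $(\p\D^m\times\DELTA^n)\cup(\D^m\times\p\DELTA^n)\into\D^m\times\DELTA^n$ need only be an inclusion, so Lemma~\ref{inner-expansion}(ii) applies. The hypercover property over the inner horns is the most delicate: for $n>1$ Corollary~\ref{lambda2} makes $\LAMBDA^n_i\into\DELTA^n$ an inner $n$-expansion, and a pushout-product argument---filling each attaching cell $\L^{n_\ell}_{i_\ell}\into\D^{n_\ell}$ against $\p\D^m\into\D^m$ using the second form of Lemma~\ref{Moore3}, whose output $(\p\D^m\times\D^{n_\ell})\cup(\D^m\times\L^{n_\ell}_{i_\ell})\into\D^{m,n_\ell}$ is an inner $n_\ell$-expansion---promotes this to the inner $n$-expansion $(\p\D^m\times\DELTA^n)\cup(\D^m\times\LAMBDA^n_i)\into\D^m\times\DELTA^n$; the cases $n\le1$ fall to the completeness axiom. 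In every case the final cover is extracted from a commutative square whose other legs are covers, by pullback or by Axiom~\ref{cancellation}, as in Lemmas~\ref{fibration} and~\ref{groupoid:complete}.

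The main obstacle is precisely the interaction of the thickening with the completeness condition, which has no counterpart in the $k$-groupoid case. Two things require care. First, one must keep every horn appearing in the $\D^m$-direction inner, so that Lemma~\ref{inner-expansion}(i) is available; the outer faces, and the low-dimensional cases $n\le1$, are exactly what the completeness axiom and the cover $\Map(\DELTA^1,X)\to X_0$ exist to absorb, and they enter only through the $\DELTA^1$-lemmas~\ref{DD1} and~\ref{TD1}. Second, one must check that each composite inclusion built from $\D^m\times\DELTA^n$, $\D^m\times\p\DELTA^n$, and $\D^m\times\LAMBDA^n_i$ is a genuine inner expansion rather than a mere expansion; this is the role of the refinement Lemma~\ref{Moore3} together with Corollaries~\ref{lambda1} and~\ref{lambda2}, and the only real work is to invoke them with the correct choices of $S\into T$ and to verify that the pushout-product of a boundary inclusion with an inner expansion is again an inner expansion.
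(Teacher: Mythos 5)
Your proposal is correct and takes essentially the same approach as the paper: both reduce every condition to a cover induced by an inclusion into $\D^m\times\DELTA^n$, exhibit that inclusion as an inner expansion via Lemma~\ref{Moore3}, Corollary~\ref{TD1}, and Corollary~\ref{lambda2}, and then apply Lemma~\ref{inner-expansion} (part~ii for the hypercover case). You additionally spell out points the paper leaves implicit --- the direct check that $\P_nX$ is itself a $k$-category, including its completeness axiom via the factorization through $\sk_0\DELTA^n\times\DELTA^1\cup\DELTA^n\times\LAMBDA^1_1$, the pushout-product mechanism behind the citation of Corollary~\ref{lambda2} together with Lemma~\ref{Moore3}, and the $n=1$ horn case via Lemma~\ref{DD1} --- all of which are sound.
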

\begin{proof}
  Let $f:X\to Y$ be a quasi-fibration. By Lemma~\ref{Moore3}, the
  inclusion
  \begin{equation*}
    \L^m_i\times\DELTA^n\cup\D^m\times\p\DELTA^n \into
    \D^m\times\DELTA^n
  \end{equation*}
  is an inner expansion for $0<i<m$. Applying
  Lemma~\ref{inner-expansion}, we conclude that the morphism
  \begin{equation*}
    \Map(\D^m\times\DELTA^n,X) \too
    \Map(\L^m_i\times\DELTA^n\cup\D^m\times\p\DELTA^n\into
    \D^m\times\DELTA^n,f)
  \end{equation*}
  is a cover.

  By Corollary~\ref{TD1}, the inclusion
  \begin{equation*}
    \DELTA^1\times\p\DELTA^n\cup\LAMBDA^1_1\times\DELTA^n
    \into \DELTA^1\times\DELTA^n
  \end{equation*}
  is an inner expansion for $n>0$. It follows by
  Lemma~\ref{inner-expansion} that the morphism
  \begin{equation*}
    \Map(\DELTA^1\times\DELTA^n,X) \too
    \Map(\DELTA^1\times\p\DELTA^n\cup\LAMBDA^1_1\times\DELTA^n\into
    \DELTA^1\times\DELTA^n,f)
  \end{equation*}
  is a cover for $n>0$. Together, these two results show that the
  simplicial morphism
  \begin{equation*}
    \P_nX \too \P_{\p\D^n}X\times_{P_{\p\D^n}Y}\P_nY
  \end{equation*}
  is a quasi-fibration for $n>0$.

  By Corollary~\ref{lambda2} and Lemma~\ref{Moore3}, the inclusion
  \begin{equation*}
    \p\D^m\times\DELTA^n\cup\D^m\times\LAMBDA^n_j \into
    \D^m\times\DELTA^n
  \end{equation*}
  is an inner expansion for $n>1$ and $0\le j\le n$. It follows that
  the morphism
  \begin{equation*}
    \Map(\D^m\times\DELTA^n,X) \too
    \Map(\p\D^m\times\DELTA^n\cup\D^m\times\LAMBDA^n_j\into
    \D^m\times\DELTA^n,f)
  \end{equation*}
  is a cover, and hence that the simplicial morphism
  \begin{equation*}
    \P_nX \too \P_{\L^n_i}X \times_{\P_{\L^n_i}Y} \P_nY
  \end{equation*}
  is a hypercover for $n>1$.

  Let $f:X\to Y$ be a hypercover. Applying Lemma~\ref{cofibration}, we
  see that the morphism
  \begin{equation*}
    \Map(\D^m\times\DELTA^n,X) \too
    \Map(\p\D^m\times\DELTA^n\cup\D^m\times\p\DELTA^n\into
    \D^m\times\DELTA^n,f)
  \end{equation*}
  is a cover for $n>0$, and hence the simplicial morphism
  \begin{equation*}
    \P_nX \too \P_{\p\D^n}X\times_{P_{\p\D^n}Y}\P_nY
  \end{equation*}
  is a hypercover for $n>0$.
\end{proof}

The following lemma is the analogue of Lemma~\ref{Hard} for
$k$-categories.
\begin{lemma}
  If $f:X\to Y$ is a fibration of $k$-categories, and $g:Y\to Z$ and
  $g\*f$ are hypercovers, then $f$ is a hypercover.
\end{lemma}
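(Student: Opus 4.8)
The plan is to mirror exactly the proof of Lemma~\ref{Hard}, the analogous statement for $k$-groupoids, since the logical skeleton is identical: the hypothesis that $f$ is a fibration (now a quasi-fibration, in the $k$-category sense) together with hypercovers $g$ and $g\*f$ should force $f$ to be a hypercover. The strategy is to show that each morphism $X_n\to\Map(\p\D^n\into\D^n,f)$ is a cover by building an auxiliary space, covering $X_n$ on one side and related to the hypercover $g\*f$ on the other, and identifying the two via an isomorphism of fibred products. The key technical input will be that $X_{n+1}\to\Map(\L^{n+1}_1\into\D^{n+1},f)$ is a cover, where the horn index $i=1$ satisfies $0<1<n+1$, so that it is a genuine \emph{inner} horn and the quasi-fibration hypothesis on $f$ applies.

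First I would introduce the fibred products $V(f,g)$ and $W(f,g)$ exactly as in the proof of Lemma~\ref{Hard}, built from the degeneracy $s_0:X_n\to X_{n+1}$ and the inner-horn inclusions $\L^{n+1}_1,\L^{n+1}_0$ into the matching data for $g\*f$. The crucial structural fact is that $V(f,g)$ and $W(f,g)$ are isomorphic, via mutually inverse maps built from the simplices $\p_0\*a$ and $\p_1\*\tilde a$; this isomorphism lets me transport the projection $\p_0\*a:V(f,g)\to X_n$ to $\tilde b:V(f,g)\to X_n$, which is manifestly a cover because it is a pullback, along $s_0$, of a cover coming from the hypercover $g\*f$ via Lemma~\ref{cofibration} (or rather its inner-expansion refinement, Lemma~\ref{inner-expansion}). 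Thus $\p_0\*a:V(f,g)\to X_n$ is a cover.

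Next I would assemble the chain of auxiliary spaces $T(f,g)$, $U(f,g)$, $V(f,g)$ and the identifications of each with an explicit fibred product, exactly paralleling the three-tiered diagram in Lemma~\ref{Hard}. Each downward map in that tower is a cover: the covers here arise from the hypercover $g$ (through $X_n\to\Map(\p\D^n\into\D^n,g)$ and related matching morphisms) and from the inner-horn covers furnished by the quasi-fibration $f$. Composing these, I obtain that the diagonal map $V(f,g)\to\Map(\p\D^n\into\D^n,f)$ fits into a triangle
\begin{equation*}
  \begin{xy}
    \Atriangle/{<-}`{->}`{-->}/<1200,300>
    [V(f,g)`X_n`\Map(\p\D^n\into\D^n,f);``]
  \end{xy}
\end{equation*}
in which the two solid legs $V(f,g)\to X_n$ and $V(f,g)\to\Map(\p\D^n\into\D^n,f)$ are covers. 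By Axiom~\ref{cancellation} for covers in the descent category $\CV$, the remaining leg $X_n\to\Map(\p\D^n\into\D^n,f)$ is a cover, and since this holds for all $n\ge0$, the morphism $f$ is a hypercover.

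The main obstacle I anticipate is purely bookkeeping: verifying that the combinatorial identifications of $T$, $U$, $V$ with their respective fibred products go through \emph{using only inner horns}. In the $k$-groupoid proof one may freely use outer horns such as $\L^{n+1}_0$ and $\L^{n+1}_1$ without restriction, but in the $k$-category setting a quasi-fibration only controls inner horns $\L^n_i$ with $0<i<n$. The delicate point is therefore to check that every horn whose cover property I invoke for $f$ is inner---in particular that the indices appearing (here $1$, and $n$ within $\L^{n+1}$) are strictly interior---and that the outer-horn data needed are supplied instead by $g$ and $g\*f$, for which the hypercover hypothesis gives control over all boundary matching maps via Lemma~\ref{cofibration}. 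Once this index discipline is respected, the argument is a faithful transcription of Lemma~\ref{Hard}.
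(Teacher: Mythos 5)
There is a genuine gap, and it sits precisely at the point you flagged but did not resolve: your final claim that the argument works ``for all $n\ge0$'' is inconsistent with your own index discipline. The only place the fibration property of $f$ enters the transcription of Lemma~\ref{Hard} is the cover $X_{n+1}\to\Map(\L^{n+1}_1\into\D^{n+1},f)$, and the horn $\L^{n+1}_1$ is inner precisely when $0<1<n+1$, that is, when $n\ge1$. For $n=0$ the relevant horn is $\L^1_1\subset\D^1$, an \emph{outer} horn, and a quasi-fibration of $k$-categories gives no cover $X_1\to\Map(\L^1_1\into\D^1,f)$: its second axiom controls $\Map(\DELTA^1,X)\to X_0\times_{Y_0}\Map(\DELTA^1,Y)$, not the outer horn filler in $X_1$. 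So your argument establishes that $X_n\to\Map(\p\D^n\into\D^n,f)$ is a cover only for $n>0$, and leaves unproved the degree-zero assertion that $f_0:X_0\to Y_0$ is a cover --- which is exactly the ``essential surjectivity'' part that inner-horn machinery cannot see, and which is where the $\Map(\DELTA^1,\cdot)$ half of the quasi-fibration hypothesis must be brought in.

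The paper closes this case by a separate argument that your proposal is missing: apply Lemma~\ref{Hard} itself (the $k$-groupoid version) to $\GG(f)$ and $\GG(g)$. Since the functor $\GG$ takes quasi-fibrations to fibrations and hypercovers to hypercovers, $\GG(f)$ is a fibration of $k$-groupoids and $\GG(g)$, $\GG(g)\*\GG(f)=\GG(g\*f)$ are hypercovers, so Lemma~\ref{Hard} makes $\GG(f)$ a hypercover; in simplicial degree zero this says that $X_0=\GG(X)_0\to\GG(Y)_0=Y_0$ is a cover, since $\DELTA^0=\D^0$. With that step added, the rest of your proposal is a correct account of the paper's proof, which is indeed just the observation that the $V,W,T,U$ construction of Lemma~\ref{Hard} goes through verbatim for $n>0$ (the outer-horn covers for $g\*f$ and $g$ being supplied by Lemma~\ref{cofibration}, which needs no innerness for hypercovers, exactly as you say).
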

\begin{proof}
  The proof of Lemma~\ref{Hard} extends to this setting as
  well. Indeed, the proof contained there establishes that the
  morphism $X_n\to\Map(\p\D^n\into\D^n,f)$ is a cover for $n>0$. It
  remains to show that $f_0:X_0\to Y_0$ is a cover, which follows from
  Lemma~\ref{Hard} applied to the morphisms $\GG(f)$ and $\GG(g)$.
\end{proof}

With these results in hand, we may easily adapt the proof of
Theorem~\ref{main} to prove the following result.
\begin{theorem}
  The category of $k$-categories is a category of fibrant objects.
\end{theorem}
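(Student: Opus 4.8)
The plan is to mirror the proof of Theorem~\ref{main}, since we have already assembled all the $k$-category analogues of the lemmas used there. The four axioms \ref{F:fibrant}--\ref{factorize} must be verified for the category of $k$-categories, with quasi-fibrations as fibrations and hypercovers as trivial fibrations. Axiom~\ref{F:fibrant} is immediate: the terminal simplicial space $e$ is a $k$-category, and the unique morphism $X\to e$ is a quasi-fibration, as noted just after the definition of quasi-fibration. Axioms~\ref{F:pullback} and~\ref{F:trivial-pullback} follow by the same pullback-square arguments used for $k$-groupoids (Lemmas~\ref{pullback-Reedy} and the lemma immediately following it): in each case one forms the pullback square at the level of $n$-simplices and observes that the relevant structure morphism is a pullback of a cover, hence a cover. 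The only point requiring mild care is that the pullback of two $k$-categories along a quasi-fibration is again a $k$-category, which again reduces to the cover condition on horns $\L^n_i$ and on $\Map(\DELTA^1,-)$ being stable under pullback.

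The heart of the proof is Axiom~\ref{factorize}, the functorial factorization, and here I would invoke Brown's Lemma~\ref{P(f)} in the equivalent form \ref{factorize}$\ast$. The requisite path space is furnished by the functor $\P_1$: by the preceding theorem, $\P_\bull X$ is a simplicial resolution, so $\P_1 X\to X\times X$ is a quasi-fibration and the two face maps $\P_1 X\to X$ are hypercovers. This is exactly the input Lemma~\ref{P(f)} requires, so the factorization $f=q(f)\*s(f)$ through $P(f)=X\times_Y\P_1 Y$ goes through verbatim, with $q(f)$ a quasi-fibration and $s(f)$ a section of a trivial fibration. One then \emph{defines} a weak equivalence of $k$-categories (as in the statement of the quasi-fibration definition) to be a morphism $f$ for which $q(f):P(f)\to Y$ is a hypercover, and checks that weak equivalences form a category with the two-out-of-three property.

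The main obstacle, exactly as in the $k$-groupoid case, is establishing the two-out-of-three closure of weak equivalences, and in particular the delicate direction handled by Lemma~\ref{Hard}: that a fibration $f$ with $g$ and $g\*f$ hypercovers is itself a hypercover. Fortunately the excerpt has already done the work, extending Lemma~\ref{Hard} to $k$-categories; the $V(f,g)\cong W(f,g)$ argument shows $X_n\to\Map(\p\D^n\into\D^n,f)$ is a cover for $n>0$, and the remaining case $f_0:X_0\to Y_0$ is reduced to the $k$-groupoid statement applied to $\GG(f)$ and $\GG(g)$, using that $\GG$ is exact. Granting this, the three closure lemmas of Section~3 (that weak equivalences form a subcategory, that $f,gf$ weak equivalences imply $g$ is, and that $g,gf$ weak equivalences imply $f$ is) transcribe word for word, since each is a formal consequence of Lemma~\ref{groupoid:complete} applied to the same hypercover diagrams built from $\P_1$. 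Thus the plan is to assemble these verified pieces in the same order as in the proof of Theorem~\ref{main}, and the compilation of results already in hand makes the adaptation essentially mechanical.
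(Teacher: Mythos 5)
Your proposal is correct and follows essentially the same route as the paper, whose own proof is exactly the observation that the proof of Theorem~\ref{main} adapts once one has the simplicial resolution $\P_\bull$ and the $k$-category analogue of Lemma~\ref{Hard} (including its reduction of the degree-zero case to $\GG(f)$ and $\GG(g)$, which you correctly flag as the one place where the inner-horn restriction on quasi-fibrations matters). Your assembly of Axioms \ref{F:fibrant}--\ref{factorize}, with $\P_1$ as the path object feeding Brown's Lemma~\ref{P(f)} and the two-out-of-three arguments run through Lemma~\ref{groupoid:complete}, is precisely the intended adaptation.
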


The following corollary is immediately implied by Lemma~\ref{P(f)}
(``Brown's Lemma'').
\begin{corollary}
  If $f:X\to Y$ is a weak equivalence of $k$-categories, then
  \begin{equation*}
    \GG(f):\GG(X)\too\GG(Y)
  \end{equation*}
  is a weak equivalence of $k$-groupoids.
\end{corollary}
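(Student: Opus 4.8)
The plan is to reduce the statement to Brown's Lemma (Lemma~\ref{P(f)}) together with the exactness of $\GG$ established above. Recall that Lemma~\ref{P(f)} characterizes the weak equivalences of an arbitrary category of fibrant objects purely in terms of trivial fibrations: a morphism is a weak equivalence if and only if it factors as a composite of a trivial fibration with a section of a trivial fibration. Since both $k$-categories and $k$-groupoids have been shown to be categories of fibrant objects, this characterization is available on each side, and in both cases the trivial fibrations are exactly the hypercovers (a fibration being a weak equivalence precisely when it is a hypercover).

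First I would apply Lemma~\ref{P(f)} to the weak equivalence $f:X\to Y$ of $k$-categories. This yields a $k$-category $P$, a hypercover $p:P\to X$ with a section $s:X\to P$ (so that $ps=1_X$), and a hypercover $q:P\to Y$, such that $f=qs$. Thus $f$ is displayed as a trivial fibration composed with a section of a trivial fibration.

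Next I would apply the functor $\GG$ to this factorization. Because $\GG$ is exact, it carries the hypercovers $p$ and $q$ to hypercovers $\GG(p):\GG(P)\to\GG(X)$ and $\GG(q):\GG(P)\to\GG(Y)$ of $k$-groupoids, and these are trivial fibrations. Functoriality gives $\GG(p)\GG(s)=\GG(ps)=\GG(1_X)=1_{\GG(X)}$, so $\GG(s)$ is a section of the trivial fibration $\GG(p)$, while $\GG(f)=\GG(q)\GG(s)$ exhibits $\GG(f)$ as the composite of the trivial fibration $\GG(q)$ with a section of a trivial fibration. Invoking Lemma~\ref{P(f)} now in the category of $k$-groupoids, I conclude that $\GG(f)$ is a weak equivalence.

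I do not expect any genuine obstacle here: all the substance has already been absorbed into the exactness of $\GG$ (proved above) and into Brown's Lemma, so what remains is a formal transport of the two-sided characterization of weak equivalences through a functor. The only points meriting a word of care are that $\GG$, being a functor, preserves composition and identities so that the factorization transfers verbatim, and that the phrase ``trivial fibration'' unwinds to ``hypercover'' on both sides, which is exactly the hypothesis ``$\GG$ takes hypercovers to hypercovers.''
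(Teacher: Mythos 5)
Your proof is correct and is exactly the argument the paper intends: the paper derives this corollary ``immediately'' from Lemma~\ref{P(f)}, using the two-sided factorization of a weak equivalence through hypercovers and the fact that $\GG$ takes hypercovers to hypercovers. Your write-up simply makes the same reasoning explicit.
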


We have the following analogue of Theorem~\ref{We}.
\begin{theorem}
  A morphism $f:X\to Y$ of $k$-categories is a weak equivalence if and
  only if the morphism
  \begin{equation*}
    X_0 \times_{Y_0} \Map(\DELTA^1,Y) \too Y_0
  \end{equation*}
  is a cover, and the morphisms
  \begin{equation*}
    \Map(\D^n\into\DELTA^1\star\D^{n-1},f) \too
    \Map(\p\D^n\into
    \DELTA^1\star\p\D^{n-1}\cup\LAMBDA^1_0\star\D^{n-1},f)
  \end{equation*}
  are covers for $n\ge0$.
\end{theorem}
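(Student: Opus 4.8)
The plan is to imitate the proof of Theorem~\ref{We}, with the prism $\D^{1,n}$ replaced by the thick prism $\D^n\times\DELTA^1$ and the simplex $\D^{n+1}$ replaced by the thick join $\DELTA^1\star\D^{n-1}$.

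First I would reduce the weak-equivalence condition to a condition on thick prisms. Since the $k$-categories form a category of fibrant objects with the simplicial resolution $\P_\bull$ furnishing a functorial path space $\P_1$ (so that $(\P_1Y)_m=\Map(\D^m\times\DELTA^1,Y)$ and $\Map(T,\P_1Y)\cong\Map(T\times\DELTA^1,Y)$), Brown's Lemma~\ref{P(f)} shows that $f$ is a weak equivalence if and only if $q(f)\colon P(f)\to Y$ is a hypercover, where $P(f)=X\times_Y\P_1Y$. Unwinding the two projections of the path space, one identifies
\begin{equation*}
  P(f)_n\cong\Map(\D^n\into\D^n\times\DELTA^1,f)
\end{equation*}
(the inclusion being $\D^n\times\{0\}$) and
\begin{equation*}
  \Map(\p\D^n\into\D^n,q(f))\cong
  \Map\bigl(\p\D^n\into\p\D^n\times\DELTA^1\cup\D^n\times\LAMBDA^1_1,f\bigr),
\end{equation*}
with the lift taken over $\p\D^n\times\{0\}$. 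Hence $f$ is a weak equivalence precisely when
\begin{equation*}
  \Map(\D^n\into\D^n\times\DELTA^1,f)\too
  \Map\bigl(\p\D^n\into\p\D^n\times\DELTA^1\cup\D^n\times\LAMBDA^1_1,f\bigr)
\end{equation*}
is a cover for every $n\ge0$. For $n=0$ this is exactly the first asserted morphism $X_0\times_{Y_0}\Map(\DELTA^1,Y)\to Y_0$, and the involution exchanging the vertices of $\DELTA^1$ lets me use $\LAMBDA^1_0$ in place of $\LAMBDA^1_1$.

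It then remains to show, for $n>0$, that this thick-prism condition is equivalent to the thick-join condition, and here I would argue in two steps as in Theorem~\ref{We}. To pass from the join condition to the prism condition, I would filter $\D^n\times\DELTA^1$ starting from the subcomplex $\p\D^n\times\DELTA^1\cup\D^n\times\LAMBDA^1_1$. By Lemma~\ref{DD1} this inclusion is an inner expansion, so all of the intermediate stages that merely enlarge the $Y$-data are covers by Lemma~\ref{inner-expansion} applied to the quasi-fibration $Y\to e$; the final stage, which also extends the lift from $\p\D^n\times\{0\}$ to $\D^n\times\{0\}$, is attached along a pullback square whose opposite edge is the join morphism, exactly as the top shuffle $\D^{n+1}_n$ was attached in Theorem~\ref{We}. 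To pass back, I would produce a comparison map $\D^n\times\DELTA^1\to\DELTA^1\star\D^{n-1}$, generalizing the collapse $\D^{1,n}\to\D^{n+1}$ of Theorem~\ref{We}, which carries $\p\D^n\times\DELTA^1\cup\D^n\times\LAMBDA^1_1$ onto $\DELTA^1\star\p\D^{n-1}\cup\LAMBDA^1_0\star\D^{n-1}$ and induces a pullback square deducing the join morphism from the prism morphism.

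The main obstacle is the combinatorics of the thick join. In Theorem~\ref{We} the prism $\D^{1,n}$ is a union of $n+1$ ordinary shuffle simplices and the collapse to $\D^{n+1}$ is classical; here the factor $\DELTA^1=\cosk_0\D^1$ has nondegenerate simplices in all dimensions, so I must instead rely on Lemma~\ref{DD1} and its relatives (Corollary~\ref{TD1}, Lemma~\ref{Moore3}, Corollary~\ref{lambda2}) to see that every gluing step but one is an inner expansion, and I must identify the exceptional step precisely with the pair $\bigl(\D^n\into\DELTA^1\star\D^{n-1},\ \p\D^n\into\DELTA^1\star\p\D^{n-1}\cup\LAMBDA^1_0\star\D^{n-1}\bigr)$ and construct the comparison map realizing it. This is where the replacement of $\D^1$ by the thick $\DELTA^1$ --- encoding quasi-invertibility --- forces the join $\DELTA^1\star\D^{n-1}$ rather than the simplex $\D^{n+1}=\D^1\star\D^{n-1}$ that appears for $k$-groupoids.
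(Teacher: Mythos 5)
Your proposal reproduces the paper's own proof in all essentials: the paper likewise reduces the weak-equivalence condition via the path space to the covering condition on $\Map(\D^n\into\D^n\times\DELTA^1,f)\to\Map(\p\D^n\into\p\D^n\times\DELTA^1\cup\D^n\times\LAMBDA^1_1,f)$, then factors this morphism using the inner-expansion filtration of Lemma~\ref{DD1}, observing that the unique attached simplex with a face in $\D^n\times\LAMBDA^1_1$ is $[\cell{1}^*;\cell{0}^*;\dots;\cell{0}^*]\in S_{n+1,n,1}$, whose horn-filler morphism is precisely the exceptional join condition, and for the converse uses the collapse map $0^{a_0}\dots n^{a_k}\times i_0\dots i_k\mapsto 0^{a_0}\dots n^{a_k}\times i_0\dots i_{a_0-1}0\dots0$ from $\D^n\times\DELTA^1$ to $\DELTA^1\star\D^{n-1}$, which carries the subcomplexes as you predict and induces exactly the pullback square you describe. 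The two combinatorial details you defer as the ``main obstacle''---pinpointing the exceptional simplex in the filtration (which, note, is not literally the final stage, since higher-dimensional shuffles are attached after it) and writing down the comparison map---are exactly what the paper supplies, so your outline is correct and the same in substance.
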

\begin{proof}
  The morphism $f$ is a weak equivalence if and only if the morphisms
  \begin{equation}
    \label{we**}
    \Map(\D^n\into\D^n\times\DELTA^1,f) \too
    \Map(\p\D^n\times\LAMBDA^1_1 \into
    \p\D^n\times\DELTA^1\cup\D^n\times\LAMBDA^1_1,f)
  \end{equation}
  are covers for all $n\ge0$. For $n=0$, this is the first hypothesis
  of the theorem. Thus, from now on, we take $n>0$.

  We have seen in Lemma~\ref{DD1} that the simplicial set
  $\D^n\times\DELTA^1$ is an inner expansion of
  $\p\D^n\times\DELTA^1\cup\D^n\times\LAMBDA^1_1$, by the successive
  adjunction of the simplices
  $[\cell{0};\dots;\cell{0};\cell{m};\sigma_{n-\ell+1};\dots;\sigma_n]$
  and
  \begin{equation*}
    [\cell{m}^\circ;\sigma_1;\dots;\sigma_n] .
  \end{equation*}
  Of these simplices, only one, namely
  $[\cell{1}^*;\cell{0}^*;\dots;\cell{0}^*]\in S_{n+1,n,1}$, has a
  face in the simplicial subset
  $\D^n\times\LAMBDA^1_1\subset\D^n\times\DELTA^1$. Thus, the morphism
  \eqref{we**} factors into a sequence of horn-filler morphisms
  indexed by this sequence of simplices, all of which are seen to be
  covers, except possibly the one corresponding to the simplex
  $[\cell{1}^*;\cell{0}^*;\dots;\cell{0}^*]$. But the morphism
  corresponding to this simplex is a cover under the hypotheses of the
  theorem.

  Now suppose that \eqref{we**} is a cover for $n>0$. The map
  \begin{equation*}
    0^{a_0}\dots n^{a_k} \times i_0\dots i_k \mapsto
    0^{a_0}\dots n^{a_k} \times i_0\dots i_{a_0-1}0\dots0
  \end{equation*}
  from $\D^n\times\DELTA^1$ to $\DELTA^1\star\D^{n-1}$ takes
  $\p\D^n\times\DELTA^1\cup\D^n\times\LAMBDA^1_1$ to
  $\DELTA^1\star\p\D^{n-1}\cup\LAMBDA^1_0\star\D^{n-1}$ and induces a
  pullback square
  \begin{equation*}
    \small
    \begin{xy}
      \SQUARE<1900,500>[\Map(\D^n \into \DELTA^1\star\D^{n-1},f)`
      \Map(\D^n\into\D^n\times\DELTA^1,f)`
      \Map(\p\D^n\into
      \DELTA^1\star\p\D^{n-1}\cup\LAMBDA^1_0\star\D^{n-1},f)`
      \Map(\p\D^n\into\p\D^n\times\DELTA^1\cup\D^n\times\LAMBDA^1_1,f);```]
    \end{xy}
  \end{equation*}
  This completes the proof of the theorem.
\end{proof}

\section{Regular $k$-categories}

If $\CV$ is a regular descent category, it is natural to single out
the following class of $k$-categories.
\begin{definition}
  A \textbf{regular} $k$-category is a $k$-category $X$ such
  that the morphism
  \begin{equation*}
    \Map(\DELTA^1,X) \too \Map(\D^1,X) \cong X_1
  \end{equation*}
  induced by the inclusion $\D^1\into\DELTA^1$ is regular.
\end{definition}

Since $\D^1\into\DELTA^1$ is an expansion, every $k$-groupoid is a
regular $k$-category.

\begin{proposition}
  \label{G}
  If $X$ is a regular $k$-category, then for all $n\ge0$, the morphism
  \begin{equation*}
    \Map(\DELTA^n,X) \too \Map(\D^n,X) \cong X_n
  \end{equation*}
  induced by the inclusion $\D^n\into\DELTA^n$ is regular.
\end{proposition}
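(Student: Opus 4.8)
The plan is to prove a statement slightly stronger than Proposition~\ref{G} by induction on $n$, namely that for every subcomplex $S$ of $\DELTA^n$ of the form $S=\D^n\cup\bigcup_{j\in A}\p_j\DELTA^n$ --- the thin simplex together with the thick faces indexed by a \emph{proper} subset $A\subsetneq\{0,\dots,n\}$ --- the restriction
\begin{equation*}
  \Map(\DELTA^n,X) \too \Map(S,X)
\end{equation*}
is regular; taking $A=\emptyset$ recovers the proposition. All the mapping spaces here are genuine finite limits, since by Lemma~\ref{coskeletal} $X$ is $k{+}1$-coskeletal, so $\Map(T,X)\cong\Map(\sk_{k+1}T,X)$ for every (possibly infinite) $T$. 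The base case $n\le1$ is immediate: for $n=1$ every such $S$ equals $\D^1$ (the thick faces of $\DELTA^1$ are its vertices, already present), so the map is $\Map(\DELTA^1,X)\to X_1$, regular by the definition of a regular $k$-category, and $n=0$ is the identity.

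For the inductive step I would factor $S\into\DELTA^n$ through a thick horn. Choosing an inner index $0<i<n$ with $i\notin A$, we have $S\subseteq\LAMBDA^n_i\cup\D^n$, and I would treat the two halves separately. The outer half $\LAMBDA^n_i\cup\D^n\into\DELTA^n$ is an inner $n$-expansion by Lemma~\ref{lambda}; applying Lemma~\ref{inner-expansion}(i) to the quasi-fibration $X\to e$ shows that $\Map(\DELTA^n,X)\to\Map(\LAMBDA^n_i\cup\D^n,X)$ is a cover, hence regular by axiom (R1). For the inner half I would build $\LAMBDA^n_i\cup\D^n$ from $S$ by adjoining the missing thick faces $\p_j\DELTA^n\cong\DELTA^{n-1}$ one at a time. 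Adjoining such a face is a pushout along the inclusion of its intersection with the complex built so far, and that intersection is a thin $(n-1)$-simplex together with some of its thick faces --- again a complex of exactly the type controlled by the inductive hypothesis in dimension $n-1$. Since $\Map(-,X)$ carries pushouts of simplicial sets to pullbacks, the corresponding restriction map is a pullback of a map $\Map(\DELTA^{n-1},X)\to\Map(S',X)$ which is regular by induction; by axiom (R2) each step is regular, and because the regular morphisms form a subcategory their composite $\Map(\LAMBDA^n_i\cup\D^n,X)\to\Map(S,X)$ is regular. Composing with the cover from the outer half gives the claim for $S$.

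The main obstacle is purely combinatorial: keeping the induction inside the class of ``partially thickened'' complexes while only ever invoking \emph{inner} expansions. Two points need care. First, one must check that the intersection of a thick face $\p_j\DELTA^n$ with the partially built complex is exactly a thin $(n-1)$-simplex with a \emph{proper} set of thick faces, so that the inductive hypothesis applies; this is where adjoining the faces in a fixed order (say increasing $j$, skipping $i$) is used. Second --- and this is the real difficulty --- for certain $A$ containing all inner vertices (e.g.\ $A=\{1,\dots,n-1\}$) no inner $i\notin A$ exists, so one must supplement the reduction by showing that an outer thick horn such as $\LAMBDA^n_0\cup\D^n$ also includes into $\DELTA^n$ as an inner expansion. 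This holds but is not literally Lemma~\ref{lambda}: once all edges of $\DELTA^n$ are present, every higher non-monotone simplex arises by filling an inner horn (for instance the reversed edge $(j,i)$ is the inner face $\p_1$ of the $2$-simplex $(j,0,i)$), so one checks directly that the subcomplex generated by $\D^n$ and all thickened edges includes into $\DELTA^n$ by inner expansions. An alternative organization that sidesteps the choice of horn is to thicken edges globally: the inclusion $\D^n\into\D^n\cup\bigcup_{\{a,b\}}\DELTA^1$ is an iterated pushout of $\D^1\into\DELTA^1$, hence induces an iterated pullback of the regular map $\Map(\DELTA^1,X)\to X_1$ and so a regular map by (R2), while the remaining inclusion into $\DELTA^n$ is the inner expansion just described and induces a cover. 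Either way, the crux is this anodyne-type lemma that thickening the edges and then filling inner horns exhausts $\DELTA^n$.
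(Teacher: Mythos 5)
You have the right two-step skeleton: factor $\Map(\DELTA^n,X)\to X_n$ as a cover coming from an inner expansion, followed by iterated pullbacks of the single generating regular morphism $\Map(\DELTA^1,X)\to X_1$ along pushouts of $\D^1\into\DELTA^1$. Indeed, your ``alternative organization'' is, up to one choice, the paper's actual proof. The paper thickens not all $\binom{n+1}{2}$ edges but only the \emph{spine}: with $\T^n_i\subset\D^n$ the union of the edges $(j-1,j)$ for $j\le i$, and $\TT^n_i=(\T^n_i\times_\D\DELTA)\cup\D^n$, each step $\Map(\TT^n_i,X)\to\Map(\TT^n_{i-1},X)$ is a pullback of $\Map(\DELTA^1,X)\to X_1$, hence regular by (R2), while the inclusion $\TT^n_n\into\DELTA^n$ is an inner expansion because the spine inclusion $\T^n_n\into\D^n$ is an inner expansion (proved in-line by explicit bookkeeping) and Lemma~\ref{lambda1} transfers inner expansions across $S\mapsto (S\times_\D\DELTA)\cup T$. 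No thick faces, no outer horns, no induction on $n$ are needed.

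The genuine gap is that both of your organizations rest, at the crux, on inner-anodyne claims that you assert but do not prove, and neither is obtainable from the paper's lemmas. For the all-edges version, the natural attempt to invoke Lemma~\ref{lambda1} fails outright: you would need $\sk_1\D^n\into\D^n$ to be an inner expansion, and for $n\ge2$ it is not even an expansion — every pushout step in an expansion adds a simplex together with one \emph{new} free face, but the first $2$-simplex attached from the full $1$-skeleton already has its entire boundary present. The spine is precisely the subcomplex for which the transfer works, so your edge-thickening route would need its own direct combinatorial argument for $\D^n\cup\bigcup_{a<b}\DELTA^{\{a,b\}}\into\DELTA^n$, of the same order of difficulty as Lemma~\ref{lambda}. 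Your primary induction has the same problem in the case you honestly flag: it needs $\LAMBDA^n_0\cup\D^n\into\DELTA^n$ to be an inner expansion. This is true (it is the combinatorial content of Joyal's special-outer-horn theorem), but it is nowhere in the paper — the closest result, Lemma~\ref{mu}, yields only an expansion, not an inner one, hence says nothing for $k$-categories — and your one-line sketch is not a proof. The phenomenon to control is exactly the one above: for instance, in $\LAMBDA^2_0\cup\D^2$ the $2$-simplex $(1,0,2)$ has all three faces already present, so it cannot be attached by any $2$-horn pushout and must enter as the free inner face of a $3$-simplex such as $(1,0,1,2)$ via $\L^3_2$; exhibiting a global ordering in which every attachment is an honest inner-horn pushout with a genuinely new face is the real work, comparable to the $Q_{k,m}$ bookkeeping of Lemma~\ref{lambda}. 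Replacing ``all edges'' (or thick faces) by the spine removes both obstacles and turns your outline into the paper's proof of Proposition~\ref{G}.
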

\begin{proof}
  Let $\T^n_i\subset\D^n$ be the union of the $1$-simplices
  \begin{equation*}
    (j-1,j) , \quad 1\le j\le i .
  \end{equation*}
  For $k>0$, let $Q_k$ be the set of $k$-simplices of $\D^n$ such that
  $i_1=i_0+1$. In particular, $Q_1$ is the set of $1$-simplices in
  $\T^n_n$.

  Let $k>1$. Given a simplex $(i_0,\dots,i_k)\in Q_k$, the faces
  $\p_j(i_0,\dots,i_k)$ lie in $Q_{k-1}$ for $j>1$, while
  $\p_0(i_0,\dots,i_k)$ either lies in $Q_{k-1}$, if $i_2=i_1+1$, or
  equals $\p_1(i_1,i_1+1,i_2,\dots,i_k)$ if $i_2>i_1+1$.

  On the other hand, $\p_1(i_0,\dots,i_k)$ lies neither in $Q_{k-1}$
  nor is it a face of any simplex $(i_0',\dots,i_k')\in Q_k$ with
  $i_0'+\dots+i_k'>i_0+\dots+i_k$. This shows that the inclusion
  $\T^n_n\into\D^n$ is an inner expansion, in which the simplices of
  $Q_k$ are attached in order of increasing $k\ge2$, and for fixed
  $k$, in order of decreasing $i_0+\dots+i_k$.

  Let $\TT^n_i = (\T^n_i\o_\D\DELTA) \cup \D^n \subset \DELTA^n$. By
  Lemma~\ref{lambda1}, $\TT^n_n\into\DELTA^n$ is an inner expansion.
  Hence the morphism
  \begin{equation*}
    \Map(\DELTA^n,X) \too \Map(\TT^n_n,X)
  \end{equation*}
  is a cover, and hence regular. For each $1\le i\le n$, the morphism
  \begin{equation*}
    \Map(\TT^n_i\D^n,X) \too \Map(\TT^n_{i-1}\D^n,X)
  \end{equation*}
  is regular, since it may be realized as the pullback of a regular
  morphism:
  \begin{equation*}
    \begin{xy}
      \Square[\Map(\TT^n_i,X)`\Map(\DELTA^1,X)`
      \Map(\TT^n_{i-1},X)`\Map(\D^1,X);```]
    \end{xy}
  \end{equation*}
  This completes the proof of the theorem, since $\TT^n_0=\D^n$, and
  the composition of regular morphisms is regular.
\end{proof}

Let $\G(X)_n$ be the image of the regular morphism $\GG(X)_n\to X_n$.
The spaces $\G(X)_n$ form a simplicial space, and for each $n$, the
morphism $\GG(X)_n\to\G(X)_n$ (coimage of $\GG(X)_n\to X_n$) is a
cover. We call $\G(X)_1$ the space of \textbf{quasi-invertible}
morphisms.

It follows from the proof of Theorem~\ref{G} that $\G(X)_n$ is
the image of the morphism
\begin{equation*}
  \Map(\T^n_n,\GG(X)) \times_{\Map(\T^n_n,X)} X_n \too X_n .
\end{equation*}

\begin{lemma}
  \label{GGG}
  $\GG(\GG(X)) \cong \GG(\G(X)) \cong \GG(X)$
\end{lemma}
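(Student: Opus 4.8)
The plan is to realize each of $\GG(\GG(X))$, $\GG(\G(X))$, and $\GG(X)$ as a space of maps out of a thick simplex and then to compare the relevant simplicial sets. The basic tool is the identity
\[
  \Map(S,\GG(Z)) \cong \Map(S\times_\D\DELTA,Z),
\]
valid for any finite simplicial set $S$ and any simplicial space $Z$: since $\GG(Z)_m=\Map(\DELTA^m,Z)$, one has $\Map(S,\GG(Z))=\lim_{\D^m\to S}\Map(\DELTA^m,Z)=\Map(\colim_{\D^m\to S}\DELTA^m,Z)$, and $\colim_{\D^m\to S}\DELTA^m=S\times_\D\DELTA$. With $S=\DELTA^n$ this gives $\GG(\GG(X))_n\cong\Map(\DELTA^n\times_\D\DELTA,X)$, to be compared with $\GG(X)_n=\Map(\DELTA^n,X)$.

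I would first settle $\GG(\G(X))\cong\GG(X)$. Since $\G(X)_n$ is by construction the image of $\GG(X)_n\to X_n$, the map $\G(X)\into X$ is a levelwise monomorphism, so $\Map(\DELTA^n,\G(X))\to\Map(\DELTA^n,X)$ is a monomorphism. To invert it I would use the characterization of $\G(X)_n$ as the locus of $n$-simplices all of whose edges are quasi-invertible. Every edge of $\DELTA^n$ extends to a map $\DELTA^1\to\DELTA^n$, so for each simplex $\D^q\to\DELTA^n$ the structure map $\Map(\DELTA^n,X)\to X_q$ sends each edge into the image of $\Map(\DELTA^1,X)\to X_1$, namely $\G(X)_1$, and hence factors through $\G(X)_q\into X_q$. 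These factorizations form a cone over $\G(X)$, producing the inverse map $\Map(\DELTA^n,X)\to\Map(\DELTA^n,\G(X))$.

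The remaining isomorphism $\GG(\GG(X))\cong\GG(X)$ is the assertion that thickening is idempotent as seen by $X$, namely $\Map(\DELTA^n\times_\D\DELTA,X)\cong\Map(\DELTA^n,X)$. The natural comparison is the restriction $\GG(\GG(X))\to\GG(X)$ along $\D^\bull\into\DELTA^\bull$; applying the theory of Proposition~\ref{G} to the $k$-category $\GG(X)$ — whose quasi-invertible part $\G(\GG(X))$ is all of $\GG(X)$ — this restriction is a hypercover, so the two sides are connected by a canonical hypercover and the point is to see that it is an isomorphism. Morally this expresses that $\GG(X)$ is already thick, so thickening it again changes nothing.

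The main obstacle is exactly this promotion of a hypercover to an isomorphism, and it cannot be purely formal. Functoriality of $\DELTA^\bull=\cosk_0\D^\bull$ under arbitrary maps of vertices produces a surjection $r\colon\DELTA^n\times_\D\DELTA\to\DELTA^n$, $(\sigma,\xi)\mapsto\sigma\*\xi$, splitting the comparison, whence $\Map(r,X)$ is a monomorphism; but $\DELTA^n\times_\D\DELTA$ is strictly larger than $\DELTA^n$ (already in degree one it acquires simplices indexed by non-monotone surjections), so $r$ is not an isomorphism of simplicial sets and one cannot conclude by formal nonsense. I would therefore build an explicit inner-expansion filtration of $\DELTA^n\times_\D\DELTA$ over $\DELTA^n$, in the manner of Lemmas~\ref{Moore} and~\ref{lambda}, and use the $(k+1)$-coskeletality of $X$ (Lemma~\ref{coskeletal}) to show that each added simplex has a \emph{unique} filling into $X$; together with the monomorphism above this would force the comparison to be both a monomorphism and an effective epimorphism, hence an isomorphism. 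Verifying uniqueness of these fillings — rather than their mere existence — is the delicate step, and is precisely where the passage to $\GG(X)$, as opposed to an arbitrary $k$-groupoid, must be used.
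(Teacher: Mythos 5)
Your argument for the second isomorphism $\GG(\G(X))\cong\GG(X)$ is essentially sound, and it takes a genuinely different route from the paper: the paper \emph{deduces} this isomorphism from the first one, by applying $\GG_n$ to the composition $\GG(X)\to\G(X)\to X$ to obtain a factorization of the identity of $\GG(X)_n$ through $\GG(\G(X))_n\to\GG(X)_n$, which is a monomorphism because $\GG_n$ is a limit and $\G(X)\into X$ is a levelwise monomorphism; a monomorphism with a section is an isomorphism. Your edge-characterization argument reaches the same conclusion without invoking the first isomorphism at all, which is a structural advantage, but it does require knowing that $\G(X)_q\into X_q$ is precisely the pullback of the edge (or spine) conditions in a general regular descent category; this can be extracted from the spine description of $\G(X)_n$ stated after Proposition~\ref{G} (a cover followed by a monomorphism is a coimage factorization in a subcanonical regular descent category), but it is a step you should write out rather than cite.

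The genuine gap is in the first isomorphism $\GG(\GG(X))\cong\GG(X)$. You correctly observe that $r\colon\DELTA^n\times_\D\DELTA\to\DELTA^n$ is not an isomorphism of simplicial sets (it already identifies distinct nondegenerate $1$-simplices such as the classes of $(01,s)$ and $(10,\mathrm{id})$), and that the comparison $\GG(\GG(X))_n\to\GG(X)_n$ is a split epimorphism, indeed a hypercover; what is missing is injectivity, i.e.\ that a morphism $\DELTA^n\times_\D\DELTA\to X$ is \emph{determined} by its restriction along $\DELTA^n\into\DELTA^n\times_\D\DELTA$. Your proposed mechanism cannot supply this: an inner-expansion filtration, via Lemma~\ref{inner-expansion}, shows that restriction maps are covers --- that is, \emph{existence} of extensions, which you already have from $r$ --- while uniqueness of horn fillers in a $k$-category holds only in dimensions $>k$, and coskeletality (Lemma~\ref{coskeletal}) gives uniqueness only above dimension $k+1$; in dimensions $\le k$ nothing in your toolkit forces the extension to be unique, and you concede the point yourself by leaving the ``delicate step'' open. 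The paper's actual argument is entirely different and, contrary to your expectation that the statement ``cannot be purely formal,'' is a one-line piece of category theory: it identifies $\Map(\DELTA^k,\DELTA^n)$ with $\Map(\D^k,\DELTA^n)$ by noting that $\DELTA^n$ is the nerve of the groupoid $\[n\]$ and that a functor from the poset $[k]$ into a \emph{groupoid} extends uniquely to the chaotic groupoid $\[k\]$. It is this unique-extension property of groupoid targets --- not any filtration of $\DELTA^n\times_\D\DELTA$ or coskeletality of $X$ --- that disposes of the extra non-monotone simplices, and no analogue of it appears in your proposal; as it stands, the first isomorphism, and hence the lemma, is not proved.
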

\begin{proof}
  In order to prove that $\GG(\GG(X))$ is isomorphic to $\GG(X)$, it
  suffices to show that for all $k,n\ge0$,
  \begin{equation*}
    \Map(\D^k,\DELTA^n)\cong\Map(\DELTA^k,\DELTA^n) .
  \end{equation*}
  Since $\DELTA^k$ is the nerve of the groupoid $\[k\]$,
  we see that $\Map(\DELTA^k,\DELTA^n)$ may be identified with the set
  of functors from $\[k\]$ to $\[n\]$. But a functor from $\[k\]$ to $\[n\]$
  determines, and is determined by, a functor from $[k]$ to $\[n\]$,
  i.e.\ by a $k$-simplex of the nerve $\DELTA^n=N_\bull\[n\]$ of
  $\[n\]$.

  Applying the functor $\GG_n$ to the composition of morphisms
  \begin{equation*}
    \GG(X) \to \G(X) \to X ,
  \end{equation*}
  we obtain a factorization of the identity map of $\GG(X)_n$:
  \begin{equation*}
    \GG(\GG(X))_n \cong \GG(X)_n \to \GG(\G(X))_n \to \GG(X)_n .
  \end{equation*}
  Since the functor $\GG_n$ is a limit, it preserves
  monomorphisms. Thus the morphism from $\GG(\G(X))_n$ to $\GG(X)_n$
  is a monomorphism, and since it has a section, an isomorphism.
\end{proof}

The statement and proof of the following lemma are similar to those of
Lemma~\ref{lambda}.
\begin{lemma}
  \label{mu}
  The inclusion $\p\DELTA^n\cup\D^n\into\DELTA^n$ is an expansion.
\end{lemma}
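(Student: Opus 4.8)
The plan is to exhibit the inclusion as an explicit sequence of horn fillings, organized by an acyclic matching on the simplices being attached, exactly in the style of Lemma~\ref{lambda}. First I would identify those simplices. A nondegenerate $k$-simplex of $\DELTA^n$ is a sequence $(i_0,\dots,i_k)$ with $i_{j-1}\ne i_j$; it lies in the thick boundary $\p\DELTA^n$ precisely when it fails to be surjective onto $\{0,\dots,n\}$, and it lies in $\D^n$ precisely when it is monotone. Hence the nondegenerate simplices of $\DELTA^n$ not already contained in $A=\p\DELTA^n\cup\D^n$ are exactly those that are simultaneously \emph{surjective} and \emph{non-monotone}, and these are what must be attached.

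Next I would define the matching. Given such a simplex $s=(i_0,\dots,i_k)$, let $r=r(s)$ be the least index $t$ for which it is \emph{not} the case that $i_t=t$ and the value $t$ occurs only at position $t$; since $s$ is surjective and non-monotone, $r$ is well defined and $0\le r\le k$. If $i_r\ne r$, I pair $s$ upward with the filler $\tilde s$ obtained by inserting the value $r$ into position $r$, so that $s=\p_r\tilde s$; if $i_r=r$, then by surjectivity and the minimality of $r$ the value $r$ must recur later in $s$, and I pair $s$ downward by deleting its $r$-th entry. I would then check that this is a fixed-point-free involution on the set of simplices to be attached. The two facts that make it well defined are that surjectivity guarantees the inserted value already occurs elsewhere (so the filler is again surjective and non-monotone, with $r$ unchanged), and that the uniqueness built into the definition of $r$ forces the neighbours of the inserted or deleted entry to differ from it, so no degeneracy is created. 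Both are routine verifications.

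Finally I would organize the matched pairs $(s,\tilde s)$ into the required filtration of $\DELTA^n$, attaching each filler $\tilde s$ along the horn $\L^{\dim\tilde s}_{r}$, in order of increasing $\dim\tilde s$ and then of decreasing $r$. Since $r<\dim\tilde s$, the horn index is never the top one; as $r=0$ is allowed, this is a genuine expansion that need not be inner, consistent with the statement. The substantive step — the analogue of the computation of $b(\pi,i)$ in Lemma~\ref{lambda}, and the part I expect to be the main obstacle — is the face analysis showing that for each filler $\tilde s$ every face $\p_j\tilde s$ with $j\ne r$ is either degenerate, already lies in $A$, or is a simplex attached at an earlier stage of this ordering (either a lower-dimensional filler, or a free face with strictly larger value of $r$). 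Establishing this, and with it the acyclicity of the matching, requires tracking how $r$ and surjectivity transform under the face maps; once it is in hand, the pushout squares in the definition of an expansion are immediate and the proof concludes.
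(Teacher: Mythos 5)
Your proposal is correct and takes essentially the same route as the paper: your index $r(s)$ coincides with the paper's $m$ (your downward-matched simplices are exactly the paper's sets $Q_{k,m}$, whose fillers insert the value $m$ at position $m$), and your ordering --- increasing dimension, then decreasing $r$ --- is precisely the paper's order of increasing $k$, then decreasing $m$. The face analysis you defer is exactly what the paper's proof supplies, and it does go through: every face of a filler other than the free one is degenerate, lies in $\p\DELTA^n\cup\D^n$, or is a face of a filler of the same dimension with strictly larger $r$ (or of smaller dimension), hence attached earlier in your ordering.
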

\begin{proof}
  Let $Q_{k,m}$, $0\le m<n$ be the set of non-degenerate $k$-simplices
  $s=(i_0\dots i_k)$ of $\DELTA^n$ which satisfy the following
  conditions:
  \begin{enumerate}[label=\alph*)]
  \item $s$ is not contained in $\p\LAMBDA^n\cup\D^n$;
  \item $i_j=j$ for $i\le j\le m$;
  \item $\{i_{m+1},\dots,i_n\}=\{m,\dots,n\}$.
  \end{enumerate}
  Let $Q_k$ be the union of the sets $Q_{k,m}$.

  The simplicial set $\DELTA^n$ is obtained from $\LAMBDA^n_i\cup\D^n$
  by inner expansions along the simplices of type $Q_{k,m}$ in order
  first of increasing $k$, then of decreasing $m$. (The order in which
  the simplices are adjoined within the sets $Q_{k,m}$ is
  unimportant.)

  Given a non-degenerate simplex $s=(i_0,\dots,i_k)$ which does not
  lie in the union of $\p\DELTA^n\cup\Delta^n$ and $Q_k$, let $m$ be
  the largest integer such that $i_j=j$ for $j<m$. Thus
  \begin{equation*}
    s = (0,\dots,m-1,i_m,\dots,i_k) ,
  \end{equation*}
  and $i_m\ne m$. The infimum $\ell$ of the set $\{i_m,\dots,i_k\}$
  equals $m$: it cannot be any larger, or the simplex would lie in
  $\p\DELTA^n$, and it cannot be any smaller, or the simplex would lie
  in $Q_k$. Define the simplex
  \begin{equation*}
    \ts = (0,\dots,m,i_m,\dots,i_k)
  \end{equation*}
  in $Q_{k+1,m}$. We have $s=\p_m\ts$.

  If $m$ occurs more than once in the sequence $\{i_m,\dots,i_k\}$,
  then the remaining faces of the simplex $\ts$ are either degenerate,
  or lie in the union of $\p\DELTA^n\cup\Delta^n$ and $Q_k$. If $m$
  occurs just once in this sequence, say $i_\ell=m$, then all faces of
  the simplex $\ts$ other than $s=\p_m\ts$ and $\p_{\ell+1}\ts$ are
  either degenerate, or lie in the union of $\p\DELTA^n\cup\Delta^n$
  and $Q_k$, while $\p_{\ell+1}\ts$ is a face of a simplex of type
  $Q_{k+1,m'}$, where $m'>m$.
\end{proof}

This lemma implies that the natural morphism $\GG(X)\to X$ is a
hypercover when $X$ is a $k$-groupoid, even if the descent category is
not assumed to be regular.

The following theorem is related to results of Rezk \cite{Rezk} and
Joyal and Tierney~\cite{JT}.
\begin{theorem}
  \label{Regular}
  If $X$ is a regular $k$-category, then $\G(X)$ is a $k$-groupoid,
  and the induced morphism
  \begin{equation*}
    \GG(X) \too \G(X)
  \end{equation*}
  is a hypercover.
\end{theorem}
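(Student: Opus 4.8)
The plan is to prove the two assertions of Theorem~\ref{Regular} in sequence, using the two properties of $\G(X)$ already established: that each $\GG(X)_n \to \G(X)_n$ is a cover (the coimage of the regular morphism $\GG(X)_n \to X_n$), and that $\GG(X)_n \to X_n$ factors as this cover followed by a monomorphism $\G(X)_n \into X_n$ (by the Lemma on regular factorizations in Section~2). The key structural input is Lemma~\ref{GGG}, which gives $\GG(\G(X)) \cong \GG(X)$, so that applying $\GG$ to the morphism $\GG(X) \to \G(X)$ recovers an isomorphism.

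First I would show that the morphism $\GG(X) \to \G(X)$ is a hypercover, i.e.\ that for each $n\ge 0$ the morphism
\begin{equation*}
  \GG(X)_n \too \Map(\p\D^n,\GG(X)) \times_{\Map(\p\D^n,\G(X))} \G(X)_n
\end{equation*}
is a cover. The strategy here is to compare this relative matching morphism with the absolute one for $\GG(X) \to X$. Lemma~\ref{mu} tells us that $\p\DELTA^n \cup \D^n \into \DELTA^n$ is an expansion, so by Lemma~\ref{expansion} the morphism $\Map(\DELTA^n,X) \to \Map(\p\DELTA^n \cup \D^n, X)$ is a cover; this says precisely that $\GG(X) \to X$ is a hypercover over its image in the appropriate sense. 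I would then use the fact that $\GG(\G(X)) \cong \GG(X)$ to identify $\Map(\p\D^n,\GG(X))$ with $\Map(\p\DELTA^n, X)$ and the fibre product against $\G(X)_n$ with $\Map(\p\DELTA^n \cup \D^n, X)$, reducing the claim to Lemma~\ref{mu}. The cover $\GG(X)_n \to \G(X)_n$ then descends the covering property along the monomorphisms $\G(X)_m \into X_m$.

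Next, to see that $\G(X)$ is a $k$-groupoid, I would verify that $\G(X)_n \to \Map(\L^n_i, \G(X))$ is a cover for $n>0$ and an isomorphism for $n>k$. The plan is to lift the question to $\GG(X)$: we have the covers $\GG(X)_n \to \G(X)_n$ and the covers $\Map(\DELTA^n,X) \to \Map(\LAMBDA^n_i,X)$ established in the Theorem on $\GG$, which express that $\GG(X)$ is itself a $k$-groupoid. Since $\GG(X)_n \to \G(X)_n$ is a cover hence an effective epimorphism (the descent category being subcanonical), and since the relevant matching objects for $\G(X)$ sit inside those for $X$ as images of the corresponding $\GG(X)$-matching objects, I would use Axiom~\ref{cancellation} to transfer the covering property from $\GG(X)_n \to \Map(\L^n_i,\GG(X))$ down to $\G(X)_n \to \Map(\L^n_i,\G(X))$. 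For the isomorphism statement when $n>k$, I would invoke that $\GG(X)$ is $k+1$-coskeletal together with the fact that pullback along a cover reflects isomorphisms.

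The main obstacle I anticipate is the second part, establishing that the \emph{horn} morphisms for $\G(X)$ are isomorphisms for $n>k$ rather than merely covers. Covering is a formal consequence of the covering of the corresponding $\GG(X)$ morphisms plus cancellation, but injectivity of $\G(X)_n \to \Map(\L^n_i,\G(X))$ requires genuinely using that $\G(X)_n \into X_n$ is a monomorphism and that the matching object $\Map(\L^n_i,\G(X))$ embeds compatibly. The delicate point is to check that a simplex of $X_n$ all of whose faces along a horn are quasi-invertible is itself quasi-invertible when $n>k$, which is where the characterization of $\G(X)_n$ as the simplices whose every $1$-dimensional face lies in $\G(X)_1$ (the remark of Joyal following the statement) does the real work; I would phrase this via the monomorphism property and the coskeletality from Lemma~\ref{coskeletal}.
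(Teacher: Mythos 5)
There is a genuine gap, and it sits exactly where your plan inverts the paper's order of deduction. Your first step proposes to prove the hypercover property by reducing to Lemma~\ref{mu} via Lemma~\ref{expansion} applied to $X$, but Lemma~\ref{expansion} requires its target to be a $k$-groupoid: an expansion may attach \emph{outer} horns (the expansion of Lemma~\ref{mu} genuinely does), and for a mere $k$-category the outer horn maps need not be covers --- indeed $\Map(\DELTA^1,X)\to X_1$ is only assumed \emph{regular}, and for the nerve of a monoid it is the inclusion of the invertible elements, far from a cover. Relatedly, your identification of the relative matching object $\Map(\p\D^n,\GG(X))\times_{\Map(\p\D^n,\G(X))}\G(X)_n$ with $\Map(\p\DELTA^n\cup\D^n,X)$ is false: the latter equals $\Map(\p\DELTA^n,X)\times_{\Map(\p\D^n,X)}X_n$, and already at $n=1$ it is $X_1$, whereas the correct target is $\G(X)_1$. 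The paper's deduction from Lemma~\ref{mu} goes through $\G(X)$, not $X$: using $\GG(X)\cong\GG(\G(X))$ (Lemma~\ref{GGG}), the relative matching object becomes $\Map(\p\DELTA^n\cup\D^n,\G(X))$, and Lemma~\ref{expansion} is applied to the \emph{$k$-groupoid} $\G(X)$. So the hypercover assertion is only available \emph{after} the first assertion is proved; your ordering makes the first step circular.

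The second gap is in your argument that $\G(X)$ is a $k$-groupoid. To apply Axiom~\ref{cancellation} in your square you must first know that $\Map(\L^n_i,\GG(X))\to\Map(\L^n_i,\G(X))$ is a cover, and this is not formal: $\Map(\L^n_i,-)$ is a finite limit, and a levelwise cover of simplicial spaces does not induce a cover on horn spaces. Supplying this is the actual content of the paper's proof, which runs an interlocking induction on the assertions $\textup{A}_n$ (the horn maps of $\G(X)$ are covers) and $\textup{B}_n$ (the relative horn maps of $\GG(X)\to\G(X)$ are covers): $\textup{A}_n$ implies $\textup{B}_n$ by cancellation, while $\textup{B}_{n-1}$ implies $\textup{A}_n$ by running the argument of Lemma~\ref{expansion} for the pair $\GG(X)\to\G(X)$ on the expansion $\D^0\into\L^n_i$, whose cells have dimension at most $n-1$; the base case $\textup{A}_1$ uses that $\GG(X)_1\to\G(X)_1$ is the coimage of a regular morphism, hence a cover. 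Without this induction your cancellation has nothing to cancel against. Your closing remarks on using the monomorphism $\G(X)_n\into X_n$ and coskeletality for the isomorphisms when $n>k$ are reasonable (a cover which is a monomorphism is an isomorphism in a subcanonical descent category), but they are downstream of the missing covering argument, so the proposal as written does not yield the theorem.
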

\begin{proof}
  For $n>0$, consider the assertions \\
  $\textup{A}_n$: for all $0\le i\le n$, the morphism
  $\G(X)_n\to\Map(\L^n_i,\G(X))$ is a cover; and \\
  $\textup{B}_n$: for all $0\le i\le n$, the morphism
  \begin{equation*}
    \GG(X)_n \too \Map(\L^n_i\to\D^n,\GG(X)\to\G(X))
  \end{equation*}
  is a cover. These imply that $\G(X)$ is a $k$-groupoid.

  Let us demonstrate $\textup{A}_1$. In the commuting diagram
  \begin{equation*}
    \begin{xy}
      \Atriangle/{>}`{>}`{.>}/<700,400>[\GG(X)_1`\G(X)_1`\GG(X)_0\cong
      X_0;``]      
    \end{xy}
  \end{equation*}
  the solid arrows are covers, hence by Axiom~\ref{cancellation}, the
  bottom arrow is a cover.

  Consider the commuting diagram
  \begin{equation*}
    \begin{xy}
      \Atriangle/{.>}`{>}`{.>}/<1000,500>%
      [\GG(X)_n\times_{\Map(\L^n_i,\G(X))}\G(X)_n`\GG(X)_n`%
      \Map(\L^n_i\to\D^n,\GG(X)\to\G(X));``]
    \end{xy}
  \end{equation*}
  in which the solid arrow is a cover. If $\textup{A}_n$ holds, the
  left-hand arrow is a cover, and hence by Axiom~\ref{cancellation},
  so is the bottom arrow, establishing $\textup{B}_n$.

  Suppose that $T$ is a finite simplicial set and $S\into T$ is an
  expansion obtained by attaching simplices of dimension at most $n-1$
  to $S$. Suppose that $\textup{B}_{n-1}$ holds. Then the same proof
  as for Lemma~\ref{expansion} shows that the morphism
  \begin{equation*}
    \Map(T,\GG(X)) \to \Map(S\into T,\GG(X)\to\G(X))
  \end{equation*}
  is a cover. Applying this argument to the expansion
  $\D^0\into\L^n_i$ shows that
  \begin{equation*}
    \Map(\L^n_i,\GG(X)) \too \Map(\L^n_i,\G(X))
  \end{equation*}
  is a cover. In the commuting diagram
  \begin{equation*}
    \begin{xy}
      \Square/{>}`{>}`{>}`{.>}/%
      [\GG(X)_n`\Map(\L^n_i,\GG(X))`\G(X)_n`\Map(\L^n_i,\G(X));```]      
    \end{xy}
  \end{equation*}
  the solid arrows are covers, hence by Axiom~\ref{cancellation}, so
  is the bottom arrow, establishing $\textup{A}_n$.

  Now that we know that $\G(X)$ is a $k$-groupoid, it follows from
  Lemma~\ref{mu} that $\GG(X)\to\G(X)$ is a hypercover.
\end{proof}

\section{The nerve of a differential graded algebra}

In this final section, we give an application of the formalism
developed in this paper to the study of the nerve of a differential
graded algebra $A$ over a field $\K$. There are different variants of
this construction: we give the simplest, in which the differential
graded algebra $A$ is finite-dimensional in each dimension and
concentrated in degrees $>-k$. Working in the descent category of
schemes of finite type, with surjective smooth morphisms (respectively
smooth morphisms) as covers (respectively regular morphisms), we will
show that the nerve of $A$ is a regular $k$-category.

In the special case that $A=M_N(\K)$ is the algebra of $N\times N$
square matrices, our construction produces the nerve of the monoid
$\End(\K^N)$: the associated $1$-groupoid $\G(N_\bull A)$ is the nerve
of the algebraic group $\GL(N)$. If $V$ is a perfect complex of
amplitude $k$, then $\GG(N_\bull\End(V))$ is the $k$-groupoid of
quasi-automorphisms of $V$. A straightforward generalization of this
construction from differential graded algebras to differential graded
categories yields the stack of perfect complexes: in a sequel to this
paper, we show how this gives a new construction of the derived stack
of perfect complexes of To\"en and Vezzosi \cite{tv}.

Let $A$ be a differential graded algebra over a field $\K$, with
differential $d:A^\bull\to A^{\bull+1}$. The curvature map is the
quadratic polynomial
\begin{equation*}
  \Phi(\mu) = d\mu + \mu^2 : A^1\to A^2 .   
\end{equation*}
The Maurer-Cartan locus $\MC(A)=V(\Phi)\subset A^1$ is the zero locus
of $\Phi$.

The graded commutator of elements $a\in A^i$ and $b\in A^j$ is defined
by the formula
\begin{equation*}
  [a,b] = ab - (-1)^{ij} ba \in A^{i+j} .
\end{equation*}
In particular, if $\mu\in A^1$, then
\begin{equation*}
  [\mu,a] = \mu a - (-1)^i a\mu \in A^{i+1} .
\end{equation*}
If $\mu$ lies in the Maurer-Cartan locus, the operator
$d_\mu:a\mapsto da+[\mu,a]$ is a differential.

Given $\mu$ and $\nu$ lying in the Maurer-Cartan locus of $A^\bull$,
define a differential $d_{\mu,\nu}$ on the graded vector space
underlying $A$ by the formula
\begin{equation*}
  A^i \ni a \mapsto d_{\mu,\nu}a = d a + \mu a - (-1)^i a\nu \in
  A^{i+1} .
\end{equation*}

Let $C^\bull(\D^n)$ be the differential graded algebra of normalized
simplicial cochains on the $n$-simplex $\D^n$ (with coefficients in
the field $\K$): this algebra is finite-dimensional, of dimension
$\binom{n+1}{i+1}$ in degree $i$. An element
$a\in C^\bull(\D^n) \o A^\bull$ corresponds to a collection of elements
\begin{equation*}
  ( a_{i_0\dots i_k} \in A^{i-k} \mid 0\le i_0<\dots<i_k\le n ) ,
\end{equation*}
where $a_{i_0\dots i_k}$ is the evaluation of the cochain $a$ on the
face of the simplex $\D^n$ with vertices $\{i_0,\dots,i_k\}$.

The differential on the differential graded algebra
$C^\bull(\D^n)\o A$ is the sum of the simplicial differential on
$C^\bull(\D^n)\o A$ and the internal differential of $A$:
\begin{equation*}
  (\delta a)_{i_0\dots i_k} = \sum_{\ell=0}^k (-1)^\ell
  a_{i_0\dots\widehat{\imath}{}_{i_\ell}\dots i_k} + (-1)^k d(a_{i_0\dots
    i_k}) .
\end{equation*}
The product of $C^\bull(\D^n)\o A$ combines the Alexander-Whitney
product on simplicial cochains with the product on $A$: if $a$ has
total degree $j$, then
\begin{equation*}
  (a\cup b)_{i_0\dots i_k} = \sum_{\ell=0}^k (-1)^{(j-\ell)(k-\ell)}
  a_{i_0\dots i_\ell} b_{i_\ell\dots i_k} .
\end{equation*}

The \textbf{nerve} of a differential graded algebra $A$ is the
simplicial scheme $N_\bull A$ such that $N_nA$ is the Maurer-Cartan
locus of $C^\bull(\D^n)\o A$:
\begin{equation*}
  N_nA = \MC(C^\bull(\D^n)\o A) .
\end{equation*}
If $T$ is a finite simplicial set, the Yoneda lemma implies that the
scheme of morphisms from $T$ to $N_\bull A$ is the Maurer-Cartan set
of the differential graded algebra $C^\bull(T)\o A$.

A simplex $\bbmu\in N_nA$ consists of a collection of elements of $A$
\begin{equation*}
  \bbmu = \bigl\{ \mu_{i_0\dots i_k} \in A^{1-k} \mid 0\le
  i_0<\ldots<i_k\le n \bigr\} ,
\end{equation*}
such that the following Maurer-Cartan equations hold: for
\begin{equation*}
  0\le i_0<\ldots<i_k\le n ,
\end{equation*}
we have
\begin{equation*}
  (-1)^k \, (d\bbmu+\bbmu^2)_{i_0\dots i_k}
  = d\mu_{i_0\dots i_k} + \sum_{\ell=0}^k (-1)^{k-\ell} \,
  \mu_{i_0\dots\widehat{\imath}{}_\ell\dots i_k} + \sum_{\ell=0}^k
  (-1)^{k\ell} \, \mu_{i_0\dots i_\ell} \mu_{i_\ell\dots i_k} = 0 .
\end{equation*}

The components $\mu_i$ and $\mu_{ij}$ play a special role in the
Maurer-Cartan equation. The components $\mu_i$ are Maurer-Cartan
elements of $A$, and determine differentials $d_{ij}:A^\bull\to
A^{\bull+1}$ by the formula
\begin{equation*}
  d_{ij}a = da + \mu_ia - (-1)^{|a|} a\mu_j .
\end{equation*}
In terms of the translate $f_{ij}=1+\mu_{ij}$ of the coefficient
$\mu_{ij}$, the Maurer-Cartan equation for $\mu_{ij}$ becomes
\begin{equation*}
  d_{ij}f_{ij} = 0 .
\end{equation*}
The Maurer-Cartan equation for $\mu_{ijk}$ may be rewritten
\begin{equation*}
  d_{ik}\mu_{ijk} + f_{ij}f_{jk} - f_{ik} = 0 .
\end{equation*}
In other words, $\mu_{ijk}$ is a homotopy between $f_{ij}f_{jk}$ and
$f_{ik}$. For $n>2$, the Maurer-Cartan equation becomes
\begin{multline*}
  d_{i_0i_k}\mu_{i_0\dots i_k} + \sum_{\ell=1}^{k-1} (-1)^{k-\ell} \,
  \mu_{i_0\dots\widehat{\imath}{}_\ell\dots i_k} \\ + (-1)^k \,
  f_{i_0i_1} f_{i_1\dots i_k} + \mu_{i_0\dots i_{k-1}}
  \mu_{i_{k-1}i_k}
  + \sum_{\ell=2}^{k-2} (-1)^{k\ell} \, \mu_{i_0\dots i_\ell}
  \mu_{i_\ell\dots i_k} = 0 .
\end{multline*}

The following is the main result of this section.
\begin{theorem}
  \label{Smooth}
  Let $A$ be a differential graded algebra such that $A^i$ is
  finite-dimensional for $i\le1$, and vanishes for $i\le-k$. Then
  $N_\bull A$ is a regular $k$-category.
\end{theorem}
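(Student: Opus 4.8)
The plan is to reduce every clause in the definitions of a $k$-category and of regularity to a single statement about restriction maps between Maurer--Cartan schemes, and then to prove that statement by deformation theory. By the Yoneda lemma, for a finite simplicial set $T$ one has $\Map(T,N_\bull A)=\MC(C^\bull(T)\o A)$, and an inclusion $S\into T$ induces the restriction map
\begin{equation*}
  \MC(C^\bull(T)\o A) \too \MC(C^\bull(S)\o A)
\end{equation*}
coming from the surjection of differential graded algebras $C^\bull(T)\o A\twoheadrightarrow C^\bull(S)\o A$. I would isolate the following claim: if the relative normalized cochain complex $C^\bull(T,S)$ is acyclic, then this restriction map is a smooth surjection of finite-type affine $\K$-schemes. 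Granting the claim, the three defining properties follow by choosing $S\into T$ appropriately: the inner-horn covers $N_nA\to\Map(\L^n_i,N_\bull A)$ for $0<i<n$ come from $\L^n_i\into\D^n$; the completeness cover $\Map(\DELTA^1,N_\bull A)\to N_0A$ comes from the vertex $\D^0\into\DELTA^1$; and regularity, i.e.\ that $\Map(\DELTA^1,N_\bull A)\to N_1A$ is smooth, comes from $\D^1\into\DELTA^1$. In each case $S\into T$ is a weak equivalence, so $C^\bull(T,S)$ is acyclic. Since covers in this descent category are the surjective smooth morphisms and the regular morphisms are the smooth morphisms, this gives exactly the required covers and regularity. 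Finite-typeness comes from the hypothesis that $A^i$ is finite-dimensional for $i\le1$: the degree-one part $(C^\bull(T)\o A)^1=\bigoplus_m C^m(T)\o A^{1-m}$ is then finite-dimensional, and the curvature $\Phi$ cuts out $\MC$ by finitely many equations.

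For the central claim I would argue by obstruction theory. Write $B=C^\bull(T)\o A$, $\bar B=C^\bull(S)\o A$, and let $I=\ker(B\to\bar B)=C^\bull(T,S)\o A$, a differential graded ideal. Lifting a Maurer--Cartan element of $\bar B$ along a chosen lift $\bar\mu$ amounts to solving $d_{\bar\mu}\nu+\nu^2+F(\bar\mu)=0$ for $\nu\in I^1$, where $F(\bar\mu)=d\bar\mu+\bar\mu^2\in I^2$ and $d_{\bar\mu}=d+[\bar\mu,-]$ is the twisted differential on $I$. By the Bianchi identity $d_{\bar\mu}F(\bar\mu)=0$, the obstruction to solving lives in $H^2(I,d_{\bar\mu})$, and the fibre is cut out by this equation. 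To obtain honestly affine fibres I would filter $S\into T$ by cell attachments along horns $\L^m_j\into\D^m$, exactly as in the expansion filtrations of Lemmas~\ref{proper} and~\ref{expansion} (for the vertex and edge inclusions into $\DELTA^1$, using the expansion $\cell{0}\into\DELTA^1$ from the proof of Lemma~\ref{DD1} and the case $n=1$ of Lemma~\ref{mu}). Each elementary step adjoins only the two coefficients dual to the top cell and the missing face, its relative ideal is square-zero, the lifting equation becomes the affine-linear $d_{\bar\mu}\nu=-F(\bar\mu)$, and its solution set is a torsor under $Z^1(I,d_{\bar\mu})$. Thus, once $(I,d_{\bar\mu})$ is known to be acyclic, each step is a nonempty affine bundle, hence a smooth surjection; composing the steps proves the claim.

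The technical heart, and the step I expect to be the main obstacle, is the acyclicity of the twisted complex $(I,d_{\bar\mu})$. I would absorb the vertex components $\mu_i\in A^1$ of $\bar\mu$ into the internal differential, replacing $d_A$ on the factor of $A$ attached to a simplex with endpoints $i,j$ by the twisted differential $d_{\mu_i,\mu_j}$ of Section~7; this is a genuine differential because the $\mu_i$ are Maurer--Cartan elements. Since $C^\bull(T,S)$ is an acyclic bounded complex of $\K$-vector spaces it is contractible, and therefore so is the tensor product $C^\bull(T,S)\o(A,d_\mu)$. The remaining components $\mu_{i_0\dots i_p}$ with $p\ge1$ contribute cup products that strictly raise cochain degree, hence a nilpotent perturbation of this contractible differential, and a standard perturbation argument shows that contractibility, and in particular acyclicity, is preserved. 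The boundedness and finite-dimensionality needed throughout come from the vanishing $A^i=0$ for $i\le-k$, which truncates all the relevant tensor products (cf.\ Lemma~\ref{coskeletal}).

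Finally, for the truncation statement that $N_nA\to\Map(\L^n_i,N_\bull A)$ is an \emph{isomorphism} when $n>k$, I would argue by a direct degree count rather than by acyclicity. Relative to the horn the only new coefficients are $\mu_{0\dots n}\in A^{1-n}$ on the top cell and $\mu_{0\dots\widehat{\imath}\dots n}\in A^{2-n}$ on the missing face. For $n>k$ one has $A^{1-n}=0$, so $\mu_{0\dots n}=0$ is forced; the Maurer--Cartan equation on the top cell then determines $\mu_{0\dots\widehat{\imath}\dots n}$ linearly in terms of the horn data (this coefficient occurs there only through the alternating face term), while the Maurer--Cartan equation on the missing face holds automatically by the Bianchi identity. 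Hence the restriction map has a unique section and is an isomorphism, completing the verification that $N_\bull A$ is a regular $k$-category.
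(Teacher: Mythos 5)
Your central claim is false as stated, and the failure is not peripheral: it occurs at one of the three places you invoke it. Take $A=M_N(\K)$ concentrated in degree $0$ (so $k=1$). The inclusion $\D^1\into\DELTA^1$ has acyclic relative cochains, yet the restriction $\MC(C^\bull(\DELTA^1)\o A)\to\MC(C^\bull(\D^1)\o A)=N_1A$ is the inclusion $\GL_N\into M_N$ --- smooth, but not surjective; more drastically, for the outer horn $\L^2_0\into\D^2$ (also an acyclic pair) the restriction is not even surjective on points, since filling requires solving $f_{01}f_{12}=f_{02}$ with $f_{01}$ possibly non-invertible. This is exactly why $N_\bull A$ is only a $k$-\emph{category}: acyclicity of $C^\bull(T,S)$ cannot suffice, because it does not see the direction of the simplices. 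The error in your acyclicity argument is concrete: after you ``absorb the vertex components'' by putting $d_{\mu_i,\mu_j}$ on the factor attached to a face with endpoints $i,j$, the resulting operator (face-wise twist plus simplicial differential) does \emph{not} square to zero --- the simplicial differential maps a face with endpoints $(1,n)$ into a cell with endpoints $(0,n)$, and the discrepancy $(\mu_0-\mu_1)a$ is cancelled only by the cup terms with $\mu_{01}$, which you relegated to the ``nilpotent perturbation.'' These degree-zero coefficients enter the twisted complex as multiplication by $f_{ij}=1+\mu_{ij}$, which is not unipotent: for an outer horn the relevant two-term complex is $A\xrightarrow{f_{01}\cdot}A$, acyclic precisely when $f_{01}$ is quasi-invertible. (For \emph{inner} horns the missing face shares both endpoints with the top cell, no such multiplication appears, and your degree count for $n>k$ is fine --- this matches the paper's Part~1, which solves the Maurer--Cartan equations explicitly to get $N_nA\cong\Map(\L^n_i,N_\bull A)\times A^{1-n}$.) A further local problem: your filtration of $\D^0\into\DELTA^1$ by horn attachments has elementary steps that are not smooth surjections --- the first step is $N_1A\to N_0A$, which is not smooth when $\MC(A)$ is singular, and the $m=1$ attachment is not square-zero (it carries the quadratic vertex equation $d\nu+\nu^2=0$) --- so even where the composite is smooth, your step-by-step argument cannot establish it.

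The paper's route is shaped by exactly these obstacles. For the thick edge it replaces $C^\bull(\DELTA^1)\o A$ by the isomorphic matrix algebra $\VA\subset\UA$ of $2\times2$ matrices over $A[u]$, and at a point $\bbmu$ of $\N_1A=\MC(C^\bull(\DELTA^1)\o A)$ uses the $u$-derivative $b(\bbmu)$ of $a(\bbmu)$, which satisfies $d_{a(\bbmu)}b(\bbmu)=1$: this is a contracting homotopy that exists only because the higher coefficients $\mu_{10},\mu_{010},\dots$ of the thick edge supply the quasi-inverse data your perturbation argument tried to get for free. Feeding the homotopies $h=b(\bbmu)d_{a(\bbmu)}b(\bbmu)(1-q)$ and $H=b(\bbmu)d_{a(\bbmu)}b(\bbmu)(1-Q)$ into the smoothness criterion (Lemma~\ref{smooth}, an \'etale-chart argument) gives smoothness of $\Map(\DELTA^1,N_\bull A)\to\MC(A)$ and $\Map(\DELTA^1,N_\bull A)\to N_1A$ \emph{at each point of the source}, which is all that regularity and the cover axiom require (surjectivity onto $N_0A$ comes from the section induced by $\DELTA^1\to\D^0$, and no surjectivity onto $N_1A$ is claimed --- the image is the quasi-invertible locus $\G(N_\bull A)_1$). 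To repair your approach you would have to restrict the claim to inner anodyne data and handle the thick edge by a pointwise contraction of this kind, at which point you have essentially reconstructed the paper's proof.
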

\begin{proof}

The proof divides into three parts.
\begin{enumerate}[label=\arabic*)]
\item If $0<i<n$, the morphism $N_nA\to\Map(\L^n_i,N_\bull A)$ is a
  smooth epimorphism, and an isomorphism if $n>k$.
\item The morphisms $\Map(\DELTA^1,N_\bull A)\to\MC(A)$ are smooth.
\item The morphism $\Map(\DELTA^1,N_\bull A)\to N_1A$ is smooth.
\end{enumerate}

Part 1) is established by rearranging the Maurer-Cartan equations for
$\mu_{0\dots n}$ and $\mu_{0\dots\widehat{\imath}\dots n}$ to give a
natural isomorphism $N_nA\cong\Map(\L^n_i,N_\bull A)\times A^{1-n}$:
\begin{align*}
  \mu_{0\dots n} &= x \in A^{1-n} \\
  \mu_{0\dots\widehat{\imath}\dots n}
                 &= - (-1)^{n-i} d_{0n}x - (-1)^i f_{01}\mu_{1\dots n}
                   - (-1)^{n-i} \mu_{0\dots n-1}f_{n-1,n} \\
                 & \quad - \sum_{\ell\notin\{0,i,n\}} (-1)^{\ell-i}
                   \mu_{0\dots\widehat{\ell}\dots n} -
                   \sum_{\ell=2}^{n-2} (-1)^{n\ell-n+i}
                   \mu_{0\dots\ell} \mu_{\ell\dots n} \in A^{2-n} .
\end{align*}
The case $n=2$ is slightly special:
\begin{align*}
  \mu_{012} &= x\in A^{-1} \\
  \mu_{02} &= dx + \mu_0x + x\mu_2 + f_{01}f_{12} - 1 \in A^0 .
\end{align*}

To establish Parts 2) and 3), we will use an alternative
representation of the algebra $C^\bull(\DELTA^1)\o A$ in terms
of $2\times2$ matrices with coefficients in $A[u]$, where $u$ is a
formal variable of degree $2$.

Associate to a differential graded algebra $A$ the auxilliary
differential graded algebra $\UA$, such that $\UA^k$ is the space of
$2\times2$ matrices
\begin{equation*}
  \UA^k = \left\{
    \begin{pmatrix} 
      \alpha_{00} & \alpha_{01} \\
      \alpha_{10} & \alpha_{11}
    \end{pmatrix} \, \middle| \, \alpha_{ij} \in A^{k+i-j}[u] \right\} .
\end{equation*}
Composition is the usual matrix product. Let $d:\UA\to\UA$ be the
differential given by the formula
\begin{equation*}
  (da)_{ij} = (-1)^i \, d\bigl(\alpha_{ij}\bigr) .
\end{equation*}
Let $\VA\subset\UA$ be the differential graded subalgebra 
\begin{equation*}
  \VA = \left\{
    \begin{pmatrix} 
      \alpha_{00} & \alpha_{01} \\
      \alpha_{10} & \alpha_{11}
    \end{pmatrix} \in \UA \, \middle| \, \alpha_{10}(0)=0 \right\} .
\end{equation*}
In other words, the bottom left entry $\alpha_{10}$ of the matrix has
vanishing constant term. Let $a_0\in\VA$ be the element
\begin{equation*}
  a_0 = \begin{pmatrix}
    0 & 1 \\
    u & 0
  \end{pmatrix} .
\end{equation*}
The following lemma is a straightforward calculation.
\begin{lemma}
  The map from $C^\bull(\DELTA^1)\o A$ to $\VA$ given by the formula
  \begin{equation*}
    x \mapsto
    \psi(x) =
    \begin{pmatrix}
      x_0 + ux_{010} + u^2x_{01010} + \dots
      & x_{01} + ux_{0101} + u^2x_{010101} + \dots \\[3pt]
      ux_{10} + u^2x_{1010} + \dots & -
      x_1 - ux_{101} - u^2x_{10101} - \dots
  \end{pmatrix}    
  \end{equation*}
  is an isomorphism of differential graded algebras between
  $C^\bull(\DELTA^1)\o A$ and $\VA$ with differential
  \begin{equation*}
    \delta x = dx + [a_0,x] .
  \end{equation*}
\end{lemma}
\begin{corollary}
  The morphism
  \begin{equation*}
    \bbmu \mapsto a(\bbmu) = a_0 + \psi(\bbmu)
  \end{equation*}
  induces an isomorphism of schemes between
  $\N_1A=\MC(C^\bull(\DELTA)\o A)$ and
  \begin{equation*}
    Z(da + a^2 - u1) \subset \VA^1 .
  \end{equation*}
\end{corollary}

A Maurer-Cartan element $\bbmu=(\mu_0,\mu_1,\mu_{01})$ is
quasi-invertible if
\begin{equation*}
  f = 1+\mu_{01}
\end{equation*}
is quasi-invertible in $A^0$: that is, there exist elements $g\in A^0$
and $h$ and $k\in A^{-1}$ such that
\begin{align*}
  dh+[\mu_0,h] &= fg-1 , & dk+[\mu_1,k] &= gf-1 .
\end{align*}
The following result (with a different proof) is due to Markl
\cite{Markl}.
\begin{proposition}
  Every quasi-invertible point of $N_1A$ may be lifted to a point of
  $\N_1A$.
\end{proposition}
\begin{proof}
  Consider the matrices
  \begin{align*}
    \alpha &=                    
    \begin{pmatrix}
     \mu_0 & f \\ 0 & - \mu_1
    \end{pmatrix} \in \VA^1 &
    \beta &=                    
    \begin{pmatrix}
      h & h(fk-hf) \\ g & -k + g(fk-hf)
    \end{pmatrix} \in \VA^{-1}
  \end{align*}
  It is easily checked that $d\beta+[\alpha,\beta]=1$. Let $C_n$ be
  the $n$th Catalan number. The matrix
  \begin{equation*}
    a = \alpha + u \sum_{n=0}^\infty (-u)^n C_n \, \beta^{2n+1} \in
    \VA^1
  \end{equation*}
  solves the equation $da+a^2=u1$, and corresponds to an element of
  $\N_1A$ lifting $\bbmu\in N_1A$. (The sum defining $a$ is finite,
  since the differential graded algebra $A^\bull$ is bounded below.)
\end{proof}

The following lemma is our main tool in the proofs of Parts~2) and 3).
\begin{lemma}
  \label{smooth}
  Let $A$ be a differential graded algebra such that $A^1$ is
  finite-dimensional. Let $h:A^\bull\to A^{\bull-1}$ be an operator on
  $A$ satisfying the following conditions:
  \begin{enumerate}[label=\alph*)]
  \item $hdh=h$ and $h^2=0$;
  \item the image of $p=dh+hd$ is an ideal $I\subset A$.
  \end{enumerate}
  Then the natural morphism $\MC(A)\to\MC(A/I)$ is smooth at
  $0\in\MC(A)$.
\end{lemma}
\begin{proof}
  Let $U$ be the open neighbourhood of $0$ in $A^1$ on which the
  determinant of the linear transformation
  \begin{equation*}
    1 + h\ad(\mu) : A^1 \too A^1
  \end{equation*}
  is nonzero. We will show that the projection $\MC(A)\to\MC(A/I)$ is
  smooth on the open subset $U\cap\MC(A)$.

  There is an isomorphism between $\MC(A)$ and the variety
  \begin{equation*}
    \V = Z(p\nu,(1-p)x,dhx-y,\Phi(\nu)+d_\nu x+x^2)
    \subset \X = \{ (\nu,x,y) \in A^1\times A^1\times A^1 \} ,
  \end{equation*}
  induced by the morphism taking $\mu\in A^1$ to
  $((1-p)\mu,p\mu,h\mu)$. Likewise, there is an isomorphism between
  $\MC(A/I)$ and the variety
  \begin{equation*}
    Z(p\nu,(1-p)\Phi(\nu)) \subset \{ \nu \in A^1 \} .
  \end{equation*}
  It follows that the variety
  \begin{equation*}
    \W = Z(p\nu,(1-dh)y,(1-p)\Phi(\nu)) \subset \{ (\nu,y) \in A^1
    \times A^1 \}
  \end{equation*}
  is a trivial finite-dimensional vector bundle over $\MC(A/I)$, with
  fibre the image of $hd:A^0\to A^0$, or equivalently, the image of
  $h:A^1\to A^0$.

  Denote the differentials of $x$ and $y:\X\to A^1$ by $\xi$ and
  $\eta\in\Omega_\X\o A^1$.  Taking the differentials of the equations
  defining $\V$ with respect to $x$ and $y$, we obtain the
  differentials
  \begin{align*}
    \om_1 &= (1-p)\xi & \om_2 &= dh\xi - \eta & \om_3 &= d\xi + \ad(\nu+x)\xi .
  \end{align*}
  By the equation
  \begin{equation*}
    (1+h\ad(\nu+x))^{-1}\bigl( \om_1+\om_2+h\om_3 \bigr) = \xi -
    (1+h\ad(\nu+x))^{-1} \eta ,
  \end{equation*}
  we see that the projection from $U\cap\V$ to $\W$ is \'etale,
  proving the lemma.
\end{proof}

We next prove Part 2). Let $b(\bbmu)\in\UA$ be the derivative of
$a(\bbmu)$ with respect to $u$:
\begin{equation*}
  b(\bbmu) =
  \begin{pmatrix}
    \mu_{010} + 2u\mu_{01010} + \dots
    & \mu_{0101} + 2u\mu_{010101} + \dots \\
    1 + \mu_{10} + 2u\mu_{1010} + \dots &
    - \mu_{101} - 2u\mu_{10101} - \dots
  \end{pmatrix}
\end{equation*}
We have the equation
\begin{equation*}
  d_{a(\bbmu)}b(\bbmu) = 1 .
\end{equation*}

Consider the projection $q:\VA\to\VA$ given
by the formula
\begin{equation*}
  q \begin{pmatrix}
    \alpha_{00} & \alpha_{01} \\ \alpha_{10} & \alpha_{11}
  \end{pmatrix} =
  \begin{pmatrix}
    \alpha_{00}(0) & 0 \\ 0 & 0
  \end{pmatrix} ,
\end{equation*}
where $\alpha_{00}(0)$ is the constant term of $\alpha_{00}\in A[u]$.

The homotopy
\begin{align*}
  h &= b(\bbmu)d_{a(\bbmu)}b(\bbmu)(1-q) \\
    &= b(\bbmu)(1-q) - b(\bbmu)^2d_{a(\bbmu)}(1-q)
\end{align*}
maps $\VA^\bull$ to $\VA^\bull$, and satisfies the hypotheses of
Lemma~\ref{smooth}, with respect to the differential
$d_{a(\bbmu)}$: the projection $p$ is given by the explicit formula
\begin{equation*}
  p = 1 - q + b[d_{a(\bbmu)},q] .
\end{equation*}
It follows that the morphism $\MC(C^\bull(\DELTA^1)\o A)\to\MC(A)$ is
smooth at $\bbmu$. This proves Part~2).

Likewise, consider the projection $Q:\VA\to\VA$ given by evaluation at
$u=0$. Applying Lemma~\ref{smooth} to the differential graded algebra
$\VA$, with differential $d_{a(\bbmu)}$, and with homotopy
\begin{align*}
  H &= b(\bbmu)d_{a(\bbmu)}b(\bbmu)(1-Q) \\
    &= b(\bbmu)(1-Q) - b(\bbmu)^2d_{a(\bbmu)}(1-Q) ,
\end{align*}
we see that the morphism
$\MC(C^\bull(\DELTA^1)\o A)\to\MC(C^\bull(\D^1)\o A)$ is smooth at
$\bbmu$. This proves Part~3).
\end{proof}

\section*{References}

\begin{biblist}

\bib{Verdier}{book}{
   author={Artin,M.},
   author={Grothendieck,A.},
   author={Verdier,J. L.},
   title={Th\'eorie des topos et cohomologie \'etale des sch\'emas. Tome 2},
   series={Lecture Notes in Mathematics, Vol. 270},
   note={S\'eminaire de G\'eom\'etrie Alg\'ebrique du Bois-Marie 1963--1964
   (SGA 4)},
   publisher={Springer-Verlag},
   place={Berlin},
   date={1972},
}

\bib{Benzeghli}{article}{
  author={Benzeghli, Brahim},
  title={Un sch\'ema simplicial de Grothendieck-Pridham},
  eprint={arXiv:1303.4941},
}

\bib{BV}{book}{
   author={Boardman, J. M.},
   author={Vogt, R. M.},
   title={Homotopy invariant algebraic structures on topological spaces},
   series={Lecture Notes in Mathematics, Vol. 347},
   publisher={Springer-Verlag},
   place={Berlin},
   date={1973},
}

\bib{Brown}{article}{
   author={Brown, Kenneth S.},
   title={Abstract homotopy theory and generalized sheaf cohomology},
   journal={Trans. Amer. Math. Soc.},
   volume={186},
   date={1973},
   pages={419--458},
}

\bib{cartan}{article}{
   author={Cartan, Henri},
   title={Sur le foncteur $\textup{Hom}(X,Y)$ en th\'eorie
     simpliciale},
   conference={
      title={S\'eminaire Henri Cartan; 9e ann\'ee: 1956/57. Quelques
      questions de topologie, Expos\'e no 3},
   },
   book={
      publisher={Secr\'etariat math\'ematique},
      place={Paris},
   },
   date={1958},
}

\bib{cisinski}{article}{
   author={Cisinski, Denis-Charles},
   title={Invariance de la $K$-th\'eorie par \'equivalences d\'eriv\'ees},
   journal={J. K-Theory},
   volume={6},
   date={2010},
   number={3},
   pages={505--546},
}

\bib{douady}{article}{
   author={Douady, A.},
   title={Le probl\`eme des modules locaux pour les espaces ${\bf
   C}$-analytiques compacts},
   language={French},
   journal={Ann. Sci. \'Ecole Norm. Sup. (4)},
   volume={7},
   date={1974},
   pages={569--602 (1975)},
}

\bib{dubuc}{article}{
   author={Dubuc, Eduardo J.},
   title={$C\sp{\infty }$-schemes},
   journal={Amer. J. Math.},
   volume={103},
   date={1981},
   number={4},
   pages={683--690},
}

\bib{di}{article}{
   author={Dugger, Daniel},
   author={Isaksen, Daniel C.},
   title={Weak equivalences of simplicial presheaves},
   conference={
      title={Homotopy theory: relations with algebraic geometry, group
      cohomology, and algebraic $K$-theory},
   },
   book={
      series={Contemp. Math.},
      volume={346},
      publisher={Amer. Math. Soc.},
      place={Providence, RI},
   },
   date={2004},
   pages={97--113},
}

\bib{Duskin}{article}{
   author={Duskin, J.},
   title={Simplicial methods and the interpretation of ``triple''\
   cohomology},
   journal={Mem. Amer. Math. Soc.},
   volume={3},
   date={1975},
   number={issue 2, 163},
}

\bib{dk1}{article}{
   author={Dwyer, W. G.},
   author={Kan, D. M.},
   title={Simplicial localizations of categories},
   journal={J. Pure Appl. Algebra},
   volume={17},
   date={1980},
   number={3},
   pages={267--284},
}

\bib{dk2}{article}{
   author={Dwyer, W. G.},
   author={Kan, D. M.},
   title={Calculating simplicial localizations},
   journal={J. Pure Appl. Algebra},
   volume={18},
   date={1980},
   number={1},
   pages={17--35},
}

\bib{dk3}{article}{
   author={Dwyer, W. G.},
   author={Kan, D. M.},
   title={Function complexes in homotopical algebra},
   journal={Topology},
   volume={19},
   date={1980},
   number={4},
   pages={427--440},
}

\bib{GZ}{book}{
   author={Gabriel, P.},
   author={Zisman, M.},
   title={Calculus of fractions and homotopy theory},
   series={Ergebnisse der Mathematik und ihrer Grenzgebiete, Band 35},
   publisher={Springer-Verlag New York, Inc., New York},
   date={1967},
}

\bib{Glenn}{article}{
   author={Glenn, Paul G.},
   title={Realization of cohomology classes in arbitrary exact categories},
   journal={J. Pure Appl. Algebra},
   volume={25},
   date={1982},
   number={1},
   pages={33--105},
}

\bib{Henriques}{article}{
   author={Henriques, Andr{\'e}},
   title={Integrating $L_\infty$-algebras},
   journal={Compos. Math.},
   volume={144},
   date={2008},
   number={4},
   pages={1017--1045},
}

\bib{Joyal}{article}{
   author={Joyal, A.},
   title={Quasi-categories and Kan complexes},
   journal={J. Pure Appl. Algebra},
   volume={175},
   date={2002},
   number={1-3},
   pages={207--222},
}

\bib{JT}{article}{
   author={Joyal, Andr{\'e}},
   author={Tierney, Myles},
   title={Quasi-categories vs Segal spaces},
   conference={
      title={Categories in algebra, geometry and mathematical physics},
   },
   book={
      series={Contemp. Math.},
      volume={431},
      publisher={Amer. Math. Soc.},
      place={Providence, RI},
   },
   date={2007},
   pages={277--326},
}

\bib{Lurie}{book}{
   author={Lurie, Jacob},
   title={Higher topos theory},
   series={Annals of Mathematics Studies},
   volume={170},
   publisher={Princeton University Press},
   place={Princeton, NJ},
   date={2009},
}

\bib{Markl}{article}{
   author={Markl, Martin},
   title={Ideal perturbation lemma},
   journal={Comm. Algebra},
   volume={29},
   date={2001},
   number={11},
   pages={5209--5232},
}

\bib{Pridham}{article}{
   author={Pridham, J. P.},
   title={Presenting higher stacks as simplicial schemes},
   journal={Adv. Math.},
   volume={238},
   date={2013},
   pages={184--245},
}

\bib{Rezk}{article}{
   author={Rezk, Charles},
   title={A model for the homotopy theory of homotopy theory},
   journal={Trans. Amer. Math. Soc.},
   volume={353},
   date={2001},
   number={3},
   pages={973--1007},
}

\bib{Serre}{article}{
   author={Serre, Jean-Pierre},
   title={Homologie singuli\`ere des espaces fibr\'es. Applications},
   journal={Ann. of Math. (2)},
   volume={54},
   date={1951},
   pages={425--505},
}

\bib{tv}{article}{
   author={To{\"e}n, Bertrand},
   author={Vezzosi, Gabriele},
   title={Homotopical algebraic geometry. II. Geometric stacks and
   applications},
   journal={Mem. Amer. Math. Soc.},
   volume={193},
   date={2008},
   number={902},
}

\bib{weiss}{article}{
   author={Weiss, Michael},
   title={Hammock localization in Waldhausen categories},
   journal={J. Pure Appl. Algebra},
   volume={138},
   date={1999},
   number={2},
   pages={185--195},
}

\end{biblist}

\end{document}